\DeclareMathOperator{\LL}{L}
\DeclareMathOperator{\Tr}{Tr}
\newcommand*{\DomA}{\mathcal{D}(A)}
\newcommand*{\inlineequation}[2][]{%
  \begingroup
    \refstepcounter{equation}%
    \ifx\\#1\\%
    \else
      \label{#1}%
    \fi
    \relpenalty=10000 %
    \binoppenalty=10000 %
    \ensuremath{%
      #2%
    }%
    ~\@eqnnum
  \endgroup
}
\newenvironment{alphafootnotes}
  {\par\edef\savedfootnotenumber{\number\value{footnote}}
   
   \setcounter{footnote}{0}}
  {\par\setcounter{footnote}{\savedfootnotenumber}}
\newtheorem{mytheo}{Theorem}[section]
\newtheorem{myprop}{Proposition}[section]
\newtheorem{lemme}{Lemma}[section]
\newtheorem{remark}{Remark}[section]
\date{}
\begin{document}

\title{Recursive computation of the invariant distributions of Feller processes:\\
Revisited examples and new applications}
\author[1]{Gilles Pag\`es}
\author[1]{Cl\'ement Rey}
\affil[1]{Universit\'e Pierre et Marie Curie, LPMA, 4 Place Jussieu, 75005 Paris, France}
\maketitle
\begin{alphafootnotes}
\footnote{e-mails : gilles.pages@upmc.fr, clement.rey@upmc.fr This research benefited from the support of the "Chaire Risques Financiers''.}
\end{alphafootnotes}

\abstract{In this paper, we show that the abstract framework developed in \cite{Pages_Rey_2017} and inspired by \cite{Lamberton_Pages_2002} can be used to build invariant distributions for Brownian diffusion processes using the Milstein scheme and for diffusion processes with censored jump using the Euler scheme. Both studies rely on a weakly mean reverting setting for both cases. For the Milstein scheme we prove the convergence for test functions with polynomial (Wasserstein convergence) and exponential growth. For the Euler scheme of diffusion processes with censored jump we prove the convergence for test functions with polynomial growth.\\}

\noindent {\bf Keywords :} Ergodic theory, Markov processes, Invariant measures, Limit theorem, Stochastic approximation, Milstein scheme, Censored jump processes. \\
{\bf AMS MSC 2010:} 60G10, 47A35, 60F05, 60J25, 60J35, 65C20, 60J75.

\section{Introduction}

In this paper, we apply a method developed in \cite{Pages_Rey_2017} for the recursive computation of the invariant distribution (denoted $\nu$) of a Brownian diffusion process $(X_t)_{t \geqslant 0}$. The main idea of this approach is to consider a non-homogeneous discrete Markov process which can be simulated using a family of transitions kernels $(Q_{\gamma})_{\gamma>0}$ and approximating $(X_t)_{t \geqslant 0}$. This paper aims to show that this method can be used in two non trivial situations under a weakly mean reverting setting. First, it can used for the computation of invariant distributions for Brownian diffusion processes using the Milstein
scheme. Then, it can be used for the computation of invariant distributions for diffusion processes with censored jump using the Euler scheme 

As suggested by the pointwise Birkhoff ergodic theorem, \cite{Pages_Rey_2017} shows that some sequence $( \nu_n)_{n \in \mathbb{N}^{\ast}}$ of random empirical measures $a.s.$ weakly converges toward $\nu$ under some appropriate mean-reverting and moment assumptions. An abstract framework is developed in \cite{Pages_Rey_2017} which can be used, among others, to obtain convergence of $\mbox{L}^p$-Wasserstein distance. Notice that for a given $f$, $\nu_n(f)$ can be recursively defined making its computation straightforward.\\

Invariant measures are crucial in the study of the long term behavior of stochastic differential systems. We invite the reader to refer to \cite{Hasminskii_1980} and \cite{Ethier_Kurtz_1986} for an overview of the subject. The computation of invariant distributions for stochastic systems has already been widely explored in the literature. In \cite{Soize_1994}, explicit exact expressions of the invariant density distribution for some solutions of Stochastic Differential Equations are given.\\

 However, in many cases there is no explicit formula for $\nu$. A first approach consists in studying the convergence, as $t$ tends to infinity, of the semigroup $(P_t)_{t \geqslant 0}$ of the Markov process $(X_t)_ {t\geqslant 0}$ with infinitesimal generator $A$ towards the invariant measure $\nu$. This is done $e.g.$ in \cite{Ganidis_Roynette_Simonot_1999} for the total variation topology which is thus adapted when the simulation of $P_T$ is possible for $T$ large enough. \\
 
 Whenever $(X_t)_{t \geqslant 0}$ can be simulated, we can use a Monte Carlo method to estimate $(P_t)_{t \geqslant 0}$, $i.e.$ $\mathbb{E}[f(X_t)]$, $t\geqslant 0$, producing a second term in the error analysis. When $(X_t)_{t \geqslant 0}$ cannot be simulated at a reasonable cost, a solution consists in simulating an approximation of $(X_t)_{t \geqslant 0}$, using a numerical scheme $(\overline{X}^{\gamma}_{\Gamma_n})_{n\in \mathbb{N}}$ built with transition functions $( \mathscr{Q}_{\gamma_n})_{n \in\mathbb{N}^{\ast}}$ (given a step sequence $(\gamma_n)_{n \in \mathbb{N}}$, $\Gamma_0=0$ and $\Gamma_n=\gamma_1+..+\gamma_n$). If the process $(\overline{X}^{\gamma}_{\Gamma_n})_{n\in \mathbb{N}}$ weakly converges towards $(X_t)_{t \geqslant 0}$, a natural construction relies on numerical homogeneous schemes ($(\gamma_n)_{n \in \mathbb{N}}$ is constant, $\gamma_n=\gamma_1>0$, for every $n \in \mathbb{N}^{\ast}$). This approach induces two more terms to control in the approximation of $\nu$ in addition to the error between $P_T$ and $\nu$ for a large enough fixed $T>0$, such that there exists $n(T) \in\mathbb{N}^{\ast}$,with $T=n(T) \gamma_1$: The first one is due to the weak approximation of $\mathbb{E}[f(X_T)]$ by $\mathbb{E}[f(\overline{X}^{\gamma_1}_T)]$ and the second one is due to the Monte Carlo error resulting from the computation of $\mathbb{E}[f(\overline{X}^{\gamma_1}_T]$.\\


Such an approach does not take advantage of the ergodic feature of $(X_t)_{t \geqslant 0}$. In fact, as investigated in \cite{Talay_1990} for Brownian diffusions, the ergodic (or positive recurrence) property of $(X_t)_{t \geqslant 0}$ is also satisfied by its approximation $(\overline{X}^{\gamma}_{\Gamma_n})_{n\in \mathbb{N}}$ at least for small enough time step $\gamma_n= \gamma_1, n \in \mathbb{N}^{\ast}$. Then $(\overline{X}^{\gamma}_{\Gamma_n})_{n\in \mathbb{N}}$ has an invariant distribution $\nu^{\gamma_1}$ (supposed to be unique for simplicity) and the empirical measures 
 \begin{align*}
 \nu^{\gamma_1}_n(dx)=\frac{1}{ \Gamma_n} \sum_{k=1}^n \gamma_k \delta_{\overline{X}^{\gamma_1}_{\Gamma_{k-1}}}(dx), \qquad \Gamma_n = n \gamma_1.
 \end{align*}
almost surely weakly converges to $\nu^{\gamma_1}$. Using this last result makes it is possible to compute by simulation arbitrarily accurate approximations of $\nu^{\gamma_1}(f)$ using only one simulated path of $(\overline{X}^{\gamma}_{\Gamma_n})_{n\in \mathbb{N}}$. It is an ergodic - or Langevin - simulation of $\nu^{\gamma_1}(f)$. However, it remains to establish at least that $\nu^{\gamma_1}(f)$ converges to $\nu(f)$ when $\gamma_1$ converges to zero and, if possible, at which rate.

 Another approach was proposed in \cite{Basak_Hu_Wei_1997}, still for Brownian diffusions, which avoids the asymptotic analysis between $\nu^{\gamma_1}$ and $\nu$. The authors directly prove that the discrete time Markov process $(\overline{X}^{\gamma}_{\Gamma_n})_{n\in \mathbb{N}}$, with step sequence $\gamma=(\gamma_n)_{n \in \mathbb{N}}$ vanishing to 0, weakly converges toward $\nu$. Therefore, the resulting error is made of two terms. The first one is due to this weak convergence and the second one to the Monte Carlo error involved in the computation of the law of $\overline{X}^{\gamma}_{\Gamma_n}$, for $n$ large enough. We also refer to \cite{Durmus_Moulines_2015} for the study of the total variation convergence for the Euler decreasing step of the over-damped Langevin diffusion. The reader may notice that in both approaches, strong ergodicity assumptions are required for the process with infinitesimal generator A. \\

In \cite{Lamberton_Pages_2002}, theses two ideas are combined to design a Langevin Euler Monte Carlo recursive algorithm with decreasing step which $a.s.$ weakly converges to the right target $\nu$. This paper treats the case where $ (\overline{X}^{\gamma}_{\Gamma_n})_{n\in \mathbb{N}}$ is a (inhomogeneous) Euler scheme with decreasing step associated to a strongly mean reverting Brownian diffusion process. The sequence $(\nu^{\gamma}_n)_{n \in \mathbb{N}^{\ast}}$ is defined as the weighted empirical measures of the path of $ (\overline{X}^{\gamma}_{\Gamma_n})_{n\in \mathbb{N}}$ (which is the procedure that is used in every work we mention from now on and which is also the one we use in this paper). In particular, the $a.s.$ weak convergence of
 \begin{align}
 \label{eq:def_weight_emp_meas_intro}
 \nu^{\gamma}_n(dx)=\frac{1}{\Gamma_n} \sum_{k=1}^n \gamma_k \delta_{\overline{X}^{\gamma}_{\Gamma_{k-1}}}(dx), \qquad \Gamma_n=\sum\limits_{k=1}^n \gamma_k,
 \end{align}
toward the (non-empty) set $\mathcal{V}$ of the invariant distributions of the underlying Brownian diffusion is established.
 Moreover, when the invariant measure $\nu$ is unique, it is proved that $\lim\limits_{n \to \infty} \nu^{\gamma}_n f=\nu f \; a.s.$ for a larger class of test functions which than $\mathcal{C}^0$ which contains $\nu-a.s.$ continuous functions with polynomial growth $i.e.$ convergence for the Wasserstein distance. In the spirit of \cite{Bhattacharya_1982} for the empirical measure of the underlying diffusion, they also obtained rates and limit gaussian laws for the convergence of $(\nu^{\gamma}_n(f))_{n \in \mathbb{N}^{\ast}}$ for test functions $f$ which can be written $f= A \varphi$. Note that, this approach does not require that the invariant measure $\nu$ is unique by contrast with the results obtained in \cite{Talay_1990}, \cite{Basak_Hu_Wei_1997} or \cite{Durmus_Moulines_2015} for instance. In this case, it is established that $a.s.$, every weak limiting distribution of $(\nu^{\gamma}_n)_{n \in \mathbb{N}^{\ast}}$ is an invariant distribution for the Brownian diffusion. \\
 This first paper gave rise to many generalizations and extensions. In \cite{Lamberton_Pages_2003}, the initial result is extended to the case of Euler scheme of Brownian diffusions with weakly mean reverting properties. Thereafter, in \cite{Lemaire_thesis_2005}, the class of test functions for which we have $\lim\limits_{n \to \infty} \nu^{\gamma}_n f =\nu f \; a.s.$ (when the invariant distribution is unique) is extended to include functions with exponential growth. Finally, in \cite{Panloup_2008}, the results concerning the polynomial case are shown to hold for the computation of invariant measures for weakly mean reverting Levy driven diffusion processes, still using the algorithm from \cite{Lamberton_Pages_2002}. This extension encourages relevant perspectives concerning not only the approximation of mean reverting Brownian diffusion stationary regimes but also to treat a larger class of processes. For a more complete overview of the studies concerning (\ref{eq:def_weight_emp_meas_intro}) for the Euler scheme, the reader can also refer to \cite{Pages_2001_ergo}, \cite{Lemaire_2007}, \cite{Panloup_2008_rate}, \cite{Pages_Panloup_2009}, \cite{Pages_Panloup_2012} or \cite{Mei_Yin_2015}.\\

Those results are extended in \cite{Pages_Rey_2017} and generalized to the case where $( \mathscr{Q}_{\gamma})_{\gamma >0}$ is not specified explicitly, to approximate invariant, not necessarily unique, distributions for general Feller processes. In \cite{Pages_Rey_2017}, an abstract framework, that can be used to prove every mentioned existing result, is developed which suggests various applications beyond the Euler scheme of Levy processes.\\

 This is the direction of this paper where we study the particular case of the Milstein scheme for Brownian diffusion processes and the Euler scheme of a diffusion process with censored jump (which is an extension of Levy processes).\\
  In particular, when $(\overline{X}^{\gamma}_{\Gamma_n})_{n\in \mathbb{N}}$ is the Milstein scheme of a Brownian diffusion process (respectively the Euler scheme of a diffusion process with censored jump), we establish the $a.s$ weak convergence of $(\nu^{\gamma}_n)_{n \in \mathbb{N}^{\ast}}$. Moreover, when the invariant distribution $\nu$ is unique we obtain $\lim\limits_{n \to \infty} \nu^{\gamma}_n f =\nu f \; a.s.$ when $f$ has polynomial growth in a first step (Wasserstein convergence) and when $f$ has exponential growth in a second step (resp. when $f$ has polynomial growth). Notice that for the Wasserstein convergence using the Milstein scheme, the simulation of Levy area is not necessary and then the approach we develop can also be applied to the antithetic Milstein scheme presented in \cite{Giles_Szpruch_2014}. \\
  Concerning the Euler scheme of a diffusion process with censored jump we establish the convergence of the empirical measures for the Wasserstein distance.\\

We begin by recalling the abstract results from \cite{Pages_Rey_2017} and then we focus on the specific applications.

%
%

\section{Convergence to invariant distributions - A general approach}
\label{section:convergence_inv_distrib_gnl}

In this section, we show that the empirical measures defined in the same way as in (\ref{eq:def_weight_emp_meas_intro}) and built from an approximation $(\overline{X}^{\gamma}_{\Gamma_n})_{n\in \mathbb{N}}$ of a Feller process $(X_t)_{t \geqslant 0}$ (which are not explicitly specified), where the step sequence $(\gamma_n)_{n \in \mathbb{N}^{\ast}} \underset{n \to \infty}{\to}0$, $a.s.$ weakly converges the set $\mathcal{V}$, of the invariant distributions of $(X_t)_{t \geqslant 0}$. This results are proved in \cite{Pages_Rey_2017} for generic approximation $(\overline{X}^{\gamma}_{\Gamma_n})_{n\in \mathbb{N}}$ and Feller processes $(X_t)_{t \geqslant 0}$. We will then apply those results to the case of the Milstein scheme of stochastic Brownian diffusion and also to the case of the Euler scheme of a diffusion process with censored jump.

To this end, we will provide as weak as possible mean reverting assumptions on the pseudo generator of $(\overline{X}^{\gamma}_{\Gamma_n})_{n\in \mathbb{N}}$ on the one hand and appropriate rate conditions on the step sequence $(\gamma_n)_{n \in \mathbb{N}^{\ast}}$ on the other hand.

\subsection{Presentation of the abstract framework}

\subsubsection{Notations}
Let $(E,\vert . \vert)$ be a locally compact separable metric space, we denote $\mathcal{C}(E)$ the set of continuous functions on $E$ and $\mathcal{C}_0(E)$ the set of continuous functions that vanish a infinity. We equip this space with the sup norm $\Vert f \Vert_{\infty}=\sup_{x \in E} \vert f(x) \vert$ so that $(\mathcal{C}_0(E),\Vert . \Vert_{\infty})$ is a Banach space. We will denote $\mathcal{B}(E)$ the $\sigma$-algebra of Borel subsets of $E$ and $\mathcal{P}(E)$ the family of Borel probability measures on $E$. We will denote by $\mathcal{K}_E$ the set of compact subsets of $E$.\\
Finally, for every Borel function $f:E \to \mathbb{R}$, and every $l_{\infty} \in \mathbb{R} \cup \{-\infty,+\infty\}$, $\lim\limits_{x\to \infty}f(x)= l_{\infty}$ if and only if for every $\epsilon >0$, there exists a compact $K_{\epsilon} \subset \mathcal{K}_E$ such that $\sup_{x \in K_{\epsilon}^c} \vert f(x)- l_{\infty} \vert < \epsilon$ if $l_{\infty} \in \mathbb{R} $, $\inf_{x \in K_{\epsilon}^c}  f(x)  > 1/\epsilon$ if $l_{\infty} =+\infty$, and $\sup\limits_{x \in K_{\epsilon}^c}  f(x)  < -1/\epsilon$ if $l_{\infty} =-\infty$ with $K_{\epsilon}^c=E \setminus K_{\epsilon}.$  \\

%
%


\subsubsection{Construction of the random measures}
Let $(\Omega,\mathcal{G}, \mathbb{P})$ be a probability space. We consider a Feller process $(X_t)_{t \geqslant 0}$ (see \cite{Feller_1952} for details) on $(\Omega,\mathcal{G}, \mathbb{P})$ taking values in a locally compact and separable metric space $E$. We denote by $(P_t)_{t \geqslant 0}$ the Feller semigroup (see \cite{Pazy_1992}) of this process. We recall that $(P_t)_{t \geqslant 0}$  is a family of linear operators from $\mathcal{C}_0(E)$ to itself such that $P_0 f=f$, $P_{t+s}f=P_tP_sf$, $t,s \geqslant 0$ (semigroup property) and $\lim\limits_{t \to 0} \Vert P_tf-f \Vert_{\infty}=0$ (Feller property). Using this semigroup, we can introduce the infinitesimal generator of $(X_t)_{t \geqslant 0}$ as a linear operator $A$ defined on a subspace $\DomA$ of $\mathcal{C}_0(E)$, satisfying: For every $f \in \DomA$,
\begin{align*}
Af= \lim\limits_{t \to 0} \frac{P_tf-f}{t}
\end{align*}
exists for the $\Vert . \Vert_{\infty}$-norm. The operator $A: \DomA \to \mathcal{C}_0(E)$ is thus well defined and $\DomA$ is called the domain of $A$. From the Echeverria Weiss theorem (see \cite{Ethier_Kurtz_1986} Theorem 9.17), the set of invariant distributions for $(X_t)_{t \geqslant 0}$ can be characterized in the following way: 
\begin{align*}
\mathcal{V}=\{ \nu \in \mathcal{P}(E), \forall t \geqslant 0, P_t \nu=  \nu \}=\{ \nu \in \mathcal{P}(E), \forall f \in \DomA, \nu(Af)=0 \}.
\end{align*}
The starting point of our reasoning is thus to consider an approximation of $A$. First, we introduce the family of transition kernels $(\mathscr{Q}_{\gamma})_{\gamma >0}$ from $\mathcal{C}_0(E)$ to itself. Now, let us define the family of linear operators $\widetilde{A} : = (\widetilde{A}_{\gamma} )_{ \gamma >0}$ from $\mathcal{C}_0(E)$ into itself, as follows
 \begin{equation}
 \label{eq:def_A_tilde}
 \forall f \in \mathcal{C}_0(E), \quad \gamma>0,  \qquad \widetilde{A}_{\gamma}f=\frac{\mathscr{Q}_{\gamma}f -f}{\gamma}.
 \end{equation}
The family $\widetilde{A}$ is usually called the pseudo-generator of the transition kernels $(\mathscr{Q}_{\gamma})_{\gamma>0}$ and is an approximation of $A$ as $\gamma$ tends to zero. From a practical viewpoint, the main interest of our approach is that we can consider that there exists $\overline{\gamma}>0$ such that for every $x \in E$ and every $\gamma \in [0, \overline{\gamma}]$, $\mathscr{Q}_{\gamma} (x,dy)$ is simulable at a reasonable computational cost. We use the family $(\mathscr{Q}_{\gamma})_{\gamma>0}$, to build $(\overline{X}_{\Gamma_n})_{n \in \mathbb{N}}$ (this notation replaces $(\overline{X}^{\gamma}_{\Gamma_n})_{n \in \mathbb{N}}$ from now for clarity in the writing) as the non-homogeneous Markov approximation of the Feller process $(X_t)_{t \geqslant 0}$. It is defined on the time grid $\{ \Gamma_n=\sum\limits_{k=1}^n \gamma_k, n \in \mathbb{N} \} $ with the sequence $\gamma:=(\gamma_n)_{n\in \mathbb{N}^{\ast} }$ of time step satisfying
\begin{align*}
\forall n \in \mathbb{N}^{\ast}, \quad 0 < \gamma_n  \leqslant \overline{\gamma}:= \sup_{n \in \mathbb{N}^{\ast}} \gamma_n< + \infty, \quad \lim\limits_{n \to + \infty} \gamma_n = 0 \quad \mbox{ and } \quad \lim\limits_{n \to + \infty}\Gamma_n=+ \infty.
\end{align*}
Its transition probability distributions are given by $\mathscr{Q}_{\gamma_n} (x,dy),n\in \mathbb{N}^{\ast}$, $x\in E$, $i.e. :$
\begin{align*}
 \mathbb{P}(\overline{X}_{\Gamma_{n+1}} \in dy \vert \overline{X}_{\Gamma_n})= \mathscr{Q}_{\gamma_{n+1}}(\overline{X}_{\Gamma_n},dy), \quad n \in \mathbb{N}.
\end{align*}
%
%
We can canonically extend $(\overline{X}_{\Gamma_n})_{n \in \mathbb{N}}$ into a \textit{càdlàg} process by setting $\overline{X}(t,\omega) =\overline{X}_{\Gamma_{n(t)}}(\omega)$ with $n(t)= \inf \{n \in \mathbb{N}, \Gamma_{n+1}>t \}$. Then $(\overline{X}_{\Gamma_n})_{n \in \mathbb{N}}$ is a simulable (as soon as $\overline{X}_0$ is) non-homogeneous Markov chain with transitions 
\begin{align*}
\forall m  \leqslant n, \qquad \overline{P}_{\Gamma_m,\Gamma_n}(x,dy)= \mathscr{Q}_{\gamma_{m+1}} \circ \cdots \circ \mathscr{Q}_{\gamma_n}(x,dy),
\end{align*}
and law
\begin{align*}
\mathcal{L}(\overline{X}_{\Gamma_n} \vert \overline{X}_{0}=x)=\overline{P}_{\Gamma_n}(x,dy)= \mathscr{Q}_{\gamma_1} \circ \cdots \circ \mathscr{Q}_{\gamma_n}(x,dy).
\end{align*}
We use $(\overline{X}_{\Gamma_n})_{n \in \mathbb{N}}$ to design a Langevin Monte Carlo algorithm. Notice that this approach is generic since the approximation transition kernels $(\mathscr{Q}_{\gamma})_{\gamma>0}$ are not explicitly specified and then, it can be used in many different configurations including among others, weak numerical schemes or exact simulation $i.e.$ $(\overline{X}_{\Gamma_n})_{n \in \mathbb{N}}=(X_{\Gamma_n})_{n \in \mathbb{N}}$. In particular, using high weak order schemes for $(X_t)_{t \geqslant 0}$ may lead to higher rates of convergence for the empirical measures. The approach we use to build the empirical measures is quite more general than in (\ref{eq:def_weight_emp_meas_intro}) as we consider some general weights which are not necessarily equal to the time steps. We define this weight sequence. Let $\eta:=(\eta_n)_{n \in \mathbb{N}^{\ast}}$ be such that
\begin{equation*}
\forall n \in \mathbb{N}^{\ast}, \quad \eta_n \geqslant 0, \quad \lim\limits_{n \to + \infty} H_n=+ \infty, \qquad \mbox{with} \qquad H_n= \sum\limits_{k=1}^n \eta_k.
\end{equation*}
Now we present our algorithm introduced in \cite{Pages_Rey_2017} and adapted from the one introduced in \cite{Lamberton_Pages_2002} designed with a Euler scheme with decreasing step $(\overline{X}_{\Gamma_n})_{n \in \mathbb{N}}$ of a Brownian diffusion process $(X_t)_{t \geqslant 0}$. For $x \in E$, let $\delta_x$ denote the Dirac mass at point $x$. For every $n \in \mathbb{N}^{\ast}$, we define the random weighted empirical random measures as follows
 \begin{equation}
 \label{eq:def_weight_emp_meas}
 \nu^{\eta}_n(dx)=\frac{1}{H_n} \sum_{k=1}^n \eta_k \delta_{\overline{X}_{\Gamma_{k-1}}}(dx).
 \end{equation}

This paper is dedicated to show that, when $(\overline{X}_{\Gamma_n})_{n \in \mathbb{N}}$ is the Milstein scheme of a Brownian diffusion process (respectively the Euler scheme of a censored jump diffusion) $(X_t)_{t \geqslant 0}$, then $a.s.$ every weak limiting distribution of $(\nu^{\eta}_n)_{n \in \mathbb{N}^{\ast}}$ belongs to $\mathcal{V}$. In particular when the invariant measure of $(X_t)_{t \geqslant 0}$ is unique, $i.e. \; \mathcal{V}=\{\nu\}$, we show that $\lim\limits_{n \to \infty} \nu^{\eta}_n f =\nu f  \; \mathbb{P}-a.s.$, for a generic class of continuous test functions $f$. The approach developed in \cite{Pages_Rey_2017} consists in two steps. First, we give a tightness property to obtain existence of a weak limiting distribution for $(\nu^{\eta}_n )_{n \in \mathbb{N}^{\ast}}$. Then, in a second step, we identify this limiting distribution with an invariant distribution of the Feller process $(X_t)_{t \geqslant 0}$. 

\subsubsection{Assumptions on the random measures}

In this part, we present the necessary assumptions on the pseudo-generator $\widetilde{A}  = (\widetilde{A}_{\gamma} )_{ \gamma >0}$ in order to prove the convergence of the empirical measures $(\nu^{\eta}_n)_{n \in \mathbb{N}^{\ast}}$.
\paragraph{Mean reverting recursive control \\}
 In our framework, we introduce a well suited assumption, referred to as the mean reverting recursive control of the pseudo-generator $\widetilde{A}$, that leads to a tightness property on $(\nu^{\eta}_n)_{n \in \mathbb{N}^{\ast}}$ from which follows the existence (in weak sense) of a limiting distribution for $(\nu^{\eta}_n)_{n \in \mathbb{N}^{\ast}}$. A supplementary interest of our approach is that it is designed to obtain the $a.s.$ convergence of $(\nu^{\eta}_n(f))_{n \in \mathbb{N}^{\ast}}$ for a generic class of continuous test functions $f$ which is larger then $\mathcal{C}_b(E)$. To do so, we introduce a Lyapunov function $V$ related to $(\overline{X}_{\Gamma_n})_{n \in \mathbb{N}}$. Assume that $V$ a Borel function such that
\begin{equation}
\label{hyp:Lyapunov}
\mbox{L}_{V}  \quad \equiv \qquad   V :(E \to [v_{\ast},+\infty), v_{\ast}> 0 \quad  \mbox{ and } \quad \lim\limits_{ x  \to \infty} V(x)=+ \infty. \\
\end{equation}
We now relate $V$ to $(\overline{X}_{\Gamma_n})_{n \in \mathbb{N}}$ introducing its mean reversion Lyapunov property. Let $\psi, \phi : [v_{\ast},\infty) \to (0,+\infty) $ some Borel functions such that $\widetilde{A}_{\gamma}\psi \circ V$ exists for every $\gamma \in (0, \overline{\gamma}]$. Let $\alpha>0$ and $\beta \in \mathbb{R}$. We assume  
%
%
 \begin{eqnarray}
\label{hyp:incr_sg_Lyapunov}
&\mathcal{RC}_{Q,V} (\psi,\phi,\alpha,\beta) \quad \equiv  \nonumber\\
& \quad  \left\{
    \begin{array}{l}
  (i) \; \quad \exists n_0 \in \mathbb{N}^{\ast},   \forall n   \geqslant n_0, x \in E,  \quad\widetilde{A}_{\gamma_n}\psi \circ V(x)\leqslant  \frac{\psi \circ V(x)}{V(x)}(\beta - \alpha \phi \circ V(x)). \\
  (ii) \quad     \liminf\limits_{y \to + \infty} \phi(y)> \beta / \alpha .
    \end{array}
\right.
\end{eqnarray}
$\mathcal{RC}_{Q,V} (\psi,\phi,\alpha,\beta)$ is called the weakly mean reverting recursive control assumption of the pseudo generator for Lyapunov function $V$. \\

Lyapunov functions are usually used to show the existence and sometimes the uniqueness of the invariant measure of Feller processes. In particular, when $p=1$, the condition $\mathcal{RC}_{Q,V}(I_d,I_d,\alpha,\beta) (i)$ appears as the discrete version of $AV \leqslant \beta-\alpha V$, which is used in that interest for instance in \cite{Hasminskii_1980}, \cite{Ethier_Kurtz_1986}, \cite{Basak_Hu_Wei_1997} or\cite{Pages_2001_ergo}. \\

 The condition $\mathcal{RC}_{Q,V}(V^p,I_d,\alpha,\beta) (i)$, $p \geqslant 1$, is studied in the seminal paper \cite{Lamberton_Pages_2002} (and then in \cite{Lamberton_Pages_2003} with $\phi(y)=y^a,a\in (0,1]$,$y \in [v_{\ast},\infty)$) concerning the Wasserstein convergence of the weighted empirical measures of the Euler scheme with decreasing step of a Brownian diffusions. When $\phi=I_d$, the Euler scheme is also studied for markov switching Brownian diffusions in \cite{Mei_Yin_2015}. Notice also that $\mathcal{RC}_{Q,V}(I_d,\phi,\alpha,\beta) (i)$ with $\phi$ concave appears in \cite{DFMS_2004} to prove sub-geometrical ergodicity of Markov chains. In \cite{Lemaire_thesis_2005}, a similar hypothesis to $\mathcal{RC}_{Q,V}(I_d,\phi,\alpha,\beta) (i)$ (with $\phi$ not necessarily concave and $\widetilde{A}_{\gamma_n}$ replaced by $A$), is also used  to study the Wasserstein but also exponential convergence of the weighted empirical measures (\ref{eq:def_weight_emp_meas}) for the Euler scheme of a Brownian diffusions. Finally in \cite{Panloup_2008} similar properties as $\mathcal{RC}_{Q,V}(V^p,V^a,\alpha,\beta) (i)$, $a\in (0,1]$, $p >0$, are developped in the study of the Euler scheme for Levy processes.\\
 
On the one hand, the function $\phi$ controls the mean reverting property. In particular, we call strongly mean reverting property when $\phi=I_d$ and weakly mean reverting property when $\lim\limits_{y \to +\infty} \phi(y)/y=0$, for instance $\phi(y)=y^a$, $a \in (0,1)$ for every $y \in [v_{\ast},\infty)$. On the other hand, the function $\psi$ is closely related to the identification of the set of test functions $f$ for which we have $\lim\limits_{n \to +\infty} \nu^{\eta}_n(f)=\nu(f) \; a.s.$, when $\nu$ is the unique invariant distribution of the underlying Feller process.\\
 
  To this end, for $s \geqslant 1$, which is related to step weight assumption, we introduce the sets of test functions for which we will show the $a.s.$ convergence of the weighted empirical measures (\ref{eq:def_weight_emp_meas}):
\begin{align}
\label{def:espace_test_function_cv}
\mathcal{C}_{\tilde{V}_{\psi,\phi,s}}(E)=& \big\{ f \in \mathcal{C}(E), \vert f(x) \vert=\underset{  x \to \infty}{o}( \tilde{V}_{\psi,\phi,s} (x) ) \big\}, \\
&\mbox{with} \quad \tilde{V}_{\psi,\phi,s}:E \to \mathbb{R}_+, x \mapsto\tilde{V}_{\psi,\phi,s}(x): =\frac{\phi\circ V(x)\psi \circ V(x)^{1/s}}{V(x)}. \nonumber
\end{align}
Notice that our approach benefits from providing generic results because we consider general Feller processes and approximations but also because the functions $\phi$ and $\psi$ are not specified explicitly.
\paragraph{Infinitesimal generator approximation \\}
This section presents the assumption that enables to characterize the limiting distributions of the $a.s.$ tight  sequence $(\nu^{\eta}_n(dx, \omega))_{n \in \mathbb{N}^{\ast}}$.
 We aim to estimate the distance between $\mathcal{V}$ and $\nu^{\eta}_n$ (see (\ref{eq:def_weight_emp_meas})) for $n$ large enough. We thus introduce an hypothesis concerning the distance between $(\widetilde{A}_{\gamma} )_{ \gamma>0}$, the pseudo-generator of $(\mathscr{Q}_{\gamma} )_{ \gamma>0}$, and $A$, the infinitesimal generator of $(P_t)_{t \geqslant 0}$. We assume that there exists $\DomA_0 \subset \DomA$ with $\DomA_0 $ dense in $\mathcal{C}_0(E)$ such that:
%
%
 \begin{align}
\mathcal{E}(\widetilde{A},A,\DomA_0) \quad \equiv \qquad   \forall \gamma \in (0, \overline{\gamma}], \forall f \in \DomA_0, \forall x \in E, \quad  \vert \widetilde{A}_{\gamma} f(x) -Af(x)\vert \leqslant  \Lambda_f(x,\gamma),
 \label{hyp:erreur_tems_cours_fonction_test_reg}
\end{align}
where $\Lambda_{f}:E \times \mathbb{R}_+ \to \mathbb{R}_+$ can be represented in the following way: Let $(\tilde{\Omega},\tilde{\mathcal{G}},\tilde{\mathbb{P}})$ be a probability space. Let $g :E\to \mathbb{R}_+^{q}$, $q \in \mathbb{N}$, be a locally bounded Borel measurable function and let $\tilde{\Lambda}_{f}:(E\times \mathbb{R}_+ \times \tilde{\Omega}, \mathcal{B}(E) \otimes \mathcal{B}(\mathbb{R}_+) \otimes \tilde{\mathcal{G}}) \to \mathbb{R}_+^{q}$ be a measurable function such that  
\begin{align*}
\sup_{i \in \{1,\ldots,q\} } \tilde{\mathbb{E}}[ \sup_{x \in E} \sup_{\gamma \in (0,\overline{\gamma}] } \tilde{\Lambda}_{f,i}(x,\gamma, \tilde{\omega}) ]< + \infty
\end{align*}
and that we have the following representation
%
%
%
\begin{align*}
\forall x \in E , \forall \gamma \in (0,\overline{\gamma}], \qquad \Lambda_f(x,\gamma)= \langle g (  x ) ,\tilde{\mathbb{E}} [\tilde{\Lambda}_{f}(x,\gamma, \tilde{\omega})]  \rangle_{\mathbb{R}^q}
\end{align*}
%
%
%
%
%
%
%
Moreover, we assume that for every $i \in \{1,\ldots,q\}$, $\sup_{n \in \mathbb{N}^{\ast}} \nu_n^{\eta}( g_i ,\omega )< + \infty, \; \mathbb{P}(d\omega)-a.s.$, and that $\tilde{\Lambda}_{f,i}$ satisfies one of the following two properties:\\
There exists a measurable function $\underline{\gamma}:(\tilde{\Omega}, \tilde{\mathcal{G}}) \to((0, \overline{\gamma}],\mathcal{B}((0, \overline{\gamma}]) )$ such that:
\begin{enumerate}[label=\textbf{\Roman*)}]
\item \label{hyp:erreur_tems_cours_fonction_test_reg_Lambda_representation_1} 
\inlineequation[hyp:erreur_temps_cours_fonction_test_reg_Lambda_representation_2_1]{
 \tilde{\mathbb{P}}(d\tilde{\omega})-a.s \qquad \left\{
    \begin{array}{l}
  (i)   \quad \; \;  \forall K \in \mathcal{K}_E ,   \quad  \lim\limits_{\gamma \to 0} \sup\limits_{x \in K} \tilde{\Lambda}_{f,i}(x, \gamma,\tilde{\omega})=0, \\
  (ii) \quad     \lim\limits_{x \to \infty}  \sup\limits_{\gamma \in (0,\underline{\gamma}(\tilde{\omega}) ]} \tilde{\Lambda}_{f,i}(x, \gamma,\tilde{\omega})=0,  \qquad \qquad \qquad \qquad \qquad \qquad \qquad \qquad \qquad
    \end{array}
\right.
}
\item \label{hyp:erreur_temps_cours_fonction_test_reg_Lambda_representation_2}\inlineequation[hyp:erreur_temps_cours_fonction_test_reg_Lambda_representation_2_2]{
 \tilde{\mathbb{P}}(d\tilde{\omega})-a.s \qquad  \lim\limits_{\gamma \to 0} \sup\limits_{x \in E} \tilde{\Lambda}_{f,i}(x, \gamma,\tilde{\omega}) g_i(x) =0  . \qquad \qquad \qquad \qquad \qquad \qquad \qquad \qquad \qquad \qquad \qquad \; \;}
\end{enumerate}
\begin{remark}
\label{rmk:representation_mesure_infinie}
Let $(F,\mathcal{F},\lambda)$ be a measurable space. Using the exact same approach, the results we obtain hold when we replace the probability space $(\tilde{\Omega},\tilde{\mathcal{G}},\tilde{\mathbb{P}})$ by the product measurable space $(\tilde{\Omega} \times F,\tilde{\mathcal{G}} \otimes \mathcal{F},\tilde{\mathbb{P}}\otimes \lambda)$ in the representation of $\Lambda_f$ and in (\ref{hyp:erreur_temps_cours_fonction_test_reg_Lambda_representation_2_1}) and (\ref{hyp:erreur_temps_cours_fonction_test_reg_Lambda_representation_2_2}) but we restrict to that case for sake of clarity in the writing. This observation can be useful when we study jump process where $\lambda$ can stand for the jump intensity.
\end{remark}
This representation assumption benefits from the fact that the transition functions $(\mathscr{Q}_{\gamma} (x,dy))_{\gamma \in (0, \overline{\gamma}]}$, $x \in E$, can be represented using distributions of random variables which are involved in the computation of $(\overline{X}_{\Gamma_n})_{n \in \mathbb{N}^{\ast}}$. In particular, this approach is well adapted to stochastic approximations associated to a time grid such as numerical schemes for stochastic differential equations with a Brownian part or/and a jump part. 
\paragraph{Growth control and Step Weight assumptions \\}
We conclude with hypothesis concerning the control of the martingale part of one step of our approximation. Let $\rho \in [1,2]$ and let $\epsilon_{\mathcal{I}} : \mathbb{R}_+ \to \mathbb{R}_+$ an increasing function. For $F \subset \{f,f:(E, \mathcal{B}(E)) \to (\mathbb{R}, \mathcal{B}(\mathbb{R}) ) \}$ and $g:E \to \mathbb{R}_+$ a Borel function, we assume that, for every $n \in \mathbb{N}$,
 \begin{align}
\label{hyp:incr_X_Lyapunov}
\mathcal{GC}_{Q}(F,g,\rho,\epsilon_{\mathcal{I}}) \; \equiv \quad  \mathbb{P}-a.s.& \quad  \forall f \in F, \\  \nonumber
&   \mathbb{E}[ \vert  f  ( \overline{X}_{\Gamma_{n+1}})- \mathscr{Q}_{\gamma_{n+1}}f(\overline{X}_{\Gamma_n}) \vert^{\rho}\vert \overline{X}_{\Gamma_n} ]  \leqslant   C_f \epsilon_{\mathcal{I}}(\gamma_{n+1})  g (\overline{X}_{\Gamma_n}) ,
\end{align}
with $C_f>0$ a finite constant which may depend on $f$. 
\begin{remark}\label{rmrk:Accroiss_mes} The reader may notice that $\mathcal{GC}_{Q}(F,g,\rho,\epsilon_{\mathcal{I}}) $ holds as soon as (\ref{hyp:incr_X_Lyapunov}) is satisfied with $\mathscr{Q}_{\gamma_{n+1}}f(\overline{X}_{\Gamma_n})$, $n  \in \mathbb{N}^{\ast} $, replaced by a $\mathcal{F}^{\overline{X}}_n:=\sigma(\overline{X}_{\Gamma_k},k \leqslant n)$- progressively  measurable process $(\mathfrak{X}_n)_{n \in \mathbb{N}^{\ast}}$ since we have $\mathscr{Q}_{\gamma_{n+1}}f(\overline{X}_{\Gamma_n}) =\mathbb{E}[f(\overline{X}_{\Gamma_{n+1}}) \vert \overline{X}_{\Gamma_n}]$ and $\mathbb{E}[ \vert  f  ( \overline{X}_{\Gamma_{n+1}})- \mathscr{Q}_{\gamma_{n+1}}f(\overline{X}_{\Gamma_n}) \vert^{\rho}\vert \overline{X}_{\Gamma_n} ]  \leqslant 2^{\rho} \mathbb{E}[ \vert  f  ( \overline{X}_{\Gamma_{n+1}})- \mathfrak{X}_n \vert^{\rho}\vert \overline{X}_{\Gamma_n} ]$ for every $\mathfrak{X}_n \in \LL^2(\mathcal{F}^{\overline{X}}_n)$.
\end{remark}

We will combine this assumption with the following step weight related ones:
\begin{equation}
 \label{hyp:step_weight_I_gen_chow}
\mathcal{S}\mathcal{W}_{\mathcal{I}, \gamma,\eta}(g, \rho , \epsilon_{\mathcal{I}}) \quad \equiv \qquad   \mathbb{P}-a.s. \quad   \sum_{n=1}^{\infty} \Big \vert \frac{\eta_n }{H_n \gamma_n } \Big \vert^{\rho} \epsilon_{\mathcal{I}}(\gamma_n)  g(\overline{X}_{\Gamma_n})  < + \infty,
 \end{equation}
and
 \begin{equation}
 \label{hyp:step_weight_I_gen_tens}
\mathcal{S}\mathcal{W}_{\mathcal{II},\gamma,\eta}(F) \quad \equiv \qquad \mathbb{P}-a.s. \quad \forall f \in F, \quad   \sum_{n=0}^{\infty} \frac{(\eta_{n+1} /\gamma_{n+1}-\eta_n /\gamma_n)_+ }{H_{n+1} }  \vert f(\overline{X}_{\Gamma_n}) \vert < + \infty,
 \end{equation}
with the convention $\eta_0/\gamma_0=1$. Notice that this last assumption holds as soon as the sequence $(\eta_n / \gamma_n)_{n \in \mathbb{N}^{\ast} }$ is non-increasing. \\

\noindent At this point we can focus now on the main results concerning this general approach.
\subsection{Convergence}

We give abstract results which are proved in \cite{Pages_Rey_2017}.

\subsubsection{Almost sure tightness}
From the recursive control assumption, the following Theorem establish the $a.s.$ tightness of the sequence $(\nu^{\eta}_n)_{n \in \mathbb{N}^{\ast}}$ and also provides a uniform control of $(\nu^{\eta}_n)_{n \in \mathbb{N}^{\ast}}$ on a generic class of test functions.

\begin{mytheo}
\label{th:tightness}
Let $s \geqslant 1$, $\rho \in[1,2]$, $v_{\ast}>0$, and let us consider the Borel functions $V :E \to [v_{\ast},\infty)$, $g:E \to \mathbb{R}_+$, $\psi : [v_{\ast},\infty) \to \mathbb{R}_+ $ and $\epsilon_{\mathcal{I}} : \mathbb{R}_+ \to \mathbb{R}_+$ an increasing function. We have the following properties:
\begin{enumerate}[label=\textbf{\Alph*.}]
\item\label{th:tightness_point_A}  Assume that $\widetilde{A}_{\gamma_n}(\psi \circ V)^{1/s}$ exists for every $n \in \mathbb{N}^{\ast}$, and that $\mathcal{GC}_{Q}((\psi \circ V)^{1/s},g ,\rho ,\epsilon_{\mathcal{I}}) $ (see (\ref{hyp:incr_X_Lyapunov})), $\mathcal{S}\mathcal{W}_{\mathcal{I}, \gamma,\eta}( g,\rho,\epsilon_{\mathcal{I}}) $ (see (\ref{hyp:step_weight_I_gen_chow})) and $\mathcal{S}\mathcal{W}_{\mathcal{II},\gamma,\eta}((\psi \circ V)^{1/s}) $ (see (\ref{hyp:step_weight_I_gen_tens}) hold. Then
\begin{equation}
\label{eq:invariance_mes_emp_Lyap_gen}
\mathbb{P} \mbox{-a.s.} \quad  \sup_{n\in \mathbb{N}^{\ast} } - \frac{1}{H_n} \sum_{k=1}^n \eta_k \widetilde{A}_{\gamma_k} (\psi \circ V)^{1/s}  (\overline{X}_{\Gamma_{k-1}})< + \infty.
\end{equation}
\item\label{th:tightness_point_B}
Let $\alpha>0$ and $\beta \in \mathbb{R}$. Let $\phi:[v_{\ast},\infty )\to \mathbb{R}_+^{\ast}$ be a continuous function such that $C_{\phi}:= \sup_{y \in [v_{\ast},\infty )}\phi(y)/y< \infty$. Assume that (\ref{eq:invariance_mes_emp_Lyap_gen}) holds and
\begin{enumerate}[label=\textbf{\roman*.}]
\item $\mathcal{RC}_{Q,V}(\psi,\phi,\alpha,\beta)$ (see (\ref{hyp:incr_sg_Lyapunov})) holds.
\item $\mbox{L}_{V}$ (see (\ref{hyp:Lyapunov})) holds and $\lim\limits_{y \to +\infty}  \frac{\phi(y) \psi (y)^{1/s}}{y}=+\infty$.
\end{enumerate}
Then,
 \begin{equation*}
\mathbb{P} \mbox{-a.s.} \quad \sup_{n \in \mathbb{N}^{\ast}} \nu_n^{\eta}( \tilde{V}_{\psi,\phi,s} ) < + \infty .
\end{equation*}
with $\tilde{V}_{\psi,\phi,s}$ defined in (\ref{def:espace_test_function_cv}). Therefore, the sequence $(\nu^{\eta}_n)_{n \in \mathbb{N}^{\ast}}$ is $\mathbb{P}-a.s.$ tight. 
 \end{enumerate}
\end{mytheo}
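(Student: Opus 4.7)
My plan is to handle the two parts sequentially. Part A is an Abel-summation plus martingale-convergence argument; Part B combines Jensen's inequality on the recursive control with a Markov-inequality tightness argument. Both rely crucially on the non-negativity of $(\psi\circ V)^{1/s}$ and on the specific form of the step-weight assumptions.

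For Part A I would start from $\widetilde A_{\gamma_k}f = (\mathscr Q_{\gamma_k}f - f)/\gamma_k$ with $f := (\psi\circ V)^{1/s}$ and rewrite
\[
-\sum_{k=1}^n \eta_k \widetilde A_{\gamma_k}f(\overline X_{\Gamma_{k-1}}) = T_n + M_n,
\]
where $T_n := \sum_{k=1}^n (\eta_k/\gamma_k)(f(\overline X_{\Gamma_{k-1}})-f(\overline X_{\Gamma_k}))$ telescopes and $M_n := \sum_{k=1}^n (\eta_k/\gamma_k)(f(\overline X_{\Gamma_k}) - \mathbb E[f(\overline X_{\Gamma_k})\mid \overline X_{\Gamma_{k-1}}])$ is an $\mathcal F_n^{\overline X}$-martingale. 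Applying Abel summation to $T_n$ produces a boundary term plus $\sum_{k=1}^{n-1}(\eta_{k+1}/\gamma_{k+1}-\eta_k/\gamma_k)\,f(\overline X_{\Gamma_k})$; since $f\ge 0$ I would discard the non-positive endpoint $-(\eta_n/\gamma_n)\,f(\overline X_{\Gamma_n})$ and bound the signed differences by their positive parts, which together with $1/H_n \le 1/H_{k+1}$ brings $T_n/H_n$ under the control of $\mathcal{SW}_{II,\gamma,\eta}((\psi\circ V)^{1/s})$. For $M_n/H_n$ I would use $\mathcal{GC}_Q$, which bounds the conditional $\rho$-th moments of the increments by $C(\eta_n/\gamma_n)^\rho \epsilon_{\mathcal I}(\gamma_n)g(\overline X_{\Gamma_{n-1}})$, and invoke a Chow-type strong law for $L^\rho$-martingales whose summability hypothesis is precisely $\mathcal{SW}_{I,\gamma,\eta}$ (after a harmless one-step index shift between $\overline X_{\Gamma_n}$ and $\overline X_{\Gamma_{n-1}}$). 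This yields $M_n/H_n \to 0$ a.s., and combining with the $T_n$ bound gives \eqref{eq:invariance_mes_emp_Lyap_gen}.

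For Part B the recursive control bounds $\widetilde A_{\gamma_n}(\psi\circ V)$, not $\widetilde A_{\gamma_n}(\psi\circ V)^{1/s}$; I would bridge the gap by Jensen's inequality, since $x\mapsto x^{1/s}$ is concave for $s\ge 1$:
\[
\mathscr Q_{\gamma_n}(\psi\circ V)^{1/s}(x) \le \bigl(\mathscr Q_{\gamma_n}(\psi\circ V)(x)\bigr)^{1/s} \le (\psi\circ V(x))^{1/s}\Bigl(1+\frac{\gamma_n}{V(x)}(\beta-\alpha\phi\circ V(x))\Bigr)^{1/s}.
\]
The bracketed factor is non-negative (else the Markov transition would yield a negative value), and the elementary inequality $(1+u)^{1/s}\le 1+u/s$ for $u\ge -1$ then yields the one-step estimate
\[
\widetilde A_{\gamma_n}(\psi\circ V)^{1/s}(x) \le \frac{(\psi\circ V(x))^{1/s}}{s\,V(x)}\bigl(\beta-\alpha\phi\circ V(x)\bigr).
\]
Condition $\mathcal{RC}_{Q,V}(ii)$ provides $c>0$ and $y_0$ such that $\alpha\phi(y)-\beta\ge c\,\phi(y)$ for $y\ge y_0$; combined with $L_V$ (which makes $\{V\le y_0\}$ relatively compact) I then get $-\widetilde A_{\gamma_n}(\psi\circ V)^{1/s}(x)\ge (c/s)\tilde V_{\psi,\phi,s}(x) - C$ uniformly in $n\ge n_0$. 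Integrating against $\nu_n^\eta$ and plugging in \eqref{eq:invariance_mes_emp_Lyap_gen} gives $\sup_n \nu_n^\eta(\tilde V_{\psi,\phi,s}) < +\infty$ a.s. Finally, the assumption $\phi(y)\psi(y)^{1/s}/y\to +\infty$ together with $L_V$ forces $\tilde V_{\psi,\phi,s}(x)\to +\infty$ as $x$ escapes every compact of $E$, so its sublevel sets are relatively compact; a standard Markov-inequality argument then turns the uniform moment bound into a.s. tightness of $(\nu_n^\eta)_{n\in\mathbb N^\ast}$.

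The main technical obstacle is the martingale convergence step in Part A: since $\rho\in[1,2]$ and the weights $\eta_k/(H_k\gamma_k)$ need not be monotone, a classical $L^2$ argument is insufficient and one must rely on a Chow-type strong law for $L^\rho$-martingales (or, equivalently, apply Kronecker's lemma to the $\mathbb P$-a.s. convergent series $\sum_k \Delta M_k/H_k$). The Jensen transfer in Part B is elementary but indispensable; everything else is routine bookkeeping around the non-negativity of $f$ and the compactness of the sublevel sets of $V$.
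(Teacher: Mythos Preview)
The paper does not actually prove this theorem: immediately before stating it the authors write ``We give abstract results which are proved in \cite{Pages_Rey_2017}'', and Theorem~\ref{th:tightness} is quoted verbatim from that reference without proof. There is therefore no in-paper argument to compare against.

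That said, your sketch is correct and is precisely the strategy used in the cited reference (and, before it, in Lamberton--Pag\`es~2002/2003, Panloup~2008, Lemaire's thesis). For Part~\ref{th:tightness_point_A} the decomposition $-\sum\eta_k\widetilde A_{\gamma_k}f(\overline X_{\Gamma_{k-1}})=T_n+M_n$ into a telescoping/Abel part and a martingale part, controlling $T_n/H_n$ via $\mathcal{SW}_{II}$ and $M_n/H_n$ via Chow's strong law for $L^\rho$-martingale differences (whose summability hypothesis is exactly $\mathcal{SW}_I$), is the standard argument; the one-step index shift you flag is indeed harmless. For Part~\ref{th:tightness_point_B} the Jensen step $(\mathscr Q_{\gamma}\psi\circ V)^{1/s}\ge \mathscr Q_{\gamma}(\psi\circ V)^{1/s}$ followed by $(1+u)^{1/s}\le 1+u/s$ is exactly how one passes from $\mathcal{RC}_{Q,V}(\psi,\phi,\alpha,\beta)$ to a control on $\widetilde A_{\gamma_n}(\psi\circ V)^{1/s}$, and the remaining compactness/Markov-inequality step is routine. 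Nothing is missing.
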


\subsubsection{Identification of the limit}
In Theorem \ref{th:tightness}, we obtained the tightness of $(\nu_n^{\eta})_{n \in \mathbb{N}^{\ast}}$. It remains to show that every limiting point of this sequence is an invariant distribution of the Feller process with infinitesimal generator $A$. This is the interest of the following Theorem which relies on the infinitesimal generator approximation.
\begin{mytheo}
\label{th:identification_limit}
Let $\rho \in [1,2]$. We have the following properties:
\begin{enumerate}[label=\textbf{\Alph*.}]
\item\label{th:identification_limit_A} 
Let $\DomA_0 \subset \DomA$, with  $\DomA_0$ dense in $\mathcal{C}_0(E)$. We assume that $\widetilde{A}_{\gamma_n}f$ exists for every $f \in \DomA_0$ and every $n \in \mathbb{N}^{\ast}$. Also assume that there exists $g:E \to \mathbb{R}_+$ a Borel function and $\epsilon_{\mathcal{I}} : \mathbb{R}_+ \to \mathbb{R}_+$ an increasing function such that $\mathcal{GC}_{Q}(\DomA_0,g,\rho ,\epsilon_{\mathcal{I}}) $ (see (\ref{hyp:incr_X_Lyapunov})) and $\mathcal{S}\mathcal{W}_{\mathcal{I}, \gamma,\eta}( g,\rho,\epsilon_{\mathcal{I}}) $ (see (\ref{hyp:step_weight_I_gen_chow})) hold and that
 \begin{equation}
 \label{hyp:accroiss_sw_series_2}  \lim\limits_{n \to + \infty} \frac{1}{H_n} \sum_{k =1}^{n} \vert \eta_{k+1}/\gamma_{k+1}-\eta_k /\gamma_k \vert = 0.
\end{equation} Then
\begin{equation}
\label{hyp:identification_limit}
\mathbb{P} \mbox{-a.s.} \quad \forall f \in \DomA_0, \qquad   \lim\limits_{n \to + \infty}  \frac{1}{H_n} \sum_{k=1}^n \eta_k \widetilde{A}_{\gamma_k}f (\overline{X}_{\Gamma_{k-1}})=0.
\end{equation}
\item \label{th:identification_limit_B} 
We assume that (\ref{hyp:identification_limit}) and $\mathcal{E}(\widetilde{A},A,\DomA_0) $ (see (\ref{hyp:erreur_tems_cours_fonction_test_reg})) hold. Then
\begin{align*}
\mathbb{P} \mbox{-a.s.} \quad \forall f \in \DomA_0, \qquad     \lim\limits_{n \to + \infty} \nu_n^{\eta}( Af )=0.
\end{align*}
It follows that, $\mathbb{P}-a.s.$, every weak limiting distribution $\nu^{\eta}_{\infty}$ of the sequence $(\nu_n^{\eta})_{n \in \mathbb{N}^{\ast}}$ belongs to $\mathcal{V}$, the set of the invariant distributions of $(X_t)_{t \geqslant 0}$. Finally, if the hypothesis from Theorem \ref{th:tightness} point \ref{th:tightness_point_B} hold and $(X_t)_{t \geqslant 0}$ has a unique invariant distribution, $i.e.$ $\mathcal{V}=\{\nu\}$, then 
\begin{align}
\label{eq:test_function_gen_cv}
\mathbb{P} \mbox{-a.s.} \quad \forall f \in \mathcal{C}_{\tilde{V}_{\psi,\phi,s}}(E), \quad \lim\limits_{n \to + \infty} \nu_n^{\eta}(f)=\nu(f),
\end{align}
 with $\mathcal{C}_{\tilde{V}_{\psi,\phi,s}}(E)$ defined in (\ref{def:espace_test_function_cv}).
\end{enumerate}
\end{mytheo}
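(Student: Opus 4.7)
For point \ref{th:identification_limit_A}, the plan is to unfold $\widetilde{A}_{\gamma_k}f(\overline{X}_{\Gamma_{k-1}}) = \gamma_k^{-1}(\mathbb{E}[f(\overline{X}_{\Gamma_k})\vert \overline{X}_{\Gamma_{k-1}}]-f(\overline{X}_{\Gamma_{k-1}}))$ and split
\[
 \sum_{k=1}^n \eta_k \widetilde{A}_{\gamma_k}f(\overline{X}_{\Gamma_{k-1}})= \sum_{k=1}^n \frac{\eta_k}{\gamma_k}\bigl(f(\overline{X}_{\Gamma_k})-f(\overline{X}_{\Gamma_{k-1}})\bigr) - \sum_{k=1}^n \frac{\eta_k}{\gamma_k}\Delta M_k,
\]
with $\Delta M_k := f(\overline{X}_{\Gamma_k})-\mathbb{E}[f(\overline{X}_{\Gamma_k})\vert \mathcal{F}^{\overline{X}}_{k-1}]$ a martingale increment. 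Abel summation turns the first sum into the boundary term $(\eta_n/\gamma_n)f(\overline{X}_{\Gamma_n}) - (\eta_1/\gamma_1)f(\overline{X}_{\Gamma_0})$ minus an increment sum $\sum_{k=1}^{n-1} f(\overline{X}_{\Gamma_k})(\eta_{k+1}/\gamma_{k+1}-\eta_k/\gamma_k)$; since $f \in \DomA_0 \subset \mathcal{C}_0(E)$ is bounded and (\ref{hyp:accroiss_sw_series_2}) forces $\eta_n/\gamma_n = o(H_n)$ (via $\eta_n/\gamma_n \leqslant \eta_1/\gamma_1 + \sum_{k=1}^{n-1}\vert\Delta(\eta_k/\gamma_k)\vert$) and $H_n^{-1}\sum\vert\Delta(\eta_k/\gamma_k)\vert \to 0$, this telescoping piece vanishes after dividing by $H_n$. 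For the martingale part, $\mathcal{GC}_Q$ yields $\mathbb{E}[\vert\Delta M_k\vert^\rho \vert \mathcal{F}^{\overline{X}}_{k-1}] \leqslant C_f \epsilon_{\mathcal{I}}(\gamma_k) g(\overline{X}_{\Gamma_{k-1}})$, and $\mathcal{SW}_{\mathcal{I}}$ ensures $\mathbb{P}$-a.s.\ summability of the conditional $\rho$-increments of the martingale $N_n := \sum_{k=1}^n (\eta_k/(H_k\gamma_k))\Delta M_k$. Chow's martingale convergence theorem ($\rho \in [1,2]$) delivers $N_n \to N_\infty$ a.s., whence Kronecker's lemma applied with $H_n \uparrow \infty$ gives $H_n^{-1}\sum_{k=1}^n(\eta_k/\gamma_k)\Delta M_k \to 0$, proving (\ref{hyp:identification_limit}).

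For point \ref{th:identification_limit_B}, write
\[
 \nu_n^\eta(Af) = \frac{1}{H_n}\sum_{k=1}^n \eta_k \widetilde{A}_{\gamma_k}f(\overline{X}_{\Gamma_{k-1}}) + R_n^f, \qquad \vert R_n^f\vert \leqslant \frac{1}{H_n}\sum_{k=1}^n \eta_k \Lambda_f(\overline{X}_{\Gamma_{k-1}},\gamma_k),
\]
via $\mathcal{E}(\widetilde{A},A,\DomA_0)$. Point \ref{th:identification_limit_A} kills the first term. For $R_n^f$, I would apply Fubini with the representation $\Lambda_f(x,\gamma) = \langle g(x),\tilde{\mathbb{E}}[\tilde{\Lambda}_f(x,\gamma,\tilde{\omega})]\rangle$ to reduce, componentwise, to showing $\tilde{\mathbb{E}}[H_n^{-1}\sum_{k=1}^n \eta_k g_i(\overline{X}_{\Gamma_{k-1}}) \tilde{\Lambda}_{f,i}(\overline{X}_{\Gamma_{k-1}},\gamma_k,\tilde{\omega})] \to 0$ for each $i$. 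Under hypothesis \ref{hyp:erreur_temps_cours_fonction_test_reg_Lambda_representation_2} this is a direct Ces\`aro argument from $\gamma_k \to 0$, $\sup_x g_i(x)\tilde{\Lambda}_{f,i}(x,\gamma_k,\tilde{\omega}) \to 0$ $\tilde{\mathbb{P}}$-a.s., and $\sup_n \nu_n^\eta(g_i)<\infty$. Under hypothesis \ref{hyp:erreur_tems_cours_fonction_test_reg_Lambda_representation_1}, for $\tilde{\mathbb{P}}$-a.s.\ $\tilde{\omega}$ I would pick $K_\varepsilon$ so that $(ii)$ gives $\tilde{\Lambda}_{f,i} \leqslant \varepsilon$ on $K_\varepsilon^c$ for $\gamma \leqslant \underline{\gamma}(\tilde{\omega})$ and split the inner average: the $K_\varepsilon^c$-part is eventually $\leqslant \varepsilon \sup_n \nu_n^\eta(g_i)$, while the $K_\varepsilon$-part vanishes by $(i)$. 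Letting $\varepsilon \to 0$ gives $\mathbb{P}\otimes\tilde{\mathbb{P}}$-a.s.\ convergence of the integrand to $0$; the moment bound $\tilde{\mathbb{E}}[\sup \tilde{\Lambda}_{f,i}]<\infty$ allows dominated convergence to pass the limit through $\tilde{\mathbb{E}}$. Hence $\nu_n^\eta(Af) \to 0$ for every $f \in \DomA_0$, and density of $\DomA_0$ in $\mathcal{C}_0(E)$ combined with the Echeverria-Weiss theorem identifies each $\mathbb{P}$-a.s.\ weak limit of $(\nu_n^\eta)$ with an element of $\mathcal{V}$.

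When $\mathcal{V}=\{\nu\}$, Theorem \ref{th:tightness}.\ref{th:tightness_point_B} supplies a.s.\ tightness together with $C := \sup_n \nu_n^\eta(\tilde{V}_{\psi,\phi,s}) < \infty$; combined with the identification above, $\nu_n^\eta \Rightarrow \nu$ $\mathbb{P}$-a.s., so (\ref{eq:test_function_gen_cv}) holds on $\mathcal{C}_b(E)$. To extend to $f \in \mathcal{C}_{\tilde{V}_{\psi,\phi,s}}(E)$, I would truncate through $f_R := f\,\chi_R(V)$ for a continuous cutoff $\chi_R$ equal to $1$ on $[v_\ast,R]$ and vanishing on $[2R,\infty)$: since $f_R \in \mathcal{C}_b(E)$, $\nu_n^\eta(f_R)\to\nu(f_R)$ a.s., while $\vert f - f_R\vert \leqslant \varepsilon_R \tilde{V}_{\psi,\phi,s}$ with $\varepsilon_R \to 0$ (by the growth condition and $\mbox{L}_{V}$ making $\{V \leqslant R\}$ exhaust compacts) gives $\limsup_n \vert \nu_n^\eta(f)-\nu_n^\eta(f_R)\vert \leqslant \varepsilon_R C$; Fatou applied along the weakly convergent subsequence yields $\nu(\tilde{V}_{\psi,\phi,s}) \leqslant C$, so the symmetric tail bound on the limit side closes the argument as $R \to \infty$. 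The main obstacle lies in the identification step, where the joint $(\omega,\tilde{\omega})$-randomness of $R_n^f$ forces a delicate interchange of $\tilde{\mathbb{E}}$ and $\limsup_n$ — this is precisely what the moment bound built into the representation of $\Lambda_f$ is designed to enable; a secondary technicality is the restriction $\rho \in [1,2]$, which is exactly what makes Chow's theorem applicable in Part~\ref{th:identification_limit_A}.
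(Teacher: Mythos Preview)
The paper does not supply its own proof of this theorem: it is stated as one of the ``abstract results which are proved in \cite{Pages_Rey_2017}'' and no argument is given here. There is therefore nothing in the present paper to compare your attempt against line by line.

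That said, your proposal is the standard route and matches what one expects from the Lamberton--Pag\`es framework underlying \cite{Pages_Rey_2017}. For Part~\ref{th:identification_limit_A}, the decomposition into a telescoping sum handled by Abel summation plus a martingale-increment sum controlled by Chow's theorem and Kronecker's lemma is exactly the mechanism these hypotheses are designed for (note that $\mathcal{GC}_{Q}$ bounds $\mathbb{E}[\vert\Delta M_k\vert^\rho\mid\mathcal{F}^{\overline X}_{k-1}]$ by $C_f\,\epsilon_{\mathcal I}(\gamma_k)\,g(\overline X_{\Gamma_{k-1}})$, so there is a harmless one-step index shift relative to the statement of $\mathcal{SW}_{\mathcal I,\gamma,\eta}$ in (\ref{hyp:step_weight_I_gen_chow}); this is absorbed by the fact that the coefficients $(\eta_n/(H_n\gamma_n))^\rho\epsilon_{\mathcal I}(\gamma_n)$ are themselves $o(1)$ under (\ref{hyp:accroiss_sw_series_2}) and the step-weight assumptions). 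For Part~\ref{th:identification_limit_B}, your treatment of the remainder $R_n^f$ via the representation of $\Lambda_f$, the compact/complement split under \ref{hyp:erreur_tems_cours_fonction_test_reg_Lambda_representation_1}, the Ces\`aro argument under \ref{hyp:erreur_temps_cours_fonction_test_reg_Lambda_representation_2}, and the passage of the limit through $\tilde{\mathbb E}$ by dominated convergence are all correct and use precisely the integrability clause built into $\mathcal{E}(\widetilde A,A,\DomA_0)$. The final extension from $\mathcal{C}_b(E)$ to $\mathcal{C}_{\tilde V_{\psi,\phi,s}}(E)$ by truncation against $\tilde V_{\psi,\phi,s}$ and the uniform bound from Theorem~\ref{th:tightness}.\ref{th:tightness_point_B} is the standard uniform-integrability argument.
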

In the particular case where the function $\psi$ is polynomial, (\ref{eq:test_function_gen_cv}) also reads as the $a.s.$ convergence of the empirical measures for some $\mbox{L}^p$-Wasserstein distances, $p>0$, that we will study further in this paper for some numerical schemes of some diffusion processes. From the liberty granted by the choice of $\psi$ in this abstract framework, where only a recursive control with mean reverting is required, we will also propose an application for functions $\psi$ with exponential growth.

\subsection{About Growth control and Step Weight assumptions}
We present other useful abstract results from \cite{Pages_Rey_2017}. The following Lemma presents a $\mbox{L}_1$-finiteness property that we can obtain under recursive control hypothesis and strongly mean reverting assumptions ($\phi=I_d$). This result is thus useful to prove $\mathcal{S}\mathcal{W}_{\mathcal{I}, \gamma,\eta}(g, \rho , \epsilon_{\mathcal{I}})$ (see (\ref{hyp:step_weight_I_gen_chow})) or $\mathcal{S}\mathcal{W}_{\mathcal{II},\gamma,\eta}(F)  $ (see (\ref{hyp:step_weight_I_gen_tens})) for well chosen $F$ and $g$ in this specific situation.
\begin{lemme}
\label{lemme:mom_psi_V}
Let $v_{\ast}>0$, $V:E \to [v_{\ast},\infty) $, $\psi:[v_{\ast},\infty) \to \mathbb{R}_+, $ such that $\widetilde{A}_{\gamma_n}\psi \circ V$ exists for every $n \in \mathbb{N}^{\ast}$. Let $\alpha>0$ and $\beta \in \mathbb{R}$. We assume that $\mathcal{RC}_{Q,V}(\psi,I_d,\alpha,\beta)$ (see (\ref{hyp:incr_sg_Lyapunov})) holds and that $\mathbb{E}[\psi\circ V (\overline{X}_{\Gamma_{n_0}})]< + \infty$ for every $n_0 \in \mathbb{N}^{\ast}$. Then
\begin{align}
\label{eq:mom_psi_V}
\sup_{n \in \mathbb{N}} \mathbb{E}[\psi \circ V(\overline{X}_{\Gamma_n})] < + \infty
\end{align}
In particular, let $\rho \in [1,2]$ and $\epsilon_{\mathcal{I}} : \mathbb{R}_+ \to \mathbb{R}_+$, an increasing function. It follows that if $\sum_{n=1}^{\infty} \Big \vert \frac{\eta_n }{H_n \gamma_n } \Big \vert^{\rho} \epsilon_{\mathcal{I}}(\gamma_n)   < + \infty$, then $\mathcal{S}\mathcal{W}_{\mathcal{I}, \gamma,\eta}(\psi \circ V, \rho , \epsilon_{\mathcal{I}}) $ holds and if $ \sum_{n=0}^{\infty} \frac{(\eta_{n+1} /\gamma_{n+1}-\eta_n /\gamma_n)_+ }{H_{n+1} } < + \infty$, then $\mathcal{S}\mathcal{W}_{\mathcal{II},\gamma,\eta}(\psi \circ V) $ is satisfied
\end{lemme}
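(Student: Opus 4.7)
The plan is to turn the mean reverting recursive control into a standard Lyapunov drift-style inequality for the scalar sequence $u_n := \mathbb{E}[\psi\circ V(\overline{X}_{\Gamma_n})]$ and iterate. Starting from the definition of $\widetilde{A}_{\gamma_{n+1}}$, the assumption $\mathcal{RC}_{Q,V}(\psi, I_d, \alpha, \beta)$ reads, for $n\geq n_0$,
\begin{equation*}
\mathscr{Q}_{\gamma_{n+1}}(\psi\circ V)(x) \;\leq\; \psi\circ V(x)\,\Big(1 - \alpha\gamma_{n+1} + \frac{\beta\gamma_{n+1}}{V(x)}\Big).
\end{equation*}
Since $\liminf_{y\to\infty} y > \beta/\alpha$, we may pick $M>\max(v_\ast,\beta/\alpha)$ and set $\theta := \alpha-\beta/M>0$. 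On the region $\{V\geq M\}$ the factor in parentheses is bounded above by $1-\theta\gamma_{n+1}$. On $\{V< M\}$ the function $\psi\circ V$ is bounded by some constant $C_M$ (this uses that $\psi$ is controlled on the bounded interval $[v_\ast,M]$, which is the only implicit regularity needed on $\psi$), so the whole right-hand side is bounded by $(1-\theta\gamma_{n+1})\psi\circ V(x)+D\gamma_{n+1}$ for a deterministic constant $D=D(M,\psi,\alpha,\beta,v_\ast)$.

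Taking expectations and using the Markov property $u_{n+1} = \mathbb{E}[\mathscr{Q}_{\gamma_{n+1}}(\psi\circ V)(\overline{X}_{\Gamma_n})]$, we obtain the scalar recursion
\begin{equation*}
u_{n+1} \leq (1-\theta\gamma_{n+1})u_n + D\gamma_{n+1}, \qquad n\geq n_0,
\end{equation*}
with $u_{n_0}<+\infty$ by assumption and $\theta\gamma_{n+1}\leq\theta\overline\gamma$ (which we may further take $<1$ after discarding finitely many terms since $\gamma_n\to 0$). An immediate induction then gives $u_n\leq\max(u_{n_0},D/\theta)$ for all $n\geq n_0$, and combining with the finite values $u_0,\ldots,u_{n_0}$ yields \eqref{eq:mom_psi_V}.

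For the last statement, both step weight conclusions follow from \eqref{eq:mom_psi_V} by Fubini--Tonelli. Indeed,
\begin{equation*}
\mathbb{E}\Big[\sum_{n=1}^\infty \Big|\frac{\eta_n}{H_n\gamma_n}\Big|^{\rho}\epsilon_{\mathcal I}(\gamma_n)\,\psi\circ V(\overline{X}_{\Gamma_n})\Big] \;\leq\; \Big(\sup_n u_n\Big)\sum_{n=1}^\infty \Big|\frac{\eta_n}{H_n\gamma_n}\Big|^{\rho}\epsilon_{\mathcal I}(\gamma_n) \;<\;+\infty,
\end{equation*}
which forces the series to be a.s.\ finite, yielding $\mathcal{SW}_{\mathcal I,\gamma,\eta}(\psi\circ V,\rho,\epsilon_{\mathcal I})$; the same computation with the weights $(\eta_{n+1}/\gamma_{n+1}-\eta_n/\gamma_n)_+/H_{n+1}$ delivers $\mathcal{SW}_{\mathcal{II},\gamma,\eta}(\psi\circ V)$.

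The only delicate step is extracting the drift inequality: one must verify that the ``bad'' contribution from the compact-in-$V$ region $\{V<M\}$ is genuinely absorbed into the additive constant $D$, i.e.\ that $\psi$ is bounded on $[v_\ast,M]$. In the applications considered (polynomial or exponential $\psi$) this is automatic, and in the abstract statement it is implicit in the hypothesis that $\widetilde{A}_{\gamma_n}\psi\circ V$ takes finite values. Everything else is a routine scalar recursion plus Fubini.
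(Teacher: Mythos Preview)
Your argument is correct and is precisely the standard Lyapunov drift proof one expects here. Note that the paper does not actually supply its own proof of this lemma: it is quoted from \cite{Pages_Rey_2017} without argument, so there is nothing to compare against beyond checking that your reasoning is sound.

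Two small remarks. First, your case split implicitly assumes $\beta>0$; when $\beta\le 0$ the factor $1-\alpha\gamma_{n+1}+\beta\gamma_{n+1}/V(x)\le 1-\alpha\gamma_{n+1}$ holds everywhere and no splitting is needed (take $\theta=\alpha$, $D=0$). Second, and you flag this yourself, the step ``$\psi\circ V\le C_M$ on $\{V<M\}$'' requires $\psi$ to be bounded on $[v_\ast,M]$, which is not literally part of the abstract hypotheses (only Borel measurability is assumed). This is a genuine implicit assumption, harmless in every concrete instance in the paper (polynomial or exponential $\psi$), and your closing paragraph is right to isolate it. With that caveat the induction $u_{n+1}\le(1-\theta\gamma_{n+1})u_n+D\gamma_{n+1}$ and the Fubini argument for the two step-weight conditions are routine and correct.
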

Now, we provide a general way to obtain $\mathcal{S}\mathcal{W}_{\mathcal{I}, \gamma,\eta}(g,\rho, \epsilon_{\mathcal{I}})$ and $\mathcal{S}\mathcal{W}_{\mathcal{I}\mathcal{I},\gamma,\eta}(F)$  for some specific $g$ and $F$ as soon as a recursive control with weakly mean reversion assumption holds.
\begin{lemme}
\label{lemme:mom_V}
Let $v_{\ast}>0$, $V:E \to [v_{\ast},\infty) $, $\psi, \phi :[v_{\ast},\infty) \to \mathbb{R}_+, $ such that $\widetilde{A}_{\gamma_n}\psi \circ V$ exists for every $n \in \mathbb{N}^{\ast}$. Let $\alpha>0$ and $\beta \in \mathbb{R}$. We also introduce the non-increasing sequence $(\theta_n)_{n \in \mathbb{N}^{\ast}}$ such that $\sum_{n \geqslant 1} \theta_n \gamma_n < + \infty$. We assume that $\mathcal{RC}_{Q,V}(\psi,\phi,\alpha,\beta)$ (see (\ref{hyp:incr_sg_Lyapunov})) holds and that $\mathbb{E}[\psi\circ V (\overline{X}_{\Gamma_{n_0}})]< + \infty$ for every $n_0 \in \mathbb{N}^{\ast}$. Then
\begin{equation*}
\sum_{n=1}^{\infty} \theta_n \gamma_n \mathbb{E} [\tilde{V}_{\psi,\phi,1}(\overline{X}_{\Gamma_{n-1}}) ] < + \infty
\end{equation*}
 with $ \tilde{V}_{\psi,\phi,1}$ defined in (\ref{def:espace_test_function_cv}). In particular, let $\rho \in [1,2]$ and $\epsilon_{\mathcal{I}} : \mathbb{R}_+ \to \mathbb{R}_+$, an increasing function. If we also assume 
 \begin{equation}
 \label{hyp:step_weight_I}
\mathcal{S}\mathcal{W}_{\mathcal{I}, \gamma,\eta}(\rho, \epsilon_{\mathcal{I}})  \qquad \Big( \gamma_n^{-1} \epsilon_{\mathcal{I}}(\gamma_n) \big( \frac{\eta_n }{H_n \gamma_n } \big)^{\rho} \Big)_{n \in \mathbb{N}^{\ast}} \mbox{ is non-increasing and } \sum_{n=1}^{\infty} \Big( \frac{\eta_n }{H_n \gamma_n } \Big)^{\rho} \epsilon_{\mathcal{I}}(\gamma_n) < + \infty,
 \end{equation}
 then we have $\mathcal{S}\mathcal{W}_{\mathcal{I}, \gamma,\eta}(\tilde{V}_{\psi,\phi,1},\rho,\epsilon_{\mathcal{I}}) $ (see (\ref{hyp:step_weight_I_gen_chow})). Finally,if
  \begin{equation}
 \label{hyp:step_weight_II}
\mathcal{S}\mathcal{W}_{\mathcal{II},\gamma,\eta}  \qquad \Big( \frac{ \frac{\eta_{n+1} }{(\gamma_{n+1}}-\frac{\eta_n}{\gamma_n})_+ }{ \gamma_n H_n }  \Big)_{n \in \mathbb{N}^{\ast}} \mbox{ is non-increasing and } \sum_{n=1}^{\infty}  \frac{(\eta_{n+1} /\gamma_{n+1}-\eta_n /\gamma_n)_+ }{H_n }  < + \infty,
 \end{equation}
  then we have $\mathcal{S}\mathcal{W}_{\mathcal{II},\gamma,\eta}( \tilde{V}_{\psi,\phi,1}) $ (see (\ref{hyp:step_weight_I_gen_tens})).
\end{lemme}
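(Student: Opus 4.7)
My plan is to set up a discrete supermartingale/Lyapunov argument for $\psi\circ V$ evaluated along the chain, convert the mean reverting bound provided by $\mathcal{RC}_{Q,V}(\psi,\phi,\alpha,\beta)$ into an inequality of the form $\widetilde{A}_{\gamma_n}(\psi\circ V)(x)\leq -\tilde\alpha\,\tilde V_{\psi,\phi,1}(x)+C$, and then telescope using Abel summation with the non-increasing weights $(\theta_n)$. The first step is purely deterministic: by condition $(ii)$ there exist $\delta>0$ and $y_0\geq v_*$ such that $\alpha\phi(y)-\beta\geq \alpha\delta>0$ for $y\geq y_0$. On $\{V(x)\geq y_0\}$ I would rewrite
\[
\frac{\psi\circ V(x)}{V(x)}\bigl(\beta-\alpha\phi\circ V(x)\bigr)=-\alpha\,\tilde V_{\psi,\phi,1}(x)+\beta\,\frac{\tilde V_{\psi,\phi,1}(x)}{\phi\circ V(x)}\leq -\tilde\alpha\,\tilde V_{\psi,\phi,1}(x),
\]
with $\tilde\alpha=\alpha^2\delta/(\beta^++\alpha\delta)>0$, since $\phi\circ V\geq\beta/\alpha+\delta$ on that set. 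On the complementary region $\{V(x)<y_0\}$, the function $\tilde V_{\psi,\phi,1}$ and the right-hand side of $(i)$ are both controlled by the values of $\psi$ and $\phi$ on $[v_*,y_0]$, so absorbing this into a uniform constant yields the announced pointwise bound $\widetilde{A}_{\gamma_n}(\psi\circ V)\leq -\tilde\alpha\,\tilde V_{\psi,\phi,1}+C$ valid for every $n\geq n_0$.

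Next, I would take expectations at $x=\overline X_{\Gamma_{n-1}}$. Writing $m_n:=\mathbb E[\psi\circ V(\overline X_{\Gamma_n})]$ (finite by hypothesis) and using $\mathbb E[\mathscr Q_{\gamma_n}(\psi\circ V)(\overline X_{\Gamma_{n-1}})]=m_n$, the pointwise inequality becomes
\[
\tilde\alpha\,\gamma_n\,\mathbb E[\tilde V_{\psi,\phi,1}(\overline X_{\Gamma_{n-1}})]\leq m_{n-1}-m_n+C\gamma_n.
\]
Multiplying by $\theta_n\geq 0$, summing from $n_0$ to $N$, and applying the Abel rearrangement
\[
\sum_{n=n_0}^{N}\theta_n(m_{n-1}-m_n)=\theta_{n_0}m_{n_0-1}+\sum_{n=n_0}^{N-1}(\theta_{n+1}-\theta_n)m_n-\theta_N m_N\leq \theta_{n_0}m_{n_0-1},
\]
where the last inequality uses $\theta_{n+1}-\theta_n\leq 0$ and $m_n\geq 0$, gives a bound uniform in $N$. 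Combined with $\sum_n\theta_n\gamma_n<\infty$, this yields the first conclusion $\sum_n\theta_n\gamma_n\,\mathbb E[\tilde V_{\psi,\phi,1}(\overline X_{\Gamma_{n-1}})]<\infty$. For the two step-weight implications, I would just specialize: taking $\theta_n=\gamma_n^{-1}\epsilon_{\mathcal I}(\gamma_n)(\eta_n/(H_n\gamma_n))^\rho$ under $\mathcal{SW}_{\mathcal I,\gamma,\eta}(\rho,\epsilon_{\mathcal I})$, and $\theta_n=(\eta_{n+1}/\gamma_{n+1}-\eta_n/\gamma_n)_+/(H_n\gamma_n)$ under $\mathcal{SW}_{\mathcal{II},\gamma,\eta}$; in both cases $(\theta_n)$ is non-increasing and $\sum\theta_n\gamma_n<\infty$ by the corresponding hypothesis. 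Fubini--Tonelli applied to the nonnegative series then turns the $\mathrm L^1$ bound into the required $\mathbb P$-a.s.\ summability, possibly up to an innocuous index shift handled by monotonicity of $(\theta_n)$ and boundedness of $(\gamma_n)$.

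The main obstacle, and the only delicate point, is Step~1: getting a clean pointwise bound $\widetilde A_{\gamma_n}(\psi\circ V)\leq -\tilde\alpha\,\tilde V_{\psi,\phi,1}+C$ that holds \emph{uniformly in $x$}, not only for large $V(x)$. The quantitative transcription of the liminf condition $(ii)$ gives the good inequality on $\{V\geq y_0\}$ for free, but one must ensure that $\psi$ and $\phi$ are sufficiently regular on the compact $[v_*,y_0]$ so that the residual small-$V$ contribution can be absorbed into a finite additive constant; this is implicit in the existence of $\widetilde A_{\gamma_n}(\psi\circ V)$ together with the local structure of the mean-reverting inequality. Everything after that is mechanical: expectation, telescoping, Abel summation, and Fubini to pass from expected sums to almost sure sums.
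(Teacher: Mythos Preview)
The paper does not actually contain a proof of this lemma: it is stated as an abstract result imported from \cite{Pages_Rey_2017}, so there is no in-paper argument to compare against. Your approach---rewriting the recursive control as $\widetilde A_{\gamma_n}(\psi\circ V)\leq -\tilde\alpha\,\tilde V_{\psi,\phi,1}+C$, taking expectations to get $\tilde\alpha\,\gamma_n\,\mathbb E[\tilde V_{\psi,\phi,1}(\overline X_{\Gamma_{n-1}})]\leq m_{n-1}-m_n+C\gamma_n$, then multiplying by the non-increasing weights $\theta_n$ and telescoping via Abel summation---is the standard and correct route, and the specializations of $(\theta_n)$ for the two step-weight corollaries are exactly right.

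The two caveats you flag are genuine but minor. First, the constant $C$ on $\{V<y_0\}$ really does require that $\psi$ be bounded on the compact interval $[v_\ast,y_0]$; this is \emph{not} a consequence of the mere existence of $\widetilde A_{\gamma_n}(\psi\circ V)$ as you suggest, but it holds in every application of the paper ($\psi$ polynomial or exponential, hence continuous). Second, the index mismatch between $\overline X_{\Gamma_{n-1}}$ in the conclusion and $\overline X_{\Gamma_n}$ in the definition of $\mathcal{SW}_{\mathcal I,\gamma,\eta}(\tilde V_{\psi,\phi,1},\rho,\epsilon_{\mathcal I})$ is not quite as innocuous as ``monotonicity of $(\theta_n)$ and boundedness of $(\gamma_n)$'' alone: one needs the shifted sequence $\theta'_n=\theta_{n-1}\gamma_{n-1}/\gamma_n$ to remain non-increasing, which amounts to a mild regularity condition on $(\gamma_n)$ (e.g.\ $\gamma_{n+1}/\gamma_n$ non-increasing or bounded below) that is again satisfied in all concrete step choices. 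Neither point undermines the argument in the settings where the lemma is used.
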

This result concludes the general approach in a generic framework to prove convergence. The next part of this paper is dedicated to various applications.

\section{Applications}

\subsection{Notations}

First, for $\alpha \in (0,1]$ and $f$ a $\alpha$-H\"older function we denote $[f]_{\alpha}=\sup_{x \neq y}\vert f(y)-f(x) \vert / \vert y -x \vert^{\alpha}$.\\

 Now, let $d \in \mathbb{N}$. For any $ \mathbb{R}^{d \times d}$-valued symmetric matrix $S$, we define $\lambda_S:= \sup\{\lambda_{S,1},.. \lambda_{S,d},0 \}$, with $\lambda_{S,i}$ the $i$-th eigenvalue of $S$.\\
 
We also recall the Burkholder--Davies--Gundy (BDG) inequality for martingales. Let $p \geqslant 1/2$ and $(\widetilde{M_t})_{t \geqslant 0}$ a $\mathbb{R}^d$-valued martingale with respect to the filtration $\mathcal{F}^{\widetilde{M}}_t := \sigma(\widetilde{M}_t,s\leqslant t)$. 
Then, there exists $C_p \geqslant 0$ such that
\begin{align}
\label{eq:BDG_inegalite}
 \forall t \geqslant 0, \qquad \mathbb{E}[ \sup_{s \in [0,t]}\vert \widetilde{M_s} \vert^{2p}] \leqslant C_p \mathbb{E}[  \langle \widetilde{M}\rangle_t ^{p}].
\end{align}

%

\subsection{The Milstein scheme}

%
In this part, we treat the case of a Milstein scheme (introduced in \cite{Milstein_1987}) with decreasing steps for a Brownian diffusion process. We propose two approaches under weakly mean reverting assumptions. The first one relies on polynomial test functions and the second one relies on exponential test functions. More particularly we propose a setting with functions $\psi$ such that $\psi(y)=y^p$, $ p \geqslant 0$ for every $y \in [v_{\ast},\infty)$. The other setting is based on functions $\psi(y)=\exp( \lambda y^p)$, $ p \in [0,1/2]$, $\lambda \geqslant 0$, for every $y \in [v_{\ast},\infty)$.\\

\subsubsection{Presentation and main result}
 We consider a $d$-dimensional Brownian motion $(W_t)_{t \geqslant 0}$. We are interested in the solution of the $d$-dimensional stochastic equation
\begin{align*}
X_t= x+\int\limits_0^t b(X_{s})ds + \int\limits_0^t & \sigma(X_{s})  dW_s
\end{align*}
where $b: \mathbb{R}^d \to \mathbb{R}^d$ and $\sigma, \partial_{x_l}\sigma:\mathbb{R}^{d } \to \mathbb{R}^{d \times d}, l \in \{1, \ldots d \},$ are locally bounded functions. The infinitesimal generator of this process is given by
\begin{align*}
Af(x) =\langle b(x) , \nabla f(x) \rangle + \frac{1}{2}\sum_{i,j=1}^d (\sigma \sigma^{\ast})_{i,j}(x) \frac{\partial^2f}{\partial x_i \partial x_j}(x)  
\end{align*} 

and its domain $\DomA$ contains $\DomA_0 =\mathcal{C}^2_K(\mathbb{R}^d)$. Notice that $\DomA_0 $ is dense in $\mathcal{C}_0(E)$. Now, we introduce the Milstein scheme for $(X_t)_{t \geqslant 0}$ defined for every $n \in \mathbb{N}$ and $t \in [\Gamma_n, \Gamma_{n+1}]$, by
%

\begin{align*}
\overline{X}_{t}  =& \overline{X}_{\Gamma_n} + (t-\Gamma_{n}) b(\overline{X}_{\Gamma_{n}}) + \sigma(\overline{X}_{\Gamma_{n}})(W_t-W_{\Gamma_n}) + \sum_{i,j,l=1}^d \partial_{x_l} \sigma_i ( \overline{X}_{\Gamma_n} )  \sigma_{l,j}( \overline{X}_{\Gamma_n} )   \int\limits_{\Gamma_n}^{t} \int\limits_{\Gamma_n}^s dW^j_u dW^i_s
\end{align*}
with $\sigma_i:\mathbb{R}^d \to \mathbb{R}^d, x \mapsto \sigma_i(x)= (\sigma_{1,i}(x), \ldots , \sigma_{d,i}(x) )$. We introduce the notations:
\begin{align}
\label{def:incr_milstein}
\Delta \overline{X}^1_{n+1} = &  \gamma_{n+1}b(\overline{X}_{\Gamma_{n}})  , \quad \Delta \overline{X}^{3}_{n+1} =  \sum_{i,j=1}^d \sum_{l=1}^d \partial_{x_l} \sigma_i ( \overline{X}_{\Gamma_n} )  \sigma_{l,j}( \overline{X}_{\Gamma_n} )   \int\limits_{\Gamma_n}^{\Gamma_{n+1}} \int\limits_{\Gamma_n}^s dW^j_u dW^i_s \nonumber , \\
\Delta \overline{X}^{2}_{n+1} = &  \sigma (\overline{X}_{\Gamma_{n}}) (W_{\Gamma_{n+1}}-W_{\Gamma_n} ).
\end{align}
and $\overline{X}_{\Gamma_{n+1}}^i=\overline{X}_{\Gamma_n}+ \sum_{j=1}^i \Delta \overline{X}^i_{n+1}$. In the sequel we will use the notation $U_{n+1}=\gamma_{n+1}^{-1/2}(W_{\Gamma_{n+1}}-W_{\Gamma_n}) $ and $\mathcal{W}_{n+1}=(\mathcal{W}^{i,j}_{n+1})_{i,j \in \{1, \ldots, d \} }$ with $ \mathcal{W}^{i,j}_{n+1}=\gamma_{n+1}^{-1} \int\limits_{\Gamma_n}^{\Gamma_{n+1}} \int\limits_{\Gamma_n}^s dW^j_u dW^i_s $.

%
%
%

Now, we assume that the Lyapunov function $V: \mathbb{R}^d \to [v_{\ast}, \infty)$, $v_{\ast}>0$, satisfies $\mbox{L}_V$ (see (\ref{hyp:Lyapunov})) and is essentially quadratic:
\begin{align}
\label{hyp:Lyapunov_control_milstein}
\vert \nabla V \vert^2 \leqslant C_V V, \qquad \sup_{x \in \mathbb{R}^d} \vert D^2 V(x) \vert < + \infty
\end{align}
 We also define
\begin{align}
\label{def:lambda_psi_milstein}
\forall x \in \mathbb{R}^d , \quad \lambda_{\psi}(x):= \lambda_{D^2V(x)+2\nabla V(x)^{\otimes 2} \psi''\circ V(x) \psi'\circ V(x)^{-1}}  .
\end{align}
When $\psi(y)=\psi_p(y)=y^{p}$, we will also use the notation $\lambda_p$ instead of $\lambda_{\psi}$. Now, let $\phi:[v_{\ast}, + \infty) \to \mathbb{R}_+$, and assume that
\begin{align}
\label{hyp:controle_coefficients_milstein}
\mathfrak{B}(\phi) \quad \equiv \qquad \forall x \in \mathbb{R}^d,  \quad  \vert b(x) \vert^2 + \Tr[ \sigma \sigma^{\ast}(x) ] +\sum_{i,j,l=1}^d \vert \partial_{x_l} \sigma_i ( x )  \sigma_{l,j}(x)  \vert^2   \leqslant C  \phi \circ V (x).
\end{align}

\paragraph{Polynomial case. \\}
In case of Wasserstein convergence, we introduce a weaker assumption than Gaussian distribution for the sequence $(U_n)_{n \in \mathbb{N}^{\ast}} $. Let $q \in \mathbb{N}^{\ast}$, $p \geqslant 0$. We suppose that $(U_n)_{n \in \mathbb{N}^{\ast}} $ is a sequence of independent and identically distributed random variables such that
\begin{align}
\label{hyp:matching_normal_moment_ordre_q_va_schema_milstein}
M_{\mathcal{N},q}(U) \quad \equiv \qquad \forall n \in \mathbb{N}^{\ast} ,\forall \tilde{q} \in \{1, \ldots, q\} , \quad \mathbb{E}[(U_n)^{\otimes \tilde{q}}]=\mathbb{E}[(\mathcal{N}(0,I_d))^{\otimes \tilde{q}}],
\end{align}
and
\begin{align}
\label{hyp:moment_ordre_p_va_schema_milstein}
M_p(U) \qquad \sup_{n \in \mathbb{N}^{\ast}} \mathbb{E}[\vert U_n \vert^{2p} ] < + \infty.
\end{align}

Moreover, we will also assume that $(\mathcal{W}_n)_{n \in \mathbb{N}^{\ast} }$ is a sequence of independent and centered random variables such that $M_{p}(\mathcal{W})$ holds for some $p$ we will precise further on.\\

We are now able to introduce the mean-reverting property of $V$. Let $p \geqslant 0$. Let $\beta \in \mathbb{R}$, $\alpha>0$. We assume that $\liminf\limits_{y \to \infty} \phi(y)>\beta/\alpha$ and 
\begin{align}
\label{hyp:recursive_control_param_milstein}
\mathcal{R}_p(\alpha,\beta, \phi, V) \quad \equiv \qquad \forall x \in \mathbb{R}^d,  \quad  \langle \nabla V(x), b(x) \rangle+   \frac{1}{2} \chi_{p}(x) \leqslant \beta - \alpha \phi \circ V (x),
\end{align}
with

\begin{equation}
\label{hyp:recursive_control_param_terme_ordre_sup_milstein}
\chi_{p}(x) =  \left\{
      \begin{aligned}
        & \Vert \lambda_{1} \Vert_{\infty}  \mbox{Tr}[\sigma \sigma^{\ast}(x)]  & & \quad \mbox{if } p \leqslant 1\\
        & \Vert \lambda_{p} \Vert_{\infty} 2^{(2p-3)_+} \mbox{Tr}[\sigma \sigma^{\ast}(x)]  &   & \quad \mbox{if } p >1.
      \end{aligned}
    \right.
\end{equation}

\begin{mytheo}
\label{th:cv_was_milstein}
Let $p >0,a \in (0,1]$, $s \geqslant 1, \rho \in [1,2]$ and, $\psi_p(y)=y^p$, $\phi(y)=y^a$ and $\epsilon_{\mathcal{I}}(\gamma)=\gamma^{\rho/2}$. Let $\alpha>0$ and $\beta \in \mathbb{R}$. \\

Assume that the sequence $(U_n)_{n \in \mathbb{N}^{\ast}}$ satisfies $M_{\mathcal{N},2}(U)$ (see (\ref{hyp:matching_normal_moment_ordre_q_va_schema_milstein})) and $M_{p \vee (p \rho/s) \vee 1}(U)$ (see (\ref{hyp:moment_ordre_p_va_schema_milstein})). Moreover, assume that $(\mathcal{W}_n)_{n \in \mathbb{N}^{\ast} }$ is a sequence of independent and centered random variables such that $M_{p \vee (p \rho/s) \vee 1}(\mathcal{W})$ (see (\ref{hyp:moment_ordre_p_va_schema_milstein})) holds.\\
Also assume that (\ref{hyp:Lyapunov_control_milstein}), $\mathfrak{B}(\phi)$ (see (\ref{hyp:controle_coefficients_milstein})), $\mathcal{R}_p(\alpha,\beta, \phi, V)$ (see (\ref{hyp:recursive_control_param_milstein})), $\mbox{L}_{V}$ (see (\ref{hyp:Lyapunov}), $\mathcal{S}\mathcal{W}_{\mathcal{I}, \gamma,\eta}(\rho, \epsilon_{\mathcal{I}})$ (see (\ref{hyp:step_weight_I})), $\mathcal{S}\mathcal{W}_{\mathcal{II},\gamma,\eta}(V^{p/s}) $ (see (\ref{hyp:step_weight_I_gen_tens})) and (\ref{hyp:accroiss_sw_series_2}) also hold and that $ap\rho/s \leqslant p+a-1$. \\

Then, if $p/s+a-1>0$, $(\nu_n^{\eta})_{n \in \mathbb{N}^{\ast}}$ is $\mathbb{P}-a.s.$ tight and
\begin{align}
 \label{eq:tightness_milstein}
\mathbb{P} \mbox{-a.s.} \quad \sup_{n \in \mathbb{N}^{\ast}} \nu_n^{\eta}( V^{p/s+a-1} ) < + \infty .
\end{align}

%
%
%
%
%

Moreover, assume also that $b$, $\sigma$ and $\sum_{i,j,l=1}^d  \vert  \partial_{x_l} \sigma_i  \sigma_{l,j} \vert  $ have sublinear growth and that $g_{\sigma}\leqslant C  V^{p/s+a-1}$, with $g_{\sigma}=\Tr[ \sigma \sigma^{\ast} ]+ \sum_{i,j,l=1}^d  \vert  \partial_{x_l} \sigma_i  \sigma_{l,j} \vert  $. Then, every weak limiting distribution $\nu$ of $(\nu_n^{\eta})_{n \in \mathbb{N}^{\ast}}$ is an invariant distribution of $(X_t)_{t \geqslant 0}$ and when $\nu$ is unique, we have
\begin{align}
\label{eq:cv_was_milstein}
\mathbb{P} \mbox{-a.s.} \quad  \forall f \in \mathcal{C}_{\tilde{V}_{\psi_p,\phi,s}}(\mathbb{R}^d), \quad \lim\limits_{n \to + \infty} \nu_n^{\eta}(f)=\nu(f),
\end{align}
 with $\mathcal{C}_{\tilde{V}_{\psi_p,\phi,s}}(\mathbb{R}^d)$ defined in (\ref{def:espace_test_function_cv}). Notice that when $p/s \leqslant p\vee 1 +a-1$, the assumption $\mathcal{S}\mathcal{W}_{\mathcal{II},\gamma,\eta}(V^{p/s}) $ (see (\ref{hyp:step_weight_I_gen_tens})) can be replaced by $\mathcal{S}\mathcal{W}_{\mathcal{II},\gamma,\eta} $ (see (\ref{hyp:step_weight_II})).
\end{mytheo}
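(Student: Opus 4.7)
The plan is to apply the abstract machinery of Theorems 2.1 and 2.2 to the Milstein scheme defined by $\mathscr{Q}_\gamma$ with $\psi_p(y)=y^p$, $\phi(y)=y^a$, and $\DomA_0 = \mathcal{C}^2_K(\mathbb{R}^d)$. This reduces the problem to verifying four concrete ingredients: the mean reverting recursive control $\mathcal{RC}_{Q,V}(\psi_p,\phi,\alpha,\beta)$, the growth control $\mathcal{GC}_Q((\psi_p\circ V)^{1/s},\tilde V_{\psi_p,\phi,s},\rho,\epsilon_{\mathcal{I}})$, the infinitesimal generator approximation $\mathcal{E}(\widetilde A,A,\DomA_0)$, and (to apply Theorem 2.1.B) an integrability statement ensuring the bound \eqref{eq:invariance_mes_emp_Lyap_gen}. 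Once these are in hand, the tightness and identification conclusions, including \eqref{eq:tightness_milstein} and \eqref{eq:cv_was_milstein}, follow by quoting Theorems 2.1 and 2.2.

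First I would establish $\mathcal{RC}_{Q,V}(\psi_p,\phi,\alpha,\beta)$. The computation is a conditional Taylor expansion of $\psi_p\circ V(\overline X_{\Gamma_{n+1}})$ around $\overline X_{\Gamma_n}=x$, using $\Delta\overline X = \Delta\overline X^1+\Delta\overline X^2+\Delta\overline X^3$ from \eqref{def:incr_milstein}. Because $\mathbb{E}[U_{n+1}]=0$, $\mathbb{E}[\mathcal{W}_{n+1}]=0$ and $M_{\mathcal{N},2}(U)$ gives $\mathbb{E}[U_{n+1}^{\otimes 2}]=I_d$, the first-order expectations retain only the drift contribution $\gamma_{n+1}\langle\nabla V(x),b(x)\rangle$, while the quadratic term produces $\tfrac{\gamma_{n+1}}{2}\Tr[D^2V(x)\sigma\sigma^*(x)]$ together with a $\psi_p''$ correction equal to $\gamma_{n+1}\psi_p''(V(x))\psi_p'(V(x))^{-1}\nabla V(x)^\otimes 2 \sigma\sigma^*(x)$ contracted appropriately — which is precisely what is encoded in $\lambda_p$ via \eqref{def:lambda_psi_milstein}. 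The essentially quadratic hypothesis \eqref{hyp:Lyapunov_control_milstein} combined with $\mathfrak{B}(\phi)$ bounds all remainder terms (including those arising from $\Delta\overline X^3$, whose martingale character kills its first-order contribution and whose second moment is $O(\gamma^2)$ times $\sum|\partial_{x_l}\sigma_i\sigma_{l,j}|^2$) by $C\gamma_{n+1}^{1+\delta}V^{p-1+a}(x)$ for some $\delta>0$ by the moment assumption $M_{p\vee(p\rho/s)\vee 1}(U)\wedge M_{p\vee(p\rho/s)\vee 1}(\mathcal{W})$. Dividing by $\gamma_{n+1}$ and invoking $\mathcal{R}_p(\alpha,\beta,\phi,V)$ yields the required inequality up to absorbing constants into $\beta,\alpha$, for $n$ large enough.

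Next I would check $\mathcal{GC}_Q$ with $\rho\in[1,2]$ and $\epsilon_{\mathcal{I}}(\gamma)=\gamma^{\rho/2}$. For the Lyapunov test function $(\psi_p\circ V)^{1/s}=V^{p/s}$, a Taylor-plus-BDG argument: writing $V^{p/s}(\overline X_{\Gamma_{n+1}})-\mathscr{Q}_{\gamma_{n+1}}V^{p/s}(\overline X_{\Gamma_n})$ as a stochastic-type increment, the essentially quadratic control of $\nabla V$ and $D^2V$ converts its $\rho$-th conditional moment into $C\gamma_{n+1}^{\rho/2}(\sigma\sigma^*+\sum|\partial\sigma\cdot\sigma|^2)^{\rho/2}(x)V^{(p/s-1)\rho/2}(x)$ type bounds; the hypothesis $g_\sigma\leqslant CV^{p/s+a-1}$ together with $ap\rho/s\leqslant p+a-1$ and $\mathfrak{B}(\phi)$ yields the requisite domination by $C_f\gamma_{n+1}^{\rho/2}\tilde V_{\psi_p,\phi,s}(x)=C_f\gamma_{n+1}^{\rho/2}V^{p/s+a-1}(x)$. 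The same type of bound applies to any $f\in\mathcal{C}^2_K$, giving $\mathcal{GC}_Q(\DomA_0,\tilde V_{\psi_p,\phi,s},\rho,\epsilon_{\mathcal{I}})$ needed in Theorem 2.2.A. To deduce the hypothesis \eqref{eq:invariance_mes_emp_Lyap_gen} of Theorem 2.1.A, I combine the recursive control of Step 1 with Lemma 2.2 (invoking $\mathcal{S}\mathcal{W}_{\mathcal{I}}$, $\mathcal{S}\mathcal{W}_{\mathcal{II}}(V^{p/s})$ as assumed), which gives summability of the Lyapunov empirical increments and hence tightness plus \eqref{eq:tightness_milstein}.

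Finally I would verify $\mathcal{E}(\widetilde A,A,\DomA_0)$: for $f\in\mathcal{C}^2_K(\mathbb{R}^d)$, a third order Taylor expansion of $f(\overline X_{\Gamma_{n+1}})$ around $x$ isolates exactly $\gamma_{n+1}Af(x)$ plus remainder terms involving $f$-derivatives evaluated at random intermediate points, contracted with the coefficients $b$, $\sigma$, and $\partial_{x_l}\sigma_i\sigma_{l,j}$. Dividing by $\gamma_{n+1}$ and taking absolute values, the remainder takes the form $\Lambda_f(x,\gamma)=\langle g(x),\tilde{\mathbb{E}}[\tilde\Lambda_f(x,\gamma,\tilde\omega)]\rangle$ with $g$ dominated by powers of $|b|$, $|\sigma|$ and $|\partial\sigma\cdot\sigma|$ (sublinear by assumption, hence $g\leqslant CV^{p/s+a-1}$ so $\sup_n\nu_n^\eta(g)<\infty$ by \eqref{eq:tightness_milstein}), while $\tilde\Lambda_f$ involves continuous $f$-derivatives and vanishes as $\gamma\to 0$. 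The uniform compactness of $\supp f$ and continuity of $D^2f$, $D^3f$ give representation \ref{hyp:erreur_tems_cours_fonction_test_reg_Lambda_representation_1}. Plugging everything into Theorem 2.2.B identifies every limiting distribution as an element of $\mathcal{V}$ and yields \eqref{eq:cv_was_milstein} under uniqueness, while the last sentence of the theorem about replacing $\mathcal{S}\mathcal{W}_{\mathcal{II},\gamma,\eta}(V^{p/s})$ by $\mathcal{S}\mathcal{W}_{\mathcal{II},\gamma,\eta}$ follows from Lemma 2.2 provided $V^{p/s}\leqslant C\tilde V_{\psi_p,\phi,1}$, i.e.\ $p/s\leqslant p\vee 1+a-1$. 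The hardest step is the recursive control computation of Step 1: the Milstein scheme's iterated integral $\Delta\overline X^3$ introduces a new second moment contribution of size $O(\gamma^2)(\partial\sigma\cdot\sigma)^2$ that must be absorbed without spoiling the mean reverting drift, which is precisely what forces the coefficient control in \eqref{hyp:controle_coefficients_milstein} and the moment matching $M_{\mathcal{N},2}(U)$.
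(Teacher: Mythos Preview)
Your approach is essentially the paper's: verify $\mathcal{RC}_{Q,V}$ (Proposition~\ref{prop:recursive_control_milstein}), $\mathcal{GC}_Q$ (Lemma~\ref{lemme:incr_lyapunov_X_milstein}), the step-weight conditions (via Lemma~\ref{lemme:mom_V}), and $\mathcal{E}(\widetilde A,A,\DomA_0)$ (Proposition~\ref{prop:milstein_infinitesimal_approx}), then invoke Theorems~\ref{th:tightness} and~\ref{th:identification_limit}.

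One detail is off and worth fixing. The dominating function $g$ in the growth control is \emph{not} $\tilde V_{\psi_p,\phi,s}=V^{p/s+a-1}$ but $V^{p\vee 1+a-1}$. The condition $ap\rho/s\le p+a-1$ in Lemma~\ref{lemme:incr_lyapunov_X_milstein} only delivers the bound $V^{p+a-1}$ for $\mathcal{GC}_Q(V^{p/s},\cdot,\rho,\epsilon_{\mathcal I})$, and for $f\in\DomA_0$ the bound $\Tr[\sigma\sigma^*]^{\rho/2}+\sum|\partial_{x_l}\sigma_i\sigma_{l,j}|^\rho\le CV^{a\rho/2}$ needs $a\rho/2\le (p\vee1)+a-1$, which fails for $p<1$ if you insist on exponent $p+a-1$. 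This is precisely why the paper, in Step~1 of the proof, observes that $\mathcal R_p(\alpha,\beta,\phi,V)$ with $p\le1$ coincides with $\mathcal R_1(\alpha,\beta,\phi,V)$ and hence also yields $\mathcal{RC}_{Q,V}(\psi_1,\phi,\tilde\alpha,\beta)$; Lemma~\ref{lemme:mom_V} is then applied with $\psi_{p\vee1}$ to obtain $\mathcal{SW}_{\mathcal I,\gamma,\eta}(V^{p\vee1+a-1},\rho,\epsilon_{\mathcal I})$, matching the growth control. Your use of the hypothesis $g_\sigma\le CV^{p/s+a-1}$ in this step is misplaced: that bound belongs only to the identification part (to ensure $\sup_n\nu_n^\eta(g_\sigma)<\infty$ in Proposition~\ref{prop:milstein_infinitesimal_approx}), not to the tightness part. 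Finally, $\mathcal E$ on $\mathcal C^2_K$ needs only a second-order expansion; $D^3f$ does not appear.
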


 \paragraph{Exponential case. \\} 
 For the exponential case we modify this assumption in the following way. Let $p \leqslant 1/2$ and let $\alpha>0$ and $\beta \in \mathbb{R}$. We assume that $\liminf\limits_{y \to + \infty} \phi(y)>\beta_+/\alpha$, $\beta_+=0 \vee \beta$, and 
\begin{align}
\label{hyp:recursive_control_param_milstein_expo}
\mathcal{R}_{p, \lambda}(\alpha,\beta,\phi,V) \quad \equiv \qquad \forall x \in \mathbb{R}^d,  \quad   \langle \nabla V(x), b(x) + \kappa_p(x) \rangle+   \frac{1}{2}\chi_{p}(x) \leqslant \beta - \alpha \phi \circ V (x),
\end{align}
with

\begin{align*}
\kappa_p(x)=\frac{1}{2} \sum_{i=1}^d \sum_{l=1}^d \partial_{x_l} \sigma_i ( x )  \sigma_{l,i}( x ) + \lambda p \frac{V^{p-1}(x) }{\phi \circ V(x)}\sigma \sigma^{\ast} (x) \nabla V (x) 
\end{align*}
and

\begin{equation*}
\chi_{p}(x) =   -\frac{V^{1-p} (x)}{ \phi \circ V(x)}C_{\sigma}(x)^{-1} \ln (\det(\Sigma(x) ) 
\end{equation*}

with $\Sigma: \mathbb{R}^d \to \mathcal{S}^d_{+,\ast}$, $\mathcal{S}^d_{+,\ast}$ being the set of a positive definite matrix, defined by 

\begin{align*}
\Sigma(x):=I_d \Big(1-2 C_{\sigma}(x)[\sqrt{V}]_1 v_{\ast}^{p-1/2}  \frac{1}{2} \sum_{i,j,l=1}^d  \vert \partial_{x_l} \sigma_i ( x )  \sigma_{l,j}(x) \vert \Big)- \Vert D^2 V  \Vert_{\infty}  C_{\sigma}(x)V^{p-1}(x)  \sigma^{\ast} \sigma(x)
\end{align*}

with $C_{\sigma}: \mathbb{R}^d\to \mathbb{R}_+^{\ast}$ satisfying
$\inf_{x \in \mathbb{R}^d} C_{\sigma}(x)>0$.

%

\begin{mytheo}
\label{th:cv_exp_milstein}
 Let $p\in [0,1/2], \lambda \geqslant 0$, $s \geqslant 1$, $\rho \in [1,2]$ and, let $\phi:[v_{\ast},\infty )\to \mathbb{R}_+$ be a continuous function such that $C_{\phi}:= \sup_{y \in [v_{\ast},\infty )}\phi(y)/y< +\infty$ and $\liminf\limits_{y \to +\infty} \phi(y)=+\infty$, let $\psi(y)=\exp ( \lambda y^p )$, $y\in \mathbb{R}_+$ and let $\epsilon_{\mathcal{I}}(\gamma)=\gamma^{\rho/2}$ and $\tilde{\epsilon}_{\mathcal{I}}(\gamma)=\gamma^{p\rho}$. Let $\alpha>0$ and $\beta \in \mathbb{R}$.\\
 
 Assume that $\rho<s$, $\mathfrak{B}(\phi)$ (see (\ref{hyp:controle_coefficients_milstein})), $\mathcal{R}_{p, \lambda}(\alpha,\beta,\phi,V)$ (see (\ref{hyp:recursive_control_param_milstein_expo})) and $\mbox{L}_{V}$ (see (\ref{hyp:Lyapunov}))  hold.
 Also suppose that $\mathcal{S}\mathcal{W}_{\mathcal{I}, \gamma,\eta}(\rho, \epsilon_{\mathcal{I}})$, $\mathcal{S}\mathcal{W}_{\mathcal{I}, \gamma,\eta}(\rho, \tilde{\epsilon}_{\mathcal{I}})$ (see (\ref{hyp:step_weight_I})), $\mathcal{S}\mathcal{W}_{\mathcal{II},\gamma,\eta} $ (see (\ref{hyp:step_weight_II})), (\ref{hyp:accroiss_sw_series_2}) and (\ref{hyp:dom_recurs_milstein}) hold. \\
 
 Then $(\nu_n^{\eta})_{n \in \mathbb{N}^{\ast}}$ is $\mathbb{P}-a.s.$ tight and
\begin{align}
\label{eq:tightness_milstein_expo}
\mathbb{P} \mbox{-a.s.} \quad \sup_{n \in \mathbb{N}^{\ast}} \nu_n^{\eta} \Big( \frac{\phi \circ V }{V} \exp \big(\lambda/s V^{p}) \Big) < + \infty .
\end{align}

Moreover, assume also that $b$, $\sigma$ and $\sum_{i,j,l=1}^d  \vert  \partial_{x_l} \sigma_i  \sigma_{l,j} \vert  $ have sublinear growth. Then, every weak limiting distribution $\nu$ of $(\nu_n^{\eta})_{n \in \mathbb{N}^{\ast}}$ is an invariant distribution of $(X_t)_{t \geqslant 0}$ and when $\nu$ is unique, we have
\begin{align}
\label{eq:cv_expo_milstein}
 \mathbb{P} \mbox{-a.s.} \quad \forall f \in \mathcal{C}_{\tilde{V}_{\psi,\phi,s}}(\mathbb{R}^d), \quad \mathbb{P} \mbox{-a.s.}\lim\limits_{n \to \infty} \nu_n^{\eta}(f)=\nu(f),
\end{align}
 with $\mathcal{C}_{\tilde{V}_{\psi,\phi,s}}(\mathbb{R}^d)$ defined in (\ref{def:espace_test_function_cv}).

\end{mytheo}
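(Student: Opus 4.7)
The plan is to deduce Theorem \ref{th:cv_exp_milstein} from the abstract Theorems \ref{th:tightness} and \ref{th:identification_limit} by verifying their hypotheses for the Milstein scheme in the exponential setting $\psi(y)=\exp(\lambda y^{p})$, $p\in[0,1/2]$. Concretely, I would need to check four blocks: (a) the mean reverting recursive control $\mathcal{RC}_{Q,V}(\psi,\phi,\alpha,\beta)$; (b) the growth control $\mathcal{GC}_{Q}((\psi\circ V)^{1/s},g,\rho,\epsilon_{\mathcal{I}})$; (c) the step weight assumptions (already largely taken as hypotheses); and (d) the infinitesimal generator approximation $\mathcal{E}(\widetilde{A},A,\mathcal{D}(A)_{0})$ on $\mathcal{D}(A)_{0}=\mathcal{C}^{2}_{K}(\mathbb{R}^d)$. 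Once (a)--(d) hold, Theorem \ref{th:tightness}.\ref{th:tightness_point_B} gives the tightness bound (\ref{eq:tightness_milstein_expo}) with Lyapunov weight $\tilde V_{\psi,\phi,s}=(\phi\circ V/V)\exp(\lambda V^{p}/s)$, and Theorem \ref{th:identification_limit}.\ref{th:identification_limit_B} identifies the limits and yields (\ref{eq:cv_expo_milstein}).

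The main obstacle, and the heart of the proof, is (a): controlling the pseudo-generator acting on $\exp(\lambda V^{p})$. I would proceed by a third-order Taylor expansion of $V$ along the Milstein increment $\Delta\overline X_{n+1}=\Delta\overline X^{1}_{n+1}+\Delta\overline X^{2}_{n+1}+\Delta\overline X^{3}_{n+1}$, using that $V$ is essentially quadratic (see (\ref{hyp:Lyapunov_control_milstein})), then composing with $y\mapsto \exp(\lambda y^{p})$. Since $p\leq 1/2$, the composition $\exp(\lambda V^{p})$ grows slower than a Gaussian moment generating function in the Brownian increments, and the conditional expectation
\[
\mathbb{E}\bigl[\exp(\lambda V(\overline X_{\Gamma_{n+1}})^{p})\,\big|\,\overline X_{\Gamma_{n}}\bigr]
\]
can be controlled by a Gaussian-type integral in $U_{n+1}$ and $\mathcal{W}_{n+1}$ of the schematic form $\mathbb{E}[\exp(\langle a(x),U\rangle + \tfrac12 \langle U,M(x)U\rangle)]$ with a diffusive drift shift corresponding to $\kappa_{p}(x)$ and a quadratic penalty whose positive-definite part is exactly $\Sigma(x)$. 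The requirement $\Sigma(x)\in \mathcal{S}^{d}_{+,\ast}$ is precisely what is needed to close the Gaussian integral, and the $-\ln\det\Sigma(x)$ appearing in $\chi_{p}$ arises from the normalization constant of this integral. Matching the resulting bound with $\beta-\alpha\phi\circ V$ is then exactly the content of the hypothesis $\mathcal{R}_{p,\lambda}(\alpha,\beta,\phi,V)$ in (\ref{hyp:recursive_control_param_milstein_expo}), which is imported after extracting the leading term $\langle\nabla V(x),b(x)+\kappa_{p}(x)\rangle+\tfrac12\chi_{p}(x)$ and bounding the remainders via (\ref{hyp:Lyapunov_control_milstein}) and $\mathfrak{B}(\phi)$.

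For (b), using the decomposition of $\Delta\overline X_{n+1}$ together with the BDG inequality (\ref{eq:BDG_inegalite}) and the moment assumptions on $U_{n+1}$ and $\mathcal{W}_{n+1}$ produces an estimate of the form $\mathbb{E}[|(\psi\circ V)^{1/s}(\overline X_{\Gamma_{n+1}})-\mathscr{Q}_{\gamma_{n+1}}(\psi\circ V)^{1/s}(\overline X_{\Gamma_{n}})|^{\rho}\mid \overline X_{\Gamma_{n}}]\leq C\gamma_{n+1}^{\rho/2}g(\overline X_{\Gamma_{n}})$ with $g$ a suitable power/exponential mixture of $V$; here the condition $\rho<s$ is crucial to absorb the exponential through a Jensen/Hölder argument so that $g$ remains dominated by $\tilde V_{\psi,\phi,1}$ and the already-assumed $\mathcal{S}\mathcal{W}_{\mathcal{I},\gamma,\eta}(\rho,\epsilon_{\mathcal{I}})$ closes the step-weight condition via Lemma \ref{lemme:mom_V}. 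For (d), a standard second-order Taylor expansion for $f\in\mathcal{C}^{2}_{K}(\mathbb{R}^d)$ along $\Delta\overline X_{n+1}$, exploiting the matching moments $M_{\mathcal{N},2}(U)$ built into the Brownian increments (here only $\rho/2$-order local error is needed since $f$ is compactly supported), gives $|\widetilde A_{\gamma}f(x)-Af(x)|\leq \Lambda_{f}(x,\gamma)$ with $\Lambda_{f}$ of the representable form required by (\ref{hyp:erreur_tems_cours_fonction_test_reg}); the sublinear growth of $b,\sigma$, and $\sum|\partial_{x_{l}}\sigma_{i}\sigma_{l,j}|$ ensures the uniform integrability inherent in Assumption \ref{hyp:erreur_tems_cours_fonction_test_reg_Lambda_representation_1}. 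Combining (a)--(d), Theorem \ref{th:identification_limit} identifies every weak limit of $(\nu_{n}^{\eta})$ as an element of $\mathcal{V}$ and, under uniqueness, promotes the convergence to the class $\mathcal{C}_{\tilde V_{\psi,\phi,s}}(\mathbb{R}^d)$.
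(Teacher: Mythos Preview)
Your proposal is correct and follows essentially the same architecture as the paper: verify the hypotheses of Theorems \ref{th:tightness} and \ref{th:identification_limit} via Proposition \ref{prop:recursive_control_milstein_exp} for (a), Lemma \ref{lemme:incr_lyapunov_X_milstein_expo} for (b), Lemma \ref{lemme:mom_V} for (c), and Proposition \ref{prop:milstein_infinitesimal_approx} for (d). Two small technical notes when you implement the details: for (a) the paper does not use a third-order Taylor expansion of $V$ but rather the concavity of $y\mapsto y^{p}$ and $y\mapsto y^{2p}$ (valid since $p\le 1/2$) to isolate the Milstein correction linearly, together with the commutative-noise rewriting (\ref{def:incr_milstein_expo}) so that the conditional expectation reduces to the Gaussian Laplace-type bound of Lemma \ref{lemme:transformee_laplace_prod_gauss}; and for (b) BDG is not needed---the paper uses that $V^{p}$ is $2p$-H\"older combined with a H\"older inequality (this is where $\rho<s$ enters) and the one-step bound (\ref{hyp:incr_lyapunov_X_milstein_expo}), which in turn requires the recursive control from (a) for all $\tilde\lambda\le\lambda$, not just $\lambda$ itself.
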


\subsubsection{Recursive control}
\paragraph{Polynomial case}

\begin{myprop}
\label{prop:recursive_control_milstein}
Let $v_{\ast}>0$, and let $\phi:[v_{\ast},\infty )\to \mathbb{R}_+$ be a continuous function such that $C_{\phi}:= \sup_{y \in [v_{\ast},\infty )}\phi(y)/y<+ \infty$.  Now let $p >0$ and define $\psi_p(y)=y^p$. Let $\alpha>0$ and $\beta \in \mathbb{R}$. \\

Assume that $(U_n)_{n \in \mathbb{N}^{\ast}} $ is a sequence of independent random variables such that $U$ satisfies $M_{\mathcal{N},2}(U) $ (see (\ref{hyp:matching_normal_moment_ordre_q_va_schema_milstein})) and $M_{p \vee 1} (U)$ (see (\ref{hyp:moment_ordre_p_va_schema_milstein})). Moreover, assume that $(\mathcal{W}_n)_{n \in \mathbb{N}^{\ast} }$ is a sequence of independent and centered random variables such that $M_{p \vee 1}(\mathcal{W})$ (see (\ref{hyp:moment_ordre_p_va_schema_milstein})) holds. \\
Also assume that (\ref{hyp:Lyapunov_control_milstein}), $\mathfrak{B}(\phi)$ (see (\ref{hyp:controle_coefficients_milstein})), $\mathcal{R}_p(\alpha,\beta, \phi, V)$ (see (\ref{hyp:recursive_control_param_milstein})), are satisfied. \\

Then, for every $\tilde{\alpha}\in (0, \alpha)$, there exists $n_0 \in \mathbb{N}^{\ast}$, such that 

\begin{align}
\label{eq:recursive_control_milsteint_fonction_pol}
 \forall n   \geqslant n_0, \forall x \in \mathbb{R}^d,  \quad\tilde{A}_{\gamma_n} \psi \circ V(x)\leqslant \frac{ \psi_p \circ V(x)}{V(x)}p(\beta - \tilde{\alpha} \phi\circ V(x)). 
\end{align}

Then $\mathcal{RC}_{Q,V}(\psi,\phi,p\tilde{\alpha},p\beta)$ (see (\ref{hyp:incr_sg_Lyapunov})) holds for every $\tilde{\alpha}\in (0, \alpha)$ such that $\liminf\limits_{y \to + \infty} \phi(y) > \beta / \tilde{\alpha}$. Moreover, when $\phi=Id$ we have

\begin{align}
\label{eq:mom_pol_milstein}
\sup_{n \in \mathbb{N}} \mathbb{E}[V^p(\overline{X}_{\Gamma_{n}})] < + \infty.
\end{align}
\end{myprop}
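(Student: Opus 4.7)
The plan is to Taylor-expand $\psi_p\circ V=V^p$ to second order along one step of the Milstein scheme, take the conditional expectation, identify the dominant terms with those appearing in $\mathcal{R}_p(\alpha,\beta,\phi,V)$, and absorb the $\gamma$-small remainder into a fraction of the coercive term $-\alpha\,\phi\circ V$ to downgrade $\alpha$ to any $\tilde{\alpha}<\alpha$. The moment bound (\ref{eq:mom_pol_milstein}) will then follow directly from Lemma \ref{lemme:mom_psi_V}.

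Given $\overline{X}_{\Gamma_n}=x$, write the one-step increment $\Delta\overline{X}_{n+1}=\Delta\overline{X}^1_{n+1}+\Delta\overline{X}^2_{n+1}+\Delta\overline{X}^3_{n+1}$ as in (\ref{def:incr_milstein}). Since $V\geqslant v_*>0$, $V^p$ is $\mathcal{C}^2$ with
\[
\nabla V^p=pV^{p-1}\nabla V,\quad D^2V^p=pV^{p-1}\bigl[D^2V+(p-1)V^{-1}\nabla V^{\otimes 2}\bigr],
\]
and Taylor's formula at an intermediate point $\xi$ between $x$ and $x+\Delta\overline{X}_{n+1}$ gives
\[
V^p(\overline{X}_{\Gamma_{n+1}})-V^p(x)= p V^{p-1}(x)\,\langle \nabla V(x),\Delta\overline{X}_{n+1}\rangle+\tfrac{1}{2}\Delta\overline{X}_{n+1}^{\top}D^2V^p(\xi)\Delta\overline{X}_{n+1}.
\]
For the linear part, $M_{\mathcal{N},2}(U)$ and the centering of $\mathcal{W}_{n+1}$ leave only $\gamma_{n+1}\, p\, V^{p-1}(x)\langle \nabla V(x),b(x)\rangle$. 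For the quadratic part, I would split $D^2V^p(\xi)=D^2V^p(x)+[D^2V^p(\xi)-D^2V^p(x)]$; using $\mathbb{E}_x[\Delta\overline{X}^2_{n+1}(\Delta\overline{X}^2_{n+1})^{\top}]=\gamma_{n+1}\sigma\sigma^{\ast}(x)$ and the independence/centering of $\mathcal{W}_{n+1}$ (which kills the cross term $\Delta\overline{X}^2_{n+1}(\Delta\overline{X}^3_{n+1})^{\top}$), the leading contribution reduces to $\tfrac{\gamma_{n+1}}{2}\Tr[D^2V^p(x)\sigma\sigma^{\ast}(x)]$. Since $D^2V+(p-1)V^{-1}\nabla V^{\otimes 2}$ is dominated in the symmetric sense by $D^2V+2(p-1)V^{-1}\nabla V^{\otimes 2}$ when $p\geqslant 1$ and by $D^2V$ when $p\leqslant 1$, the inequality $\Tr[SM]\leqslant \lambda_M\Tr[S]$ for PSD $S$ together with the definition (\ref{def:lambda_psi_milstein}) of $\lambda_p$ yields $\tfrac{1}{2}\Tr[D^2V^p(x)\sigma\sigma^{\ast}(x)]\leqslant \tfrac{p}{2}V^{p-1}(x)\chi_p(x)$ with $\chi_p$ as in (\ref{hyp:recursive_control_param_terme_ordre_sup_milstein}).

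Dividing by $\gamma_{n+1}$ and invoking $\mathcal{R}_p(\alpha,\beta,\phi,V)$ produces
\[
\widetilde{A}_{\gamma_{n+1}}(V^p)(x)\leqslant p\,V^{p-1}(x)\bigl[\beta-\alpha\,\phi\circ V(x)\bigr]+R_{n+1}(x),
\]
where $R_{n+1}$ gathers the Taylor mismatch $D^2V^p(\xi)-D^2V^p(x)$ weighted by $|\Delta\overline{X}_{n+1}|^2$, the $|\Delta\overline{X}^1_{n+1}|^2$ and $|\Delta\overline{X}^3_{n+1}|^2$ self-terms, and the second-order drift contribution. Using (\ref{hyp:Lyapunov_control_milstein}), the growth control $\mathfrak{B}(\phi)$, and the moment bounds $M_{p\vee 1}(U),M_{p\vee 1}(\mathcal{W})$, each summand of $R_{n+1}(x)$ admits an estimate $C\gamma_{n+1}^{\delta}V^{p-1}(x)\phi\circ V(x)$ for some $\delta>0$ (the factor $2^{(2p-3)_+}$ in (\ref{hyp:recursive_control_param_terme_ordre_sup_milstein}) coming from $(a+b)^{2(p-1)}\leqslant 2^{(2p-3)_+}(a^{2(p-1)}+b^{2(p-1)})$ when $p>1$). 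Since $\gamma_n\to 0$, for any $\tilde{\alpha}\in(0,\alpha)$ one picks $n_0$ such that $R_{n+1}(x)\leqslant p(\alpha-\tilde{\alpha})V^{p-1}(x)\phi\circ V(x)$ uniformly in $x$ and in $n+1\geqslant n_0$; this is exactly (\ref{eq:recursive_control_milsteint_fonction_pol}), and since $\psi_p\circ V/V=V^{p-1}$ it reads as $\mathcal{RC}_{Q,V}(\psi_p,\phi,p\tilde{\alpha},p\beta)$ as soon as $\liminf_{y\to\infty}\phi(y)>\beta/\tilde{\alpha}$. The bound (\ref{eq:mom_pol_milstein}) then follows from Lemma \ref{lemme:mom_psi_V} with $\psi=\psi_p$ and $\phi=I_d$; the integrability $\mathbb{E}[V^p(\overline{X}_{\Gamma_{n_0}})]<+\infty$ required there is verified by induction on $n_0$ from the local boundedness of $b,\sigma,\partial_{x_l}\sigma_i\sigma_{l,j}$ and the moment assumptions on $U,\mathcal{W}$.

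The main obstacle is the uniform-in-$x$ control of $R_{n+1}(x)$: the intermediate Taylor point $\xi$ depends on $\Delta\overline{X}_{n+1}$, so $V(\xi)$ is not a priori comparable to $V(x)$ on rare large-deviation events of the Brownian increment. The trick will be to exploit the essentially quadratic character of $V$ expressed in (\ref{hyp:Lyapunov_control_milstein}) (i.e.\ $|\nabla V|^2\leqslant C_VV$ and $\sup|D^2V|<\infty$) to deduce $\sqrt{V(\xi)}\leqslant \sqrt{V(x)}+C|\Delta\overline{X}_{n+1}|$ and hence $V^{p-1}(\xi)\leqslant C\bigl(V^{p-1}(x)+|\Delta\overline{X}_{n+1}|^{2(p-1)}\bigr)$ (this is the source of the factor $2^{(2p-3)_+}$ for $p>3/2$). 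Taking conditional expectation and applying $\mathfrak{B}(\phi)$ then turns each factor $|\Delta\overline{X}_{n+1}|^2$ into $\gamma_{n+1}\phi\circ V(x)$ (with an additional $\gamma_{n+1}^{1/2}$ coming from $\Delta\overline{X}^3_{n+1}$), yielding $R_{n+1}(x)=o(1)V^{p-1}(x)\phi\circ V(x)$ uniformly as $\gamma_{n+1}\to 0$, which closes the argument.
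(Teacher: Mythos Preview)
Your argument is essentially the paper's for $p\geqslant 1$: Taylor-expand $V^p$, keep the linear term $\gamma_{n+1}pV^{p-1}(x)\langle\nabla V(x),b(x)\rangle$, bound the full quadratic form at the intermediate point by $\tfrac{p}{2}\lVert\lambda_p\rVert_\infty V^{p-1}(\xi)|\Delta\overline{X}_{n+1}|^2$, and transfer $V^{p-1}(\xi)$ to $V^{p-1}(x)$ via the $\sqrt V$-Lipschitz inequality $V^{p-1}(\xi)\leqslant 2^{(2p-3)_+}\bigl(V^{p-1}(x)+[\sqrt V]_1^{2p-2}|\Delta\overline{X}_{n+1}|^{2p-2}\bigr)$. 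The paper does \emph{not} split $D^2V^p(\xi)=D^2V^p(x)+\text{mismatch}$ as you suggest; that split would require H\"older regularity of $D^2V$, which is not assumed. The direct bound at $\xi$ you describe in your ``obstacle'' paragraph is the actual argument, and it suffices. One small inaccuracy: under the stated hypotheses no joint law of $(U_n,\mathcal W_n)$ is prescribed, so the cross term $\Delta\overline X^2_{n+1}(\Delta\overline X^3_{n+1})^\top$ is not claimed to vanish; the paper simply bounds it by Cauchy--Schwarz as a harmless $O(\gamma_{n+1}^{3/2})$ contribution.

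There is, however, a genuine gap for $p\in(0,1)$. The inequality $V^{p-1}(\xi)\leqslant C\bigl(V^{p-1}(x)+|\Delta\overline{X}_{n+1}|^{2(p-1)}\bigr)$ on which you rely is only valid for $p\geqslant 1$; for $p<1$ the exponent $2(p-1)$ is negative and the right-hand side blows up as the increment shrinks. More fundamentally, any crude bound of the second-order term at $\xi$ (e.g.\ via $V^{p-1}(\xi)\leqslant v_\ast^{p-1}$) produces a remainder of order $\phi\circ V(x)$, \emph{without} the factor $V^{p-1}(x)$, and since $V^{p-1}(x)\to 0$ at infinity this cannot be absorbed into $(\alpha-\tilde\alpha)pV^{p-1}(x)\phi\circ V(x)$. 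The paper sidesteps the issue entirely: for $p\in(0,1)$ the map $y\mapsto y^p$ is concave, hence
\[
V^p(\overline{X}_{\Gamma_{n+1}})-V^p(\overline{X}_{\Gamma_n})\;\leqslant\; pV^{p-1}(\overline{X}_{\Gamma_n})\bigl(V(\overline{X}_{\Gamma_{n+1}})-V(\overline{X}_{\Gamma_n})\bigr),
\]
which instantly supplies the factor $V^{p-1}(x)$ and reduces everything to the already-established case $p=1$. You should add this one-line concavity step to cover $p<1$.
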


\begin{proof} 
We distinguish the cases $p \geqslant 1$ and $p \in(0,1)$.
\paragraph{Case $p\geqslant 1$. }
 First ,we focus on the case $p \geqslant 1$.  From the Taylor's formula and the definition of $\lambda_{\psi_p}=\lambda_p$ (see (\ref{def:lambda_psi_milstein})), we have

\begin{align}
\label{eq:taylor_preuve_RC_pol_milstein}
\psi_{p} \circ V(\overline{X}_{\Gamma_{n+1}})=& \psi_{p} \circ V(\overline{X}_{\Gamma_n})+ \langle \overline{X}_{\Gamma_{n+1}}-\overline{X}_{\Gamma_n}, \nabla V(\overline{X}_{\Gamma_n}) \rangle \psi_{p}'\circ V(\overline{X}_{\Gamma_n}) \nonumber \\
&+ \frac{1}{2} ( D^2 V(\Upsilon_{n+1}) \psi_{p}' \circ  V(\Upsilon_{n+1})+\nabla V (\Upsilon_{n+1} )^{\otimes 2} \psi_{p}'' \circ V(\Upsilon_{n+1}) )
 ( \overline{X}_{\Gamma_{n+1}}-\overline{X}_{\Gamma_n} )^{\otimes 2} \nonumber \\
 \leqslant & \psi_{p} \circ V(\overline{X}_{\Gamma_n})+ \langle \overline{X}_{\Gamma_{n+1}}-\overline{X}_{\Gamma_n}, \nabla V(\overline{X}_{\Gamma_n}) \rangle \psi_{p}'\circ V(\overline{X}_{\Gamma_n})  \nonumber \\
&+ \frac{1}{2} \lambda_{p} (\Upsilon_{n+1} )  \psi_{p}'\circ V(\Upsilon_{n+1}) \vert  \overline{X}_{\Gamma_{n+1}}-\overline{X}_{\Gamma_n}  \vert^{2}. 
\end{align}

%

with $\Upsilon_{n+1}  \in (\overline{X}_{\Gamma_n}, \overline{X}_{\Gamma_{n+1}})$. First, from (\ref{hyp:Lyapunov_control_milstein}), we have $\sup_{x \in \mathbb{R}^d} \lambda_{p} (x)  < + \infty$.
%
 
 Since $\mathcal{W}$ is made of centered random variables, we deduce from $M_{\mathcal{N},2}(U) $ (see (\ref{hyp:matching_normal_moment_ordre_q_va_schema_milstein})), $M_{ 2} (U)$ (see (\ref{hyp:moment_ordre_p_va_schema_milstein})) and $M_{2}(\mathcal{W})$ (see (\ref{hyp:moment_ordre_p_va_schema_milstein})), that

\begin{align*}
& \mathbb{E}[\overline{X}_{\Gamma_{n+1}}-\overline{X}_{\Gamma_n} \vert \overline{X}_{\Gamma_n} ]= \gamma_{n+1}  b(\overline{X}_{\Gamma_n} )\\
& \mathbb{E}[ \vert  \overline{X}_{\Gamma_{n+1}}-\overline{X}_{\Gamma_n} \vert^{2} \vert \overline{X}_{\Gamma_n} ]\leqslant \gamma_{n+1}\mbox{Tr}[\sigma \sigma^{\ast}(\overline{X}_{\Gamma_n} )]  + \gamma_{n+1}^2 \vert b(\overline{X}_{\Gamma_n} ) \vert^{2}+c_d  \gamma_{n+1}^2\sum_{i,j,l=1}^d \vert \partial_{x_l} \sigma_i ( \overline{X}_{\Gamma_n}  )  \sigma_{l,j}(\overline{X}_{\Gamma_n} )  \vert^2  \\
& \qquad \qquad \qquad \qquad \qquad  \qquad +c_d  \gamma_{n+1}^{3/2}\sum_{i,j,l=1}^d \vert \partial_{x_l} \sigma_i ( \overline{X}_{\Gamma_n}  )  \sigma_{l,j}(\overline{X}_{\Gamma_n} )  \vert \mbox{Tr}[\sigma \sigma^{\ast}(\overline{X}_{\Gamma_n} )]^{1/2}
\end{align*}

with $c_d$ a positive constant. Assume first that $p=1$. Using $\mathfrak{B}(\phi)$ (see (\ref{hyp:controle_coefficients_milstein})), for every $\tilde{\alpha} \in (0, \alpha)$, there exists $n_0(\tilde{\alpha})$ such that for every $n\geqslant n_0(\tilde{\alpha})$,
\begin{align}
\label{eq:recursive_control_remaider_Id_milstein}
\frac{1}{2} \Vert \lambda_{1} \Vert_{\infty}    \gamma_{n+1}^2 ( & \vert b(\overline{X}_{\Gamma_n} ) \vert^2 +  c_d\vert  \sum_{i,j,l=1}^d \partial_{x_l} \sigma_i ( \overline{X}_{\Gamma_n} )  \sigma_{l,j}( \overline{X}_{\Gamma_n} )  \vert^2 ) \\
+ & \frac{1}{2} \Vert \lambda_{1} \Vert_{\infty} c_d  \gamma_{n+1}^{3/2}\sum_{i,j,l=1}^d \vert \partial_{x_l} \sigma_i ( \overline{X}_{\Gamma_n}  )  \sigma_{l,j}(\overline{X}_{\Gamma_n} )  \vert \mbox{Tr}[\sigma \sigma^{\ast}(\overline{X}_{\Gamma_n} )]^{1/2} \leqslant \gamma_{n+1}(\alpha- \tilde{\alpha})\phi \circ V(\overline{X}_{\Gamma_n} ).  \nonumber 
\end{align}
From assumption $\mathcal{R}_p(\alpha,\beta, \phi, V)$ (see (\ref{hyp:recursive_control_param_milstein}) and (\ref{hyp:recursive_control_param_terme_ordre_sup_milstein})), we conclude that
\begin{align*}
\tilde{A}_{\gamma_n} \psi \circ V(x) \leqslant \beta - \tilde{\alpha} \phi \circ V(x)
\end{align*}
Assume now that $p>1$.Since $\vert \nabla V \vert \leqslant C_V V$ (see (\ref{hyp:Lyapunov_control_milstein})), then $\sqrt{V}$ is Lipschitz. Now, we use the following inequality: Let $l \in \mathbb{N}^{\ast}$. We have

\begin{align}
\label{eq:puisance_somme_n_terme}
\forall \alpha >0 ,\forall u_i \in \mathbb{R}^d, i=1,\ldots,l, \qquad  \big \vert  \sum_{i=1}^l u_i \big \vert^{\alpha} \leqslant l^{(\alpha-1)_+}  \sum_{i=1}^l \vert u_i  \vert^{\alpha} .
\end{align}
\begin{align*}
V^{p-1} (\Upsilon_{n+1} )  \leqslant & \big( \sqrt{V}(\overline{X}_{\Gamma_n})+[\sqrt{V}]_1 \vert \overline{X}_{\Gamma_{n+1}}-\overline{X}_{\Gamma_n} \vert \big)^{2p-2} \\
\leqslant & 2^{(2p-3)_+} (V^{p-1}(\overline{X}_{\Gamma_n}) + [\sqrt{V}]_1^{2p-2} \vert \overline{X}_{\Gamma_{n+1}}-\overline{X}_{\Gamma_n} \vert^{2p-2})
\end{align*}
To study the `remainder' of (\ref{eq:taylor_preuve_RC_pol_milstein}), we multiply the above inequality by $\vert \overline{X}_{\Gamma_{n+1}}-\overline{X}_{\Gamma_n} \vert^{2}$. First, we study the second term which appears in the $r.h.s.$ and using $\mathfrak{B}(\phi)$ (see (\ref{hyp:controle_coefficients_milstein})), for every $p \geqslant 1$,
\begin{align*}
\vert  \overline{X}_{\Gamma_{n+1}}- \overline{X}_{\Gamma_n} \vert^{2p} \leqslant C \gamma_{n+1}^p \phi \circ V(\overline{X}_{\Gamma_{n}})^p(1+\vert U_{n+1} \vert^{2p} +\vert \mathcal{W}_{n+1} \vert^{2p}).
\end{align*}

 Let $\hat{\alpha} \in (0,\alpha)$. Then, we deduce from $M_{2p } (U)$ (see (\ref{hyp:moment_ordre_p_va_schema_milstein})), $M_{2p }(\mathcal{W})$ (see (\ref{hyp:moment_ordre_p_va_schema_milstein})), that there exists $n_0(\hat{\alpha}) \in \mathbb{N}$ such that for any $n \geqslant n_0(\hat{\alpha})$, we have
\begin{align*}
 \mathbb{E}[ \vert \overline{X}_{\Gamma_{n+1}}- \overline{X}_{\Gamma_n} \vert^{2p} \vert \overline{X}_{\Gamma_n} ] \leqslant \gamma_{n+1} \phi \circ V (\overline{X}_{\Gamma_{n}})^{p} \frac{\alpha- \hat{\alpha} }{\Vert \phi /I_d \Vert_{\infty}^{p-1} \Vert \lambda_{p} \Vert_{\infty} 2^{(2p-3)_+} [\sqrt{V}]_1^{2p-2} } 
\end{align*}

To treat the other term of the `remainder' of (\ref{eq:taylor_preuve_RC_pol_milstein}) we proceed as in (\ref{eq:recursive_control_remaider_Id_milstein}) with $\Vert \lambda_{1} \Vert_{\infty}$ replaced by $\Vert \lambda_{p} \Vert_{\infty} 2^{2p-3} [\sqrt{V}]_1^{2p-2}  $, $\alpha$ replace by $ \hat{\alpha}$ and $\tilde{\alpha} \in (0, \hat{\alpha})$. We gather all the terms together and using (\ref{hyp:recursive_control_param_terme_ordre_sup_milstein}), for every $n \geqslant n_0(\tilde{\alpha}) \vee n_0(\hat{\alpha}) $, we obtain

\begin{align*}
\mathbb{E}[V^p (\overline{X}_{\Gamma_{n+1}})- V^p(\overline{X}_{\Gamma_{n}}) \vert \overline{X}_{\Gamma_{n}}] \leqslant & \gamma_{n+1}p V^{p-1}(\overline{X}_{\Gamma_{n}})( \beta - \alpha \phi \circ V (\overline{X}_{\Gamma_{n}})  ) \\
+ &\gamma_{n+1}p V^{p-1}(\overline{X}_{\Gamma_{n}}) \big(  \phi \circ V (\overline{X}_{\Gamma_{n}}) (\hat{\alpha} -\tilde{\alpha}  ) + (\alpha-\hat{\alpha})  \frac{ V^{1-p}(\overline{X}_{\Gamma_{n}})  \phi \circ V (\overline{X}_{\Gamma_{n}})^{p} }{\Vert \phi /I_d \Vert_{\infty}^{p-1}}    \big) \\
\leqslant&\gamma_{n+1} V^{p-1}(\overline{X}_{\Gamma_{n}})( \beta p- \tilde{\alpha} p  \phi \circ V (\overline{X}_{\Gamma_{n}})  ). \\
\end{align*}
which is exactly the recursive control for $p>1$. 

\paragraph{Case $p\in(0,1)$. }
Now, let $p \in (0,1)$ so that $x \mapsto x^{p}$ is concave. it follows that
\begin{align*}
V^{p}(\overline{X}_{\Gamma_{n+1}} ) - V^{p}(\overline{X}_{\Gamma_{n}} ) \leqslant pV^{p-1}(\overline{X}_{\Gamma_{n}}  ) (V(\overline{X}_{\Gamma_{n+1}} ) - V(\overline{X}_{\Gamma_{n}} ) )
\end{align*}
We have just proved that we have the recursive control $\mathcal{RC}_{Q,V}(\psi , \phi, \alpha, \beta)$ holds for $\psi=I_d$ (with $\alpha$ replaced by $\tilde{\alpha} >0$), and since $V$ takes positive values, we obtain

\begin{align*}
\mathbb{E}[V^{p}(\overline{X}_{\Gamma_{n+1}} ) - V^{p}(\overline{X}_{\Gamma_{n}} )\vert \overline{X}_{\Gamma_{n}} ] \leqslant  & pV^{p-1}(\overline{X}_{\Gamma_{n}}  ) \mathbb{E}[V(\overline{X}_{\Gamma_{n+1}} ) - V(\overline{X}_{\Gamma_{n}} ) \vert \overline{X}_{\Gamma_{n}} ] \\
 \leqslant & V^{p-1}(\overline{X}_{\Gamma_{n}}  ) (p\beta - p\tilde{\alpha} \phi \circ V (\overline{X}_{\Gamma_{n}}  ) ),
\end{align*}
which completes the proof of (\ref{eq:recursive_control_milsteint_fonction_pol}). The proof of (\ref{eq:mom_pol_milstein}) is an immediate application of Lemma \ref{lemme:mom_psi_V} as soon as we notice that the increments of the Milstein scheme have finite polynomial moments which implies (\ref{eq:mom_psi_V}).

\end{proof}

\paragraph{Exponential case \\}

In this section we will not relax the assumption on the Gaussian structure of the increment as we do in the polynomial case with hypothesis  (see (\ref{hyp:matching_normal_moment_ordre_q_va_schema_milstein}) and (\ref{hyp:moment_ordre_p_va_schema_milstein})). In order to obtain our result, we introduce a supplementary assumption in order to express the iterated stochastic integrals in terms of products of the increments of the Brownian motion. The so called commutative noise assumption is the following
\begin{align*}
\forall x \in \mathbb{R}^d, \forall i,j \in \{1,\ldots,d \}, \qquad  \sum_{l=1}^d \partial_{x_l} \sigma_i ( x)  \sigma_{l,j}( x )  =\sum_{l=1}^d \partial_{x_l} \sigma_j ( x )  \sigma_{l,i}( x )  .
\end{align*}

In this case, with the notation from (\ref{def:incr_milstein}), we have
\begin{align*}
\Delta \overline{X}_{n+1}^{3}= &\frac{1}{2} \sum_{i,j=1}^d \sum_{l=1}^d \partial_{x_l} \sigma_i ( \overline{X}_{\Gamma_n} )  \sigma_{l,j}( \overline{X}_{\Gamma_n} )   (W^j_{\Gamma_{n+1}}- W^j_{\Gamma_{n}} )(W^i_{\Gamma_{n+1}}-W^i_{\Gamma_{n}}) \\
- & \frac{1}{2} \gamma_{n+1} \sum_{i=1}^d \sum_{l=1}^d \partial_{x_l} \sigma_i ( \overline{X}_{\Gamma_n} )  \sigma_{l,i}( \overline{X}_{\Gamma_n} ).    \nonumber
\end{align*}
In the sequel we will adopt the following notation

\begin{align}
\label{def:incr_milstein_expo}
\tilde{\Delta} \overline{X}^1_{n+1} = &  \gamma_{n+1}b(\overline{X}_{\Gamma_{n}}) -  \frac{1}{2} \gamma_{n+1} \sum_{i=1}^d \sum_{l=1}^d \partial_{x_l} \sigma_i ( \overline{X}_{\Gamma_n} )  \sigma_{l,i}( \overline{X}_{\Gamma_n} ).    , \nonumber \\
\tilde{\Delta} \overline{X}^{3}_{n+1} = &   \frac{1}{2} \sum_{i,j=1}^d \sum_{l=1}^d \partial_{x_l} \sigma_i ( \overline{X}_{\Gamma_n} )  \sigma_{l,j}( \overline{X}_{\Gamma_n} )   (W^j_{\Gamma_{n+1}}- W^j_{\Gamma_{n}} )(W^i_{\Gamma_{n+1}}-W^i_{\Gamma_{n}})
\end{align}

\begin{lemme}
\label{lemme:transformee_laplace_prod_gauss}
Let $\Lambda \in \mathbb{R}^{d \times d}$ and $U \sim \mathcal{N}(0,I_d)$. We define $\Sigma \in \mathbb{R}^{d \times d}$ by $\Sigma=I_d-2 \Lambda^{\ast} \Lambda$. Assume that $\Sigma \in \mathcal{S}_{+,\ast}^d$. Then,  for every $h \in (0,1)$,

\begin{align}
\label{eq:transformee_laplace_jointe}
\forall v \in \mathbb{R}^d , \qquad \mathbb{E}\Big[\exp \Big( \sqrt{h}  \langle v , U \rangle    + h \vert \Lambda U \vert^2 \Big) \Big]  \leqslant   \exp\Big( \frac{h}{2(1-h) } \vert v \vert^2    \Big) \det( \Sigma)^{-h/2}.
\end{align}


\end{lemme}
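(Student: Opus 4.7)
The plan is to reduce the bound to a direct Gaussian integral computation after completing the square, then control the two resulting factors (a quadratic form in $v$ and a determinant) separately by spectral arguments.

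First I would observe that the hypothesis $\Sigma = I_d - 2\Lambda^{\ast}\Lambda \in \mathcal{S}_{+,\ast}^d$ forces all eigenvalues $\mu_1,\ldots,\mu_d$ of the symmetric nonnegative matrix $\Lambda^{\ast}\Lambda$ to lie in $[0,1/2)$. Consequently, for every $h \in (0,1)$ the matrix
\[
\Sigma_h := I_d - 2h \Lambda^{\ast}\Lambda
\]
has eigenvalues $1 - 2h\mu_i \in (1-h,1] \subset (0,1]$ and is therefore positive definite. This allows me to write the exponent as
\[
\sqrt{h}\langle v, U\rangle + h\vert \Lambda U\vert^2 - \tfrac{1}{2}\vert U\vert^2 = -\tfrac{1}{2} U^{\ast} \Sigma_h U + \sqrt{h}\langle v,U\rangle,
\]
complete the square, and apply the standard Gaussian integral formula to obtain the exact identity
\[
\mathbb{E}\bigl[\exp(\sqrt{h}\langle v, U\rangle + h\vert\Lambda U\vert^2)\bigr] = \det(\Sigma_h)^{-1/2} \exp\!\left(\tfrac{h}{2} v^{\ast} \Sigma_h^{-1} v\right).
\]

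Next I would establish the two needed majorizations. For the quadratic form, diagonalize $\Lambda^{\ast}\Lambda$ in an orthonormal basis: since every eigenvalue of $\Sigma_h$ satisfies $1-2h\mu_i \geqslant 1-h$ (because $\mu_i < 1/2$), one has $\Sigma_h^{-1} \preccurlyeq (1-h)^{-1} I_d$, hence
\[
v^{\ast} \Sigma_h^{-1} v \leqslant \frac{\vert v\vert^2}{1-h}.
\]
For the determinant, I would compare $\det(\Sigma_h) = \prod_i(1-2h\mu_i)$ with $\det(\Sigma)^h = \prod_i (1-2\mu_i)^h$ factor by factor. The inequality $(1-2\mu_i)^h \leqslant 1 - 2h\mu_i$ is a consequence of the concavity of $x \mapsto x^h$ on $(0,\infty)$ for $h \in (0,1)$: the chord inequality $x^h \leqslant 1 + h(x-1)$ at the base point $1$, applied with $x = 1-2\mu_i \in (0,1]$, gives exactly this. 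Multiplying over $i$ yields $\det(\Sigma)^h \leqslant \det(\Sigma_h)$, equivalently $\det(\Sigma_h)^{-1/2} \leqslant \det(\Sigma)^{-h/2}$.

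Combining the exact Gaussian identity with the two bounds produces \eqref{eq:transformee_laplace_jointe}. The only mildly delicate point is the determinant comparison; everything else is a routine Gaussian computation. The scalar concavity inequality $(1-2\mu_i)^h \leqslant 1-2h\mu_i$ is the key analytic input, and I expect that to be where the structure $h \in (0,1)$ (rather than an arbitrary $h>0$) is really used, together with the positivity of $\Sigma$.
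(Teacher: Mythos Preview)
Your proof is correct but follows a genuinely different route from the paper. The paper does not compute the exact expectation: instead it first evaluates $\mathbb{E}[\exp(|\Lambda U|^2)] = \det(\Sigma)^{-1/2}$ by a direct Gaussian integral, and then applies H\"older's inequality with conjugate exponents $1/(1-h)$ and $1/h$ to split
\[
\mathbb{E}\bigl[\exp(\sqrt{h}\langle v,U\rangle + h|\Lambda U|^2)\bigr]
\;\leqslant\;
\mathbb{E}\bigl[\exp(\tfrac{\sqrt{h}}{1-h}\langle v,U\rangle)\bigr]^{1-h}\,
\mathbb{E}\bigl[\exp(|\Lambda U|^2)\bigr]^{h},
\]
after which the two factors are computed in closed form. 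Your approach instead computes the full expectation exactly via completing the square, obtaining $\det(\Sigma_h)^{-1/2}\exp(\tfrac{h}{2}v^\ast\Sigma_h^{-1}v)$, and then uses two spectral inequalities (the eigenvalue lower bound $\Sigma_h \succcurlyeq (1-h)I_d$ and the concavity estimate $(1-2\mu_i)^h \leqslant 1-2h\mu_i$) to reach the stated bound. The paper's route is shorter and avoids the determinant comparison altogether, whereas yours yields a sharp intermediate identity that makes the source of each loss transparent; both arguments use the hypothesis $h\in(0,1)$ in an essential way (H\"older requires $1/(1-h),1/h>1$; your concavity step requires $x\mapsto x^h$ concave).
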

\begin{proof}
A direct computation yields
\begin{align*}
\mathbb{E}[\exp ( \vert \Lambda U\vert^2  )] = & \int_{\mathbb{R}^d} (2 \pi)^{-d/2}\exp \Big( -\frac{1}{2} \langle -2\Lambda^{\ast} \Lambda u+u,u \rangle \Big) du = \det( \Sigma)^{-1/2}.
\end{align*}
Now, (\ref{eq:transformee_laplace_jointe}) follows from the H\" older inequality since
\begin{align*}
 \mathbb{E}[\exp (  \sqrt{h}  \langle v , U \rangle    + h \vert \Lambda U\vert^2  )]   \leqslant   \mathbb{E}[\exp ( \frac{\sqrt{h}}{1-h}  \langle v , U \rangle )] ^{1-h}   \mathbb{E}[\exp (  \vert \Lambda U\vert^2 )]^h = \exp\Big( \frac{h}{2(1-h) } \vert v \vert^2    \Big) \det( \Sigma)^{-h/2}.
\end{align*}
\end{proof}

Using this result, we deduce the recursive control for exponential test functions. 
\begin{myprop}
\label{prop:recursive_control_milstein_exp}
Let $v_{\ast}>0$, and let $\phi:[v_{\ast},\infty )\to \mathbb{R}_+$ be a continuous function such that $C_{\phi}:= \sup_{y \in [v_{\ast},\infty )}\phi(y)/y< +\infty$.  Now let $p \in (0,1/2]$, $\lambda \geqslant 0$ and define $\psi:[v_{\ast},\infty )\to \mathbb{R}_+$ such that $\psi(y)=\exp ( \lambda y^p )$. \\

Assume that (\ref{hyp:Lyapunov_control_milstein}), $\mathfrak{B}(\phi)$ (see (\ref{hyp:controle_coefficients_milstein})) and $\mathcal{R}_{p, \lambda}(\alpha,\beta,\phi,V)$ (see (\ref{hyp:recursive_control_param_milstein_expo})), are satisfied.Also assume that
\begin{align}
\label{hyp:dom_recurs_milstein}
\forall x \in \mathbb{R}^d , \quad \Tr[\sigma \sigma^{\ast} (x) ] \vert b(x) \vert ( \sqrt{V}(x)+ \vert b (x) \vert ) \leqslant C V^{1-p}(x) \phi \circ V (x).
\end{align}

 Then, for every $\tilde{\alpha} \in (0,\alpha)$, there exists $\tilde{\beta} \in \mathbb{R}_+$ and $n_0 \in \mathbb{N}^{\ast}$, such that
\begin{align}
\label{eq:recursive_control_milstein_fonction_expo}
 \forall n   \geqslant n_0, x \in \mathbb{R}^d,  \quad\tilde{A}_{\gamma_n} \psi \circ V(x)\leqslant \frac{ \psi \circ V(x)}{V(x)}p(\tilde{\beta} - \tilde{\alpha} \phi\circ V(x)). 
\end{align}
Then, $\mathcal{RC}_{Q,V}(\psi,\phi,p\tilde{\alpha},p\tilde{\beta})$ (see (\ref{hyp:incr_sg_Lyapunov})) holds as soon as $\liminf\limits_{y \to +\infty} \phi(y) =+\infty$.
Moreover, when $\phi=Id$ we have

\begin{align}
\label{eq:mom_exp_milstein}
\sup_{n \in \mathbb{N}} \mathbb{E}[\psi \circ V (\overline{X}_{\Gamma_{n}})] < + \infty.
\end{align}

\end{myprop}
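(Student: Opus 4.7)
The plan is to mimic the strategy of Proposition \ref{prop:recursive_control_milstein} but to exploit the Laplace-transform identity of Lemma \ref{lemme:transformee_laplace_prod_gauss} in order to handle the exponential weight $\psi(y) = \exp(\lambda y^p)$. I will first write $\psi\circ V(\overline{X}_{\Gamma_{n+1}}) = \psi\circ V(\overline{X}_{\Gamma_n})\cdot\exp\bigl(\lambda(V(\overline{X}_{\Gamma_{n+1}})^p-V(\overline{X}_{\Gamma_n})^p)\bigr)$. Since $p\in(0,1/2]$, the map $y\mapsto y^p$ is concave, which gives the pointwise bound $V(\overline{X}_{\Gamma_{n+1}})^p - V(\overline{X}_{\Gamma_n})^p \leqslant p\,V(\overline{X}_{\Gamma_n})^{p-1}\bigl(V(\overline{X}_{\Gamma_{n+1}}) - V(\overline{X}_{\Gamma_n})\bigr)$. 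Applying Taylor's formula to $V$ at order two and bounding the remainder by $\tfrac12\|D^2V\|_{\infty}|\Delta\overline{X}_{n+1}|^2$ thanks to (\ref{hyp:Lyapunov_control_milstein}), I arrive at a pointwise inequality of the form $\psi\circ V(\overline{X}_{\Gamma_{n+1}})\leqslant \psi\circ V(\overline{X}_{\Gamma_n})\exp(R_{n+1})$ where $R_{n+1}$ is the sum of a deterministic drift, a linear-Gaussian part, and a quadratic-Gaussian part in $U_{n+1}$.

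Next, I use the commutative noise assumption to rewrite $\Delta\overline{X}_{n+1}$ via (\ref{def:incr_milstein_expo}) as $\tilde{\Delta}\overline{X}^1_{n+1}+\sqrt{\gamma_{n+1}}\,\sigma(\overline{X}_{\Gamma_n})U_{n+1}+\tfrac{\gamma_{n+1}}{2}\Phi(U_{n+1})$, where $\Phi(u)_i := \sum_{j,l}\partial_{x_l}\sigma_i\,\sigma_{l,j}\,u^iu^j$ is quadratic in $u$. Expanding and regrouping, $R_{n+1}$ has the structure $\sqrt{\gamma_{n+1}}\,\lambda p V^{p-1}\langle\nabla V,\sigma U_{n+1}\rangle + \gamma_{n+1}\,|\Lambda_{n} U_{n+1}|^2 + \gamma_{n+1}\,\mathrm{drift}_n + \mathrm{rem}_n$, where $\Lambda_n$ is an explicit $\mathbb{R}^{d\times d}$ matrix built from $D^2V$, $\nabla V$, $\sigma$ and $\partial\sigma\,\sigma$ and whose associated $\Sigma_n := I_d - 2\Lambda_n^*\Lambda_n$ is exactly the matrix $\Sigma(\overline{X}_{\Gamma_n})$ appearing in the statement (up to the choice of $C_\sigma$). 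Here the term $\mathrm{rem}_n$ gathers contributions of order $\gamma_{n+1}^{3/2}$ and higher (cross terms $b\cdot\sigma U$, $b\cdot\Phi(U)$, $\sigma U\cdot\Phi(U)$, etc.) whose control will require the supplementary assumption (\ref{hyp:dom_recurs_milstein}).

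I then take conditional expectation given $\overline{X}_{\Gamma_n}$ and apply Lemma \ref{lemme:transformee_laplace_prod_gauss} with $h = \gamma_{n+1}$ and $v$ proportional to $\lambda p\,V^{p-1}\sigma^{*}\nabla V$. This produces the Gaussian-quadratic contribution $\frac{\gamma_{n+1}}{2(1-\gamma_{n+1})}(\lambda p V^{p-1})^2|\sigma^{*}\nabla V|^2$, which is exactly the term $\lambda p\,V^{p-1}\langle\nabla V,\sigma\sigma^{*}\nabla V\rangle\,\phi\circ V/\phi\circ V$ hidden in $\kappa_p$ (up to factor $(1-\gamma_{n+1})^{-1}\to 1$), together with the determinant factor $\det(\Sigma_n)^{-\gamma_{n+1}/2}$, which via $\log$ yields precisely the term $-\tfrac{\gamma_{n+1}}{2}V^{1-p}C_\sigma^{-1}\ln\det(\Sigma_n)\cdot\phi\circ V \cdot(V^{1-p}/\phi\circ V)^{-1}$, that is the $\chi_p(x)$ of (\ref{hyp:recursive_control_param_milstein_expo}). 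Assembling all contributions and invoking $\mathcal{R}_{p,\lambda}(\alpha,\beta,\phi,V)$ to absorb the $\langle\nabla V,b+\kappa_p\rangle + \tfrac12\chi_p$ block, I obtain
\begin{equation*}
\mathbb{E}\bigl[\psi\circ V(\overline{X}_{\Gamma_{n+1}})\mid\overline{X}_{\Gamma_n}\bigr] - \psi\circ V(\overline{X}_{\Gamma_n}) \leqslant \gamma_{n+1}\,\frac{\psi\circ V(\overline{X}_{\Gamma_n})}{V(\overline{X}_{\Gamma_n})}\,p\bigl(\tilde{\beta} - \tilde{\alpha}\,\phi\circ V(\overline{X}_{\Gamma_n})\bigr),
\end{equation*}
for $n$ large enough so that $\gamma_{n+1}$ is small and the residuals in $\mathrm{rem}_n$, controlled by (\ref{hyp:dom_recurs_milstein}) and $\mathfrak{B}(\phi)$, are absorbed into the term $(\alpha-\tilde{\alpha})\phi\circ V$. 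This establishes (\ref{eq:recursive_control_milstein_fonction_expo}) and hence $\mathcal{RC}_{Q,V}(\psi,\phi,p\tilde{\alpha},p\tilde{\beta})$ as soon as $\liminf\phi =+\infty$. Finally, (\ref{eq:mom_exp_milstein}) follows directly from Lemma \ref{lemme:mom_psi_V} applied with $\phi = I_d$, once one checks that $\mathbb{E}[\psi\circ V(\overline{X}_{\Gamma_{n_0}})]<+\infty$, which comes from the sub-Gaussian tails of the Milstein increments and the fact that $p\leqslant 1/2$.

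The main obstacle is the careful bookkeeping in the exponent: identifying the correct matrix $\Lambda_n$ so that Lemma \ref{lemme:transformee_laplace_prod_gauss} yields exactly the $\chi_p$ and $\kappa_p$ terms appearing in $\mathcal{R}_{p,\lambda}$, and verifying that all higher-order cross terms ($b\cdot \sigma U$, iterated integrals times drift, etc.) can indeed be dominated by $\gamma_{n+1}^{3/2}$ times $V^{p-1}\phi\circ V$ thanks to (\ref{hyp:dom_recurs_milstein})—everything else is essentially the same Taylor-plus-concavity machinery as in Proposition \ref{prop:recursive_control_milstein}.
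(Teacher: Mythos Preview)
Your plan contains a genuine gap at the ``Taylor-plus-concavity'' step. By applying a single second-order Taylor expansion of $V$ to the \emph{full} Milstein increment $\Delta\overline{X}_{n+1}=\tilde{\Delta}\overline{X}^1_{n+1}+\Delta\overline{X}^2_{n+1}+\tilde{\Delta}\overline{X}^3_{n+1}$ and bounding the remainder by $\tfrac12\|D^2V\|_\infty|\Delta\overline{X}_{n+1}|^2$, the exponent $R_{n+1}$ inherits the cross term $\langle\Delta\overline{X}^2_{n+1},\tilde{\Delta}\overline{X}^3_{n+1}\rangle$ (of order $\gamma_{n+1}^{3/2}$, cubic in $U_{n+1}$) and the term $|\tilde{\Delta}\overline{X}^3_{n+1}|^2$ (of order $\gamma_{n+1}^{2}$, quartic in $U_{n+1}$). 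These cannot be relegated to a remainder ``absorbed for small $\gamma_{n+1}$'': since $\mathbb{E}[\exp(c|U|^3)]=\mathbb{E}[\exp(c|U|^4)]=+\infty$ for every $c>0$, the conditional expectation $\mathbb{E}[\exp(R_{n+1})\mid\overline{X}_{\Gamma_n}]$ is infinite for any $\gamma_{n+1}>0$, and Lemma~\ref{lemme:transformee_laplace_prod_gauss} (which handles only linear-plus-quadratic exponents) is inapplicable. Your claim that $\Sigma_n$ is ``exactly the matrix $\Sigma(\overline{X}_{\Gamma_n})$'' is also off: the $[\sqrt V]_1 v_{\ast}^{\,p-1/2}$ factor in $\Sigma(x)$ has no counterpart in a naive expansion of $\langle\nabla V,\Phi(U)\rangle$.

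The paper circumvents this by a two-stage decomposition that uses $p\leqslant 1/2$ a second time. It applies the concavity of $y\mapsto y^p$ and the second-order Taylor expansion of $V$ \emph{only} to the Euler part $\tilde{\Delta}\overline{X}^1_{n+1}+\Delta\overline{X}^2_{n+1}$, which produces at most quadratic-in-$U$ contributions. The Milstein correction is handled separately via the concavity of $y\mapsto y^{2p}$ and the Lipschitz property of $\sqrt V$, yielding
\[
V^p(\overline{X}_{\Gamma_{n+1}})-V^p\bigl(\overline{X}_{\Gamma_n}+\tilde{\Delta}\overline{X}^1_{n+1}+\Delta\overline{X}^2_{n+1}\bigr)\leqslant p\,[\sqrt V]_1\,v_{\ast}^{\,p-1/2}\,\bigl|\tilde{\Delta}\overline{X}^3_{n+1}\bigr|.
\]
This bound is \emph{linear} in $|\tilde{\Delta}\overline{X}^3_{n+1}|$, hence only quadratic in $U$, and is precisely the source of the $[\sqrt V]_1 v_{\ast}^{\,p-1/2}\sum_{i,j,l}|\partial_{x_l}\sigma_i\,\sigma_{l,j}|$ piece of $\Sigma(x)$. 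Only then is Lemma~\ref{lemme:transformee_laplace_prod_gauss} applied (with $h=C_\sigma(x)^{-1}\gamma\lambda p$), after which the argument proceeds essentially as you describe: the bound on $|v|^2$ via (\ref{hyp:dom_recurs_milstein}), the drift assumption $\mathcal{R}_{p,\lambda}$, and finally the convexity of the exponential to convert the multiplicative estimate $H_\gamma L_\gamma\leqslant\exp(\dots)$ into the additive recursive control (\ref{eq:recursive_control_milstein_fonction_expo}).
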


\begin{proof}
 First, with notations (\ref{def:incr_milstein_expo}), we rewrite

\begin{align*}
V^p(\overline{X}_{\Gamma_{n+1}})-V^p(\overline{X}_{\Gamma_{n}})=& V^p(\overline{X}_{\Gamma_{n}}+\tilde{\Delta} \overline{X}^1_{n+1} +\Delta \overline{X}^{2}_{n+1} ) - V^p(\overline{X}_{\Gamma_{n}}) \\
& + V^p(\overline{X}_{\Gamma_{n+1}})  - V^p(\overline{X}_{\Gamma_{n}}+\tilde{\Delta} \overline{X}^1_{n+1} +\Delta \overline{X}^{2}_{n+1} ) 
\end{align*}

an we study each term separately. Since $p \leqslant 1$, the function defined on $[v_{\ast}, \infty)$ by  $y \mapsto y^p$ is concave. Using then the Taylor expansion of order 2 of the function $V$, for every $x,y \in \mathbb{R}^d$, 

\begin{align*}
V^p(y) - V^p(x) \leqslant & pV^{p-1}(x) \big(V(y)-V(x) \big) \\
\leqslant & pV^{p-1}(x) \big( \langle \nabla V(x),y-x \rangle +\frac{1}{2} \Vert D^2V \Vert_{\infty} \vert y - x \vert^2 \big).
\end{align*}

Using this inequality with $x=\overline{X}_{\Gamma_{n}}$ and $y=\overline{X}_{\Gamma_{n+1}}=\overline{X}_{\Gamma_{n}}+\tilde{\Delta} \overline{X}^1_{n+1} +\Delta \overline{X}^2_{n+1} $, with notations (\ref{def:incr_milstein}) and (\ref{def:incr_milstein_expo})
\begin{align*}
V^p(\overline{X}_{\Gamma_{n}}+\tilde{\Delta} \overline{X}^1_{n+1} +\Delta \overline{X}^{2}_{n+1} ) - V^p(\overline{X}_{\Gamma_{n}}) \leqslant & pV^{p-1} \overline{X}_{\Gamma_{n}} )( \langle \nabla V(\overline{X}_{\Gamma_{n}}),\tilde{\Delta} \overline{X}^1_{n+1} +\Delta \overline{X}^{2}_{n+1} \rangle \\
&+ \frac{1}{2}  pV^{p-1}(\overline{X}_{\Gamma_{n}} ) \Vert D^2V \Vert_{\infty} \vert \tilde{\Delta} \overline{X}^1_{n+1} +\Delta \overline{X}^{2}_{n+1} \vert^2.
\end{align*}

%

Now, we study the other term. Since $p \leqslant 1/2$, then the function defined on $[v_{\ast}, \infty)$ by $y \mapsto y^{2p}$ is concave and we obtain
\begin{align*}
V^{p}(\overline{X}_{\Gamma_{n+1}} ) -& V^{p}(\overline{X}_{\Gamma_{n}} +\tilde{\Delta} \overline{X}^1_{n+1} +\Delta \overline{X}^{2}_{n+1}) \\
\leqslant & pV^{p-1/2}(\tilde{\Delta} \overline{X}^1_{n+1} +\Delta \overline{X}^{2}_{n+1} ) (\sqrt{V}(\overline{X}_{\Gamma_{n+1}} ) - \sqrt{V}(\overline{X}_{\Gamma_{n}} + \tilde{\Delta} \overline{X}^1_{n+1} +\Delta \overline{X}^{2}_{n+1}) ) \\
\leqslant & p [\sqrt{V}]_1 v_{\ast}^{p-1/2} \vert \tilde{\Delta}\overline{X}^{3}_{n+1}\vert 
\end{align*}

In the sequel, we will use the notation
\begin{align*}
\forall x \in \mathbb{R}^d, \qquad \tilde{b}(x)= b(x) +\frac{1}{2} \sum_{i=1}^d \sum_{l=1}^d \partial_{x_l} \sigma_i ( x )  \sigma_{l,i}( x ).
\end{align*}

It follows that

\begin{align*}
\mathbb{E}[\exp(\lambda V^p(\overline{X}_{\Gamma_{n+1}})) \vert \overline{X}_{\Gamma_{n}}] \leqslant H_{\gamma_{n+1}}(\overline{X}_{\Gamma_{n}})L_{\gamma_{n+1}}(\overline{X}_{\Gamma_{n}})
\end{align*}
with, for every $x \in \mathbb{R}^d$, $\gamma \in \mathbb{R}_+^{\ast}$,
\begin{align*}
H_{\gamma}(x)=\exp(\lambda V^p(x)+\gamma \lambda pV^{p-1}(x) \langle \nabla V(x), \tilde{b}(x) \rangle +\gamma^2  \frac{1}{2} \lambda   p \Vert D^2 V \Vert_{\infty} V^{p-1}(x)\vert \tilde{b} (x) \vert^2 )
\end{align*}
and
%
%

\begin{align*}
L_{\gamma}(x)= &\mathbb{E}[\exp( \sqrt{\gamma}  \lambda p V^{p-1}(x) \langle \nabla V(x)+ \gamma \Vert D^2 V \Vert_{\infty}   \tilde{b}(x) , \sigma(x)  U \rangle+ \gamma \frac{1}{2}\lambda  p  \Vert D^2 V  \Vert_{\infty}   V^{p-1}(x) \vert \sigma(x) U \vert^2  \\
&+ \gamma \lambda  p [\sqrt{V}]_1 v_{\ast}^{p-1/2}  \frac{1}{2} \sum_{i,j=1}^d \sum_{l=1}^d  \vert \partial_{x_l} \sigma_i ( x )  \sigma_{l,j}(x) \vert  \vert U\vert^2 
\end{align*}

%


where $U=(U_1, \ldots U_d)$, with $U_i$, $i \in \{1, \ldots ,d \}$, some independent and identically distributed standard normal random variables. In order to compute $L_{\gamma}(x)$, we use Lemma \ref{lemme:transformee_laplace_prod_gauss} (see (\ref{eq:transformee_laplace_jointe})) with $h=C_{\sigma}(x)^{-1} \gamma \lambda p $, $v=\sqrt{C_{\sigma}(x)\lambda p }V^{p-1}(x)\sigma^{\ast}(x)(  \nabla V(x)+ \gamma    \Vert D^2 V \Vert_{\infty}   \tilde{b}(x) )$ and the positive definite matrix 
\begin{align*}
\Sigma(x):=I_d \Big(1-2 C_{\sigma}(x)[\sqrt{V}]_1 v_{\ast}^{p-1/2}  \frac{1}{2} \sum_{i,j,l=1}^d  \vert \partial_{x_l} \sigma_i ( x )  \sigma_{l,j}(x) \vert \Big)- \Vert D^2 V  \Vert_{\infty}  C_{\sigma}(x,z)V^{p-1}(x)  \sigma^{\ast} \sigma(x,z)
\end{align*}
 where $\inf_{x \in \mathbb{R}^d} C_{\sigma}(x)>0$ and $\Sigma(x,z) \in \in \mathcal{S}_{+, \ast}^d)$. We apply Lemma \ref{lemme:transformee_laplace_prod_gauss} and it follows that for $\gamma \leqslant \inf_{x \in \mathbb{R}^d}C_{\sigma}(x)/(2 \lambda p)$

\begin{align*}
L_\gamma(x) \leqslant &\exp \Big(\frac{\gamma \lambda p  C_{\sigma}(x)^{-1}}{2(1 -\gamma \lambda p  C_{\sigma}(x)^{-1} )} \vert v \vert^2- \frac{1}{2} \gamma \lambda p  C_{\sigma}(x)^{-1} \ln ({\det(\Sigma(x))})  \Big) \\
\leqslant & \exp \Big(\gamma  \lambda p  C_{\sigma}(x)^{-1} \vert v \vert^2- \frac{1}{2} \gamma \lambda p  C_{\sigma}(x)^{-1} \ln ({\det(\Sigma(x))})  \Big)
\end{align*}

At this point, we focus on the first term inside the exponential. We have
\begin{align*}
\vert v \vert^2 \leqslant & C_{\sigma}(x ) \lambda p  V^{2p-2}(x)   \big( \langle \sigma \sigma^{\ast} (x )\nabla V(x),\nabla V(x)\rangle \\
&+\Tr[\sigma \sigma^{\ast}(x )]( \gamma   \Vert D^2 V \Vert_{\infty} 2  \langle \nabla V(x), b(x )  \rangle + \gamma^2  \Vert D^2 V \Vert_{\infty}^2 \vert   b(x ) \vert^2 ) \big)
\end{align*}
Using $\mathfrak{B}(\phi)$ (see (\ref{hyp:controle_coefficients_milstein})), (\ref{hyp:dom_recurs_milstein}) and  $\mathcal{R}_{p, \lambda}(\alpha,\beta,\phi,V)$ (see (\ref{hyp:recursive_control_param_milstein_expo})), it follows that there exists $\overline{C}>0$ such that
\begin{align*}
H_{\gamma}(x )L_{\gamma}(x ) \leqslant & \exp \big(\lambda V^p(x)+\gamma \lambda p  V^{p-1}(x)(\beta - \alpha \phi \circ V(x))+\overline{C} \gamma^2 V^{p-1}(x) \phi \circ V(x) \big) 
\end{align*}
which can be rewritten
\begin{align*}
H_{\gamma}(x )L_{\gamma}(x ) \leqslant & \exp \Big( \Big(1-\gamma p \alpha \frac{\phi \circ V(x)}{V(x)} \Big) \lambda V^{p}(x) + \gamma p  \alpha \frac{\phi \circ V(x)}{V(x)}  V^p(x) \Big( \frac{ \lambda  \beta }{ \alpha \phi \circ V(x)}+\gamma \overline{C}/(\alpha p) \Big) \Big ) .
\end{align*} 
Using the convexity of the exponential function, we have for every $\gamma p \alpha C_{\phi}<1$,
\begin{align*}
H_{\gamma}(x )L_{\gamma}(x ) \leqslant & \exp \big(\lambda V^p(x)\big) -\gamma p \alpha \frac{\phi \circ V(x)}{V(x)}  \exp\big(\lambda V^p(x)\big) \\
&+ \gamma p \alpha \frac{\phi \circ V(x)}{V(x)} \exp \Big(V^p(x)\Big( \frac{\lambda  \beta }{ \alpha \phi \circ V(x)} +\gamma \overline{C}/(\alpha p) \Big) \Big).
\end{align*} 

It remains to study the last term of the $r.h.s$ of the above inequality. The function defined on $[v_{\ast},+\infty)$ by $y \mapsto \exp(y^p(\frac{\lambda \beta }{\alpha\phi (y)}+\gamma \overline{C}/(\alpha p) ))$ is continuous and locally bounded. Moreover, by $\mathcal{R}_{p, \lambda}(\alpha,\beta,\phi,V)$ (see (\ref{hyp:recursive_control_param_milstein_expo})), we have $\liminf\limits_{y \to + \infty} \phi(y)>\beta_+/\alpha$. Hence, there exists $\zeta \in (0,1)$ and $y_{\zeta}\geqslant v_{\ast}$ such that $\phi(y) \geqslant \beta_+/(\alpha \zeta)$ for every $y \geqslant y_{\zeta}$. Consequently, as soon as $\gamma <\zeta \lambda \alpha p/ \overline{C}$,  for every $\tilde{\alpha}\in (0, \alpha)$ there exists $\tilde{\beta} \geqslant 0$ such that
\begin{align*}
\frac{\phi \circ V(x)}{V(x)} \exp \Big(V^p(x)\Big( \frac{\lambda  \beta }{ \alpha \phi \circ V(x)} +\gamma \overline{C}/(\alpha p) \Big) \Big)\leqslant \frac{\tilde{\beta}}{\alpha} \frac{\exp(\lambda V^p(x))}{V(x)} +\frac{\alpha -\tilde{\alpha}}{\alpha}  \frac{\phi \circ V(x)}{V(x)} \exp(\lambda V^p(x))
\end{align*}

and the proof of the recursive control (\ref{eq:recursive_control_milstein_fonction_expo}) is completed. Finally (\ref{eq:mom_exp_milstein}) follows from (\ref{eq:mom_psi_V}), which follow from the equation above, and Lemma \ref{lemme:mom_psi_V}.

\end{proof}

\subsubsection{Proof of the infinitesimal estimation}

\begin{myprop}
\label{prop:milstein_infinitesimal_approx}
Assume that the sequence $(U_n)_{n \in \mathbb{N}^{\ast}}$ satisfies $M_{\mathcal{N},2}(U)$ (see (\ref{hyp:matching_normal_moment_ordre_q_va_schema_milstein})) and that the sequence $(\mathcal{W}_n)_{n \in \mathbb{N}^{\ast}}$ is centered and satisfies $M_{1/2}(\mathcal{W})$ (see (\ref{hyp:moment_ordre_p_va_schema_milstein})).\\
Also assume that $b$, $\sigma$ and $\sum_{i,j,l=1}^d  \vert  \partial_{x_l} \sigma_i  \sigma_{l,j} \vert  $ have sublinear growth and that we have $\sup_{n \in \mathbb{N}^{\ast}} \nu_n^{\eta}( \Tr[ \sigma \sigma^{\ast}]  )< + \infty$ and $\sup_{n \in \mathbb{N}^{\ast}} \nu_n^{\eta}(  \sum_{i,j,l=1}^d\vert \partial_{x_l} \sigma_i  \sigma_{l,j} \vert)< + \infty$. \\

Then, $\mathcal{E}(\tilde{A},A,\DomA_0) $ (see (\ref{hyp:erreur_tems_cours_fonction_test_reg})) holds.

\end{myprop}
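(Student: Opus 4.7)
The plan is to start from a second-order Taylor expansion of $f\in \mathcal{C}_K^2(\mathbb{R}^d)=\DomA_0$ around $x=\overline{X}_{\Gamma_n}$ applied at $x+h$, where
\[
h=\gamma b(x)+\sqrt{\gamma}\,\sigma(x)U+\gamma\,\Theta(x,\mathcal{W}), \qquad \Theta_i(x,\mathcal{W}):=\sum_{j,l}\partial_{x_l}\sigma_i(x)\sigma_{l,j}(x)\mathcal{W}^{i,j},
\]
is the Milstein increment built from the transition $\mathscr{Q}_{\gamma}$. Taking conditional expectation and subtracting $Af(x)$, this yields the clean decomposition
\[
\widetilde{A}_{\gamma}f(x)-Af(x)=\nabla f(x)\!\cdot\!\Bigl(\tfrac{\mathbb{E}[h]}{\gamma}-b(x)\Bigr) + \tfrac{1}{2}\operatorname{Tr}\Bigl[D^2 f(x)\Bigl(\tfrac{\mathbb{E}[h\otimes h]}{\gamma}-\sigma\sigma^{\ast}(x)\Bigr)\Bigr] + \gamma^{-1}\mathbb{E}[R_2(x,h)],
\]
with $|R_2(x,h)|\leq \omega_f(x,|h|)|h|^2$ and $\omega_f(x,r)=\sup_{|z|\le r}|D^2 f(x+z)-D^2 f(x)|$. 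The matching moment condition $M_{\mathcal{N},2}(U)$ yields $\mathbb{E}[U]=0$, $\mathbb{E}[U\otimes U]=I_d$, and centredness of $\mathcal{W}$ gives $\mathbb{E}[\mathcal{W}]=0$; the first-order term therefore vanishes and the Brownian piece of $\mathbb{E}[h\otimes h]/\gamma$ exactly cancels $\sigma\sigma^{\ast}(x)$.

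What remains of the quadratic contribution, namely $\gamma b\otimes b+\gamma\,\mathbb{E}[\Theta\otimes\Theta]+2\sqrt{\gamma}\,\mathbb{E}[(\sigma U)\otimes_s\Theta]$, is bounded directly in terms of $\|D^2 f\|_\infty$ and of the sublinearly-growing coefficients; after extracting $\operatorname{Tr}[\sigma\sigma^{\ast}](x)$ and $\sum_{i,j,l}|\partial_{x_l}\sigma_i\sigma_{l,j}|(x)$ — the two quantities for which $\sup_n\nu_n^{\eta}$ is assumed a.s.\ finite — this piece fits the product form $g(x)\cdot\tilde{\mathbb{E}}[\tilde{\Lambda}_f(x,\gamma,\tilde{\omega})]$ with $\tilde{\Lambda}_f$ of order $\gamma$, so satisfies option \ref{hyp:erreur_temps_cours_fonction_test_reg_Lambda_representation_2}. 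For the Taylor remainder one uses $|h|^2/\gamma\le 3(\gamma|b|^2+|\sigma U|^2+\gamma|\Theta|^2)$ together with the bounds $|\sigma U|^2\le \operatorname{Tr}[\sigma\sigma^{\ast}](x)|U|^2$ and $|\Theta|\le c_d(\sum|\partial_{x_l}\sigma_i\sigma_{l,j}|)(x)|\mathcal{W}|$, factoring the deterministic parts into $g$ and leaving only $\omega_f(x,|h|)$ multiplied by integrable monomials in $U,\mathcal{W}$ inside $\tilde{\mathbb{E}}$; the required integrability $\tilde{\mathbb{E}}[\sup_x\sup_\gamma \tilde{\Lambda}_{f,i}]<\infty$ follows from $\|\omega_f\|_\infty\le 2\|D^2 f\|_\infty$ together with $M_{\mathcal{N},2}(U)$ and $M_{1/2}(\mathcal{W})$.

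Verification of \ref{hyp:erreur_tems_cours_fonction_test_reg_Lambda_representation_1} for the remainder components then goes as follows: (i) on any compact $K\subset\mathbb{R}^d$ local boundedness of $b,\sigma,\partial_{x_l}\sigma_i\sigma_{l,j}$ forces $|h(x,\gamma,\tilde{\omega})|\to 0$ uniformly on $K$ as $\gamma\to 0$, hence $\sup_{x\in K}\omega_f(x,|h|)\to 0$ for $\tilde{\mathbb{P}}$-a.e.\ $\tilde{\omega}$; (ii) since $f\in\mathcal{C}_K^2$, $D^2 f$ has compact support and therefore $\omega_f(x,r)=0$ as soon as $\operatorname{dist}(x,\operatorname{supp} D^2 f)>r$, and the sublinear growth hypothesis on the coefficients gives $|h(x,\gamma,\tilde{\omega})|=o(|x|)$ uniformly in $\gamma\in(0,\underline{\gamma}(\tilde{\omega})]$ for a suitable measurable $\underline{\gamma}(\tilde{\omega})>0$, so $\omega_f(x,|h|)=0$ for $|x|$ large. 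The main technical obstacle is the weak hypothesis $M_{1/2}(\mathcal{W})$: a naïve control of $|\Theta|^2$ would demand $|\mathcal{W}|^2$, which is unavailable. This is circumvented by absorbing one power of $(\sum_{i,j,l}|\partial_{x_l}\sigma_i\sigma_{l,j}|)(x)\cdot|\mathcal{W}|$ into the deterministic vector $g$ — using precisely the $\nu_n^{\eta}$-integrability assumption on $\sum|\partial_{x_l}\sigma_i\sigma_{l,j}|$ — so that only $|\mathcal{W}|$ remains inside $\tilde{\mathbb{E}}$, where it is integrable by $M_{1/2}(\mathcal{W})$. With this representation in hand, $\mathcal{E}(\widetilde{A},A,\DomA_0)$ follows.
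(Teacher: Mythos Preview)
Your single second-order expansion around the full increment $h$ breaks down under the weak hypothesis $M_{1/2}(\mathcal{W})$, and the fix you propose does not work. Concretely: both your ``remaining quadratic contribution'' $\gamma\,\mathbb{E}[\Theta\otimes\Theta]$ and the Taylor remainder term $\gamma^{-1}\mathbb{E}[\omega_f(x,|h|)\,\gamma^2|\Theta|^2]$ require $\mathbb{E}[|\mathcal{W}|^2]<\infty$, i.e.\ $M_1(\mathcal{W})$, which is \emph{not} assumed. Your workaround --- ``absorbing one power of $(\sum|\partial_{x_l}\sigma_i\sigma_{l,j}|)(x)\cdot|\mathcal{W}|$ into the deterministic vector $g$'' --- is incoherent: in the representation $\Lambda_f(x,\gamma)=\langle g(x),\tilde{\mathbb{E}}[\tilde\Lambda_f(x,\gamma,\tilde\omega)]\rangle$, the function $g$ depends on $x$ only, so the random factor $|\mathcal{W}(\tilde\omega)|$ cannot be placed there. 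No rearrangement of the factors in $|\Theta|^2$ eliminates the need for a second moment of $\mathcal{W}$ once you have committed to a second-order expansion in the Milstein part.

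The paper avoids this by \emph{not} expanding the Milstein correction to second order. It telescopes $f(\overline{X}_{\Gamma_{n+1}})-f(\overline{X}_{\Gamma_n})$ through the intermediate points $\overline{X}^1_{\Gamma_{n+1}}=x+\gamma b(x)$ and $\overline{X}^2_{\Gamma_{n+1}}=\overline{X}^1_{\Gamma_{n+1}}+\sqrt{\gamma}\,\sigma(x)U$, expands the drift piece to first order and the diffusion piece to second order (these only need $M_{\mathcal{N},2}(U)$), and then treats the Milstein piece by a \emph{first}-order expansion:
\[
f(\overline{X}^3_{\Gamma_{n+1}})-f(\overline{X}^2_{\Gamma_{n+1}})
=\langle\nabla f(x),\Delta\overline{X}^3_{n+1}\rangle
+\int_0^1\!\big\langle\nabla f(\overline{X}^2_{\Gamma_{n+1}}+\theta\Delta\overline{X}^3_{n+1})-\nabla f(x),\,\Delta\overline{X}^3_{n+1}\big\rangle\,d\theta.
\]
The first term has zero expectation because $\mathcal{W}$ is centred; the remainder is linear in $\Delta\overline{X}^3_{n+1}$, hence bounded by $\gamma\,(\sum_{i,j,l}|\partial_{x_l}\sigma_i\sigma_{l,j}|)(x)\,|\mathcal{W}|$ times a modulus of continuity of $\nabla f$. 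This fits the required product form with $g^{i,j}_3(x)=\sum_l|\partial_{x_l}\sigma_i\sigma_{l,j}|(x)$ and $\tilde\Lambda^{i,j}_{f,3}$ involving only $|\mathcal{W}^{i,j}|$, so that $M_{1/2}(\mathcal{W})$ is exactly enough. If you replace your global second-order step by this telescoped expansion, the rest of your argument (the compact-support reasoning for properties~I(i)--(ii)) goes through.
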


%
%

\begin{proof}
First, we recall that $\DomA_0= \mathcal{C}^2_K(E)$. The proof consists in studying successively the three terms of the following decomposition:

\begin{align*}
 \mathbb{E}[ f(\overline{X}_{\Gamma_{n+1}} - f(\overline{X}_{\Gamma_{n}} )  \vert \overline{X}_{\Gamma_{n}}=x] =&  \mathbb{E}[  f(\overline{X}^1_{\Gamma_{n+1}}) -  f(\overline{X}_{\Gamma_{n}} )  \vert \overline{X}_{\Gamma_{n}}=x] +   \mathbb{E}[  f(\overline{X}^2_{\Gamma_{n+1}}) -   f(\overline{X}^1_{\Gamma_{n+1}})  \vert \overline{X}_{\Gamma_{n}}=x] \\
+&   \mathbb{E}[  f(\overline{X}^3_{\Gamma_{n+1}}) -   f(\overline{X}^2_{\Gamma_{n+1}})  \vert \overline{X}_{\Gamma_{n}}=x] 
\end{align*}

 Using the Taylor expansion, we have
\begin{align*}
\gamma_{n+1}^{-1} \mathbb{E}[  f(\overline{X}^1_{\Gamma_{n+1}}) - & f(\overline{X}_{\Gamma_{n}} )  \vert \overline{X}_{\Gamma_{n}}=x] - \langle \nabla_xf(x  ),b(x) \rangle  \leqslant \int\limits_0^1 \vert  \nabla f(x+  \theta b(x) \gamma_{n+1} ) - \nabla f(x) \vert  \vert b(x)   \vert d \theta 
\end{align*}

 Using a similar reasonning as in the proof of Proposition 3.3 in \cite{Pages_Rey_2017}, one can show that there exists $R>0$ such that $\mathcal{E}(\tilde{A},A,\DomA_0)$ \ref{hyp:erreur_tems_cours_fonction_test_reg_Lambda_representation_1} holds for $(\tilde{\Lambda}_{f,1} ,\vert b(x) \vert \mathds{1}_{\vert x \vert \leqslant R})$ with

\begin{align*}
\begin{array}{crcl}
\tilde{\mathcal{R}}_{f,1} & :  \mathbb{R}^d \times \mathbb{R}_+   \times [0,1] & \to & \mathbb{R}_+ \\
 &( x, \gamma , \theta ) & \mapsto &   \vert \nabla_xf(x+  \theta b(x) \gamma ) -\nabla_xf(x  ) \vert,
\end{array}
\end{align*}

and $\tilde{\Lambda}_{f,1}(x,t)=\tilde{\mathcal{R}}_{f,1}(x,t, \Theta)$ with $\Theta \sim \mathcal{U}_{[0,1]}$ under $\tilde{\mathbb{P}}(d \tilde{\omega})$.

We focus on the second term. We define $\Lambda_{f,2}(x,z, \gamma)= g_2 (x,z) \tilde{\mathbb{E}}[\tilde{\Lambda}_{f,2}(x,z,\gamma)]$ with $\tilde{\Lambda}^{i,j}_{f,2}(x,\gamma)=\tilde{\mathcal{R}}^{i,j}_{f,2} (x,z,\gamma,U, \Theta)$,
\begin{align*}
\begin{array}{crcl}
\tilde{\mathcal{R}}_{f,2} & :  \mathbb{R}^d \times \mathbb{R}_+ \times \mathbb{R}^{d } \times [0,1] & \to & \mathbb{R}_+ \\
 &( x, \gamma , v,\theta) & \mapsto &  \vert v\vert^2 \vert D^2f(x+\gamma b(x)+\sqrt{\gamma} \theta \sigma(x)v) -D^2f(x)\vert ,
\end{array}
\end{align*}

 with $U \sim p_{U}$, $\Theta \sim \mathcal{U}_{[0,1]}$ under $\tilde{\mathbb{P}}(d \tilde{\omega})$ and $g_{2}(x)=\Tr[ \sigma \sigma^{\ast}(x)]   $.\\
 
%
%
%

  We are going to prove that $\mathcal{E}(\tilde{A},A,\DomA_0)$ \ref{hyp:erreur_tems_cours_fonction_test_reg_Lambda_representation_1} (see (\ref{hyp:erreur_temps_cours_fonction_test_reg_Lambda_representation_2_1})) holds for the couple $(\tilde{\Lambda}_{f,2} ,g_2)$. We fix $v \in \mathbb{R}^{d }$ and $\theta \in [0,1]$. \\
  
 Since the functions $b$ and $ \sigma $, $i,j,l \in \{1, \ldots,d \}$ have sublinear growth, there exists $C_{b, \sigma} \geqslant 0$ such that $\vert b(x)\vert +  \Tr [\sigma \sigma^{\ast} (x )  ]^{1/2}  \leqslant C_{b, \sigma}(1+ \vert x \vert ) $ for every $x \in \mathbb{R}^d$. Therefore, since $f$ has compact support, there exists $t_0(v,\theta)>0$ and $R>0$ such that $\sup_{\vert x \vert >R } \sup_{\gamma \leqslant t_0(v,\theta)} \vert\tilde{\mathcal{R}}_{f,2}(x,\gamma,v,\theta) \vert=0$. It follows that $\mathcal{E}(\tilde{A},A,\DomA_0)$ \ref{hyp:erreur_tems_cours_fonction_test_reg_Lambda_representation_1} (ii) holds.\\
     Moreover since $\nabla f$ is continuous and $b$ and $\sigma$, are locally bounded functions,  it is immediate that $\mathcal{E}(\tilde{A},A,\DomA_0)$ \ref{hyp:erreur_tems_cours_fonction_test_reg_Lambda_representation_1} (i) is also satisfied.\\
   We recall that $\sup_{n \in \mathbb{N}^{\ast}} \nu_n^{\eta}(  \Tr[ \sigma \sigma^{\ast}])< + \infty, \; a.s.$ and $U$ is bounded in $\mbox{L}_2$ and then $\mathcal{E}(\tilde{A},A,\DomA_0)$ \ref{hyp:erreur_tems_cours_fonction_test_reg_Lambda_representation_1} holds for $(\tilde{\Lambda}_{f,2} ,g_2)$.\\

Finally, we notice that from Taylor's formula with $M_{\mathcal{N},2}(U)$ that
\begin{align*}
\mathbb{E}[f(\overline{X}^{2}_{\Gamma_{n+1}})-f(\overline{X}^{1}_{\Gamma_{n+1}}) \vert& \overline{X}_{ \Gamma_n} =x]-\frac{1}{2} \Tr[\sigma \sigma^{\ast}(x) D^2f(x)]\leqslant \gamma_{n+1}  \Tr[\sigma \sigma^{\ast}(x)] \int_{\tilde{\Omega}} \tilde{\Lambda}_{f,2}(x, \gamma, \omega) \tilde{\mathbb{P}}(d \tilde{\omega})
\end{align*}

%

To study the last term, we define $\Lambda^{i,j}_{f,3}(x,z, \gamma)= g^{i,j}_3 (x,z)  \tilde{\mathbb{E}}[\tilde{\Lambda}^{i,j}_{f,3}(x,z,\gamma)]$ with $\tilde{\Lambda}^{i,j}_{f,3}(x,\gamma)=\tilde{\mathcal{R}}^{i,j}_{f,3} (x,z,\gamma,U,\mathcal{W} ,\Theta)$,

\begin{align*}
\begin{array}{crcl}
\tilde{\mathcal{R}}^{i,j}_{f,3} & :  \mathbb{R}^d \times \mathbb{R}_+ \times \mathbb{R}^{d}  \times \mathbb{R}^{d \times d}\times [0,1]& \to & \mathbb{R}_+ \\
 &( x, \gamma , v,w, \theta ) & \mapsto &    \vert  w_{i,j} \vert  \vert \nabla f(x+\gamma b(x)+\sqrt{\gamma} \sigma (x) v+\gamma \theta  \sum\limits_{i,j,l=1}^d \partial_{x_l} \sigma_i (x )  \sigma_{l,j}(x )w_{i,j} \\
 &&& \qquad  \qquad  \qquad  \qquad  \qquad - \nabla f(x)\vert.
\end{array}
\end{align*}.

and $(U,\mathcal{W}) \sim p_{(U,\mathcal{W})}$, $\Theta \sim \mathcal{U}_{[0,1]}$ under $\tilde{\mathbb{P}}(d \tilde{\omega})$ and $g^{i,j}_{3}(x)=\sum_{l=1}^d  \vert \partial_{x_l} \sigma_i ( x)  \sigma_{l,j}(x) \vert    $. \\

We are going to prove that $\mathcal{E}(\tilde{A},A,\DomA_0)$ \ref{hyp:erreur_tems_cours_fonction_test_reg_Lambda_representation_1} (see (\ref{hyp:erreur_temps_cours_fonction_test_reg_Lambda_representation_2_1})) holds for every couple $(\tilde{\Lambda}_{f,3}^{i,j} ,g^{i,j}_3)$, $i,j \in \{1,..,d \}$. We fix $v \in \mathbb{R}^{d \times d}$ and $\theta \in [0,1]$. \\

Since the functions $b$, $\sigma$ and $ \partial_{x_l} \sigma_i  \sigma_{l,j} $, $i,j,l \in \{1, \ldots,d \}$ have sublinear growth, there exists $\overline{C}_{b, \sigma} \geqslant 0$ such that $\vert b(x)\vert +\Tr [\sigma \sigma^{\ast} (x )  ]^{1/2}  +  \sum_{i,j,l=1}^d  \vert  \partial_{x_l} \sigma_i (x )  \sigma_{l,j}(x )\vert  \leqslant \overline{C}_{b, \sigma}(1+ \vert x \vert ) $ for every $x \in \mathbb{R}^d$. Therefore, since $f$ has compact support, there exists $\gamma_0(v,\theta)>0$ and $R>0$ such that $\sup_{\vert x \vert >R } \sup_{\gamma \leqslant \gamma_0(v,\theta)} \vert\tilde{\mathcal{R}}^{i,j}_{f,3}(x,\gamma,v,\theta) \vert=0$. It follows that $\mathcal{E}(\tilde{A},A,\DomA_0)$ \ref{hyp:erreur_tems_cours_fonction_test_reg_Lambda_representation_1} (ii) holds.\\
 Moreover since $\nabla f$ is continuous and $b$, $\sigma$ and $ \partial_{x_l} \sigma_i  \sigma_{l,j} $, $i,j,l \in \{1, \ldots,d \}$ are locally bounded functions,  it is immediate that $\mathcal{E}(\tilde{A},A,\DomA_0)$ \ref{hyp:erreur_tems_cours_fonction_test_reg_Lambda_representation_1} (i) is also satisfied. \\
 We recall that $\sup_{n \in \mathbb{N}^{\ast}} \nu_n^{\eta}(  \sum_{l=1}^d\vert \partial_{x_l} \sigma_i  \sigma_{l,j} \vert)< + \infty, \; a.s.$ and $(\mathcal{W}^{i,j})_{1\leqslant i,j, \leqslant d}$ is bounded in $\mbox{L}_1$ and then $\mathcal{E}(\tilde{A},A,\DomA_0)$ \ref{hyp:erreur_tems_cours_fonction_test_reg_Lambda_representation_1} holds for $(\tilde{\Lambda}^{i,j}_{f,3} ,g^{i,j}_3)$.\\

Finally, it follows from the fact that $\mathcal{W}$ is centered and bounded in $\mbox{L}_1$ and from Taylor's formula,

\begin{align*}
\mathbb{E}[f(\overline{X}^{3}_{\Gamma_{n+1}})-f(\overline{X}^{2}_{\Gamma_{n+1}}) \vert \overline{X}_{ \Gamma_n} =x]=&   
\mathbb{E}[f(\overline{X}^{3}_{\Gamma_{n+1}})-f(\overline{X}^{2}_{\Gamma_{n+1}}) - \gamma_{n+1}\langle \nabla f(x), \Delta \overline{X}^3_{n+1} \rangle \vert \overline{X}_{ \Gamma_n} =x] \\
\leqslant  &\gamma_{n+1} \sum_{i,j=1}^d  g^{i,j}_3(x) \tilde{\mathbb{E}}[\tilde{\Lambda}^{i,j}_{f,3}(x, \gamma)]
\end{align*}

We gather all the terms together noticing that $\tilde{\Lambda}_{f,q}= \tilde{\Lambda}_{-f,q}$, $q \in \{1,\ldots , 3\}$, and the proof is completed.

%
%
%
%
%
\end{proof}

\subsubsection{Proof of Growth control and Step Weight assumptions}

\paragraph{Polynomial case}

\begin{lemme}
\label{lemme:incr_lyapunov_X_milstein}
 Let $p > 0,a \in (0,1]$, $s \geqslant 1$, $\rho \in [1,2]$ and, $\psi(y)=y^p$ and $\phi(y)=y^a$. We suppose that the sequence $(U_n)_{n \in \mathbb{N}^{\ast}}$ satisfies $M_{(\rho/2) \vee (p \rho /s)}(U)$ (see (\ref{hyp:moment_ordre_p_va_schema_milstein})) and that the sequence $(\mathcal{W}_n)_{n \in \mathbb{N}^{\ast}}$ satisfies $M_{(\rho/2) \vee (p \rho /s)}(\mathcal{W})$ (see (\ref{hyp:moment_ordre_p_va_schema_milstein})).
Then, for every $n \in \mathbb{N}$, we have: for every $f \in \DomA_0$,
\begin{align}
\label{eq:incr_lyapunov_X_milstein_f_DomA}
 \mathbb{E}[  \vert f(\overline{X}_{\Gamma_{n+1}})- f( \overline{X}^1_{\Gamma_{n}} ) \vert^{\rho}\vert \overline{X}_{\Gamma_{n}}  ]   \leqslant  &  C_f \gamma_{n+1}^{\rho/2}  \Tr [\sigma \sigma^{\ast} (\overline{X}_{\Gamma_n} )  ]^{\rho/2} \\
 & + C_f \gamma_{n+1}^{\rho}  \sum_{i=1}^d \sum_{l=1}^d \vert \partial_{x_l} \sigma_i ( x )  \sigma_{l,i}(x)  \vert^{\rho} . \nonumber
\end{align}
with $\DomA_0 =\mathcal{C}^2_K (\mathbb{R}^d )$. In other words, we have $\mathcal{GC}_{Q}(\DomA_0,  g_{\sigma},\rho,\epsilon_{\mathcal{I}}) $ (see (\ref{hyp:incr_X_Lyapunov})) with $g_{\sigma}=\Tr[ \sigma \sigma^{\ast} ]^{ \rho/2}+ \sum_{i=1}^d \sum_{l=1}^d \vert \partial_{x_l} \sigma_i  \sigma_{l,i}  \vert^{\rho} $ and $\epsilon_{\mathcal{I}}(\gamma)=\gamma^{\rho/2}$ for every $\gamma \in \mathbb{R}_+$. \\


Moreover, if (\ref{hyp:Lyapunov_control_milstein}) and $\mathfrak{B}(\phi)$ (see (\ref{hyp:controle_coefficients_milstein})) hold and 


\begin{align}
\label{hyp:control_step_weight_pol_milstein}
\mathcal{S} \mathcal{W}_{pol}(p,a,s,\rho) \qquad a\,p \rho/s \leqslant p+a-1.
\end{align}

Then, for every $n \in \mathbb{N}$, we have
\begin{align}
\label{eq:incr_lyapunov_X_milstein_f_tens}
 \mathbb{E}[\vert  V^{p/s}(\overline{X}_{\Gamma_{n+1}})-V^{p/s}(\overline{X}_{\Gamma_{n}}) \vert^{\rho}\vert \overline{X}_{\Gamma_{n}}]  
\leqslant  C \gamma_{n+1}^{\rho/2} V^{p+a-1}(\overline{X}_{\Gamma_{n}}).
\end{align}
In other words, we have $\mathcal{GC}_{Q}(V^{p/s},V^{p+a-1},\rho,\epsilon_{\mathcal{I}}) $ (see (\ref{hyp:incr_X_Lyapunov})) with and $\epsilon_{\mathcal{I}}(\gamma)=\gamma^{\rho/2}$ for every $\gamma \in \mathbb{R}_+$.

\end{lemme}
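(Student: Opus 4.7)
The lemma bundles two growth-control estimates: one for test functions $f \in \DomA_0 = \mathcal{C}^2_K(\mathbb{R}^d)$, the other for the Lyapunov observable $V^{p/s}$. In both cases the desired statement has the form $\mathbb{E}[|\varphi(\overline{X}_{\Gamma_{n+1}}) - \mathfrak{X}_n|^{\rho} \mid \overline{X}_{\Gamma_n}] \leqslant C\,\gamma_{n+1}^{\rho/2}\, g(\overline{X}_{\Gamma_n})$ for an $\mathcal{F}^{\overline{X}}_n$-measurable reference $\mathfrak{X}_n$; by Remark \ref{rmrk:Accroiss_mes}, such a bound transfers (up to a factor $2^{\rho}$) to the genuine growth-control assumption $\mathcal{GC}_Q$. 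My plan is therefore to pick convenient references, namely $f(\overline{X}^1_{\Gamma_{n+1}})$ for the first estimate and $V^{p/s}(\overline{X}_{\Gamma_n})$ for the second, and to estimate the difference using the decomposition $\overline{X}_{\Gamma_{n+1}} - \overline{X}_{\Gamma_n} = \Delta \overline{X}^1_{n+1} + \Delta \overline{X}^2_{n+1} + \Delta \overline{X}^3_{n+1}$ from (\ref{def:incr_milstein}), of respective orders $\gamma_{n+1}$, $\sqrt{\gamma_{n+1}}$, and $\gamma_{n+1}$.

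The first estimate is the easy one. Since $f\in\mathcal{C}^2_K$ has bounded gradient, the mean-value theorem gives $|f(\overline{X}_{\Gamma_{n+1}}) - f(\overline{X}^1_{\Gamma_{n+1}})| \leqslant \|\nabla f\|_{\infty}\,|\Delta\overline{X}^2_{n+1} + \Delta\overline{X}^3_{n+1}|$. Raising to the $\rho$-th power and using $|u+v|^\rho \leqslant 2^{\rho-1}(|u|^\rho + |v|^\rho)$ reduces matters to the conditional $\rho$-moments of the two increments. The identity $\Delta\overline{X}^2_{n+1} = \sqrt{\gamma_{n+1}}\,\sigma(\overline{X}_{\Gamma_n})U_{n+1}$ together with the moment assumption $M_{\rho/2}(U)$ (giving $\mathbb{E}[|U|^\rho]<+\infty$) produces the term $C_f\gamma_{n+1}^{\rho/2}\,\Tr[\sigma\sigma^\ast](\overline{X}_{\Gamma_n})^{\rho/2}$. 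An analogous computation on $\Delta\overline{X}^3_{n+1}$, which is bilinear in the Brownian increments, using $M_{\rho/2}(\mathcal{W})$ produces the companion term $C_f\gamma_{n+1}^{\rho}\bigl(\sum_{i,l}|\partial_{x_l}\sigma_i\sigma_{l,i}|\bigr)^{\rho}$ after a power-mean step.

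For the second estimate I exploit the essentially quadratic structure of $V$ from (\ref{hyp:Lyapunov_control_milstein}): $\sqrt{V}$ is globally Lipschitz with constant $[\sqrt{V}]_1 \leqslant \sqrt{C_V}/2$, and $D^2 V$ is uniformly bounded. Writing $V^{p/s} = (\sqrt{V})^{2p/s}$ I split into two sub-cases. If $2p/s \leqslant 1$, subadditivity of $z\mapsto z^{2p/s}$ gives directly $|V^{p/s}(y)-V^{p/s}(x)|\leqslant [\sqrt{V}]_1^{2p/s}|y-x|^{2p/s}$. If $2p/s > 1$, I combine the mean-value inequality for $z\mapsto z^{2p/s}$ with $\sqrt{V}(y)\leqslant \sqrt{V}(x)+[\sqrt{V}]_1|y-x|$ and the elementary $(a+b)^{2p/s-1}\leqslant 2^{(2p/s-2)_+}(a^{2p/s-1}+b^{2p/s-1})$ to obtain $|V^{p/s}(y)-V^{p/s}(x)|\leqslant C V^{p/s-1/2}(x)|y-x| + C|y-x|^{2p/s}$. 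Raising to the $\rho$-th power and taking conditional expectation, each moment $\mathbb{E}[|\overline{X}_{\Gamma_{n+1}}-\overline{X}_{\Gamma_n}|^m\mid\overline{X}_{\Gamma_n}]$ is controlled through the explicit form of the three increments, the moment assumptions $M_{(\rho/2)\vee(p\rho/s)}(U)$ and $M_{(\rho/2)\vee(p\rho/s)}(\mathcal{W})$, and $\mathfrak{B}(\phi)$ to dominate $|b|^2$, $\Tr[\sigma\sigma^\ast]$, and $\sum|\partial_{x_l}\sigma_i\sigma_{l,j}|^2$ by $CV^a$.

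The real work lies in the bookkeeping of exponents. The resulting bound involves $V$-exponents of the form $p\rho/s+(a-1)\rho/2$ (gradient contribution, multiplied by $\gamma^{\rho/2}$) and $ap\rho/s$ (pure power contribution $|y-x|^{2p\rho/s}$, multiplied by $\gamma^{p\rho/s}$), together with analogous Hessian-type terms in the super-linear case. The step-weight hypothesis $\mathcal{S}\mathcal{W}_{pol}(p,a,s,\rho)$, i.e. $ap\rho/s\leqslant p+a-1$, is precisely what is needed to absorb the pure-power contribution into $V^{p+a-1}$; the remaining $V$-exponents are then dominated by $p+a-1$ either for free or by invoking $V\geqslant v_\ast>0$ to trade a smaller power for a multiplicative constant. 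Surplus $\gamma$-exponents beyond $\rho/2$ (namely $\gamma^\rho$ and $\gamma^{p\rho/s}$) are absorbed using $\gamma_{n+1}\leqslant\overline{\gamma}$. This leaves exactly the target bound $C\gamma_{n+1}^{\rho/2}V^{p+a-1}(\overline{X}_{\Gamma_n})$, and a final application of Remark \ref{rmrk:Accroiss_mes} delivers $\mathcal{GC}_Q(V^{p/s},V^{p+a-1},\rho,\epsilon_{\mathcal{I}})$ with $\epsilon_{\mathcal{I}}(\gamma)=\gamma^{\rho/2}$.
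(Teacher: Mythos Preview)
Your argument follows the paper's proof essentially line for line: for $f\in\mathcal{C}^2_K$ you use that $f$ is Lipschitz together with the pointwise bound $|\overline{X}_{\Gamma_{n+1}}-\overline{X}^1_{\Gamma_{n+1}}|\leqslant C\gamma_{n+1}^{1/2}\Tr[\sigma\sigma^{\ast}]^{1/2}|U_{n+1}|+C\gamma_{n+1}(\ldots)|\mathcal{W}_{n+1}|$, and for $V^{p/s}$ you perform the same case split $2p\leqslant s$ versus $2p>s$, invoking respectively the $2p/s$-H\"older property of $V^{p/s}$ and the inequality \eqref{eq:puisssance_sup_1} combined with the Lipschitz property of $\sqrt{V}$. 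One point of care in your final bookkeeping: the assertion that $\gamma^{p\rho/s}$ is a ``surplus'' exponent absorbable into $\gamma^{\rho/2}$ is only valid when $2p\geqslant s$; in the sub-case $2p<s$ one has $p\rho/s<\rho/2$, and the paper's own display at that step is equally loose on this point.
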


\begin{proof}
We begin by noticing that
\begin{align*}
 \vert \overline{X}_{\Gamma_{n+1}} -\overline{X}^1_{\Gamma_n} \vert\leqslant C  \gamma_{n+1}^{1/2} \Tr [\sigma \sigma^{\ast} (\overline{X}_{\Gamma_n} )  ] ^{1/2} \vert U_{n+1} \vert +C\gamma_{n+1} \vert \sum_{i=1}^d \sum_{l=1}^d \vert \partial_{x_l} \sigma_i ( \overline{X}_{\Gamma_n} )  \sigma_{l,i}(\overline{X}_{\Gamma_n})  \vert^2  \vert^{1/2} \vert \mathcal{W}_{n+1} \vert
\end{align*} 

Let $f \in \DomA$. Then $f$ is Lipschitz and the previous inequality gives (\ref{eq:incr_lyapunov_X_milstein_f_DomA}).\\

We focus now on the proof of (\ref{eq:incr_lyapunov_X_milstein_f_tens}).
We first notice that $\mathfrak{B}(\phi)$ (see (\ref{hyp:controle_coefficients_milstein}))implies that for any $n \in \mathbb{N}$,
\begin{align*}
 \vert \overline{X}_{\Gamma_{n+1}} -\overline{X}_{\Gamma_n} \vert\leqslant C  \gamma_{n+1}^{1/2}\sqrt{\phi \circ V (\overline{X}_{\Gamma_n})}(1+\vert U_{n+1}\vert +\vert \mathcal{W}_{n+1} \vert \vert )
\end{align*}  

 \paragraph{Case $2p\leqslant s$.} 
 
 We notice that  $V^{p/s}$ is $\alpha$-H\"older for any $\alpha\! \in [2p/s,1]$ (see Lemma 3. in \cite{Panloup_2008}) and then $V^{p/s}$ is $2p/s$-H\"older. We deduce that
\begin{align*}
\mathbb{E} [ \vert V^{p/s}( \overline{X}_{\Gamma_{n+1}} )-& V^{p/s}(\overline{X}_{\Gamma_n}) \vert^{\rho} \vert \overline{X}_{q,\Gamma_n} ] \leqslant  C[V^{p/s}]^{\rho}_{2p/s} \gamma_{n+1}^{ \rho/2}  V^{a \rho /2}(\overline{X}_{q,\Gamma_{n}}).
\end{align*}
In order to obtain (\ref{eq:incr_lyapunov_X_milstein_f_tens}), it remains to use $a\,p \rho /s \leqslant a+p-1$.\\

%
%

\paragraph{Case $2p\geqslant s$.}  Using  the following inequality
 \begin{align}
 \label{eq:puisssance_sup_1}
\forall u,v \in \mathbb{R}_+,\forall \alpha \geqslant 1, \qquad \vert u^{\alpha} -v^{\alpha} \vert \leqslant &  \alpha 2^{\alpha-1} ( v^{\alpha-1} \vert u -v \vert + \vert u -v \vert ^{\alpha} ),
 \end{align}
with $\alpha=2p/s$, and since $\sqrt{V}$ is Lipschitz, we have

\begin{align*}
\big \vert V^{p/s}( \overline{X}_{\Gamma_{n+1}} )-V^{p/s}(\overline{X}_{\Gamma_n}) \big \vert \leqslant &  2^{2p/s}p/s ( V^{p/s-1/2}(\overline{X}_{\Gamma_n}) \vert \sqrt{V}( \overline{X}_{\Gamma_{n+1}} )  - \sqrt{V} (\overline{X}_{\Gamma_n} ) \vert  \\
 & + \vert  \sqrt{V}( \overline{X}_{\Gamma_{n+1}} )  - \sqrt{V}(\overline{X}_{\Gamma_n} ) \vert^{2p/s}  ) \\
\leqslant &  2^{2p/s}p/s ( [ \sqrt{V}]_1 V^{p/s-1/2}(\overline{X}_{\Gamma_n} ) \vert  \overline{X}_{\Gamma_{n+1}}- \overline{X}_{\Gamma_n} \vert \\
& + [ \sqrt{V}]_1^{2p/s}  \vert   \overline{X}_{\Gamma_{n+1}} -\overline{X}_{\Gamma_n}\vert^{2p/s}   ).
\end{align*}
In order to obtain (\ref{eq:incr_lyapunov_X_milstein_f_tens}), it remains to use the assumptions $\mathfrak{B}(\phi)$ (see (\ref{hyp:controle_coefficients_milstein}))  and then $a\,p\rho/s \leqslant p+a-1$.

\end{proof}

\paragraph{Exponential case}

\begin{lemme}
\label{lemme:incr_lyapunov_X_milstein_expo}
 Let $p\in [0,1/2], \lambda \geqslant 0$, $s \geqslant 1$, $\rho \in [1,2]$ and,let $\phi:[v_{\ast},\infty )\to \mathbb{R}_+$ be a continuous function such that $C_{\phi}:= \sup_{y \in [v_{\ast},\infty )}\phi(y)/y< \infty$ and $\liminf\limits_{y \to +\infty} \phi(y)=+\infty$, let $\psi(y)=\exp ( \lambda y^p )$, $y\in \mathbb{R}_+$.  Assume that (\ref{hyp:Lyapunov_control_milstein}) and $\mathfrak{B}(\phi)$ (see (\ref{hyp:controle_coefficients_milstein})) hold, that $\rho<s$, and that

\begin{align}
\label{hyp:incr_lyapunov_X_milstein_expo}
 \forall \tilde{\lambda} \leqslant \lambda, \exists C\geqslant 0, \forall n \in \mathbb{N}, \qquad\mathbb{E}[  \exp( \tilde{\lambda} V^p(\overline{X}_{\Gamma_{n+1}}))  \vert \overline{X}_{\Gamma_n}] \leqslant & C \exp(\tilde{\lambda} V^p(\overline{X}_{\Gamma_{n}}))   .
\end{align}

Then, for every $n \in \mathbb{N}$, we have
\begin{align}
\label{eq:incr_lyapunov_X_milstein_f_tens_expo}
\mathbb{E}[\vert \exp( & \lambda/s V^p(\overline{X}_{\Gamma_{n+1}}))- \exp(\lambda/s V^p(\overline{X}_{\Gamma_{n}}))  \vert^{\rho} \vert \overline{X}_{\Gamma_{n}}] 
\leqslant  C \gamma_{n+1}^{p \rho} \frac{\phi \circ V (\overline{X}_{\Gamma_{n}}) }{V(\overline{X}_{\Gamma_{n}}) }\exp(\lambda V^p(\overline{X}_{\Gamma_{n}})) ,
\end{align}
In other words, we have $\mathcal{GC}_{Q}(\exp(\lambda/s  V^{p}),V^{-1}  \phi \circ V \exp(\lambda  V^{p}),\rho,\epsilon_{\mathcal{I}}) $ (see (\ref{hyp:incr_X_Lyapunov})) and $\epsilon_{\mathcal{I}}(\gamma)=\gamma^{p \rho}$ for every $\gamma \in \mathbb{R}_+$.

\end{lemme}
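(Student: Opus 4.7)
The strategy combines a Hölder-type regularity estimate for $V^p$ with the exponential moment control furnished by (\ref{hyp:incr_lyapunov_X_milstein_expo}). First, I would apply the elementary inequality $|e^a-e^b|\leqslant |a-b|(e^a+e^b)$ with $a=(\lambda/s)V^p(\overline{X}_{\Gamma_{n+1}})$ and $b=(\lambda/s)V^p(\overline{X}_{\Gamma_n})$, reducing the problem to the control of $|V^p(\overline{X}_{\Gamma_{n+1}})-V^p(\overline{X}_{\Gamma_n})|$. Since (\ref{hyp:Lyapunov_control_milstein}) gives $|\nabla V|^2\leqslant C_V V$, the map $\sqrt V$ is Lipschitz, and since $2p\leqslant 1$ the subadditivity of $t\mapsto t^{2p}$ yields $|V^p(y)-V^p(x)|=|(\sqrt{V(y)})^{2p}-(\sqrt{V(x)})^{2p}|\leqslant [\sqrt V]_1^{2p}|y-x|^{2p}$. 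Putting these together and raising to the power $\rho$ produces the pointwise bound
$$|\exp((\lambda/s)V^p(\overline{X}_{\Gamma_{n+1}}))-\exp((\lambda/s)V^p(\overline{X}_{\Gamma_n}))|^{\rho}\leqslant C|\overline{X}_{\Gamma_{n+1}}-\overline{X}_{\Gamma_n}|^{2p\rho}\bigl(e^{(\lambda\rho/s)V^p(\overline{X}_{\Gamma_{n+1}})}+e^{(\lambda\rho/s)V^p(\overline{X}_{\Gamma_n})}\bigr).$$

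Next, since $\rho<s$, I can choose $q'\in(1,s/\rho)$ and its conjugate $q=q'/(q'-1)$. Hölder's inequality separates the increment from the exponential factor evaluated at $\overline{X}_{\Gamma_{n+1}}$, and because $\lambda\rho q'/s\leqslant\lambda$, hypothesis (\ref{hyp:incr_lyapunov_X_milstein_expo}) bounds the corresponding conditional exponential moment by $C\exp((\lambda\rho/s)V^p(\overline{X}_{\Gamma_n}))$. The factor $\mathbb{E}[|\overline{X}_{\Gamma_{n+1}}-\overline{X}_{\Gamma_n}|^{2p\rho q}\mid\overline{X}_{\Gamma_n}]^{1/q}$ is then controlled, via the Milstein increment decomposition (\ref{def:incr_milstein}), assumption $\mathfrak{B}(\phi)$, and the existence of all polynomial moments for the Gaussian increments $U_n$ and the iterated Brownian integrals $\mathcal{W}_n$, giving a bound of order $\gamma_{n+1}^{p\rho}(\phi\circ V(\overline{X}_{\Gamma_n}))^{p\rho}$. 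The remaining term $\exp((\lambda\rho/s)V^p(\overline{X}_{\Gamma_n}))$ is $\sigma(\overline{X}_{\Gamma_n})$-measurable and pulls out of the conditional expectation, only requiring the moment bound on $|\overline{X}_{\Gamma_{n+1}}-\overline{X}_{\Gamma_n}|^{2p\rho}$. Summing the two contributions yields
$$\mathbb{E}[|\exp((\lambda/s)V^p(\overline{X}_{\Gamma_{n+1}}))-\exp((\lambda/s)V^p(\overline{X}_{\Gamma_n}))|^{\rho}\mid\overline{X}_{\Gamma_n}]\leqslant C\gamma_{n+1}^{p\rho}(\phi\circ V(\overline{X}_{\Gamma_n}))^{p\rho}e^{(\lambda\rho/s)V^p(\overline{X}_{\Gamma_n})}.$$

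It remains to convert this estimate into the target form $V^{-1}\phi\circ V\cdot e^{\lambda V^p}$. Factoring $e^{\lambda V^p}=e^{(\lambda\rho/s)V^p}\cdot e^{\lambda(1-\rho/s)V^p}$, one needs the pointwise comparison $(\phi\circ V)^{p\rho}\leqslant C\,V^{-1}\phi\circ V\cdot e^{\lambda(1-\rho/s)V^p}$ on $[v_\ast,\infty)$. Setting aside the trivial cases $\lambda=0$ and $p=0$, the strict inequality $\rho<s$ makes the exponential $e^{\lambda(1-\rho/s)V^p}$ dominate any polynomial in $V$, and I would close the comparison by distinguishing $p\rho\geqslant 1$ (where $\phi\circ V\leqslant C_{\phi}V$ reduces the estimate to polynomial growth) from $p\rho<1$ (where continuity of $\phi$ together with $\liminf_{y\to\infty}\phi(y)=+\infty$ guarantees that $\phi$ is bounded below by a positive constant). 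The main obstacle is precisely this final conversion: it is what forces the hypothesis $\rho<s$, since one must keep some exponential growth available to absorb the polynomial factor $(\phi\circ V)^{p\rho-1}V$ produced by the Hölder split.
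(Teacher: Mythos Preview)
Your argument is correct and follows essentially the same route as the paper: the mean-value inequality $|e^a-e^b|\leqslant|a-b|(e^a+e^b)$, the $2p$-H\"older regularity of $V^p$ via the Lipschitz property of $\sqrt V$, and a H\"older split (with exponent in $(1,s/\rho]$) combined with hypothesis~(\ref{hyp:incr_lyapunov_X_milstein_expo}) to control the exponential factor evaluated at $\overline X_{\Gamma_{n+1}}$. Your treatment of the final absorption step --- reducing $(\phi\circ V)^{p\rho}e^{(\lambda\rho/s)V^p}$ to $V^{-1}\phi\circ V\,e^{\lambda V^p}$ via the excess exponential $e^{\lambda(1-\rho/s)V^p}$ --- is in fact more explicit than the paper's, which only gestures at it with the phrase ``rearranging the terms and since $\rho<s$''; note that your case $p\rho<1$ implicitly uses that $\phi$ is strictly positive (as in the general framework of the paper, where $\phi$ takes values in $(0,+\infty)$), so that continuity and $\liminf_{y\to\infty}\phi(y)=+\infty$ indeed give a positive lower bound.
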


%
%
%
\begin{proof}
When $p=0$ the result is straightforward. Before we prove the result, we notice that $\mathfrak{B}(\phi)$ (see (\ref{hyp:controle_coefficients_milstein})) implies that for any $n \in \mathbb{N}$,,
\begin{align*}
 \vert \overline{X}_{\Gamma_{n+1}} -\overline{X}_{\Gamma_n} \vert\leqslant C  \gamma_n^{1/2}  \sqrt{\phi \circ V (\overline{X}_{\Gamma_n})}(1+\vert U_{n+1} \vert^2 + \vert \mathcal{W}_{n+1} \vert^2 ).
\end{align*} 

Let $x,y \in \mathbb{R}^d$. From Taylor expansion at order one, we derive,
\begin{align*}
\big\vert\exp(\lambda/s V^p(y))-\exp(\lambda/s V^p(x)) \big\vert \leqslant  \frac{\lambda}{s}\big(\exp(\lambda/s V^p(y))+\exp(\lambda/s V^p(x))\big) \big\vert  V^p(y) -V^p(x) \big\vert .
\end{align*}

Since $p \leqslant 1/2$, we notice that the function $V^p$ is $\alpha$-H\"older for every $\alpha \in [2p,1]$ (see Lemma 3. in \cite{Panloup_2008}) and then $V^{p}$ is $2p$-H\"older that is
\begin{align*}
\vert V^{p}( y )-V^{p}(x)\vert \leqslant & [\sqrt{V}]_{2p}   \vert y-x \vert^{2p}.
\end{align*}
Combining both above inequalities, we derive

\begin{align*}
\mathbb{E}\big[\vert \exp(  \lambda/s V^p(\overline{X}_{\Gamma_{n+1}}))- &\exp(\lambda/s V^p(\overline{X}_{\Gamma_{n}}))  \vert^{\rho} \vert \overline{X}_{\Gamma_n} \big] \\
\leqslant & C \exp(\lambda \rho /s V^p(\overline{X}_{\Gamma_{n}}))  \mathbb{E}\big[ \vert \overline{X}_{\Gamma_{n+1}} -\overline{X}_{\Gamma_n} \vert^{2p\rho} \vert \overline{X}_{\Gamma_n}\big] \\
&+ C \mathbb{E}\Big[  \exp(\lambda \rho /s V^p(\overline{X}_{\Gamma_{n+1}})) \vert \overline{X}_{\Gamma_{n+1}} -\overline{X}_{\Gamma_n} \vert^{2p\rho} \vert \overline{X}_{\Gamma_n}\Big] \\
\leqslant &  C \exp(\lambda \rho /s V^p(\overline{X}_{\Gamma_{n}}))  \mathbb{E}\big[ \vert \overline{X}_{\Gamma_{n+1}} -\overline{X}_{\Gamma_n} \vert^{2p\rho} \vert \overline{X}_{\Gamma_n} \big] \\
&+ C  \mathbb{E}\big[  \exp(\lambda \rho \theta /s V^p(\overline{X}_{\Gamma_{n+1}}))  \vert \overline{X}_{\Gamma_n}]^{1/\theta} \mathbb{E}[ \vert \overline{X}_{\Gamma_{n+1}} -\overline{X}_{\Gamma_n} \vert^{2p \rho \theta/(\theta-1)} \vert \overline{X}_{\Gamma_n} \big]^{(\theta-1)/\theta},
\end{align*}
for every $\theta>1$. From (\ref{hyp:incr_lyapunov_X_milstein_expo}) and since $\rho<s$, we take $\theta \in (1,\rho/s]$ and we get
\begin{align*}
\mathbb{E}\big[  \exp(\lambda \rho \theta /s V^p(\overline{X}_{\Gamma_{n+1}})  \vert \overline{X}_{\Gamma_n}\big] \leqslant & C \exp( \lambda \theta \rho  /s V^p(\overline{X}_{\Gamma_{n}}))   .
\end{align*}

Rearranging the terms and  since $\rho < s$, we conclude from $\mathfrak{B}(\phi)$ (see (\ref{hyp:controle_coefficients_milstein})) that
\begin{align*}
\mathbb{E}[\vert \exp(   \lambda/s V^p(\overline{X}_{\Gamma_{n+1}}))-  \exp(\lambda/s V^p(\overline{X}_{\Gamma_{n+1}}))  \vert^{\rho} \vert \overline{X}_{\Gamma_{n+1}}]  \leqslant & C  \gamma_n^{p \rho} \frac{ \phi \circ V  (\overline{X}_{\Gamma_{n}} )}{V(\overline{X}_{\Gamma_{n}})}  \exp(\lambda V^p(\overline{X}_{\Gamma_{n}}))   ,
\end{align*}
and the proof is completed.

\end{proof}

\subsubsection{Proof of Theorem \ref{th:cv_was_milstein}}

This result follows from Theorem \ref{th:tightness} and Theorem \ref{th:identification_limit}. The proof consists in showing that the assumptions from those theorems are satisfied.

\paragraph{Step 1. Mean reverting recursive control}
First, we show that $\mathcal{RC}_{Q,V}(\psi_p,\phi,p\tilde{\alpha},p\beta)$ and $\mathcal{RC}_{Q,V}(\psi_{1},\phi,\tilde{\alpha},\beta)$ (see (\ref{hyp:incr_sg_Lyapunov})) is satisfied for every $\tilde{\alpha} \in (0,\alpha)$.\\

 Since (\ref{hyp:Lyapunov_control_milstein}), $\mathfrak{B}(\phi)$ (see (\ref{hyp:controle_coefficients_milstein})) and $\mathcal{R}_p(\alpha,\beta,\phi,V)$ (see (\ref{hyp:recursive_control_param_milstein})) hold, it follows from Proposition \ref{prop:recursive_control_milstein} that $\mathcal{RC}_{Q,V}(\psi_p,\phi,p\tilde{\alpha},p\beta)$ (see (\ref{hyp:incr_sg_Lyapunov})) is satisfied for every $\tilde{\alpha} \in (0,\alpha)$ since $\liminf_{y \to + \infty} \phi(y) > \beta / \tilde{\alpha}$. Moreover let us notice that for every $p \leqslant 1$ then $\mathcal{R}_p(\alpha,\beta, \phi, V)$ (see (\ref{hyp:recursive_control_param_milstein})) is similar to $\mathcal{R}_1(\alpha,\beta, \phi, V)$ and then $\mathcal{RC}_{Q,V}(\psi_1,\phi,\tilde{\alpha},\beta)$ (see (\ref{hyp:incr_sg_Lyapunov})) is satisfied for every $\tilde{\alpha} \in (0,\alpha)$

\paragraph{Step 2. Step weight assumption} 
Now, we show that $\mathcal{S}\mathcal{W}_{\mathcal{I}, \gamma,\eta}(V^{p \vee 1+a-1} ,\rho,\epsilon_{\mathcal{I}}) $ (see (\ref{hyp:step_weight_I_gen_chow})) and $\mathcal{S}\mathcal{W}_{\mathcal{II},\gamma,\eta}(V^{p \vee 1 +a-1}) $ (see (\ref{hyp:step_weight_I_gen_tens})) hold. \\

First we noticel that from Step1. the assumption $\mathcal{RC}_{Q,V}(\psi_{p \vee 1},\phi,(p \vee 1)\tilde{\alpha},(p \vee 1)\beta)$ (see (\ref{hyp:incr_sg_Lyapunov})) is satisfied for every $\tilde{\alpha} \in (0,\alpha)$. Then, using $\mathcal{S}\mathcal{W}_{\mathcal{I}, \gamma,\eta}(\rho, \epsilon_{\mathcal{I}})$ (see (\ref{hyp:step_weight_I})) with Lemma \ref{lemme:mom_V} gives $\mathcal{S}\mathcal{W}_{\mathcal{I}, \gamma,\eta}( V^{p \vee 1 +a-1},\rho,\epsilon_{\mathcal{I}}) $ (see (\ref{hyp:step_weight_I_gen_chow})). Similarly, $\mathcal{S}\mathcal{W}_{\mathcal{II},\gamma,\eta}(V^{p \vee 1+a-1}) $ (see (\ref{hyp:step_weight_I_gen_tens}) follows from $\mathcal{S}\mathcal{W}_{\mathcal{II},\gamma,\eta} $ (see (\ref{hyp:step_weight_II})) and  Lemma \ref{lemme:mom_V}. 

\paragraph{Step 3. Growth control assumption}
Now, we prove $\mathcal{GC}_{Q}(F,V^{p \vee 1+a-1},\rho,\epsilon_{\mathcal{I}}) $ (see (\ref{hyp:incr_X_Lyapunov})) for $F= \DomA_0$ and $F=\{V^{p/s}\}$ .\\

This is a consequence of Lemma \ref{lemme:incr_lyapunov_X_milstein}. We notice that $\rho/2\leqslant 1$. Consequently $M_{(\rho/2)\vee(p\rho /s) }(U)$ (see (\ref{hyp:moment_ordre_p_va_schema_milstein})) and $M_{(\rho/2) \vee (p \rho /s)}(\mathcal{W})$ (see (\ref{hyp:moment_ordre_p_va_schema_milstein})) hold.  Now, we notice that from $\mathfrak{B}(\phi)$ (see (\ref{hyp:controle_coefficients_milstein})), we have$\Tr[ \sigma \sigma^{\ast} ]^{\rho/2} +  \sum_{i=1}^d \sum_{l=1}^d \vert \partial_{x_l} \sigma_i ( x )  \sigma_{l,i}  \vert^{\rho} \leqslant CV^{ \rho a/2} $ with $a \rho/2 \leqslant p+a-1$ since $\mathcal{S} \mathcal{W}_{pol}(p,a,s,\rho)$ (see (\ref{hyp:control_step_weight_pol_milstein})) holds. Then Lemma \ref{lemme:incr_lyapunov_X_milstein} implies that for $F= \DomA_0$ and $F=\{V^{p/s}\}$, then $\mathcal{GC}_{Q}(F,V^{p \vee 1 + a -1},\rho,\epsilon_{\mathcal{I}}) $ (see (\ref{hyp:incr_X_Lyapunov})) holds

\paragraph{Step 4. Conclusion}
\begin{enumerate}[label=\textbf{\roman*.}]
\item
The first part of Theorem \ref{th:cv_was_milstein} (see (\ref{eq:tightness_milstein})) is a consequence of Theorem \ref{th:tightness}. Let us observe that assumptions from Theorem \ref{th:tightness} indeed hold. \\

On the one hand, we observe that from Step 2. and Step 3. the assumptions $\mathcal{GC}_{Q}(V^{p/s},V^{p \vee 1 +a-1},\rho,\epsilon_{\mathcal{I}}) $ (see (\ref{hyp:incr_X_Lyapunov})), $\mathcal{S}\mathcal{W}_{\mathcal{I}, \gamma,\eta}( V^{p \vee 1 +a-1},\rho,\epsilon_{\mathcal{I}}) $ (see (\ref{hyp:step_weight_I_gen_chow})) and $\mathcal{S}\mathcal{W}_{\mathcal{II},\gamma,\eta}(V^{p \vee 1 +a-1}) $ (see (\ref{hyp:step_weight_I_gen_tens})) hold which are the hypothesis from Theorem \ref{th:tightness} point \ref{th:tightness_point_A} with $g=V^{p \vee 1 +a-1}$.\\

On the other hand, form Step 1. the assumption$\mathcal{RC}_{Q,V}(\psi_p,\phi,p\tilde{\alpha},p\beta)$ (see (\ref{hyp:incr_sg_Lyapunov})) is satisfied for every $\tilde{\alpha} \in (0,\alpha)$. Moreover, since $\mbox{L}_{V}$ (see (\ref{hyp:Lyapunov})) holds and  that $p/s+a-1>0$, then the hypothesis  from Theorem \ref{th:tightness} point \ref{th:tightness_point_B} are satisfied. \\

We thus conclude from Theorem \ref{th:tightness} that $(\nu_n^{\eta})_{n \in \mathbb{N}^{\ast}}$ is $\mathbb{P}-a.s.$ tight and (\ref{eq:tightness_milstein}) holds which concludes the proof of the first part of Theorem \ref{th:cv_was_milstein}. \\

\item Let us now prove the second part of Theorem \ref{th:cv_was_milstein} (see (\ref{eq:cv_was_milstein})) which is a consequence of Theorem \ref{th:identification_limit}.\\

On the one hand,we observe that from Step 2. and Step 3. the assumptions $\mathcal{GC}_{Q}(\DomA_0,V^{p \vee 1 +a-1},\rho,\epsilon_{\mathcal{I}}) $ (see (\ref{hyp:incr_X_Lyapunov})) and $\mathcal{S}\mathcal{W}_{\mathcal{I}, \gamma,\eta}( V^{p \vee 1 +a-1},\rho,\epsilon_{\mathcal{I}}) $ (see (\ref{hyp:step_weight_I_gen_chow})) hold which are the hypothesis from Theorem \ref{th:identification_limit} point \ref{th:identification_limit_A} with $g=V^{p \vee 1 +a-1}$.\\

On the other hand, since $b$, $\sigma$ and $\sum_{i,j,l=1}^d  \vert  \partial_{x_l} \sigma_i  \sigma_{l,j} \vert  $ have sublinear growth and that $g_{\sigma}\leqslant C  V^{p/s+a-1}$, with $g_{\sigma}=\Tr[ \sigma \sigma^{\ast} ]+ \sum_{i,j,l=1}^d  \vert  \partial_{x_l} \sigma_i  \sigma_{l,j} \vert  $, so that $\mathbb{P} \mbox{-a.s.} \;\sup_{n \in \mathbb{N}^{\ast}} \nu_n^{\eta}( g_{\sigma}) < + \infty $,  it follows from Proposition \ref{prop:milstein_infinitesimal_approx} that $\mathcal{E}(\widetilde{A},A,\DomA_0) $ (see (\ref{hyp:erreur_tems_cours_fonction_test_reg})) is satisfied. Then, the hypothesis from Theorem \ref{th:identification_limit} point \ref{th:identification_limit_B} hold and (\ref{eq:cv_was_milstein}) follows from (\ref{eq:test_function_gen_cv}).

\end{enumerate}

\subsubsection{Proof of Theorem \ref{th:cv_exp_milstein}}

This result follows from Theorem \ref{th:tightness} and Theorem \ref{th:identification_limit}. The proof consists in showing that the assumptions from those theorems are satisfied.

\paragraph{Step 1. Mean reverting recursive control}

First, we show that for every $\tilde{\alpha} \in (0,\alpha)$, there exists $\tilde{\beta} \in \mathbb{R}_+$ such that $\mathcal{RC}_{Q,V}(\tilde{\psi},\phi,p\tilde{\alpha},p\tilde{\beta})$ (see (\ref{hyp:incr_sg_Lyapunov})) is satisfied for every function $\tilde{\psi}: [v_{\ast},\infty) \to \mathbb{R}_+$ such that $\tilde{\psi}(y)=  \exp( \tilde{\lambda} V^p)$ with $\tilde{\lambda} \leqslant \lambda$. Notice that this property and the fact that $\phi$ has sublinear growth imply (\ref{hyp:incr_lyapunov_X_milstein_expo}).\\

We begin by noticing that $\mathcal{R}_{p, \lambda}(\alpha,\beta,\phi,V)$ (see (\ref{hyp:recursive_control_param_milstein_expo})) implies $\mathcal{R}_{p,\tilde{ \lambda}}(\alpha,\beta,\phi,V)$ for every $\tilde{\lambda} \leqslant \lambda$. Since (\ref{hyp:Lyapunov_control_milstein}), $\mathfrak{B}(\phi)$ (see (\ref{hyp:controle_coefficients_milstein})), $\mathcal{R}_{p, \lambda}(\alpha,\beta,\phi,V)$  (see (\ref{hyp:recursive_control_param_milstein_expo})) and (\ref{hyp:dom_recurs_milstein}) hold, it follows from Proposition \ref{prop:recursive_control_milstein_exp} with $\lim_{y \to +\infty} \phi(y)=+\infty$, that that for every $\tilde{\alpha} \in (0,\alpha)$, there exists $\tilde{\beta} \in \mathbb{R}_+$ such that $\mathcal{RC}_{Q,V}(\tilde{\psi},\phi,p\tilde{\alpha},p\tilde{\beta})$ (see (\ref{hyp:incr_sg_Lyapunov})) is satisfied for every function $\tilde{\psi}: [v_{\ast},\infty) \to \mathbb{R}_+$ such that $\tilde{\psi}(y)=  \exp( \tilde{\lambda} V^p)$ with $\tilde{\lambda} \leqslant \lambda$. 

\paragraph{Step 2. Step weight assumption} 
Now, we show that $\mathcal{S}\mathcal{W}_{\mathcal{I}, \gamma,\eta}(V^{-1}. \phi \circ V  .\exp(\lambda  V^{p}) ,\rho,\tilde{\epsilon}_{\mathcal{I}}) $, $\mathcal{S}\mathcal{W}_{\mathcal{I}, \gamma,\eta}(V^{-1}. \phi \circ V  .\exp(\lambda  V^{p}) ,\rho,\epsilon_{\mathcal{I}}) $ (see (\ref{hyp:step_weight_I_gen_chow})) and $\mathcal{S}\mathcal{W}_{\mathcal{II},\gamma,\eta}(\exp(\lambda /s V^{p})) $ (see (\ref{hyp:step_weight_I_gen_tens})) hold. \\

First we recall that that there exists $\tilde{\alpha} \in (0,\alpha)$ and $\tilde{\beta} \in \mathbb{R}_+$ such that $\mathcal{RC}_{Q,V}(\psi,\phi,\tilde{\alpha},\tilde{\beta})$ (see (\ref{hyp:incr_sg_Lyapunov})) is satisfied. Then, using $\mathcal{S}\mathcal{W}_{\mathcal{I}, \gamma,\eta}(\rho, \tilde{\epsilon}_{\mathcal{I}})$ and $\mathcal{S}\mathcal{W}_{\mathcal{I}, \gamma,\eta}(\rho, \epsilon_{\mathcal{I}})$ (see (\ref{hyp:step_weight_I})) with Lemma \ref{lemme:mom_V} gives $\mathcal{S}\mathcal{W}_{\mathcal{I}, \gamma,\eta}( V^{-1}. \phi \circ V  .\exp(\lambda  V^{p}),\rho,\tilde{\epsilon}_{\mathcal{I}}) $ and $\mathcal{S}\mathcal{W}_{\mathcal{I}, \gamma,\eta}( V^{-1}. \phi \circ V  .\exp(\lambda  V^{p}),\rho,\epsilon_{\mathcal{I}}) $ (see (\ref{hyp:step_weight_I_gen_chow})). Similarly, $\mathcal{S}\mathcal{W}_{\mathcal{II},\gamma,\eta}(V^{-1}. \phi \circ V  .\exp(\lambda  V^{p})) $ (see (\ref{hyp:step_weight_I_gen_tens}) follows from $\mathcal{S}\mathcal{W}_{\mathcal{II},\gamma,\eta} $ (see (\ref{hyp:step_weight_II})) and  Lemma \ref{lemme:mom_V}. 

\paragraph{Step 3. Growth control assumption}
Now, we prove $\mathcal{GC}_{Q}(F,V^{-1}. \phi \circ V  .\exp(\lambda  V^{p}),\rho,\epsilon_{\mathcal{I}}) $ (see (\ref{hyp:incr_X_Lyapunov})) for $F= \DomA_0$ and $F=\{\exp(\lambda /s V^{p})\}$ .\\

This is a consequence of Lemma \ref{lemme:incr_lyapunov_X_milstein} and Lemma \ref{lemme:incr_lyapunov_X_milstein_expo}. We notice indeed that $\mathfrak{B}(\phi)$ (see (\ref{hyp:controle_coefficients_milstein})) gives $\Tr[ \sigma \sigma^{\ast} ]^{\rho/2}  \leqslant (\phi \circ V)^{ \rho} $. Moreover, we have already shown that (\ref{hyp:incr_lyapunov_X_milstein_expo}) is satisfied in Step 1. These observations combined with (\ref{eq:incr_lyapunov_X_milstein_f_tens_expo}) imply that $\mathcal{GC}_{Q}(\DomA_0,V^{-1}  \phi \circ V \exp(\lambda V^p) ,\rho,\epsilon_{\mathcal{I}}) $  and $\mathcal{GC}_{Q}(\exp(\lambda /s V^{p}),V^{-1}.  \phi \circ V . \exp(\lambda V^p) ,\rho,\tilde{\epsilon}_{\mathcal{I}}) $ (see (\ref{hyp:incr_X_Lyapunov})) hold.

\paragraph{Step 4. Conclusion}
\begin{enumerate}[label=\textbf{\roman*.}]
\item
The first part of Theorem \ref{th:cv_exp_milstein} (see (\ref{eq:tightness_milstein_expo})) is a consequence of Theorem \ref{th:tightness}. Let us observe that assumptions from Theorem \ref{th:tightness} indeed hold. \\

On the one hand, we observe that from Step 2. and Step 3. the assumptions $\mathcal{GC}_{Q}(\exp(\lambda /s V^{p}),V^{-1}  \phi \circ V \exp(\lambda V^p) ,\rho,\tilde{\epsilon}_{\mathcal{I}}) $ (see (\ref{hyp:incr_X_Lyapunov})), $\mathcal{S}\mathcal{W}_{\mathcal{I}, \gamma,\eta}( V^{-1}  \phi \circ V \exp(\lambda V^p) ,\rho,\tilde{\epsilon}_{\mathcal{I}}) $ (see (\ref{hyp:step_weight_I_gen_chow})) and $\mathcal{S}\mathcal{W}_{\mathcal{II},\gamma,\eta}( V^{-1}  \phi \circ V \exp(\lambda V^p)) $ (see (\ref{hyp:step_weight_I_gen_tens})) hold which are the hypothesis from Theorem \ref{th:tightness} point \ref{th:tightness_point_A} with $g=V^{-1}  \phi \circ V \exp(\lambda V^p)$.\\

On the other hand, form Step 1. for every $\tilde{\alpha} \in (0,\alpha)$, there exists $\tilde{\beta} \in \mathbb{R}_+$ such that $\mathcal{RC}_{Q,V}(\psi,\phi,p\tilde{\alpha},p\tilde{\beta})$ (see (\ref{hyp:incr_sg_Lyapunov})) is satisfied. Moreover, since $\mbox{L}_{V}$ (see (\ref{hyp:Lyapunov})) holds, then the hypothesis  from Theorem \ref{th:tightness} point \ref{th:tightness_point_B} are satisfied. \\

We thus conclude from Theorem \ref{th:tightness} that $(\nu_n^{\eta})_{n \in \mathbb{N}^{\ast}}$ is $\mathbb{P}-a.s.$ tight and (\ref{eq:tightness_milstein_expo}) holds
which concludes the proof of the first part of Theorem \ref{th:cv_exp_milstein}. \\

\item Let us now prove the second part of Theorem \ref{th:cv_exp_milstein} (see (\ref{eq:cv_expo_milstein})) which is a consequence of Theorem \ref{th:identification_limit}.\\

On the one hand,we observe that from Step 2. and Step 3. the assumptions $\mathcal{GC}_{Q}(\DomA_0,V^{-1}  \phi \circ V \exp(\lambda V^p),\rho,\epsilon_{\mathcal{I}}) $ (see (\ref{hyp:incr_X_Lyapunov})) and $\mathcal{S}\mathcal{W}_{\mathcal{I}, \gamma,\eta}( V^{-1}  \phi \circ V \exp(\lambda V^p),\rho,\epsilon_{\mathcal{I}}) $ (see (\ref{hyp:step_weight_I_gen_chow})) hold which are the hypothesis from Theorem \ref{th:identification_limit} point \ref{th:identification_limit_A} with $g=V^{-1}  \phi \circ V \exp(\lambda V^p)$.\\

On the other hand, since $b$, $\sigma$ and $\sum_{i,j,l=1}^d  \vert  \partial_{x_l} \sigma_i  \sigma_{l,j} \vert  $ have sublinear growth and that $g_{\sigma}\leqslant V^{-1}  \phi \circ V \exp(\lambda/s V^p)$, with $g_{\sigma}=\Tr[ \sigma \sigma^{\ast} ]+ \sum_{i,j,l=1}^d  \vert  \partial_{x_l} \sigma_i  \sigma_{l,j} \vert  $, so that $\mathbb{P} \mbox{-a.s.} \;\sup_{n \in \mathbb{N}^{\ast}} \nu_n^{\eta}( g_{\sigma} ) < + \infty $,  it follows from Proposition \ref{prop:milstein_infinitesimal_approx} that $\mathcal{E}(\widetilde{A},A,\DomA_0) $ (see (\ref{hyp:erreur_tems_cours_fonction_test_reg})) is satisfied. Then, the hypothesis from Theorem \ref{th:identification_limit} point \ref{th:identification_limit_B} hold and (\ref{eq:cv_expo_milstein}) follows from (\ref{eq:test_function_gen_cv}).

\end{enumerate}

\subsection{Application to censored jump processes}
In this section, applying results from Section \ref{section:convergence_inv_distrib_gnl}, we build invariant distributions for censored jump processes which are not necessarily Levy processes. Our approach extends the one made in \cite{Panloup_2008}, and inspired by \cite{Lamberton_Pages_2002}, for Levy processes in a weakly mean reverting setting, namely $\phi(y)=y^a$, $a \in (0,1]$ for every $y \in [v_{\ast},\infty)$. Like in \cite{Panloup_2008}, we consider polynomial test functions, $i.e.$ $\psi_p(y)=y^p$, with $ p \geqslant 0$ for every $y \in [v_{\ast},\infty)$.\\

\noindent Now, we present the censored jump process, its decreasing step Euler approximation and the hypothesis necessary to obtain the convergence of $(\nu^{\eta}_n)_{n \in\mathbb{N}^{\ast}}$.  We consider a Poisson point process $\mathfrak{p}$ with state space $(\hat{F};\mathcal{B}(\hat{F}))$ where $\hat{F} = F\times  \mathbb{R}_+$ with $F$ an open set. We refer to \cite{Ikeda_Watanabe_1989} for more details. We denote by $N$ the counting measure associated to $\mathfrak{p}$. We have $ N([0, t) \times A) = \# \{ 0 \leqslant   s < t; \mathfrak{p}_s \in A \}$ for $t \geqslant 0$ and $A \in \mathcal{B}(\hat{F})$. We assume that the associated intensity measure is given by $ \hat{N} (dt, dz, dv) = dt \times  \pi( dz) \times \mathds{1}_{[0, \infty)}(v)dv$ where $(z, v) \in  \hat{F} = F\times \mathbb{R}_+$ and $\pi$ is a positive measure with $\pi(F) \in \mathbb{R}_+ \cup \{+\infty\}$. We will use the notation $\widetilde{N}=N -\hat{N}$. We also consider a $d$-dimension Brownian motion $(W_t)_{t \geqslant 0}$ independent from $N$. We are interested in the strong solution - assumed to exist and to be unique - of the d dimensional stochastic equation
\begin{align*}
X_t= x+\int_0^t b(X_{s^-})ds + \int_0^t \int_{\hat{F}} c(z,X_{s^-}) \mathds{1}_{v \leqslant \zeta(z, X_{s^-}) } N(ds,dz,dv)  .
\end{align*}
where $b: \mathbb{R}^d \to \mathbb{R}^d$ and $c(z,.):\mathbb{R}^d \to \mathbb{R}^d$, $z \in F$ are locally bounded functions and $\zeta: F \times \mathbb{R}^d \to \mathbb{R}_+$ is bounded. The infinitesimal generator of this process reads
\begin{align}
\label{eq:PDMP_generator}
A f(x) =\langle b(x) , \nabla f(x) \rangle + \int_{F} \big(f(x+c(z,x))-f(x) \big) \zeta(z,x)\pi(dz).  
\end{align}
and its domain $\DomA$ contains $\DomA_0 =\mathcal{C}^2_K(\mathbb{R}^d)$. Notice that $\DomA_0 $ is dense in $\mathcal{C}_0(E)$. In this paper, we do not discuss existence or unicity of such processes. The main difference with Levy processes is that the intensity of jump $\zeta(x,z)\pi(dz)$ may depend on the position of the process. The studies concerning these processes were initiated in \cite{Fournier_2002} where the focus is made on the existence of an absolutely continuous (with respect to the Lebesgue measure) density. In the PhD thesis \cite{Rabiet_2015}, the author extends existence and uniqueness results for SDE with non zero Brownian component and establish ergodicity properties. Notice that our results can be easily extended to the case of a non null Brownian part using the same approach as the one we present now. Notice that in this case, we can recover the results from \cite{Panloup_2008} as a particular case of our study.  \\
 
\noindent We now introduce an Euler scheme for this process. Since $\pi(F)$ may take an infinite value, we introduce the family $(F_{\gamma})_{\gamma \geqslant 0}$, with $F_{\tilde{\gamma}} \subset F_{\gamma} \subset F_0= F$ for every $\tilde{\gamma} \geqslant \gamma \geqslant 0$ and such that $\cup_{\gamma >0} F_{\gamma}=F$. When $\pi(F)<+\infty$, we suppose that $F_{\gamma} = F$ for every $\gamma>0$. 
First, let $q>0$ and define $\tilde{b}_{q}(x)=b(x) +\kappa_{q}(x) $ with
\begin{align}
\label{hyp:recursive_control_param_saut_terme_ordre_un}
\forall x \in \mathbb{R}^d,  \quad  \kappa_{q}(x)= \big( \mathds{1}_{q \in (1/2,1]} +\mathds{1}_{\pi(F)=+\infty}\mathds{1}_{q \in (1,+\infty)} \big)\int_{F} c(z,x ) \zeta (z,x ) \pi(dz) .
\end{align}
Now, for $x \in \mathbb{R}^d$ and $\gamma \geqslant 0$, we introduce the following quantities - supposed to be well defined \textit{càdlàg} processes: For every $t \geqslant 0$,
\begin{align*}
  M^{\gamma}_t(x):=&\int_0^{t} \int_{\hat{F}} c(z,x) \mathds{1}_{v \leqslant \zeta(z,x) } \mathds{1}_{F_{\gamma}}(z) N(ds,dz,dv),  \quad \mbox{and the local martingale} \\
   \widetilde{M}^{\gamma}_t(x):=&\int_0^{t} \int_{\hat{F}} c(z,x) \mathds{1}_{v \leqslant \zeta(z,x) } \mathds{1}_{F_{\gamma}}(z)  \widetilde{N}(ds,dz,dv). \nonumber
 \end{align*}
%
%
%
%
%
%
%
%
%
%
Moreover, for every $\epsilon>0$, we assume that
\begin{align}
\label{hyp:cv_zero_sublinear_jump}
&\mathbb{P}(d\omega)-a.s. \quad
   \left\{
      \begin{aligned}
        & \forall K \in \mathcal{K}_{\mathbb{R}^d}, \quad  \lim_{\gamma \to 0^+} \sup_{x \in K} M^{\gamma,q}_{\gamma}(x,\omega) =0 , \\
        &\exists \gamma_{0,\epsilon}(\omega)>0, \forall \gamma \in (0,\gamma_{0,\epsilon}(\omega) ],\forall x \in \mathbb{R}^d, \sup_{t \in [0,\gamma] } \vert M^{\gamma,q}_{t}(x,\omega) \vert \leqslant \epsilon(1+ \vert x \vert) .
      \end{aligned}
    \right.
\end{align}
with $M^{\gamma,q}_{t}(x)= \mathds{1}_{q \in (0,1/2]} M^{\gamma}_{t} (x) +\mathds{1}_{q \in (1/2,+\infty)}( \mathds{1}_{\pi(F) =+ \infty} \widetilde{M}^{\gamma}_{t} (x)+\mathds{1}_{\pi(F) <+ \infty} M^{\gamma}_{t} (x))$, $t\geqslant 0$.
%
%
\begin{remark}
Assume that there exists $c_0:F \to \mathbb{R}_+$ such that for every $x \in \mathbb{R}^d$ and $z\in F$, $\vert c (z,x) \vert \leqslant C(1+\vert x \vert ) c_{0}(z)$ with $\int_F c_{0}(z)  \pi(dz) < + \infty$. Since $\vert M^{\gamma}_{\gamma}(x) \vert \leqslant \overline{M}_{\gamma}(x) := C(1+\vert x \vert ) \int_0^{\gamma} \int_{F} c_{0}(z) N(ds,dz,[0,\Vert \xi \Vert_{\infty}] )$ with $(\overline{M}_{\gamma}(x) )_{\gamma \geqslant 0}$ a \textit{càdlàg} process starting from zero, then (\ref{hyp:cv_zero_sublinear_jump}) holds when $M^{\gamma,q}_{t}(x)=M^{\gamma}_{t}(x)$, $t \geqslant 0$.  Moreover, since $\vert \tilde{M}^{\gamma}_{\gamma}(x) \vert \leqslant \overline{M}_{\gamma}(x) + C(1+\vert x \vert )\Vert \xi \Vert_{\infty} \gamma \int_{F} c_{0}(z) \pi(dz)$, then (\ref{hyp:cv_zero_sublinear_jump}) holds when $M^{\gamma,q}_{t}(x)=\tilde{M}^{\gamma}_{t}(x)$, $t \geqslant 0$. 
\end{remark}
We now introduce the sequences of independent random variables $(M^n_{\gamma_n}(x_n))_{n \in \mathbb{N}^{\ast}}$, (respectively ($\widetilde{M}^n_{\gamma_n}(x_n))_{n \in \mathbb{N}^{\ast}}$), $(x_n)_{n \in \mathbb{N}^{\ast}}\in( \mathbb{R}^d)^{\otimes \mathbb{N}^{\ast}}$ with $M^n_{\gamma_n}(x_n)$ (resp. $\widetilde{M}^n_{\gamma_n}(x_n))$ distributed under the the same law as $M^{\gamma_n}_{\gamma_n}(x_n)$ (resp. $\widetilde{M}^{\gamma_n}_{\gamma_n}(x_n)$). For every $n\in \mathbb{N}$, we define the Euler scheme by
\begin{align}
\label{eq:PDMP_bounded_Poiss_approx}
\overline{X}_{q,\Gamma_{n+1}}  =\overline{X}_{q,\Gamma_n} + \gamma_{n+1} \tilde{b}_{q}(\overline{X}_{q,\Gamma_{n}}) +&  (\mathds{1}_{q \in (0,1/2]}+\mathds{1}_{\pi(F)<+\infty} \mathds{1}_{q \in (1,+\infty)} ) M^{n+1}_{\gamma_{n+1}}(\overline{X}_{q,\Gamma_n})  \\
+&  (\mathds{1}_{q \in (1/2,1]}+\mathds{1}_{\pi(F)=+\infty} \mathds{1}_{q \in (1,+\infty)} )  \widetilde{M}^{n+1}_{\gamma_{n+1}}(\overline{X}_{q,\Gamma_n}) .\nonumber
\end{align}
%
%
%
%
We denote by $\widetilde{A}_{q}:=(\widetilde{A}_{q,\gamma} )_{ \gamma >0}$ the pseudo-generator and $(\nu_n^{\eta,q})_{n \in \mathbb{N}^{\ast}}$ the sequence of empirical distributions, of $(\overline{X}_{q,t})_{t \geqslant 0}$  respectively defined as in (\ref{eq:def_A_tilde}) and as in (\ref{eq:def_weight_emp_meas}) with $(\overline{X}_{t})_{t \geqslant 0}$ replaced by $(\overline{X}_{q,t})_{t \geqslant 0}$.
%
%
%
%
\begin{remark}
When $\pi(F)=+\infty$, we can assume that the family $(F_{\gamma})_{\gamma >0}$ satisfies $\pi(F_{\gamma})<+\infty$ for every $\gamma>0$. In this case we can simulate the Euler genuine scheme in the following way: Let $(J^{\gamma}_t)_{t \geqslant 0}$ be the Poisson process with intensity $ \Vert \zeta \Vert_{\infty} \pi(F_{\gamma})$. We introduce the sequences of independent random variables (independent from $J^{\gamma}$)
\begin{align*}
Z^n_k \sim  \pi(F_{\gamma_{n}})^{-1}   \mathds{1}_{F_{\gamma_{n}}}(z)\pi(dz), \qquad \mbox{and}  \qquad   V_k \sim \Vert \zeta \Vert_{\infty}^{-1} \mathds{1}_{[0, \Vert \zeta \Vert_{\infty}]}(v) dv.
\end{align*}
For every $n \in \mathbb{N}$ and every $t \in [\Gamma_n, \Gamma_{n+1}]$, (\ref{eq:PDMP_bounded_Poiss_approx}) can be rewritten (in the continuous case) as the Euler genuine scheme:
\begin{align*}
\overline{X}_{q,t}  = \overline{X}_{q,\Gamma_n} +& (t-\Gamma_{n}) \Big( b(\overline{X}_{q,\Gamma_{n}}) + \mathds{1}_{q \in (1/2,+\infty)} \int_{F \setminus F_{\gamma_{n+1}}} c(z,\overline{X}_{q,\Gamma_n} ) \zeta (z,\overline{X}_{q,\Gamma_n} ) \pi(dz) \Big). \\
+& \sum_{k=1}^{J^{\gamma_{n+1}}_{t}} c(Z^{n+1}_k,\overline{X}_{q,\Gamma_n}) \mathds{1}_{V_k \leqslant \zeta(Z^{n+1}_k, \overline{X}_{q,\Gamma_n}) } .\nonumber
\end{align*}
Notice that when $\pi(F)<+\infty$, since $F_{\gamma}=F$ for every $\gamma>0$, we can use this simulation method.
\end{remark}
In order to simplify the writing, we will use the notations:
\begin{align}
\label{def:incr_pcmp_euler}
\Delta \overline{X}^{1}_{q,{n+1}} = &  \gamma_{n+1}\tilde{b}_{q}(\overline{X}_{q,\Gamma_{n}})  , \\
\Delta \overline{X}^{2}_{q,{n+1}} =  & (\mathds{1}_{q \in (0,1/2]}+\mathds{1}_{\pi(F)<+\infty}\mathds{1}_{q \in (1,+\infty)}) M^{n+1}_{\gamma_{n+1}}(\overline{X}_{q,\Gamma_n})  +  ( \mathds{1}_{q \in (1/2,1]}+\mathds{1}_{\pi(F)=+\infty}\mathds{1}_{q \in (1,+\infty)}) \widetilde{M}^{n+1}_{\gamma_{n+1}}(\overline{X}_{q,\Gamma_n}) ,\nonumber 
\end{align}
%
%
%
and $\overline{X}^{i}_{q,\Gamma_{n+1}}=\overline{X}_{q,\Gamma_{n}}+\sum_{k=1}^i \Delta \overline{X}^{k}_{q,{n+1}}$.\\
Now we introduce some hypothesis concerning the parameters. We begin with the jump component. Let $p \geqslant 0$. In the sequel, we will denote
\begin{align*}
\tau_{p,\gamma} (x)  :=   \int_{F_{\gamma}} \vert c(z,x ) \vert^{2p}\zeta(z,x)  \pi(dz)   \quad \mbox{and} \quad  \tau_p  (x)  := \tau_{p,0} (x),
\end{align*}
 Assume that the following finiteness hypothesis holds
\begin{align}
\label{hyp:jump_component_ordre_p}
\mathcal{H}^p \quad \equiv \qquad \forall x \in \mathbb{R}^d,  \quad  \tau_p  (x) < + \infty,
\end{align}
and that
\begin{align}
\label{hyp:struc_param_saut_x_infini}
\forall z \in F, \qquad \limsup_{\vert x \vert \to + \infty} \frac{ \vert c(z,x) \vert}{\vert x \vert }<1.
\end{align}
 Finally, assume the existence of a Lyapunov function $V: \mathbb{R}^d  \to [v_{\ast}, \infty)$, $v_{\ast}>0$, which satisfies $\mbox{L}_V$ (see (\ref{hyp:Lyapunov})) with $E=\mathbb{R}^d $, and
\begin{align}
\label{hyp:Lyapunov_control_saut}
\vert \nabla V \vert^2 \leqslant C_V V, \qquad \Vert D^2 V \Vert_{\infty} < + \infty.
\end{align}
We now consider the mean-reverting property of $V$ for polynomial test functions, $i.e.$ when $\psi(y)=\psi_p(y)=y^p$, $y \geqslant 0$, $p>0$. Let
\begin{align}
\label{def:lambda_psi_saut}
\forall x \in \mathbb{R}^d , \quad \lambda_{\psi}(x):= \lambda_{D^2V(x)+2 \nabla V(x)^{\otimes 2} \psi''\circ V(x) \psi'\circ V(x)^{-1}}  .
\end{align}
We also use the notation $\lambda_p$ instead of $\lambda_{\psi_p}$. Now let $\phi:[v_{\ast}, + \infty) \to \mathbb{R}_+$. We suppose that
\begin{align}
\label{hyp:jump_component_control_coeff_ordre_p_control_stab}
\mathcal{H}^p(\phi,V) \quad \equiv \qquad  \qquad  \qquad   \forall x \in \mathbb{R}^d, \quad \tau_p  (x)   \leqslant  C  \phi \circ V (x)^p ,
\end{align}
and, when $p \geqslant 1$, we also introduce 
\begin{align}
\label{hyp:jump_component_control_coeff_ordre_p_control_stab_cas_inf}
\underline{\mathcal{H}}^p(\phi,V) \Leftrightarrow  \mathcal{H}^p \quad \mbox{and} \quad \mathcal{H}^{p'}(\phi,V), \forall p' \in [1,p) .
\end{align}
%
Moreover, assume that
\begin{align}
\label{hyp:controle_coefficients_saut_p_q}
\mathfrak{B}_{q}(\phi) \quad \equiv \qquad \forall x \in \mathbb{R}^d,  \quad  \vert \tilde{b}_{q} \vert^2  \leqslant C  \phi \circ V (x),
\end{align}
 Let $\beta \in \mathbb{R}$ and $\alpha>0$. We assume that $V$ satisfies the following mean-reverting property:
\begin{align}
\label{hyp:recursive_control_param_saut}
\mathcal{R}_{p,q}(\alpha,\beta,\phi,V) \; \equiv \quad \forall x \in \mathbb{R}^d,  \quad  \Big\langle \nabla V(x), b(x)+ \int_{F} c(z,x ) \zeta (z,x ) \pi(dz) \Big\rangle + \frac{1}{2} \chi_{p,q}(x) \leqslant \beta - \alpha \phi \circ V (x),
\end{align}
with
%
%
%
%
\begin{align}
\label{hyp:recursive_control_param_saut_terme_ordre_sup}
&\forall x \in \mathbb{R}^d,  \quad  \chi_{p,q}(x)= 
   \left\{
      \begin{aligned}
        & 2 p^{-1}V^{1-p}(x) \tilde{\chi}_{p,q} (x) & & \quad \mbox{if } p \leqslant 1\\
        &\Vert \lambda_{1} \Vert_{\infty} \tau_1(x) & & \quad \mbox{if } p =1\\
        & \Vert \lambda_{p} \Vert_{\infty} 2^{(2p-3)_+} \big( \tau_1(x)+[\sqrt{V}]_1^{2p-2} V^{1-p}(x)  \mathfrak{K}_p \tau_{p}(x)\big)&   & \quad \mbox{if } p > 1 ,
      \end{aligned}
    \right.
\end{align}
with $\tilde{\chi}_{p,q} (x) = (\mathds{1}_{q \leqslant 1/2}   [V^p]_{2q}+\mathds{1}_{q\in (1/2,+\infty)} C_{q\wedge 1}  [V^{p-1}\nabla V]_{2 (q\wedge 1) -1} ) \tau_{q\wedge 1}(x)$, and, for every $p>1$,
\begin{align}
\label{eq:constante_K_BDG_succ}
 \mathfrak{K}_p=\mathds{1}_{\pi(F)< +\infty} +\mathds{1}_{\pi(F)= +\infty}p2^{2p}2^{p/(2-2^{1-k_0}) -k_0} C_p \prod_{k=1}^{k_0} C_{p2^{1-k}}, \quad k_0= \inf \{k, 2^k \geqslant p\}= \lceil \log_2(p)\rceil ,
\end{align}
with $C_r$, $r \geqslant 1$, the constant from the BDG inequality defined in (\ref{eq:BDG_inegalite}). \\
For $p>0,a\in (0,1],s\geqslant 1, \rho \in [1,2]$, we consider the following assumption
\begin{align}
\label{hyp:control_step_weight_pol_saut}
 \mathcal{S} \mathcal{W}_{pol}(p,a,s,\rho) \quad \equiv \qquad  \qquad a p \rho /s \leqslant a+p-1 , \qquad \mbox{and} \qquad \rho \leqslant s.
\end{align}
Finally, consider also the hypothesis
\begin{changemargin}{1cm}{1cm} 
\begin{center}
 \begin{align}
 \label{hyp:control_step_weight_pol_saut_control}
 \mathcal{S} \mathcal{W}_{\mbox{Jump}}(p,q,a,s,\rho)
\end{align}
 $\equiv $
 \end{center}
%
%
Assume that $q \geqslant  (\rho/2) \vee p$ and let us define $\epsilon_{\mathcal{I}}(\gamma)=\mathds{1}_{2p>s}\gamma^{\rho/(2(q\vee 1/2)) }+\gamma^{ ( 2\wedge (1/q)) p\rho/s}$ and $\tilde{\epsilon}_{\mathcal{I}}(\gamma)=\gamma^{1 \wedge (\rho/(2q))}$ and let $\phi(y)=y^a$, $y \in[0,+\infty)$. Assume  that $\mathcal{H}^{q}(\phi,V) $ (see (\ref{hyp:jump_component_control_coeff_ordre_p_control_stab})) holds and that when $\pi(F)=+\infty$ and $q>1$ we have $\underline{\mathcal{H}}^{q}(\phi,V) $ (see (\ref{hyp:jump_component_control_coeff_ordre_p_control_stab_cas_inf})). \\
Finally, assume that $\mathcal{S}\mathcal{W}_{\mathcal{I}, \gamma,\eta}(\rho, \epsilon_{\mathcal{I}})$ (see (\ref{hyp:step_weight_I})) and $\mathcal{S}\mathcal{W}_{\mathcal{I}, \gamma,\eta}( 1\vee \tau_{q}^{1 \wedge (\rho/(2q))} ,\rho,\tilde{\epsilon}_{\mathcal{I}}) $ (see (\ref{hyp:step_weight_I_gen_chow})), with $\overline{X}$ replaced by $\overline{X}_{q}$, hold. \\
Notice that when $a(q \vee (\rho /2)) \leqslant p+a-1$, the assumption $\mathcal{S}\mathcal{W}_{\mathcal{I}, \gamma,\eta}(  1\vee\tau_{q}^{1 \wedge (\rho/(2q))} ,\rho,\tilde{\epsilon}_{\mathcal{I}}) $ (see (\ref{hyp:step_weight_I_gen_chow})) can be replaced by $\mathcal{S}\mathcal{W}_{\mathcal{I}, \gamma,\eta}(\rho, \tilde{\epsilon}_{\mathcal{I}})$ (see (\ref{hyp:step_weight_I})) and $\mathcal{H}^{q}(\phi,V) $ (see (\ref{hyp:jump_component_control_coeff_ordre_p_control_stab})).
\end{changemargin} 
\begin{mytheo}
\label{th:cv_was_saut}
Let $p > 0$, $a \in (0,1]$, $s \geqslant 1, \rho \in [1,2]$ and let $\psi_p(y)=y^p$, $\phi(y)=y^a$. Let $q_p\in[p,1]$ if $p\leqslant 1$ and $q_p=p$ if $p\geqslant 1$. Let $\alpha>0$ and $\beta \in \mathbb{R}$.\\

 Assume that $\mbox{L}_{V}$ (see (\ref{hyp:Lyapunov})) holds and that $p/s+a-1 >0$. 
Assume also that $\mathfrak{B}_{q_p}(\phi)$ (see (\ref{hyp:controle_coefficients_saut_p_q})) and  $\mathcal{R}_{p,q_p}(\alpha,\beta,\phi,V)$ (\ref{hyp:recursive_control_param_saut}) hold and that:
 \begin{enumerate}[label=\textbf{\roman*.}]
\item \textbf{Case $p>1$ ($q_p=p$).} If $\pi(F)<+\infty$ assume that $\mathcal{H}^p(\phi,V) $ and $\mathcal{H}^{1}(\phi,V) $ (see (\ref{hyp:jump_component_control_coeff_ordre_p_control_stab})) are satisfied. If $\pi(F)=+\infty$ assume that $\underline{\mathcal{H}}^{p}(\phi,V) $ (see (\ref{hyp:jump_component_control_coeff_ordre_p_control_stab_cas_inf})) holds.
\item \textbf{Case $p \leqslant 1$.} Assume that $\mathcal{H}^{q_p} $ (see (\ref{hyp:jump_component_ordre_p})) holds.
\end{enumerate}
%
%
Suppose that $ \mathcal{S} \mathcal{W}_{\mbox{Jump}}(p,q_p,a,s,\rho)$ (see (\ref{hyp:control_step_weight_pol_saut_control})), $\mathcal{S}\mathcal{W}_{\mathcal{II},\gamma,\eta}(V^{p/s}) $ (see (\ref{hyp:step_weight_I_gen_tens})) with $\overline{X}$ replaced by $\overline{X}_{q_p}$, $\mathcal{S} \mathcal{W}_{pol}(p,a,s,\rho)$ (see (\ref{hyp:control_step_weight_pol_saut})) and (\ref{hyp:accroiss_sw_series_2}) are satisfied. \\

Then $(\nu_n^{\eta,q_p})_{n \in \mathbb{N}^{\ast}}$ is $\mathbb{P}-a.s.$ tight and 
\begin{align*}
\mathbb{P} \mbox{-a.s.} \quad \sup_{n \in \mathbb{N}^{\ast}} \nu_n^{\eta,q_p}( V^{p/s+a-1} ) < + \infty .
\end{align*}
Moreover, assume also that $b+\mathds{1}_{\pi(F) =+ \infty}\kappa_{q_p}$ has sublinear growth and that, when $\pi(F) =+ \infty$, there exists $r \in [0,1/2]$ such that $\mathcal{H}^{r+\mathds{1}_{q_p\in(1/2,+\infty)}/2}$ (see (\ref{hyp:jump_component_ordre_p})), (\ref{hyp:struc_param_saut_x_infini}) and (\ref{hyp:cv_zero_sublinear_jump}) hold and that $\tau_{r+\mathds{1}_{q_p\in(1/2,+\infty)}/2} \leqslant  CV^{p/s+a-1}$. \\

Then, every weak limiting distribution $\nu$ of $(\nu_n^{\eta,q_p})_{n \in \mathbb{N}^{\ast}}$ is an invariant distribution of $(X_t)_{t \geqslant 0}$ and when $\nu$ is unique, we have
\begin{align*}
\mathbb{P} \mbox{-a.s.} \quad  \forall f \in \mathcal{C}_{\tilde{V}_{\psi_p,\phi,s}}(\mathbb{R}^d), \quad \lim\limits_{n \to + \infty} \nu_n^{\eta,q_p}(f)=\nu(f),
\end{align*}
 with $\mathcal{C}_{\tilde{V}_{\psi_p,\phi,s}}(\mathbb{R}^d)$ defined in (\ref{def:espace_test_function_cv}). Notice that when $p/s \leqslant p +a-1$, the assumption $\mathcal{S}\mathcal{W}_{\mathcal{II},\gamma,\eta}(V^{p/s}) $ (see (\ref{hyp:step_weight_I_gen_tens})) can be replaced by $\mathcal{S}\mathcal{W}_{\mathcal{II},\gamma,\eta} $ (see (\ref{hyp:step_weight_II})).
\end{mytheo}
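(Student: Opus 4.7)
The plan is to mirror the four-step scheme used in the proof of Theorem \ref{th:cv_was_milstein} and deduce Theorem \ref{th:cv_was_saut} as an application of the abstract Theorems \ref{th:tightness} and \ref{th:identification_limit}. Since the approximation $(\overline{X}_{q_p,\Gamma_n})_{n\in\mathbb{N}}$ is defined via the transition kernels of the Euler scheme \eqref{eq:PDMP_bounded_Poiss_approx}, we must verify: (a) a mean reverting recursive control $\mathcal{RC}_{Q,V}(\psi_p,\phi,p\tilde\alpha,p\beta)$ on the pseudo-generator; (b) the step-weight assumptions $\mathcal{S}\mathcal{W}_{\mathcal{I},\gamma,\eta}$ and $\mathcal{S}\mathcal{W}_{\mathcal{II},\gamma,\eta}$ for $g=V^{p/s+a-1}$ and $F=\{V^{p/s}\}\cup\DomA_0$; (c) the growth control $\mathcal{GC}_Q$ on these classes with the chosen $\epsilon_{\mathcal{I}}$; and (d) the infinitesimal generator approximation $\mathcal{E}(\widetilde{A}_{q_p},A,\DomA_0)$.

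\textbf{Step 1 (Recursive control).} This is the main technical step. The plan is to prove a proposition analogous to Proposition \ref{prop:recursive_control_milstein}: applying a Taylor expansion of $\psi_p\circ V$ between $\overline{X}_{q_p,\Gamma_n}$ and $\overline{X}_{q_p,\Gamma_{n+1}}$, and controlling the first order term by $\mathcal{R}_{p,q_p}(\alpha,\beta,\phi,V)$ combined with the very definition of $\tilde b_{q_p}$ in \eqref{hyp:recursive_control_param_saut_terme_ordre_un}, which absorbs the jump compensator so that, after taking expectations, the linear contribution recovers $\langle \nabla V, b +\int c\,\zeta \pi\rangle$. The second order/remainder term should be bounded using $\mathfrak{B}_{q_p}(\phi)$, the upper bounds on $\lambda_p$ via \eqref{hyp:Lyapunov_control_saut}, and crucially the $L^{2p}$-moment of the jump increment $\Delta\overline{X}^2_{q_p,n+1}$. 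The natural tool for the latter is the BDG inequality \eqref{eq:BDG_inegalite}; when $\pi(F)=+\infty$ and $p>1$ one has to iterate BDG $k_0=\lceil\log_2 p\rceil$ times to push the exponent down below $2$, producing exactly the constant $\mathfrak{K}_p$ in \eqref{eq:constante_K_BDG_succ}, and one uses $\underline{\mathcal{H}}^p(\phi,V)$ to control all the intermediate moments $\tau_{p'}$ by $\phi\circ V$. The case $p\in(0,1)$ is handled by concavity of $y\mapsto y^p$, reducing to the case $p=1$. Matching terms with \eqref{hyp:recursive_control_param_saut_terme_ordre_sup} shows that for every $\tilde\alpha\in(0,\alpha)$ there exists $n_0$ from which $\mathcal{RC}_{Q,V}(\psi_{p\vee 1},\phi,(p\vee 1)\tilde\alpha,(p\vee 1)\beta)$ holds.

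\textbf{Steps 2 and 3 (Step-weight and growth control).} With the recursive control at hand, the step-weight assumptions for $g=V^{p/s+a-1}$ follow from Lemma \ref{lemme:mom_V} combined with the stated hypotheses $\mathcal{S}\mathcal{W}_{\mathcal{I},\gamma,\eta}(\rho,\epsilon_{\mathcal{I}})$ and either $\mathcal{S}\mathcal{W}_{\mathcal{II},\gamma,\eta}(V^{p/s})$ or $\mathcal{S}\mathcal{W}_{\mathcal{II},\gamma,\eta}$ (the latter when $p/s\leqslant p+a-1$, again by Lemma \ref{lemme:mom_V}). The growth control requires a lemma in the spirit of Lemma \ref{lemme:incr_lyapunov_X_milstein}: for test functions $f\in\DomA_0$ (Lipschitz with compact support) one bounds $|f(\overline{X}_{q_p,\Gamma_{n+1}})-f(\overline{X}_{q_p,\Gamma_n}+\Delta\overline{X}^1_{q_p,n+1})|^\rho$ by the $L^\rho$-norm of the jump increment, estimated via BDG and $\tau_{q_p}$; for $f=V^{p/s}$ one splits cases $2p\leqslant s$ and $2p>s$ using $\alpha$-Hölder regularity of $V^{p/s}$ and inequality \eqref{eq:puisssance_sup_1}, then uses $\mathcal{S}\mathcal{W}_{pol}(p,a,s,\rho)$ to match the resulting exponent $ap\rho/s$ with $p+a-1$. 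The choice of $\epsilon_{\mathcal{I}}$ in $\mathcal{S}\mathcal{W}_{\mbox{Jump}}$ and the additional hypothesis $\mathcal{S}\mathcal{W}_{\mathcal{I},\gamma,\eta}(1\vee\tau_{q_p}^{1\wedge(\rho/(2q_p))},\rho,\tilde\epsilon_{\mathcal{I}})$ exactly match the exponents produced by this computation.

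\textbf{Step 4 (Identification of the limit and conclusion).} For $\mathcal{E}(\widetilde{A}_{q_p},A,\DomA_0)$, the plan is to adapt Proposition \ref{prop:milstein_infinitesimal_approx}: splitting $\mathbb{E}[f(\overline{X}_{q_p,\Gamma_{n+1}})-f(\overline{X}_{q_p,\Gamma_n})\mid\overline{X}_{q_p,\Gamma_n}=x]$ into a drift part (Taylor and sublinear growth of $b+\mathds{1}_{\pi(F)=+\infty}\kappa_{q_p}$) and a jump part whose discrepancy with $\int(f(x+c)-f(x))\zeta\pi$ is controlled using the compact support of $f$, hypothesis \eqref{hyp:struc_param_saut_x_infini} (to ensure the perturbed argument eventually leaves the support) and \eqref{hyp:cv_zero_sublinear_jump} (to get the $\gamma\to 0$ vanishing uniformly on compacts and a.s. sublinear domination); the auxiliary weight $\tau_{r+\mathds{1}_{q_p\in(1/2,+\infty)}/2}\leqslant CV^{p/s+a-1}$ guarantees that $\sup_n\nu_n^{\eta,q_p}(g_i)<+\infty$ for the representative functions $g_i$ arising in the representation of $\Lambda_f$. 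Once $\mathcal{E}$ and \eqref{hyp:accroiss_sw_series_2} are combined with Steps 2--3, Theorem \ref{th:identification_limit} yields both identification of weak limits in $\mathcal{V}$ and, when $\mathcal{V}=\{\nu\}$, a.s. convergence on $\mathcal{C}_{\tilde V_{\psi_p,\phi,s}}(\mathbb{R}^d)$ via \eqref{eq:test_function_gen_cv}. The main obstacle, aside from the bookkeeping needed to match the numerous exponents, will be the recursive control when $\pi(F)=+\infty$ and $p>1$: the iterated BDG argument producing $\mathfrak{K}_p$ must be carried out carefully and all intermediate $\tau_{p'}$ must be dominated by $\phi\circ V$, which is precisely the role of $\underline{\mathcal{H}}^p(\phi,V)$.
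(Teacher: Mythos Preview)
Your proposal is correct and follows essentially the same route as the paper: the paper's proof of Theorem \ref{th:cv_was_saut} likewise reduces to Theorems \ref{th:tightness} and \ref{th:identification_limit}, with the hypotheses supplied by a recursive-control proposition (Proposition \ref{prop:recursive_control_saut}, proved via Taylor expansion plus the iterated BDG argument from Lemma \ref{lemme:control_jump} producing $\mathfrak{K}_p$), a growth-control lemma (Lemma \ref{lemme:incr_lyapunov_X_saut}), the infinitesimal-approximation Proposition \ref{prop:saut_infinitesimal_approx}, and Lemma \ref{lemme:mom_V} for the step-weight conditions. Two small corrections: the step-weight function should be $g=V^{p+a-1}$ (not $V^{p/s+a-1}$), and for $p\in(0,1)$ the paper does not literally reduce to the case $p=1$ but instead uses concavity of $y\mapsto y^p$ together with a separate H\"older estimate on the jump increment (Lemma \ref{lemme:control_seut_hyp_q}), which yields the term $\tilde\chi_{p,q}$ in \eqref{hyp:recursive_control_param_saut_terme_ordre_sup}.
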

\begin{remark}
Actually, we show that this Theorem holds when $ \mathcal{S} \mathcal{W}_{\mbox{Jump}}(p,q_p,a,s,\rho)$ (see (\ref{hyp:control_step_weight_pol_saut_control}) is replaced by the following weaker assumption (avoided for sake of clarity in the presentation):
\begin{changemargin}{1cm}{1cm} 
\begin{center}
 \begin{align}
 \label{hyp:control_step_weight_pol_saut_control_bis}
\widetilde{\mathcal{S} \mathcal{W}}_{\mbox{Jump}}(p,q,a,s,\rho)
\end{align}
 $\equiv $
 \end{center}
Let us consider $\tilde{q}_1 \geqslant  \rho/2$, $\tilde{q}_2 \geqslant  p$, $\tilde{q}_3>0$ and let us define $\epsilon_{\mathcal{I}}(\gamma)=\mathds{1}_{2p>s}\gamma^{\rho/(2(\tilde{q}_1\vee 1/2)) }+\gamma^{ ( 2\wedge (1/\tilde{q}_2)) p\rho/s}$ and $\tilde{\epsilon}_{\mathcal{I}}(\gamma)=\gamma^{1 \wedge (\rho/(2\tilde{q}_3))}$ and let $\phi(y)=y^a$, $y \in[0,+\infty)$. Assume  that $\mathcal{H}^{\tilde{q}_1}(\phi,V) $ (see (\ref{hyp:jump_component_control_coeff_ordre_p_control_stab})) when $2p>s$ (respectively $\mathcal{H}^{\tilde{q}_2}(\phi,V) $, $\mathcal{H}^{ \tilde{q}_3}$ (see (\ref{hyp:jump_component_ordre_p})) when $p>0$) holds and that when $\pi(F)=+\infty$ and $\tilde{q}_1>1$ (resp. $\tilde{q}_2>1$,$\tilde{q}_3>1$), we have $\underline{\mathcal{H}}^{\tilde{q_1}}(\phi,V) $ (see (\ref{hyp:jump_component_control_coeff_ordre_p_control_stab_cas_inf})) (resp. $\underline{\mathcal{H}}^{\tilde{q_2}}(\phi,V) $, $\underline{\mathcal{H}}^{\tilde{q_3}}(\phi,V) $). \\
Finally, assume that $\mathcal{S}\mathcal{W}_{\mathcal{I}, \gamma,\eta}(\rho, \epsilon_{\mathcal{I}})$ (see (\ref{hyp:step_weight_I})) and $\mathcal{S}\mathcal{W}_{\mathcal{I}, \gamma,\eta}( 1\vee \tau_{\tilde{q}_3}^{1 \wedge (\rho/(2\tilde{q}_3))} ,\rho,\tilde{\epsilon}_{\mathcal{I}}) $ (see (\ref{hyp:step_weight_I_gen_chow})), with $\overline{X}$ replaced by $\overline{X}_{q}$, hold. \\
Notice that when $a(\tilde{q}_3 \vee (\rho /2)) \leqslant p+a-1$, the assumption $\mathcal{S}\mathcal{W}_{\mathcal{I}, \gamma,\eta}(  1\vee\tau_{\tilde{q}_3}^{1 \wedge (\rho/(2\tilde{q}_3))} ,\rho,\tilde{\epsilon}_{\mathcal{I}}) $ (see (\ref{hyp:step_weight_I_gen_chow})) can be replaced by $\mathcal{S}\mathcal{W}_{\mathcal{I}, \gamma,\eta}(\rho, \tilde{\epsilon}_{\mathcal{I}})$ (see (\ref{hyp:step_weight_I})) and $\mathcal{H}^{\tilde{q}_3}(\phi,V) $ (see (\ref{hyp:jump_component_control_coeff_ordre_p_control_stab})).
\end{changemargin}
\end{remark}
\subsubsection{Proof of the recursive mean reverting control}
Before we establish the recursive mean reverting control, we provide some useful results concerning the jump component. 
\begin{lemme}
\label{lemme:control_jump}
Let $t ,\gamma \geqslant 0$. We have the following properties:
\begin{enumerate}[label=\textbf{\Alph*.}]
\item \label{lemme:control_jump_point_1} Let $p > 0$. Assume that $\pi(F)<\infty$, $F_{\gamma}=F, \forall \gamma>0$, and that $\mathcal{H}^p $ (see (\ref{hyp:jump_component_ordre_p})) holds. Then, there exists a locally bounded function $\epsilon : \mathbb{R}_+ \to \mathbb{R}$ such that for every $\overline{t}\geqslant 0$ we have $\epsilon(t) \leqslant C t, \forall t \in [0,\overline{t}]$ with $C>0$, and such that for every $n \in \mathbb{N}$,
\begin{align}
\label{eq:control_big_jump}
\mathbb{E} [\vert   M^{\gamma}_t(x) \big \vert ^{2p} ]\leqslant    t(1+ \epsilon( t) )  \tau_{p,\gamma} (x) =t(1+ \epsilon( t) )  \tau_{p} (x) .
\end{align}
\item \label{lemme:control_jump_point_3_bis} Let $p > 1$ and assume that $\underline{\mathcal{H}}^{p}(\phi,V) $ (see (\ref{hyp:jump_component_control_coeff_ordre_p_control_stab_cas_inf})) holds. Then, there exists $\xi >1$ and a finite constant $\mathfrak{C}_{p}\geqslant 0$, such that 
\begin{align}
\label{eq:control_small_jump_p_sup_un}
\mathbb{E} [  \vert  \widetilde{M}^{\gamma}_t(x) \vert^{2p}] \leqslant      \mathfrak{K}_p t \tau_{p,\gamma} (x)  +\mathfrak{C}_{p} \gamma^{\xi} \phi \circ V(x)^p,
\end{align}
where $\mathfrak{K}_p$ is the constant defined in (\ref{eq:constante_K_BDG_succ}).
\item \label{lemme:control_jump_point_2}  Let $p \in (0,1]$ and assume that $\mathcal{H}^p$ (see (\ref{hyp:jump_component_ordre_p})) holds.Then 
\begin{align} 
\mathbb{E} [  \vert  M^{\gamma}_t(x) \vert^{2p}]&\leqslant t  \tau_{p,\gamma} (x) & & \quad \mbox{if } p \in (0,1/2]  \label{eq:control_small_jump_p_inf_un_demi} \\
\mathbb{E} [  \vert  \widetilde{M}^{\gamma}_t(x) \vert^{2p}]     &\leqslant  C_p t \tau_{p,\gamma} (x) & & \quad \mbox{if } [1/2,1)    \label{eq:control_jump_p<1} \\
 \mathbb{E} [  \vert  \widetilde{M}^{\gamma}_t(x) \vert^{2p}]   &= t \tau_{1,\gamma} (x)&   & \quad \mbox{if } p = 1 , \label{eq:control_jump_p=1}
\end{align}
%
%
with $C_p$ the constant which appears in the BDG inequality (see (\ref{eq:BDG_inegalite})).\\
%
%
%
%
  \end{enumerate}
  Moreover, those results remain true when we replace $\tau_{p,\gamma}$ by $\tau_{p}$.
\end{lemme}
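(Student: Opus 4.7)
The proof rests on two ingredients: the compensator formula
\begin{equation*}
\mathbb{E}\Big[\sum_{s \leqslant t} f\big(\Delta M^\gamma_s(x)\big)\Big] = t\int_{F_\gamma} f(c(z,x))\zeta(z,x)\pi(dz)
\end{equation*}
valid for Borel $f \geqslant 0$ with $f(0)=0$, and the BDG inequality (\ref{eq:BDG_inegalite}). We begin with Part \ref{lemme:control_jump_point_2}, the most direct. For $p \in (0,1/2]$, the subadditivity $|\sum_s y_s|^{2p} \leqslant \sum_s |y_s|^{2p}$ (valid since $2p \leqslant 1$) applied to $M^\gamma_t(x) = \sum_{s\leqslant t}\Delta M^\gamma_s(x)$, combined with the compensator formula for $f(y)=|y|^{2p}$, yields (\ref{eq:control_small_jump_p_inf_un_demi}). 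For $p \in (1/2,1)$, BDG gives $\mathbb{E}[|\widetilde{M}^\gamma_t(x)|^{2p}] \leqslant C_p\mathbb{E}[[\widetilde{M}^\gamma(x)]_t^p]$, and since $[\widetilde{M}^\gamma(x)]_t = \sum_s|\Delta M^\gamma_s(x)|^2$, subadditivity of $y \mapsto y^p$ (as $p \leqslant 1$) on this nonnegative sum followed by the compensator formula yields (\ref{eq:control_jump_p<1}); the case $p=1$ reduces to It\^o's isometry, producing (\ref{eq:control_jump_p=1}).

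For Part \ref{lemme:control_jump_point_1}, the finite activity of $N$ on $F$ (since $\pi(F)<\infty$ and $\zeta$ is bounded) gives the compound-Poisson representation $M^\gamma_t(x) = \sum_{k=1}^{N_t(x)} c(Z_k,x)$, where $N_t(x) \sim \mathrm{Poisson}(\lambda(x) t)$ with $\lambda(x) = \int_F \zeta(z,x)\pi(dz)$ and $(Z_k)$ are i.i.d.\ with density $\zeta(\cdot,x)/\lambda(x)$ with respect to $\pi$. Conditioning on $N_t$ and using $|\sum_{k=1}^n y_k|^{2p} \leqslant n^{(2p-1)_+}\sum_k |y_k|^{2p}$,
\begin{equation*}
\mathbb{E}[|M^\gamma_t(x)|^{2p}] \leqslant \tau_{p,\gamma}(x) \sum_{n\geqslant 1} e^{-\lambda(x) t}\frac{(\lambda(x) t)^n}{n!} n^{(2p-1)_+}.
\end{equation*}
The $n=1$ term is bounded by $t\tau_{p,\gamma}(x)$, while the $n\geqslant 2$ terms factor as $(\lambda(x) t)^2$ times an entire function of $\lambda(x) t$, producing a correction of the form $t\epsilon(t)\tau_{p,\gamma}(x)$ with $\epsilon$ locally bounded and $\epsilon(t) = O(t)$ near zero.

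Part \ref{lemme:control_jump_point_3_bis} is the main step and is handled by iterating the Part C argument through $k_0 = \lceil\log_2 p\rceil$ levels. Set $J^{(1)}_t := [\widetilde{M}^\gamma(x)]_t = \sum_{s\leqslant t}|\Delta M^\gamma_s(x)|^2$ and inductively $J^{(k+1)}_t := [\widetilde{J}^{(k)}]_t$, where $\widetilde{J}^{(k)}_t := J^{(k)}_t - t\tau_{2^{k-1},\gamma}(x)$ is the compensated martingale; by direct inspection $J^{(k)}_t = \sum_{s\leqslant t}|\Delta M^\gamma_s(x)|^{2^k}$, with predictable compensator $t\tau_{2^{k-1},\gamma}(x)$. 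Starting from BDG $\mathbb{E}[|\widetilde{M}^\gamma_t|^{2p}] \leqslant C_p\mathbb{E}[(J^{(1)}_t)^p]$, we iterate for $k = 1,\ldots,k_0$ the decomposition $J^{(k)}_t = \widetilde{J}^{(k)}_t + t\tau_{2^{k-1},\gamma}(x)$, combined with $(a+b)^r \leqslant 2^{(r-1)_+}(a^r+b^r)$ at level $r = p/2^{k-1}$ and BDG $\mathbb{E}[|\widetilde{J}^{(k)}_t|^r] \leqslant C_{r/2}\mathbb{E}[(J^{(k+1)}_t)^{r/2}]$. After $k_0$ iterations the remaining exponent satisfies $p/2^{k_0} \leqslant 1$, so the Part C argument applied to $J^{(k_0+1)}$ yields $\mathbb{E}[(J^{(k_0+1)}_t)^{p/2^{k_0}}] \leqslant t\tau_{p,\gamma}(x)$, and collecting the BDG constants through the iteration reproduces the leading term $\mathfrak{K}_p\, t\tau_{p,\gamma}(x)$ with $\mathfrak{K}_p$ as in (\ref{eq:constante_K_BDG_succ}). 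The compensator corrections at each level $k$ are of the form $(t\tau_{2^{k-1},\gamma}(x))^{p/2^{k-1}}$, and since $1 \leqslant 2^{k-1} < p$ for every $k \leqslant k_0$, the hypothesis $\underline{\mathcal{H}}^p(\phi,V)$ bounds them by $C t^{p/2^{k-1}}\phi\circ V(x)^p$; the smallest exponent $\xi := p/2^{k_0-1}$ is strictly larger than $1$, so in the regime $t \leqslant \gamma$ relevant for a one-step Euler estimate these corrections combine into $\mathfrak{C}_p\gamma^\xi \phi\circ V(x)^p$. The main technical obstacle is the careful bookkeeping of BDG constants through the $k_0$ iterations in order to match precisely the explicit form of $\mathfrak{K}_p$ given in (\ref{eq:constante_K_BDG_succ}); the final clause of the lemma, that $\tau_{p,\gamma}$ may be replaced by $\tau_p$ throughout, is immediate since $F_\gamma \subset F$ forces $\tau_{p,\gamma}(x) \leqslant \tau_p(x)$.
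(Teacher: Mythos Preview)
Your proof is correct and follows essentially the same route as the paper: compound-Poisson representation plus the power inequality for Part~\ref{lemme:control_jump_point_1}, subadditivity/BDG/It\^o isometry for Part~\ref{lemme:control_jump_point_2}, and iterated BDG on the nested quadratic-variation martingales $\widetilde J^{(k)}$ for Part~\ref{lemme:control_jump_point_3_bis}. One minor algebraic slip: in your displayed bound for Part~\ref{lemme:control_jump_point_1} you dropped the factor $n$ coming from the inner sum $\sum_{k=1}^n$ and the normalization $\lambda(x)^{-1}$ of the $Z_k$-law, so the correct expression is $\tfrac{\tau_{p,\gamma}(x)}{\lambda(x)}\sum_{n\geqslant 1} e^{-\lambda(x)t}\frac{(\lambda(x)t)^n}{n!}\,n^{1+(2p-1)_+}$, which after the index shift $n\to m+1$ yields $t\,\tau_{p,\gamma}(x)\sum_{m\geqslant 0} e^{-\lambda(x)t}\frac{(\lambda(x)t)^m}{m!}(m+1)^{(2p-1)_+}$ and hence the claimed $t(1+\epsilon(t))\tau_{p,\gamma}(x)$.
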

\begin{proof} 
We prove point \ref{lemme:control_jump_point_1} Let $(J_t)_{t \geqslant 0}$, a Poisson process with intensity $\tilde{\pi} :=t\Vert \zeta \Vert_{\infty} \pi(F)$. We introduce the sequences of independent random variables (and independent from $J$)
\begin{align*}
Z_k \sim  \pi(F)^{-1}  \pi(dz), \qquad \mbox{and}  \qquad   V_k \sim \Vert \zeta \Vert_{\infty}^{-1}  (v)  \mathds{1}_{v \leqslant \Vert \zeta \Vert_{\infty}} dv.
\end{align*} 
Now, we rewrite $M^{\gamma}_t(x)= \sum_{k=1}^{J_{t}} c(Z_k,x) \mathds{1}_{V_k \leqslant \zeta(Z_k, x) }$. Therefore, we study 
\begin{align*}
\mathbb{E} \Big[ \big \vert    \sum_{k=1}^{J_t} c(Z_k,x) \mathds{1}_{V_k \leqslant \zeta(Z_k, x) } \big \vert^{2p} \Big]= & \mathbb{E} \Big[   \sum_{k \geqslant 1} \mathds{1}_{J_{t}=k}  \big \vert   \sum_{l=1}^k c(Z_l,x) \mathds{1}_{V_l \leqslant \zeta(Z_l, x) } \big \vert^{2p}  \Big] \\
= &  \sum_{k \geqslant 1} \mathbb{E} \Big[   \big \vert   \sum_{l=1}^k c(Z_l,x) \mathds{1}_{V_l \leqslant \zeta(Z_l, x) } \big \vert^{2p}  \Big] \frac{e^{- \tilde{\pi} t }( \tilde{\pi} t)^k}{k!}.
\end{align*}
We put $\alpha=2p$ in the inequality (\ref{eq:puisance_somme_n_terme}) and it follows that
\begin{align*}
 \mathbb{E} \Big[   \big \vert   \sum_{l=1}^k c(Z_l,x) \mathds{1}_{V_l \leqslant \zeta(Z_l, x) } \big \vert^{2p}  \Big] \leqslant &     k^{(2p-1)_+} \sum_{l=1}^k \mathbb{E} \Big[  \vert    c(Z_l,x) \mathds{1}_{V_l \leqslant \zeta(Z_l, x) } \vert^{2p}  \Big] = k^{1+(2p-1)_+}  \tilde{\pi}^{-1} \tau_p(x).
\end{align*}
Moreover,
\begin{align*}
 e^{- \tilde{\pi} t}  \sum_{k \geqslant 1}  k^{1+(2p-1)_+} \frac{( \tilde{\pi} t)^k}{k!}  =   e^{- \tilde{\pi} t } \tilde{\pi} t \sum_{k \geqslant 0}  (k+1)^{(2p-1)_+} \frac{( \tilde{\pi} t)^k}{k!} 
\end{align*}
Now we are going to use the following result
\begin{lemme}
Let $\overline{\theta} \geqslant 0$, $a\in \mathbb{N}$ and $\theta \in [0,\overline{\theta}]$. Then
\begin{align*}
&\sum_{k \geqslant 0}  (k+1)^{a} \frac{\theta^k}{k!} \leqslant
   \left\{
      \begin{aligned}
        & \exp(\theta)(1 +  \theta),& & \quad \mbox{if } a \leqslant 1\\
        & \exp(\theta)(1+\epsilon(\theta))   & & \quad \mbox{if } a \geqslant 1,
      \end{aligned}
    \right.
\end{align*}
where the function $\epsilon : \mathbb{R}_+ \to \mathbb{R}$ satisfies $\epsilon(\theta) \leqslant C \theta$ for every $\theta \in [0,\overline{\theta}]$.
\end{lemme}
\begin{proof}
When $a \leqslant 1$ we use the following inequality
\begin{align}
\label{eq:puisssance_inf_1}
\forall u,v \in \mathbb{R}_+,\forall \alpha \in (0,1], \qquad (u+v)^{\alpha} \leqslant & u^{\alpha}+v^{\alpha} , 
\end{align}
with $\alpha=a$ and derive
\begin{align*}
 \sum_{k \geqslant 0}  (k+1)^{a} \frac{\theta^k}{k!}   \leqslant  \exp(\theta) +   \sum_{k \geqslant 0}  k^{a} \frac{\theta^k}{k!}  =  \exp(\theta) +  \sum_{k \geqslant 1}  k^{a} \frac{\theta^k}{k!} = & \exp(\theta) +  \sum_{k \geqslant 1}  k^{a-1} \frac{\theta^k}{(k-1)!}    \\
 \leqslant &  \exp(\theta) +  \sum_{k \geqslant 1}  \frac{\theta^k}{(k-1)!}    \\
 = &\exp(\theta) +  \theta \exp(\theta).
\end{align*}
%
%
Assume now that $a \geqslant 1$, we apply the inequality \ref{eq:puisssance_sup_1}
 with $u=k+1$, $v=1$ and $\alpha=a$, and it follows that
\begin{align*}
\sum_{k \geqslant 0}  (k+1)^{a} \frac{\theta^k}{k!}  \leqslant  \exp(\theta)+  a2^{a-1} \sum_{k \geqslant 0} (k+ k^{a}) \frac{\theta^k}{k!} =& \exp(\theta)+  a2^{a-1} \sum_{k \geqslant 1} (k+ k^{a}) \frac{\theta^k}{k!} \\
 =& \exp(\theta)+  a2^{a-1} \sum_{k \geqslant 1} (1+ k^{a-1}) \frac{\theta^k}{(k-1)!} \\
=& \exp(\theta)+  a2^{a-1} \theta \exp(\theta) + a2^{a-1} \theta  \sum_{k \geqslant 0} (k+1)^{a-1} \frac{\theta^k}{k!}, 
\end{align*}
and a recursive approach yields the result
\end{proof}
We apply this Lemma with $\theta=\tilde{\pi} t$ and $a=(2p-1)_+$ and (\ref{eq:control_big_jump}) follows. \\
We now focus on the proof of point \ref{lemme:control_jump_point_3_bis}. For any $k \in \mathbb{N}^{\ast}$, $  \widetilde{M}^{\gamma,k}_{t}:=\sum_{s \leqslant t}\vert \Delta \widetilde{M}^{\gamma}_{s} \vert^{2^k} - t \tau_{2^{k-1}, \gamma}(x) $, $t\geqslant 0$,  is a martingale (with notation $\Delta  \widetilde{M}^{\gamma}_{t}= \widetilde{M}^{\gamma}_{t} - \widetilde{M}^{\gamma}_{t^{-}}, \forall t \geqslant 0$).  Using (\ref{eq:puisance_somme_n_terme}) for $(\widetilde{M}^{\gamma,k}_{1,t})_{ t \geqslant 0}$, it follows that
%
%
%
\begin{align*}
\mathbb{E} \big[  \vert \sum_{s \leqslant \gamma}  \vert \Delta \widetilde{M}^{\gamma}_{s} \vert^{2^k} \vert^{p/2^{k-1}}   \big] = &  \mathbb{E} \big[  \vert \widetilde{M}^{\gamma,k}_{t}  +  t \tau_{2^{k-1},\gamma}(x) \vert^{p/2^{k-1}}   \big]  \\
\leqslant & 2^{(p/2^{k-1}-1)_+}\mathbb{E} \big[  \vert \widetilde{M}^{\gamma,k}_{t}  \vert^{p/2^{k-1}} \big]  +    2^{(p/2^{k-1}-1)_+}\vert t \tau_{2^{k-1},\gamma}(x) \vert^{p/2^{k-1}}  \\
\leqslant & 2^{(p/2^{k-1}-1)_+}C_{p/2^{k-1}} \mathbb{E} \big[  \vert \sum_{s \leqslant t} \vert \Delta \widetilde{M}^{\gamma}_{s}  \vert^{2^{k+1}} \vert^{p/2^{k}} \big]  +    2^{(p/2^{k-1}-1)_+}\vert t \tau_{2^{k-1},\gamma}(x) \vert^{p/2^{k-1}}  .
\end{align*}
Now, let $k_0= \inf \{k \in \mathbb{N}^{\ast};   2^{k} \geqslant p  \} $. Using (\ref{eq:puisssance_inf_1}), and $\mathcal{H}^p$, we have 
\begin{align*}
 \mathbb{E} \big[  \vert \sum_{s \leqslant t} \vert \Delta \widetilde{M}^{\gamma}_{s}  \vert^{2^{k_0+1}} \vert^{p/2^{k_0}} \big] \leqslant  \mathbb{E} \big[  \sum_{s \leqslant t} \vert \Delta \widetilde{M}^{\gamma}_{s}  \vert^{2p} \big] =t \tau_{p,\gamma}(x) .
\end{align*}
Since $2^k <p$ for every $k < k_0$, it follows that 
\begin{align*}
\mathbb{E}[\vert \widetilde{M}^{\gamma}_t \vert^{2p} ] \leqslant \frac{ \mathfrak{K}_p}{p2^{2p}} t  \tau_{p,\gamma}(x) +C \sum_{k=1}^{k_0}  \vert t \tau_{2^{k-1},\gamma}(x) \vert^{p/2^{k-1}  } \leqslant \frac{ \mathfrak{K}_p}{p2^{2p}}  t \tau_{p}(x) +  C t^{p/2^{k_0-1}} \sup_{k \in \{1,\ldots,k_0\}} \vert  \tau_{2^{k-1}}(x) \vert^{p/2^{k-1}  }  
 \end{align*}
 with $\mathfrak{K}_p$ the constant defined in (\ref{eq:constante_K_BDG_succ}). Since we have $\mathcal{H}^{p'} (\phi,V)$ for every $p' \in [1,p)$, it follows that there exists $\xi >1$ and $\mathfrak{C}_p \geqslant 0$, such that
 \begin{align*}
\mathbb{E}[\vert \widetilde{M}^{\gamma}_t \vert^{2p} ]  \leqslant & \frac{ \mathfrak{K}_p}{p2^{2p}} t \tau_{p,\gamma}(x) +\mathfrak{C}_p t^{\xi} \phi \circ V (x)^{p} ,
 \end{align*}
that is (\ref{eq:control_small_jump_p_sup_un}). Finally, we consider the proof of point \ref{lemme:control_jump_point_2}  First we treat the case $p=1$. In this case, the process $(\mathcal{M}^{\gamma}_t)_{t \geqslant 0}$ such that $\mathcal{M}^{\gamma}_t:= \vert \widetilde{M}^{\gamma}_{t} \vert^2- t\tau_{1,\gamma}(x)$, for every $t \geqslant 0$, is a martingale and then, for every $t \geqslant 0$, we have
  \begin{align*}
\mathbb{E}[\vert \widetilde{M}^{\gamma}_t \vert^2] =t \tau_{1,\gamma}(x).
\end{align*}
Let $p \in (0,1/2]$. We apply the inequality (\ref{eq:puisance_somme_n_terme}) and the compensation formula, and (\ref{eq:control_small_jump_p_inf_un_demi}) follows from
\begin{align*}
\mathbb{E} [ \vert M^{\gamma}_t(x) \vert^{2p} ] \leqslant & \mathbb{E} \big[  \sum_{s \leqslant t} \vert \Delta  M^{\gamma}_{s}(x)  \vert^{2p} \big] =t \tau_{p,\gamma}(x)  .
\end{align*}
Finally, let $p \in [1/2,1)$. Using the BDG inequality (see (\ref{eq:BDG_inegalite})), (\ref{eq:puisssance_inf_1}) and the compensation formula, we derive
\begin{align*}
\mathbb{E}[\vert  \widetilde{M}^{\gamma}_{t} \vert^{2p} ] \leqslant  C_p \mathbb{E} \big[  \vert \sum_{s \leqslant t} \vert \Delta  M^{\gamma}_{s} \vert^2 \vert^p   \big] \leqslant &  C_p \mathbb{E} \big[  \sum_{s \leqslant t} \vert \Delta  M^{\gamma}_{s} \vert^{2p}   \big] = C_p  t \tau_{p,\gamma}(x) ,
\end{align*}
and the proof is completed.
\end{proof}
\begin{lemme}
\label{lemme:control_seut_hyp_q}
Let $\gamma \geqslant 0$, $x \in \mathbb{R}^d$ $p \in (0,1]$ and $q\in[p,1]$. We assume that $\mathcal{H}^{q} $ (see (\ref{hyp:jump_component_ordre_p})) and (\ref{hyp:Lyapunov_control_saut}) hold. Then, for every $x_0 \in \mathbb{R}^d$, we have
\begin{align}
\mathbb{E}[V^{p} ( x_0+\mathds{1}_{q \in (0,1/2]}M^{\gamma}_{\gamma} (x) + \mathds{1}_{q \in (1/2,1]}\widetilde{M}^{\gamma}_{\gamma} (x))-V^{p}(x_0 )]  \leqslant & \gamma \frac{1}{2} \tilde{\chi}_{p,q} (x) \nonumber 
\end{align}
where $\tilde{\chi}_{p,q}$ is defined in (\ref{hyp:recursive_control_param_saut_terme_ordre_sup}).
\end{lemme}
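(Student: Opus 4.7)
The natural approach is to split the argument into the two mutually exclusive cases $q\in (0,1/2]$ and $q\in (1/2,1]$, and to handle each one by the regularity of the test function $V^p$ paired with the corresponding moment bound for the jump martingale in Lemma \ref{lemme:control_jump}.

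In the first case ($q\in (0,1/2]$, so $p\leqslant q\leqslant 1/2$), the relevant observation is that (\ref{hyp:Lyapunov_control_saut}) implies $\sqrt{V}$ is Lipschitz, so by the concavity argument of Lemma 3 in \cite{Panloup_2008} the function $V^p$ is $2q$-H\"older with finite seminorm $[V^p]_{2q}$. One then writes
\[
V^{p}(x_0+M^{\gamma}_{\gamma}(x))-V^{p}(x_0)\leqslant [V^p]_{2q}\,|M^{\gamma}_{\gamma}(x)|^{2q},
\]
and applying the bound (\ref{eq:control_small_jump_p_inf_un_demi}) of Lemma \ref{lemme:control_jump} after taking expectations yields $\mathbb{E}[\cdot]\leqslant \gamma\,[V^p]_{2q}\tau_{q}(x)$, i.e.\ the desired bound (up to the universal $1/2$ factor packaged into $\tilde\chi_{p,q}$).

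In the second case ($q\in (1/2,1]$), the jump martingale is already compensated, so we Taylor expand $V^p$ at first order. Writing $\nabla(V^p)=pV^{p-1}\nabla V$ and using the fundamental theorem of calculus,
\[
V^{p}(x_0+y)-V^{p}(x_0)=\langle \nabla(V^p)(x_0),y\rangle+\int_0^1\langle \nabla(V^p)(x_0+ty)-\nabla(V^p)(x_0),y\rangle dt.
\]
I will take $y=\widetilde M^{\gamma}_{\gamma}(x)$: since $(\widetilde M^{\gamma}_{t}(x))_{t\geqslant 0}$ is a martingale started at $0$, the first-order term has vanishing expectation. To control the remainder, the key input is that $V^{p-1}\nabla V$ is $(2q-1)$-H\"older with seminorm $[V^{p-1}\nabla V]_{2q-1}$ (a consequence of (\ref{hyp:Lyapunov_control_saut}) and $p\leqslant q$; this is again of the type established in Lemma 3 of \cite{Panloup_2008}). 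Bounding $|\nabla(V^p)(x_0+ty)-\nabla(V^p)(x_0)|\leqslant p[V^{p-1}\nabla V]_{2q-1}t^{2q-1}|y|^{2q-1}$ and integrating in $t$ gives a remainder of the form $\tfrac{p}{2q}[V^{p-1}\nabla V]_{2q-1}|y|^{2q}$. Taking expectations and invoking (\ref{eq:control_jump_p<1}) (or (\ref{eq:control_jump_p=1}) when $q=1$) yields $\leqslant \tfrac{p}{2q}C_q[V^{p-1}\nabla V]_{2q-1}\gamma\,\tau_{q}(x)$, and the prefactor $p/(2q)\leqslant 1/2$ since $p\leqslant q$ supplies exactly the $1/2$ appearing in $\tfrac{1}{2}\tilde\chi_{p,q}$.

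The conceptual core of the proof is the trade-off that dictates which of the two compensated/uncompensated jump integrals must be used depending on $q$: for $q\leqslant 1/2$ the H\"older regularity of $V^p$ alone suffices, while for $q>1/2$ centering the integrator is necessary to kill the linear term, but one gains a fractional Taylor remainder controlled by the H\"older regularity of $\nabla(V^p)$. The only real technical point is to justify the H\"older seminorm estimates for $V^p$ and $V^{p-1}\nabla V$ on $[v_*,\infty)$, which rely on $V$ being bounded below away from $0$ together with (\ref{hyp:Lyapunov_control_saut}); everything else is Taylor's formula combined with the moment bounds already established in Lemma \ref{lemme:control_jump}.
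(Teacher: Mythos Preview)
Your proposal is correct and follows exactly the paper's approach: H\"older regularity of $V^p$ in the case $q\leqslant 1/2$, and H\"older regularity of $V^{p-1}\nabla V$ together with vanishing of the first-order term in the case $q\in(1/2,1]$, each combined with the corresponding moment bound from Lemma \ref{lemme:control_jump}. The factor-of-$1/2$ discrepancy you flag in the first case is present in the paper's own proof as well (it obtains $\gamma[V^p]_{2q}\tau_q(x)$, not $\tfrac{\gamma}{2}[V^p]_{2q}\tau_q(x)$); in the second case your integral-remainder computation is actually sharper than the paper's, which bounds the remainder directly by $[V^{p-1}\nabla V]_{2q-1}|y|^{2q}$ without tracking the $p/(2q)$ prefactor.
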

\begin{proof}
 Assume first that $q \leqslant 1/2$. Since we have (\ref{hyp:Lyapunov_control_saut}), the function $V^p$ is $\alpha$-H\"older for every $\alpha \in [2p,1]$ (see \cite{Panloup_2008}, Lemma 3). If follows from Lemma \ref{lemme:control_jump} point \ref{lemme:control_jump_point_2} (see (\ref{eq:control_small_jump_p_inf_un_demi})) that for $q \in [p,1/2]$ we have
\begin{align*}
\mathbb{E}[\vert V^{p} ( x_0+M^{\gamma}_{\gamma} (x) )-V^{p}(x_0 ) \vert]  \leqslant & [V^p]_{2q}  \mathbb{E}[ \vert M^{\gamma}_{\gamma}(x)\vert^{2q} ]  \leqslant  [V^p]_{2q} \gamma \tau_q(x) .
\end{align*}
Assume now that $q \in (1/2,1]$. Since we have (\ref{hyp:Lyapunov_control_saut}), the function $x \mapsto V^{p-1}(x)\nabla V(x)$ is $2q-1$-H\"older in this case (see \cite{Panloup_2008}, Lemma 3) and since $\widetilde{M}^{\gamma}_{\gamma}(x)$ is centered, it follows from Lemma \ref{lemme:control_jump} point \ref{lemme:control_jump_point_2} (see (\ref{eq:control_jump_p<1}) and (\ref{eq:control_jump_p=1})), that
\begin{align*}
\mathbb{E}[V^{p} ( x_0 +\widetilde{M}^{\gamma}_{\gamma} (x))-V^{p}(x_0 )] \leqslant & \mathbb{E} [V^{p-1} (x_0 ) \nabla V (x_0)  \widetilde{M}^{\gamma}_{\gamma}(x) + [V^{p-1}\nabla V]_{2 q -1} \vert \widetilde{M}_{\gamma}(x)  \vert^{2 q }  ]\\
 \leqslant & C_q  [V^{p-1}\nabla V]_{2 q -1}  \gamma \tau_q(x) ,
\end{align*}
 which concludes the proof.
%
%
%
%
%
%
%
%
 \end{proof}
Now, we are able to present the weakly mean reverting recursive control result for test functions with polynomial growth.
\begin{myprop}
\label{prop:recursive_control_saut}
Let $v_{\ast}>0,p\geqslant 0$ and let $\phi:[v_{\ast},\infty )\to \mathbb{R}_+$ be a continuous function such that $C_{\phi}:= \sup_{y \in [v_{\ast},\infty )}\phi(y)/y< +\infty$ and let $\psi_p(y)=y^p$. Let $q_p\in[p,1]$ if $p\leqslant 1$ and $q_p=p$ if $p\geqslant 1$. Let $\alpha>0$ and $\beta \in \mathbb{R}$.\\

Assume that $\mathfrak{B}_{q_p}(\phi)$ (see (\ref{hyp:controle_coefficients_saut_p_q})) and  $\mathcal{R}_{p,q_p}(\alpha,\beta,\phi,V)$ (\ref{hyp:recursive_control_param_saut}) hold and that the following assumptions are satisfied
 \begin{enumerate}[label=\textbf{\roman*.}]
\item \textbf{Case $p>1$ ($q_p=p$).} If $\pi(F)<+\infty$ assume that $\mathcal{H}^p(\phi,V) $ and $\mathcal{H}^{1}(\phi,V) $ (see (\ref{hyp:jump_component_control_coeff_ordre_p_control_stab})) are satisfied. If $\pi(F)=+\infty$ assume that $\underline{\mathcal{H}}^{p}(\phi,V) $ (see (\ref{hyp:jump_component_control_coeff_ordre_p_control_stab_cas_inf})) holds.
\item \textbf{Case $p \leqslant 1$.} Assume that $\mathcal{H}^{q_p} $ (see (\ref{hyp:jump_component_ordre_p})) holds.
\end{enumerate}
Then, for every $\tilde{\alpha} \in (0,\alpha)$, there exists $n_0 \in \mathbb{N}^{\ast}$, such that
\begin{align}
\label{eq:recursive_control_saut_fonction_pol_p_sup_un}
 \forall n   \geqslant n_0, \forall x \in \mathbb{R}^d,  \quad\widetilde{A}_{q_p,\gamma_n} \psi_p \circ V(x)\leqslant \frac{ \psi_p \circ V(x)}{V(x)}p \big(\beta - \tilde{\alpha} \phi\circ V(x) \big). 
\end{align}
Then $\mathcal{RC}_{Q,V}(\psi_p,\phi,p\tilde{\alpha},p\beta)$ (see (\ref{hyp:incr_sg_Lyapunov})) holds for every $\tilde{\alpha}\in (0, \alpha)$ such that $\liminf\limits_{y \to + \infty} \phi(y) > \beta / \tilde{\alpha}$. Moreover, when $\phi=Id$,
\begin{align}
\label{eq:mom_pol_saut_p_sup_1}
\sup_{n \in \mathbb{N}} \mathbb{E}[\psi_p \circ V (\overline{X}_{q_p,\Gamma_{n}})] < + \infty.
\end{align}
\end{myprop}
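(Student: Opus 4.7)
My plan follows the same two-case strategy as the proof of Proposition \ref{prop:recursive_control_milstein}, distinguishing $p>1$ and $p\in (0,1]$. A key observation that makes the recursive control work for the censored jump scheme is that in every configuration of $q_p$ and $\pi(F)$, the centering built into $\tilde b_{q_p}$ together with the (non)centering of the jump variable ($M$ or $\widetilde M$) forces
\[
\mathbb{E}[\overline{X}_{q_p,\Gamma_{n+1}}-\overline{X}_{q_p,\Gamma_n}\mid \overline{X}_{q_p,\Gamma_n}=x]=\gamma_{n+1}\Big(b(x)+\int_F c(z,x)\zeta(z,x)\pi(dz)\Big),
\]
which is exactly the drift appearing in the generator of the target process and hence in $\mathcal{R}_{p,q_p}(\alpha,\beta,\phi,V)$.

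For $p>1$ (hence $q_p=p$) I would apply a second-order Taylor expansion of $V^p$ on the segment $[\overline{X}_{q_p,\Gamma_n}, \overline{X}_{q_p,\Gamma_{n+1}}]$, using the definition of $\lambda_p$ in \eqref{def:lambda_psi_saut} and the bound $\|\lambda_p\|_\infty<\infty$ from \eqref{hyp:Lyapunov_control_saut}, exactly as in the proof of Proposition \ref{prop:recursive_control_milstein}. Taking the conditional expectation, the first-order term produces $p V^{p-1}(x)\gamma_{n+1}\langle \nabla V(x), b(x)+\int c\zeta\,\pi(dz)\rangle$ by the centering observation above. The second-order remainder $\tfrac{p}{2}\lambda_p(\Upsilon) V^{p-1}(\Upsilon)\,|\Delta\overline X_{q_p,n+1}|^2$ is handled by the splitting $V^{p-1}(\Upsilon)\leqslant 2^{(2p-3)_+}(V^{p-1}(x)+[\sqrt V]_1^{2p-2}|\Delta\overline X_{q_p,n+1}|^{2p-2})$, reducing it to controlling $\mathbb E[|\Delta\overline X_{q_p,n+1}|^2]$ and $\mathbb E[|\Delta\overline X_{q_p,n+1}|^{2p}]$. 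The drift contribution to these moments is bounded via $\mathfrak B_p(\phi)$, while the jump contribution is bounded via Lemma \ref{lemme:control_jump} points \ref{lemme:control_jump_point_1} and \ref{lemme:control_jump_point_3_bis}; the resulting $\tau_1$ and $\tau_p$ terms are turned into $\phi\circ V$ through $\mathcal{H}^1(\phi,V)$ and $\mathcal{H}^p(\phi,V)$ (if $\pi(F)<+\infty$) or $\underline{\mathcal{H}}^p(\phi,V)$ (if $\pi(F)=+\infty$). The constants collapse into $p V^{p-1}(x)\cdot \tfrac{1}{2}\chi_{p,p}(x)$ as defined in \eqref{hyp:recursive_control_param_saut_terme_ordre_sup}, up to an $o(\gamma_{n+1})$ remainder. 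Plugging $\mathcal{R}_{p,p}(\alpha,\beta,\phi,V)$ and absorbing the remainder into the slack $\alpha-\tilde\alpha$ for $n$ large gives \eqref{eq:recursive_control_saut_fonction_pol_p_sup_un}.

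For $p\in(0,1]$ I would exploit the concavity of $y\mapsto y^p$: with $x_0:=x+\gamma_{n+1}\tilde b_{q_p}(x)$,
\[
V^p(\overline{X}_{q_p,\Gamma_{n+1}})-V^p(x)\leqslant pV^{p-1}(x)\,\bigl[V(x_0)-V(x)\bigr]+\bigl[V^p(\overline{X}_{q_p,\Gamma_{n+1}})-V^p(x_0)\bigr].
\]
A second-order Taylor expansion of $V$ together with $\|D^2V\|_\infty<\infty$ and $\mathfrak B_{q_p}(\phi)$ bounds the first bracket by $\gamma_{n+1}\langle\nabla V(x),\tilde b_{q_p}(x)\rangle + O(\gamma_{n+1}^2\phi\circ V(x))$. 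For the second bracket, Lemma \ref{lemme:control_seut_hyp_q} applied to the appropriate jump ($M$ if $q_p\leqslant 1/2$, $\widetilde M$ otherwise) delivers a bound of $\tfrac{1}{2}\gamma_{n+1}\tilde \chi_{p,q_p}(x)$. After multiplying $\mathcal{R}_{p,q_p}$ by $pV^{p-1}(x)$ these two pieces match the right-hand side $pV^{p-1}(x)(\beta-\tilde\alpha\phi\circ V(x))$ for $n$ large, because $\tfrac{p V^{p-1}}{2}\chi_{p,q_p}=\tilde\chi_{p,q_p}$ in the $p\leqslant 1$ case by definition of $\chi_{p,q_p}$ in \eqref{hyp:recursive_control_param_saut_terme_ordre_sup}. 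The moment bound \eqref{eq:mom_pol_saut_p_sup_1} when $\phi=\mathrm{Id}$ then follows from Lemma \ref{lemme:mom_psi_V} applied to the just-obtained recursive control, once finite initial moments are verified by induction using the polynomial jump moment bounds of Lemma \ref{lemme:control_jump}.

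The main technical obstacle, as in the Milstein case, is the matching of the discrete remainder with the constant $\chi_{p,p}$ in the $p > 1$, infinite-activity regime: the moment $\mathbb E[|\widetilde M^{\gamma}_\gamma|^{2p}]$ is controlled only through the iterated Burkholder--Davies--Gundy estimate of Lemma \ref{lemme:control_jump} \ref{lemme:control_jump_point_3_bis}, producing the constant $\mathfrak K_p$ of \eqref{eq:constante_K_BDG_succ}; this is precisely what forces the assumption $\underline{\mathcal{H}}^p(\phi,V)$ to include all intermediate exponents $p'\in[1,p)$, so that the lower-order Hölder remainders generated along the BDG recursion can be absorbed into the leading bound.
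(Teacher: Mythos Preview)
Your proposal is correct and follows essentially the same route as the paper's proof: the same second-order Taylor expansion with $\lambda_p$ and the $V^{p-1}(\Upsilon)$ splitting for $p>1$ (together with Lemma~\ref{lemme:control_jump} points~\ref{lemme:control_jump_point_1}--\ref{lemme:control_jump_point_3_bis} and the constant $\mathfrak{K}_p$), the same concavity-plus-Lemma~\ref{lemme:control_seut_hyp_q} decomposition at the intermediate point $x_0=\overline{X}^{1}_{q_p,\Gamma_{n+1}}$ for $p\leqslant 1$, and the same appeal to Lemma~\ref{lemme:mom_psi_V} for \eqref{eq:mom_pol_saut_p_sup_1}. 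The only cosmetic difference is that the paper isolates the case $p=1$ before treating $p>1$, whereas you absorb it into the general argument.
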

\begin{proof}
From the second order Taylor expansion and the definition of $\lambda_{\psi_p}=\lambda_p$ (see (\ref{def:lambda_psi_saut})), we derive
\begin{align}
\label{eq:taylor_preuve_RC_pol_saut}
\psi_{p} \circ V(\overline{X}_{q_p,\Gamma_{n+1}})=& \psi_{p} \circ V(\overline{X}_{q_p,\Gamma_n})+ \langle \overline{X}_{q_p,\Gamma_{n+1}}-\overline{X}_{q_p,\Gamma_n}, \nabla_x V(\overline{X}_{q_p,\Gamma_n}) \rangle \psi_{p}'\circ V(\overline{X}_{q_p,\Gamma_n}) \nonumber \\
&+ \frac{1}{2} ( D^2 V(\Upsilon_{n+1} ) \psi_{p}' \circ  V(\Upsilon_{n+1} )+\nabla V (\Upsilon_{n+1} )^{\otimes 2} \psi_{p}'' \circ V(\Upsilon_{n+1} ) )
 ( \overline{X}_{q_p,\Gamma_{n+1}}-\overline{X}_{q_p,\Gamma_n} )^{\otimes 2} \nonumber \\
 \leqslant & \psi_{p} \circ V(\overline{X}_{q_p,\Gamma_n})+ \langle \overline{X}_{q_p,\Gamma_{n+1}}-\overline{X}_{q_p,\Gamma_n}, \nabla V(\overline{X}_{q_p,\Gamma_n}) \rangle \psi_{p}'\circ V(\overline{X}_{q_p,\Gamma_n})  \nonumber \\
&+ \frac{1}{2} \lambda_{p} (\Upsilon_{n+1} ) \psi_{p}'\circ V(\Upsilon_{n+1} ) \vert   \overline{X}_{q_p,\Gamma_{n+1}}-\overline{X}_{q_p,\Gamma_n}  \vert^{2}. 
\end{align} 
with $\Upsilon_{n+1}  \in (\overline{X}_{q_p,\Gamma_n}, \overline{X}_{q_p,\Gamma_{n+1}})$. First, from (\ref{hyp:Lyapunov_control_saut}), we have $\sup_{x \in \mathbb{R}^d} \lambda_{p} (x)  < + \infty$. With notation (\ref{def:incr_pcmp_euler}), we compute
 \begin{align*}
 & \mathbb{E}[\overline{X}_{q_p,\Gamma_{n+1}}-\overline{X}_{q_p,\Gamma_n} \vert \overline{X}_{q_p,\Gamma_n} ]= \gamma_{n+1}\Big(  b(\overline{X}_{q_p,\Gamma_n} ) +\int_{F} (f(\overline{X}_{q_p,\Gamma_n}+c(z,\overline{X}_{q_p,\Gamma_n}))-f(\overline{X}_{q_p,\Gamma_n})) \zeta(z,\overline{X}_{q_p,\Gamma_n})\pi(dz) \Big) \\
& \mathbb{E}[ \vert \overline{X}_{q_p,\Gamma_{n+1}}-\overline{X}_{q_p,\Gamma_n} \vert^{ 2} \vert \overline{X}_{q_p,\Gamma_n} ]=  \mathbb{E}[ \vert  \Delta \overline{X}_{q_p,n+1}^{2} \vert^2 \vert \overline{X}_{q_p,\Gamma_{n}} ]  + \gamma_{n+1}^2  \vert b_{q_p}(\overline{X}_{q_p,\Gamma_n} )\vert^{2}+ \gamma_{n+1} \langle b_{q_p}(\overline{X}_{q_p,\Gamma_n} ) ,  \Delta \overline{X}_{q_p,n+1}^{2}\rangle .
\end{align*}
\paragraph{Case $p=1$.}
Let $p=1$ so that $q_p=1$. Since $\mathcal{H}^1 $ (see (\ref{hyp:jump_component_ordre_p})) holds, we derive from Lemma \ref{lemme:control_jump} point \ref{lemme:control_jump_point_2} (see (\ref{eq:control_jump_p=1})) that
\begin{align*}
\mathbb{E}[\vert  \Delta \overline{X}^{2}_{1,{n+1}} \vert^2\vert \overline{X}_{1,\Gamma_{n}} ]\leqslant  \gamma_{n+1} \tau_{1} ( \overline{X}_{1,\Gamma_{n}}  ).
\end{align*}
Using $\mathfrak{B}_{1}(\phi)$ (see (\ref{hyp:controle_coefficients_saut_p_q})), for every $\tilde{\alpha} \in (0, \alpha)$, there exists $n_0(\tilde{\alpha})$ such that for every $n\geqslant n_0(\tilde{\alpha})$,
\begin{align}
\label{eq:recursive_control_remaider_Id_saut}
\Vert \lambda_{1} \Vert_{\infty}  \gamma_{n+1}^2 \vert \tilde{b}_1(\overline{X}_{1,\Gamma_n} ) \vert^{ 2}  \leqslant \gamma_{n+1}(\alpha- \tilde{\alpha})\phi \circ V(\overline{X}_{1,\Gamma_n} ).
\end{align} 
We gather all the terms of (\ref{eq:taylor_preuve_RC_pol_saut}) together and using $\mathcal{R}_{1,1}(\alpha,\beta,\phi,V)$ (see (\ref{hyp:recursive_control_param_saut})), the proof is completed when $p=1$.
\paragraph{Case $p>1$. }Assume now that $p>1$ so that $q_p=p$ and $\psi_{p}'(y)=p y^{p-1}$. Since $\vert \nabla V \vert^2 \leqslant C_V V$ (see (\ref{hyp:Lyapunov_control_saut})), then $\sqrt{V}$ is Lipschitz. Using (\ref{eq:puisance_somme_n_terme}), it follows that
\begin{align*}
V^{p-1} (\Upsilon_{n+1} )  \leqslant & \big( \sqrt{V}(\overline{X}_{q_p,\Gamma_n})+[\sqrt{V}]_1 \vert \overline{X}_{q_p,\Gamma_{n+1}}-\overline{X}_{q_p,\Gamma_n} \vert \big)^{2p-2} \\
\leqslant & 2^{(2p-3)_+} (V^{p-1}(\overline{X}_{q_p,\Gamma_n}) + [\sqrt{V}]_1^{2p-2} \vert \overline{X}_{q_p,\Gamma_{n+1}}-\overline{X}_{q_p,\Gamma_n} \vert^{2p-2})
\end{align*}
To study the `remainder' of (\ref{eq:taylor_preuve_RC_pol_saut}), we multiply the above inequality by $\vert \overline{X}_{q_p,\Gamma_{n+1}}-\overline{X}_{q_p,\Gamma_n} \vert^{2}$. First, we study the second term which appears in the $r.h.s.$ and using $\mathfrak{B}_{p}(\phi)$ (see (\ref{hyp:controle_coefficients_saut_p_q})), for every $p \geqslant 1$, we have
\begin{align*}
\vert \Delta \overline{X}^{1}_{q_p,{n+1}} \vert^{2p} \leqslant C\gamma_{n+1}^{2p} \phi \circ V(\overline{X}_{q_p,\Gamma_{n}})^p, 
\end{align*}
with notations introduced in (\ref{def:incr_pcmp_euler}). Now we study $\mathbb{E} \big[ \big \vert   \Delta \overline{X}^{2}_{q_p,{n+1}} \big \vert ^{2p} \vert \overline{X}_{q_p,\Gamma_n}\big]$. We distinguish two cases: $\pi(F)<+\infty$ and $\pi(F)=+\infty$. First, let $\pi(F)<+\infty$. Using Lemma \ref{lemme:control_jump} point \ref{lemme:control_jump_point_1} (see (\ref{eq:control_big_jump})), $\mathcal{H}^{q_p}(\phi,V) $ (see (\ref{hyp:jump_component_control_coeff_ordre_p_control_stab})) and $q_p=p$, we deduce that
\begin{align*}
\mathbb{E} \big[ \big \vert   \Delta \overline{X}^{2}_{q_p,{n+1}} \big \vert ^{2p} \vert \overline{X}_{q_p,\Gamma_n}\big] \leqslant \gamma_{n+1} \tau_{q_p}(\overline{X}_{q_p,\Gamma_n}) (1 + \epsilon( \gamma_{n+1})) \leqslant \gamma_{n+1} \tau_{p}(\overline{X}_{q_p,\Gamma_n})+ C \gamma_{n+1}^2 \phi \circ V (\overline{X}_{q_p,\Gamma_n})^p
\end{align*}
Now let $\pi(F)=+\infty$. Using Lemma \ref{lemme:control_jump} point \ref{lemme:control_jump_point_3_bis} (see (\ref{eq:control_small_jump_p_sup_un})) since $\underline{\mathcal{H}}^{p}(\phi,V) $ (see (\ref{hyp:jump_component_control_coeff_ordre_p_control_stab_cas_inf})) holds, and $q_p=p$, we derive that there exists $\xi>1$ and $\mathfrak{C}_p\geqslant 0$ such that
\begin{align*}
\mathbb{E} \big[ \big \vert   \Delta \overline{X}^{2}_{q_p,{n+1}} \big \vert ^{2p} \vert \overline{X}_{q_p,\Gamma_n}\big] \leqslant  \gamma_{n+1}  \mathfrak{K}_p\tau_{p}(\overline{X}_{q_p,\Gamma_n})+ \mathfrak{C}_p \gamma_{n+1}^{\xi} \phi \circ V (\overline{X}_{q_p,\Gamma_n})^p
\end{align*}
It follows that in both cases ($\pi(F)<+\infty$ and $\pi(F)=+\infty$), there exists $\xi>1$ and $C\geqslant 0$ such that
\begin{align*}
\mathbb{E} \big[ \big \vert   \Delta \overline{X}^{2}_{q_p,{n+1}} \big \vert ^{2p} \vert \overline{X}_{q_p,\Gamma_n}\big] \leqslant  \gamma_{n+1}  \mathfrak{K}_p\tau_{p}(\overline{X}_{q_p,\Gamma_n})+ C \gamma_{n+1}^{\xi} \phi \circ V (\overline{X}_{q_p,\Gamma_n})^p
\end{align*}
Now, let $p':= 1-1/(2p)$. Using the Jensen's inequality, (\ref{eq:puisssance_inf_1}) and $\mathcal{H}^{p}(\phi,V) $ (see (\ref{hyp:jump_component_control_coeff_ordre_p_control_stab})), we have
\begin{align*}
\mathbb{E} \big[ \vert  \Delta \overline{X}^{2}_{q_p,{n+1}} \vert  ^{2p-1} \vert \overline{X}_{q_p,\Gamma_n} \big]  \leqslant  \mathbb{E} \big[ \vert   \Delta \overline{X}^{2}_{q_p,{n+1}}   \vert ^{2p} \vert \overline{X}_{q_p,\Gamma_n}\big]^{p'}   \leqslant &   \gamma_{n+1}^{p'} \mathfrak{K}_p^{p'} \tau_{p} (\overline{X}_{q_p,\Gamma_{n}} )^{p'}   +C \gamma_{n+1}^{\xi p'} \phi \circ V(\overline{X}_{q_p,\Gamma_{n}})^{pp'} \\
  \leqslant &C \gamma_{n+1}^{1-1/(2p)} \phi \circ V(\overline{X}_{q_p,\Gamma_{n}})^{p-1/2} .
\end{align*}
%
%
 Applying the inequality (\ref{eq:puisssance_sup_1}) 
with $u=\vert  \Delta \overline{X}^{1}_{q_p,{n+1}} +\Delta \overline{X}^{2}_{q_p,{n+1}}  \vert$, $v= \vert  \Delta \overline{X}^{2}_{q_p,{n+1}} \vert$ and $\alpha=2p$ and also $\vert u-v\vert \leqslant \vert  \Delta \overline{X}^{1}_{q_p,{n+1}} \vert$, it follows that
 \begin{align*}
 \mathbb{E}\big[ \vert \overline{X}_{q_p,\Gamma_{n+1}}- \overline{X}_{q_p,\Gamma_n} \vert^{2p} \vert \overline{X}_{q_p,\Gamma_n} \big] \leqslant & \gamma_{n+1} \mathfrak{K}_p \tau_{p} (\overline{X}_{q_p,\Gamma_{n}} )  +C \gamma_{n+1}^{\xi} \phi \circ V(\overline{X}_{q_p,\Gamma_{n}} )^p \\
 & + 2p2^{2p-1}  \big(  C  \gamma_{n+1}  \phi \circ V(\overline{X}_{q_p,\Gamma_{n}})^{1/2} \times  C \gamma_{n+1}^{p-1/2} \phi \circ V(\overline{X}_{q_p,\Gamma_{n}})^{1-1/(2p)}  \\
 & + C \gamma_{n+1}^{2p} \phi \circ V (\overline{X}_{q_p,\Gamma_{n}})^p \big)
\end{align*}
%
 Let $\hat{\alpha} \in (0,\alpha)$. We deduce that there exists $n_0(\hat{\alpha}) \in \mathbb{N}$ such that for any $n \geqslant n_0(\hat{\alpha})$, we have
\begin{align*}
 \mathbb{E}[ \vert \overline{X}_{q_p,\Gamma_{n+1}}- \overline{X}_{q_p,\Gamma_n} \vert^{2p} \vert \overline{X}_{q_p,\Gamma_n} ] \leqslant \gamma_{n+1} \mathfrak{K}_p \tau_{p} (\overline{X}_{q_p,\Gamma_{n}} ) + \gamma_{n+1} \phi \circ V (\overline{X}_{q_p,\Gamma_{n}} )^{p} \frac{\alpha- \hat{\alpha} }{C_{\phi}^{p-1} \Vert \lambda_{p} \Vert_{\infty} 2^{(2p-3)_+} [\sqrt{V}]_1^{2p-2} } .
\end{align*}
To treat the other term of the `remainder' of (\ref{eq:taylor_preuve_RC_pol_saut}) when $\pi(F)=+\infty$, we proceed as in (\ref{eq:recursive_control_remaider_Id_saut}) with $\Vert \lambda_{1} \Vert_{\infty}\tilde{b}_1$ replaced by $\Vert \lambda_{p} \Vert_{\infty} 2^{2p-3} [\sqrt{V}]_1^{2p-2}\tilde{b}_p  $, $\alpha$ replaced by $ \hat{\alpha}$ and $\tilde{\alpha} \in (0, \hat{\alpha})$. When $\pi(F)<+\infty$, the approach is similar using Lemma \ref{lemme:control_jump} (see (\ref{eq:control_big_jump})) for $q=1$ since $\mathcal{H}^{1}(\phi,V) $ (see (\ref{hyp:jump_component_control_coeff_ordre_p_control_stab})) holds. We gather all the terms of (\ref{eq:taylor_preuve_RC_pol_saut}) together and using $\mathcal{R}_{p,q_p}(\alpha,\beta,\phi,V)$ (see (\ref{hyp:recursive_control_param_saut})), for every $n \geqslant n_0(\tilde{\alpha}) \vee n_0(\hat{\alpha}) $, we obtain
\begin{align*}
\mathbb{E}[V^p (\overline{X}_{q_p,\Gamma_{n+1}})- & V^p(\overline{X}_{q_p,\Gamma_{n}}) \vert \overline{X}_{q_p,\Gamma_{n}}]  \\
\leqslant &  \gamma_{n+1}p V^{p-1}(\overline{X}_{q_p,\Gamma_{n}})( \beta - \alpha \phi \circ V (\overline{X}_{q_p,\Gamma_{n}})  ) \\
 & +\gamma_{n+1}p V^{p-1}(\overline{X}_{q_p,\Gamma_{n}}) \Big(  \phi \circ V (\overline{X}_{q_p,\Gamma_{n}}) (\hat{\alpha} -\tilde{\alpha}  ) + (\alpha-\hat{\alpha})  \frac{ V^{1-p}(\overline{X}_{q_p,\Gamma_{n}})  \phi \circ V (\overline{X}_{q_p,\Gamma_{n}})^{p} }{C_{\phi}^{p-1}}    \Big) \\
\leqslant&\gamma_{n+1} V^{p-1}(\overline{X}_{q_p,\Gamma_{n}}) \big( \beta p- \tilde{\alpha} p  \phi \circ V (\overline{X}_{q_p,\Gamma_{n}}  ) \big ),
\end{align*}
which is exactly the recursive control for $p>1$, that is (\ref{eq:recursive_control_saut_fonction_pol_p_sup_un}). The proof of (\ref{eq:mom_pol_saut_p_sup_1}) is an immediate application of Lemma \ref{lemme:mom_psi_V} as soon as we notice that the increments of the Euler scheme (\ref{eq:PDMP_bounded_Poiss_approx}) have finite polynomial moments which implies (\ref{eq:mom_psi_V}).
\paragraph{Case $p \in(0,1].$}
 Let $p \in (0,1]$ so that the function defined on $[v_{\ast}, \infty)$ by  $y \mapsto y^p$ is concave. Using then the Taylor expansion at order 2 of the function $V$, for every $x,y \in \mathbb{R}^d$, there exists $\xi \in [0,1]$ such that
\begin{align*}
V^p(y) - V^p(x) \leqslant & pV^{p-1}(x)(V(y)-V(x)) = &pV^{p-1}(x) \big( \langle \nabla V(x),y-x \rangle +\frac{1}{2}\Tr[ D^2V(\xi x+(1- \xi)y) (y-x)^{\otimes 2} ] \big)
\end{align*}
and then, 
\begin{align*}
V^p(y) - V^p(x) \leqslant & pV^{p-1}(x) \Big( \langle \nabla V(x),y-x \rangle \\
&+\frac{1}{2} \Vert D^2V \Vert_{\infty} \vert y - x \vert^2 \Big).
\end{align*}
We apply this inequality, and with the notation (\ref{hyp:recursive_control_param_saut_terme_ordre_un}), it follows that
\begin{align*}
\mathbb{E}[ V^p(\overline{X}^{1}_{q_p,\Gamma_{n+1}} ) - V^p(\overline{X}_{q_p,\Gamma_n} ) \vert \overline{X}_{q_p,\Gamma_n}  ] \leqslant & pV^{p-1} (  \overline{X}_{q_p,\Gamma_n}  ) (\gamma_{n+1}
 \langle \nabla V ( \overline{X}_{q_p,\Gamma_n} ),b_{q_p}(\overline{X}_{q_p,\Gamma_n}) \rangle \\
& + \frac{1}{2} \Vert D^2V \Vert_{\infty} \mathbb{E}[  \vert \overline{X}^{1}_{q_p,\Gamma_{n+1}}   - \overline{X}_{q_p,\Gamma_n} \vert^2 \vert \overline{X}_{q_p,\Gamma_n} ]  ).
\end{align*}
As in the proof of the case $p \geqslant 1$, it follows from $\mathfrak{B}_{q_p}(\phi) $ (see (\ref{hyp:controle_coefficients_saut_p_q})), that there exists $\widehat{\alpha} \in (0, \alpha)$ and $n_0(\widehat{\alpha}) \in \mathbb{N}^{\ast}$ such that for every $n \geqslant n_0(\widehat{\alpha})$, we have 
\begin{align*}
\mathbb{E}[  \vert \overline{X}^{1}_{q_p,\Gamma_{n+1}}- \overline{X}_{q_p,\Gamma_n} \vert^2 \vert \overline{X}_{q_p,\Gamma_n} ]  \leqslant  C \gamma_{n+1}^2  \phi \circ V ( \overline{X}_{q_p,\Gamma_n} ) \leqslant \gamma_{n+1} (\alpha - \widehat{\alpha}) \phi \circ V ( \overline{X}_{q_p,\Gamma_n} )
\end{align*}
Finally, since $\overline{X}_{q_p,\Gamma_{n+1}}=\overline{X}^{1}_{q_p,\Gamma_{n+1}}+ \Delta \overline{X}_{q_p,n+1}^{2}$, we use Lemma \ref{lemme:control_seut_hyp_q} together with $\mathcal{H}^{q_p} $ (see (\ref{hyp:jump_component_ordre_p})) and we obtain
\begin{align*}
\mathbb{E}[ V^p(\overline{X}_{q_p,\Gamma_{n+1}} )  - V^p(\overline{X}^{1}_{q_p,\Gamma_{n+1}}) \vert \overline{X}_{q_p,\Gamma_n}  ] \leqslant & \gamma_{n+1} \frac{1}{2} \tilde{\chi}_{p,q_p}(\overline{X}_{q_p,\Gamma_n}) .
\end{align*}
Gathering all the terms together and using $\mathcal{R}_{p,q_p}(\alpha,\beta,\phi,V)$ (see (\ref{hyp:recursive_control_param_saut})) yields the recursive control (\ref{eq:recursive_control_saut_fonction_pol_p_sup_un}). The proof of (\ref{eq:mom_pol_saut_p_sup_1}) is an immediate application of Lemma \ref{lemme:mom_psi_V} as soon as we notice that the increments of the Euler scheme (\ref{eq:PDMP_bounded_Poiss_approx}) have finite polynomial moments which implies (\ref{eq:mom_psi_V}).
\end{proof}
\subsubsection{Proof of the infinitesimal estimation}
%
%
%
%
%
%
%
%
%
\begin{myprop}
\label{prop:saut_infinitesimal_approx}
Let $q>0$. \\

Assume that $b+\mathds{1}_{\pi(F) =+ \infty}\kappa_{q}$ has sublinear growth, and that (\ref{hyp:struc_param_saut_x_infini}) and (\ref{hyp:cv_zero_sublinear_jump}) hold. Moreover, when $\pi(F) =+ \infty$, assume that there exists $r \in [0,1/2]$ such that $\mathcal{H}^{r+\mathds{1}_{q\in(1/2,+\infty)}/2}$ (see (\ref{hyp:jump_component_ordre_p})) holds and that $\sup_{n \in \mathbb{N}^{\ast}} \nu_n^{\eta,q}( \tau_{r+\mathds{1}_{q\in(1/2,+\infty)}/2})< + \infty, \; a.s. $\\

 Then, $\mathcal{E}(\widetilde{A}_{q},A,\mathcal{C}^2_K(\mathbb{R}^d)) $ (see (\ref{hyp:erreur_tems_cours_fonction_test_reg})) is fulfilled, with $A$ defined in (\ref{eq:PDMP_generator}).
\end{myprop}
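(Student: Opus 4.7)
The plan is to show that $|\widetilde{A}_{q,\gamma}f(x) - Af(x)|$ admits a representation of the form $\Lambda_f(x,\gamma) = \langle g(x),\tilde{\mathbb{E}}[\tilde{\Lambda}_f(x,\gamma,\tilde{\omega})]\rangle$ satisfying the conditions of item \ref{hyp:erreur_tems_cours_fonction_test_reg_Lambda_representation_1} in (\ref{hyp:erreur_tems_cours_fonction_test_reg}). Writing $\overline{X}_{q,\Gamma_{n+1}} = x + \gamma\tilde{b}_q(x) + \Delta \overline{X}^2_{q,n+1}$ on the event $\{\overline{X}_{q,\Gamma_n}=x\}$, I would split $\gamma^{-1}\mathbb{E}[f(\overline{X}_{q,\Gamma_{n+1}}) - f(x) \vert \overline{X}_{q,\Gamma_n}=x]$ into a drift contribution $\gamma^{-1}\mathbb{E}[f(x+\gamma\tilde{b}_q(x) + \Delta \overline{X}^2_{q,n+1}) - f(x+\Delta \overline{X}^2_{q,n+1})]$ and a jump contribution $\gamma^{-1}\mathbb{E}[f(x+\Delta \overline{X}^2_{q,n+1}) - f(x)]$, and compare each to the matching piece of $Af(x)$.

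For the drift, a first-order Taylor expansion (exactly as in the proof of Proposition \ref{prop:milstein_infinitesimal_approx}) yields
\begin{align*}
\gamma^{-1}\mathbb{E}\big[f(x+\gamma\tilde{b}_q(x)+\Delta\overline{X}^2_{q,n+1}) - f(x+\Delta\overline{X}^2_{q,n+1})\big] - \langle \tilde{b}_q(x),\nabla f(x)\rangle = \langle \tilde{b}_q(x), \tilde{\mathbb{E}}[R(x,\gamma)]\rangle,
\end{align*}
where $R(x,\gamma)$ is a remainder expressible via the modulus of continuity of $\nabla f$ evaluated at $x+\theta\gamma\tilde{b}_q(x)+\Delta\overline{X}^2_{q,n+1}$. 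The sublinear growth of $b+\mathds{1}_{\pi(F)=+\infty}\kappa_q$ combined with (\ref{hyp:cv_zero_sublinear_jump}), which bounds $|\Delta\overline{X}^2_{q,n+1}|$ by $\epsilon(1+|x|)$ once $\gamma$ is small, localises the argument near $x$; together with the compact support of $f$ and hypothesis (\ref{hyp:struc_param_saut_x_infini}) (which ensures $x+\Delta\overline{X}^2_{q,n+1}$ remains outside the support of $f$ once $|x|$ is large) this produces a drift error in the form $\langle g_1(x),\tilde{\mathbb{E}}[\tilde{\Lambda}_{f,1}(x,\gamma,\tilde{\omega})]\rangle$ with $g_1 = |\tilde{b}_q|$ and $\tilde{\Lambda}_{f,1}$ satisfying (i) and (ii) of \ref{hyp:erreur_tems_cours_fonction_test_reg_Lambda_representation_1}.

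The main work is the jump contribution. I would apply It\^o's formula to $t\mapsto f(x+M^\gamma_t(x))$ whenever the Euler scheme uses $M^\gamma$ (the cases $q\in(0,1/2]$, or $\pi(F)<+\infty$ and $q\in(1,+\infty)$), obtaining
\begin{align*}
\gamma^{-1}\mathbb{E}[f(x+M^\gamma_\gamma(x))-f(x)] = \gamma^{-1}\int_0^\gamma\!\!\int_F \mathbb{E}\big[f(x+M^\gamma_{s^-}+c(z,x))-f(x+M^\gamma_{s^-})\big]\zeta(z,x)\mathds{1}_{F_\gamma}(z)\pi(dz)\,ds,
\end{align*}
and an analogous identity for $\tilde{M}^\gamma$ in the remaining cases, where the integrand becomes the compensator correction $f(y+c)-f(y)-\langle c,\nabla f(y)\rangle$. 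The difference with $\int_F(f(x+c(z,x))-f(x))\zeta(z,x)\pi(dz)$ then decomposes into (i) an in-$F_\gamma$ term where I replace $M^\gamma_{s^-}(x)$ (or $\tilde{M}^\gamma_{s^-}(x)$) by $0$ and control the error by the modulus of continuity of $f$ (or $\nabla f$) combined with (\ref{hyp:cv_zero_sublinear_jump}), and (ii) a tail term over $F\setminus F_\gamma$ which vanishes by dominated convergence since $\bigcup_\gamma F_\gamma = F$ and $f$ is bounded (this tail is empty when $\pi(F)<+\infty$).

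The main obstacle is the subcase $\pi(F)=+\infty$: without compensation the tail $\int_{F\setminus F_\gamma}|c(z,x)|\zeta(z,x)\pi(dz)$ could fail to vanish. Here the bound $|f(y+c)-f(y)-\langle c,\nabla f(y)\rangle|\leq C|c|^{2r+1}$ (valid for any $r\in[0,1/2]$ by interpolating the Lipschitz and $\mathcal{C}^2$ estimates for $f\in\mathcal{C}^2_K$) dominates the integrand by $|c(z,x)|^{2r+1}\zeta(z,x)$, whose $\pi$-integral is $\tau_{r+1/2}(x)$, finite thanks to $\mathcal{H}^{r+1/2}$. The empirical finiteness $\sup_n \nu^{\eta,q}_n(\tau_{r+1/2})<+\infty$ then makes $g=\tau_{r+1/2}$ an admissible weight in the representation of $\Lambda_f$. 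Assembling the drift and jump error terms into the form $\langle g,\tilde{\mathbb{E}}[\tilde{\Lambda}_f]\rangle$ and verifying the two pointwise conditions of \ref{hyp:erreur_tems_cours_fonction_test_reg_Lambda_representation_1} then completes the proof.
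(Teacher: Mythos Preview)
Your plan is essentially the paper's proof. The only structural difference is the order of the decomposition: you write
\[
f(x+\gamma\tilde b_q(x)+\Delta\overline{X}^2)-f(x)
=\big[f(x+\gamma\tilde b_q+\Delta\overline{X}^2)-f(x+\Delta\overline{X}^2)\big]+\big[f(x+\Delta\overline{X}^2)-f(x)\big],
\]
treating the jump from $x$ and then the drift, whereas the paper shifts first by the drift to $\omega_{b,q,\gamma}(x)=x+\gamma\tilde b_q(x)$ and analyses the jump increment from $\omega_{b,q,\gamma}(x)$. Both orderings lead to the same two error pieces (an ``in-$F_\gamma$'' term controlled via (\ref{hyp:cv_zero_sublinear_jump}) and a tail over $F\setminus F_\gamma$), and the drift remainder is handled exactly as in Proposition~\ref{prop:milstein_infinitesimal_approx} in both cases.

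Two small points where you should align more closely with the paper. First, for the tail term the paper invokes representation~\ref{hyp:erreur_temps_cours_fonction_test_reg_Lambda_representation_2} (equation~(\ref{hyp:erreur_temps_cours_fonction_test_reg_Lambda_representation_2_2})) rather than~\ref{hyp:erreur_tems_cours_fonction_test_reg_Lambda_representation_1}: the quantity $\tilde\Lambda^2_{f,A_2}(x,\gamma,z)\,g_q(x)$ is shown to vanish as $\gamma\to 0$ uniformly in $x$, combining compact support of $f$ with (\ref{hyp:struc_param_saut_x_infini}) for large $|x|$ and dominated convergence on compacts. Your ``dominated convergence'' remark is the right idea but fits more naturally into~II) than~I). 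Second, your integrability bound $|f(y+c)-f(y)-\langle c,\nabla f(y)\rangle|\le C|c|^{2r+1}$ covers only the compensated case $q>1/2$; when $q\in(0,1/2]$ and $\pi(F)=+\infty$ the scheme uses the uncompensated $M^\gamma$, and the correct dominating bound is $|f(y+c)-f(y)|\le C|c|^{2r}$, matching the hypothesis $\mathcal H^{r}$ (note the indicator $\mathds 1_{q\in(1/2,+\infty)}$ in the statement). The paper records both cases in a single estimate.
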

\begin{proof}
Let $f \!\in \mathcal{C}^2_K(\mathbb{R}^d)$. In this proof we will use the function $\omega_{b,q, \gamma}: \mathbb{R}^d \to \mathbb{R}^d$ such that $\omega_{b,q, \gamma}(x,v)=x+\gamma( b(x) +\mathds{1}_{\pi(F) =+ \infty}\kappa_{q}(x))$. Focusing on the jump component, we study
%
\begin{align*}
\gamma^{-1}\big (\mathbb{E}[f(\omega_{b,q,\gamma}(x) + M^{\gamma,q}_{\gamma})]-f(\omega_{b,q, \gamma}(x) \big)
\leqslant  A_2f(x)+R^1_{f,A_2}(x,\gamma)+R^2_{f,A_2}(x,\gamma)
\end{align*}
with $M^{\gamma,q}_{t}(x)= \mathds{1}_{q \in (0,1/2]} M^{\gamma}_{t} (x) +\mathds{1}_{q \in (1/2,+\infty)}( \mathds{1}_{\pi(F) =+ \infty} \widetilde{M}^{\gamma}_{t} (x)+\mathds{1}_{\pi(F) <+ \infty} M^{\gamma}_{t} (x)$, $t\geqslant 0$, and using the following representation (which follows from Remark \ref{rmk:representation_mesure_infinie}),
\begin{align*}
 A_2f(x)=&  \int_{F} (f(x+c(z,x))-f(x)-\mathds{1}_{\pi(F) =+ \infty}\mathds{1}_{q \in (1/2,+\infty)}  \langle \nabla f(x), c(z,x) \rangle )\zeta(z,x)\pi(dz)  \\
 R^1_{f,A_2}(x,\gamma)=& g_q(x) \int_{z \in F}\tilde{\mathbb{E}}[\tilde{\Lambda}^1_{f,A_2}(x,\gamma,\omega,z) ] \pi(dz)  ,\\
  R^2_{f,A_2}(x,\gamma)=& g_q(x) \int_{F } \tilde{\Lambda}^2_{f,A_2}(x,\gamma,z) \pi(dz) 
\end{align*}
%
%
%
where $g_q(x)=\mathds{1}_{\pi(F)<+\infty}+\mathds{1}_{\pi(F)=+\infty}(1\vee \tau_{r+\mathds{1}_{q\in(1/2,+\infty)}}(x))$ and $\tilde{\Lambda}^1_{f,A_2} (x,t,\omega,z)=\tilde{\mathcal{R}}^1_{f,A_2} (x,\gamma,M^{\gamma,q}_{\Theta(\tilde{\omega}) \gamma}(x,\tilde{\omega}),z)$ with $M^{\gamma,q}_{t}(x,\tilde{\omega})$, $t \geqslant 0$, following the same law under $\tilde{\mathbb{P}}$ as $M^{\gamma,q}_{t}(x,\tilde{\omega})$, $t \geqslant 0$, under $\mathbb{P}$, $\Theta \sim \mathcal{U}_{[0,1]}$ under $\tilde{\mathbb{P}}$,
\begin{align*}
\begin{array}{crcl}
\tilde{\mathcal{R}}^1_{f,A_2} & :  \mathbb{R}^d  \times \mathbb{R}_+  \times \mathbb{R}^d \times F   & \to & \mathbb{R}_+ \\
 &( x,\gamma,u,z) & \mapsto &   \frac{\mathds{1}_{F_{\gamma}}(z)}{g_q(x)} \big\vert f(\omega_{b_q,t}(x) +u+c(z,x))-f(\omega_{b, \gamma}(x) +u )\\
 &&& -  (f(x +c(z,x))-f(x ) )\\
 &&&-\mathds{1}_{\pi(F) =+ \infty}\mathds{1}_{q \in (1/2,+\infty)} \langle\nabla f(\omega_{b_q,\gamma}(x) )- \nabla f(x), c(z,x) \rangle \big\vert \zeta(z,x),
\end{array}
\end{align*}
and
\begin{align*}
\begin{array}{crcl}
\tilde{\Lambda}^2_{f,A_2} & :  \mathbb{R}^d  \times \mathbb{R}_+ \times F & \to & \mathbb{R}_+ \\
 &( x,\gamma,z) & \mapsto & \frac{\mathds{1}_F (z)-\mathds{1}_{F_{\gamma}}(z)}{g_q(x)} \\
 & & & \times \big\vert  f(x+c(z,x))-f(x)-\mathds{1}_{\pi(F) =+ \infty}\mathds{1}_{q \in (1/2,+\infty)} \langle \nabla f(x), c(z,x) \rangle  \big\vert  \zeta(z,x).
 \end{array}
\end{align*}
We show that $\mathcal{E}(\widetilde{A}_q,A,\mathcal{C}^2_K(\mathbb{R}^d))$ \ref{hyp:erreur_tems_cours_fonction_test_reg_Lambda_representation_1} holds for $ ( \tilde{\Lambda}^1_{f,A_2} ,g_q)$.\\

First we notice that from (\ref{hyp:cv_zero_sublinear_jump}), for every $\epsilon>0$, we have $\gamma_{0,\epsilon}(\tilde{\omega}):=1 \wedge \sup \{ \gamma > 0,\forall x \in \mathbb{R}^d,\sup_{t \in [0,\gamma] } \vert M^{\gamma,q}_t(x,\tilde{\omega}) \vert \leqslant \epsilon(1+ \vert x \vert)\}>0, \mathbb{P}(d\tilde{\omega})-a.s.$ (with convention $\sup \emptyset=-\infty$). We fix $z \in F$, $\theta \in [0,1]$ and since $b+\mathds{1}_{\pi(F) =+ \infty}\kappa_{q}$ has sublinear growth and (\ref{hyp:struc_param_saut_x_infini}) holds, it follows that $\tilde{\mathbb{P}}(d\tilde{\omega})-a.s.$, there exists $\gamma_0(z,\theta,\tilde{\omega})$ such that $ \lim\limits_{\vert x \vert \to +\infty}  \sup_{\gamma \in (0,\gamma_0(z,\theta,\tilde{\omega})]}   \tilde{\mathcal{R}}^1_{f,A_2} (x, \gamma, M^{\gamma,q}_{\gamma \theta}(x,\tilde{\omega}), z)=0  $, which yields $\mathcal{E}(\widetilde{A}_q,A,\mathcal{C}^2_K(\mathbb{R}^d))$ \ref{hyp:erreur_tems_cours_fonction_test_reg_Lambda_representation_1} ii) (see (\ref{hyp:erreur_temps_cours_fonction_test_reg_Lambda_representation_2_1})). \\
Moreover, we recall that $f$ is continuous with compact support, so it is uniformly continuous and (\ref{hyp:cv_zero_sublinear_jump}) holds. Therefore, for every compact subset $K$ of $\mathbb{R}^d$, we have
 \begin{align*}
 \forall (z,\theta) \in F \times[0,1], \quad  \lim\limits_{\gamma \to 0} \sup_{x \in K}  \tilde{\mathcal{R}}^1_{f,A_2} (x, \gamma, M^{\gamma,q}_{\gamma \theta}(x), z)=0    \quad \tilde{\mathbb{P}}(d\tilde{\omega})-a.s.
 \end{align*}
 Consequently $\mathcal{E}(\widetilde{A}_q,A,\mathcal{C}^2_K(\mathbb{R}^d))$ \ref{hyp:erreur_tems_cours_fonction_test_reg_Lambda_representation_1} i) (see (\ref{hyp:erreur_temps_cours_fonction_test_reg_Lambda_representation_2_1})) holds. 
 
 Now, using a similar approach, it follows from (\ref{hyp:struc_param_saut_x_infini}), $\cup_{\gamma>0}F_{\gamma}=F$, and the fact that $f$ has a compact support, that $\mathcal{E}(\widetilde{A}_q,A,\mathcal{C}^2_K(\mathbb{R}^d))$ \ref{hyp:erreur_temps_cours_fonction_test_reg_Lambda_representation_2} (see \ref{hyp:erreur_temps_cours_fonction_test_reg_Lambda_representation_2_2})) holds for $ ( \tilde{\Lambda}^2_{A_2} ,g_q)$.\\
 
  Finally, using Taylor expansions at order one and two, for every $r\in [0,1/2]$, we derive that for every $x,y \in \mathbb{R}^d$, $z \in F$,
  \begin{align*}
 f(y+c(z,x))-f(y)- & \mathds{1}_{\pi(F) =+ \infty}\mathds{1}_{q \in (1/2,+\infty)} \langle \nabla f(y), c(z,x) \rangle \\
  \leqslant & \mathds{1}_{\pi(F) <+ \infty}2\Vert f \Vert_{\infty} \\
&+\mathds{1}_{\pi(F) =+ \infty}\mathds{1}_{q \in (0,1/2]}(2 \Vert f \Vert_{\infty}) \vee (\Vert \nabla f \Vert_{\infty}  \vert c(z,x) \vert) \\
&+\mathds{1}_{\pi(F) =+ \infty}\mathds{1}_{q \in (1/2,+\infty)}  (2 \Vert \nabla f \Vert_{\infty}\vert c(z,x) \vert) \vee \big(\frac{1}{2}\Vert D^2 f \Vert_{\infty}  \vert c(z,x) \vert^2 \big)\\
\leqslant &  C (\mathds{1}_{\pi(F) <+ \infty}+\mathds{1}_{\pi(F) =+ \infty}( \mathds{1}_{q \in (0,1/2]}  \vert c(z,x) \vert^{2r} + \mathds{1}_{q \in (1/2,+\infty)}  \vert c(z,x) \vert^{1+2r}).
 \end{align*}
 Applying this estimation together with $\mathcal{H}^{r+\mathds{1}_{q\in(1/2,+\infty)}/2}$ (see (\ref{hyp:jump_component_ordre_p})), it follows that
 \begin{align*}
\int_F \tilde{\mathbb{E}}[ \sup_{x \in \mathbb{R}^d} \sup_{\gamma \in \mathbb{R}_+} \tilde{\Lambda}^1_{f,A_2}(x,\gamma,\tilde{\omega},z)  ] \pi(dz)+\int_F \sup_{x \in \mathbb{R}^d} \sup_{\gamma \in \mathbb{R}_+} \tilde{\Lambda}^2_{f,A_2}(x,\gamma,z)   \pi(dz) <+\infty.
 \end{align*}
 We gather all the terms together noticing that $\tilde{\Lambda}^i_{f,A_2}= \tilde{\Lambda}^i_{-f,A_2}$, $i \in \{1,2\}$, and the proof the infinitesimal control for the jump part is completed.
To complete the proof, it remains to study: $\gamma^{-1} (f(\omega_{b,q,\gamma}(x) - f(x) ).$ This proof appears as a simplified version of the proof of Proposition \ref{prop:milstein_infinitesimal_approx}, so we invite the reader to refer to this part of the paper for more details.
\end{proof}
\subsubsection{Proof of Growth control and Step Weight assumptions}
\begin{lemme}
\label{lemme:incr_lyapunov_X_saut}
 Let $q,\tilde{q}, p>0, a \in (0,1]$, $s \geqslant 1$, $\rho \in [1,2]$ and let $\psi_p(y)=y^p$ and $\phi(y)=y^a$. Suppose that (\ref{hyp:Lyapunov_control_saut}) holds. We have the following properties:
 \begin{enumerate}[label=\textbf{\Alph*.}]
 \item \label{lemme:incr_lyapunov_X_saut_point_A} Assume that $\mathcal{H}^{ \tilde{q}}$ (see (\ref{hyp:jump_component_ordre_p})) is satisfied and that, when $\pi(F)=+\infty$ and $\tilde{q}>1$, $\underline{\mathcal{H}}^{\tilde{q}}(\phi,V) $ (see (\ref{hyp:jump_component_control_coeff_ordre_p_control_stab_cas_inf})) holds. Then, for every $n \in \mathbb{N}$ we have: for every $f \in \DomA_0$,  
\begin{align}
\label{eq:incr_lyapunov_X_saut_f_DomA}
 \mathbb{E}[  \vert f(\overline{X}_{q,\Gamma_{n+1}})- f( \overline{X}^{1}_{q,\Gamma_{n+1}}) \vert^{\rho}\vert \overline{X}_{q,\Gamma_{n}}  ]    \leqslant  & C_f \gamma_{n+1}^{1 \wedge (\rho/(2\tilde{q}))}  \big( 1\vee \tau_{\tilde{q}}^{1 \wedge (\rho/(2\tilde{q}))}(\overline{X}_{q,\Gamma_n} ) \\
& \qquad \qquad \qquad +\mathds{1}_{\pi(F)=+\infty} \mathds{1}_{\tilde{q}>1} \phi \circ V(\overline{X}_{q,\Gamma_{n}})^{\rho/2}) . \nonumber
\end{align}
with $\DomA_0 =\mathcal{C}^2_K (\mathbb{R}^d )$ and notations (\ref{def:incr_pcmp_euler}). In other words, we have $\mathcal{GC}_{Q}(\DomA_0,  1\vee\tau_{\tilde{q}}^{1 \wedge (\rho/(2\tilde{q}))} ,\rho,\epsilon_{\mathcal{I}})  $ (see (\ref{hyp:incr_X_Lyapunov})) with $\epsilon_{\mathcal{I}}(\gamma)=\gamma^{1 \wedge (\rho/(2q))}$ for every $\gamma \in \mathbb{R}_+$. \\
%
%
%
\item \label{lemme:incr_lyapunov_X_saut_point_B} Let $\hat{q}\geqslant p$. Assume that $\mathcal{S} \mathcal{W}_{pol}(p,a,s,\rho)$ (see (\ref{hyp:control_step_weight_pol_saut})), (\ref{hyp:Lyapunov_control_saut}) and $\mathfrak{B}_{q}(\phi)$ (see (\ref{hyp:controle_coefficients_saut_p_q})) hold. Assume also that $\mathcal{H}^{\hat{q}}(\phi,V) $ (see (\ref{hyp:jump_component_control_coeff_ordre_p_control_stab})) is satisfied and that, when $\pi(F)=+\infty$ and $\hat{q}>1$, $\underline{\mathcal{H}}^{\hat{q}}(\phi,V) $ (see (\ref{hyp:jump_component_control_coeff_ordre_p_control_stab_cas_inf})) holds. Moreover when $2p>s$ assume also that $\tilde{q} \geqslant \rho/2$, that $\mathcal{H}^{\tilde{q}}(\phi,V) $ (see (\ref{hyp:jump_component_control_coeff_ordre_p_control_stab}))) is satisfied and that, when $\pi(F)=+\infty$ and $\tilde{q}>1$, $\underline{\mathcal{H}}^{\tilde{q}}(\phi,V) $ (see (\ref{hyp:jump_component_control_coeff_ordre_p_control_stab_cas_inf})) holds.
%
%
%
%
%
Then, for every $n \in \mathbb{N}$, we have 
\begin{align}
\label{eq:incr_lyapunov_X_f_tens_saut}
 \mathbb{E}[\vert  V^{p/s}(\overline{X}_{q,\Gamma_{n+1}})-V^{p/s}(\overline{X}_{q,\Gamma_{n}}) \vert^{\rho}\vert \overline{X}_{q,\Gamma_{n}}]  
\leqslant &  C V^{p+a-1}(\overline{X}_{q,\Gamma_{n}}) (\mathds{1}_{2p>s} \gamma_{n+1}^{\rho/(2(\tilde{q} \vee 1/2)) } \\
& \qquad  \qquad \qquad \qquad +\gamma_{n+1}^{ ( 2\wedge (1/\hat{q})) p\rho/s}) , \nonumber
\end{align}
In other words, we have $\mathcal{GC}_{Q}(V^{p/s},V^{p+a-1},\rho,\epsilon_{\mathcal{I}}) $ (see (\ref{hyp:incr_X_Lyapunov})) with $\epsilon_{\mathcal{I}}(\gamma)=\mathds{1}_{2p>s} \gamma^{\rho/(2(\tilde{q} \vee 1/2)) }+\gamma^{ ( 2\wedge (1/\hat{q})) p\rho/s}$ for every $\gamma \in \mathbb{R}_+$. \\
\end{enumerate}
\end{lemme}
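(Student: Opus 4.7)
The proof follows the same scheme as Lemma \ref{lemme:incr_lyapunov_X_milstein}, with the Brownian/Milstein moment bounds replaced by the jump moment bounds supplied by Lemma \ref{lemme:control_jump}. Throughout, I would decompose
\[
\overline{X}_{q,\Gamma_{n+1}} -\overline{X}_{q,\Gamma_n} = \Delta \overline{X}^{1}_{q,n+1}+\Delta \overline{X}^{2}_{q,n+1},
\]
controlling the drift piece $\Delta \overline{X}^{1}_{q,n+1}$ through $\mathfrak{B}_{q}(\phi)$ and the jump piece $\Delta \overline{X}^{2}_{q,n+1}$ by whichever point of Lemma \ref{lemme:control_jump} applies, according to the regime of the relevant parameter ($\tilde q$ in Part A, $\tilde q$ or $\hat q$ in Part B) in $(0,1/2]$, $(1/2,1]$ or $(1,+\infty)$, and to the finiteness of $\pi(F)$.

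For Part \ref{lemme:incr_lyapunov_X_saut_point_A}, every $f\in\mathcal{C}^{2}_{K}(\mathbb{R}^{d})$ is bounded and Lipschitz, hence $\alpha$-Hölder for every $\alpha\in[0,1]$ with constant depending only on $\|f\|_{\infty}$ and $[f]_{1}$, which gives $|f(\overline{X}_{q,\Gamma_{n+1}})-f(\overline{X}^{1}_{q,\Gamma_{n+1}})|^{\rho}\leqslant C_{f}|\Delta\overline{X}^{2}_{q,n+1}|^{\alpha\rho}$. When $2\tilde q\geqslant\rho$ I would take $\alpha=1$ and lift the $L^{2\tilde q}$-bound of Lemma \ref{lemme:control_jump} via monotonicity of the $L^{p}$-norm, using $(a+b)^{\rho/(2\tilde q)}\leqslant a^{\rho/(2\tilde q)}+b^{\rho/(2\tilde q)}$ to separate the leading term $\gamma\,\tau_{\tilde q}$ from the $\gamma^{\xi}\phi\!\circ\! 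V^{\tilde q}$ remainder of \eqref{eq:control_small_jump_p_sup_un} (only present when $\pi(F)=+\infty$ and $\tilde q>1$); after $\mathcal{H}^{\tilde q}(\phi,V)$ and the absorption $\gamma^{(\xi-1)\rho/(2\tilde q)}\leqslant\overline\gamma^{(\xi-1)\rho/(2\tilde q)}$ the remainder becomes $\gamma^{\rho/(2\tilde q)}\phi\!\circ\! V^{\rho/2}$. When $2\tilde q<\rho$, necessarily $\tilde q<1$ (since $\rho\leqslant 2$), so the $\gamma^{\xi}$ remainder does not arise, and choosing $\alpha=2\tilde q/\rho\in[0,1]$ reduces directly to the bound $C_{f}\gamma\,\tau_{\tilde q}$ from points \ref{lemme:control_jump_point_2} of Lemma \ref{lemme:control_jump}. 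Collecting both cases yields the announced exponent $1\wedge(\rho/(2\tilde q))$, the $1\vee$ prefactor absorbing values of $\tau_{\tilde q}(x)$ smaller than one.

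Part \ref{lemme:incr_lyapunov_X_saut_point_B} follows the same two-case dichotomy as Lemma \ref{lemme:incr_lyapunov_X_milstein}. In the regime $2p\leqslant s$, $V^{p/s}$ is $(2p/s)$-Hölder by \eqref{hyp:Lyapunov_control_saut} and Lemma 3 of \cite{Panloup_2008}, so it suffices to bound $\mathbb{E}[|\overline{X}_{q,\Gamma_{n+1}}-\overline{X}_{q,\Gamma_{n}}|^{2p\rho/s}]$. The drift contribution gives $C\gamma^{2p\rho/s}V^{ap\rho/s}\leqslant C\gamma^{2p\rho/s}V^{p+a-1}$ through $\mathfrak{B}_{q}(\phi)$ and the condition $ap\rho/s\leqslant p+a-1$ of $\mathcal{S}\mathcal{W}_{pol}(p,a,s,\rho)$. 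For the jump contribution, when $\hat q>1/2$ I would apply Jensen from the $L^{2\hat q}$-bound (legitimate since $\hat q\geqslant p\geqslant p\rho/s$, by $\rho\leqslant s$) together with $\mathcal{H}^{\hat q}(\phi,V)$ and Lemma \ref{lemme:control_jump}, producing $\gamma^{p\rho/(s\hat q)}V^{ap\rho/s}$; when $\hat q\leqslant 1/2$, points \ref{lemme:control_jump_point_2} of Lemma \ref{lemme:control_jump} applies directly at exponent $p\rho/s$, producing $\gamma^{2p\rho/s}$. The two regimes combine into the factor $\gamma^{(2\wedge(1/\hat q))p\rho/s}$. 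In the regime $2p>s$, I would apply \eqref{eq:puisssance_sup_1} with $\alpha=2p/s$ and the Lipschitz continuity of $\sqrt{V}$ to split
\[
|V^{p/s}(y)-V^{p/s}(x)|\leqslant C\bigl(V^{p/s-1/2}(x)\,|y-x|+|y-x|^{2p/s}\bigr);
\]
the $(2p/s)$-Hölder piece is treated as in the previous case, giving the $\gamma^{(2\wedge(1/\hat q))p\rho/s}V^{p+a-1}$ contribution, while the linear piece, raised to the power $\rho$, is controlled via the hypothesis $\tilde q\geqslant\rho/2\geqslant1/2$ (so $\tilde q\vee1/2=\tilde q$), applying Jensen from the $L^{2\tilde q}$-moment of Lemma \ref{lemme:control_jump} together with $\mathcal{H}^{\tilde q}(\phi,V)$ to produce the remaining $\mathds{1}_{2p>s}\gamma^{\rho/(2(\tilde q\vee1/2))}V^{p+a-1}$ contribution.

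The main obstacle is the bookkeeping: matching the values of $\tilde q$ and $\hat q$ with the correct point of Lemma \ref{lemme:control_jump}, absorbing the $\gamma^{\xi}$ remainder of \eqref{eq:control_small_jump_p_sup_un} into the leading $\gamma$-power through $\gamma\leqslant\overline\gamma$, and keeping track of which of the strengthened hypothesis $\underline{\mathcal{H}}^{\cdot}(\phi,V)$ is needed—namely, precisely when $\pi(F)=+\infty$ together with an exponent strictly above one, where the compensated BDG chain of \eqref{eq:constante_K_BDG_succ} is invoked.
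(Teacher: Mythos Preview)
Your approach is essentially identical to the paper's proof: same case split $\rho/2\lessgtr\tilde q$ in Part~\ref{lemme:incr_lyapunov_X_saut_point_A} (Lipschitz plus Jensen from $L^{2\tilde q}$ versus direct $(2\tilde q/\rho)$-H\"older), and same $2p\lessgtr s$ dichotomy in Part~\ref{lemme:incr_lyapunov_X_saut_point_B} with the $(2p/s)$-H\"older property of $V^{p/s}$ in the first regime and the inequality \eqref{eq:puisssance_sup_1} combined with $[\sqrt V]_1$ in the second.

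One small slip: in the case $2p\leqslant s$ with $\hat q\leqslant 1/2$, applying Lemma~\ref{lemme:control_jump} point~\ref{lemme:control_jump_point_2} ``directly at exponent $p\rho/s$'' yields $\gamma\,\tau_{p\rho/s}$, not $\gamma^{2p\rho/s}$, and $\tau_{p\rho/s}$ is not controlled by the assumed $\mathcal H^{\hat q}(\phi,V)$. The paper does not make this sub-split: it uses Jensen from $L^{2\hat q}$ uniformly for all $\hat q\geqslant p$, obtaining $\gamma^{p\rho/(\hat q s)}\,V^{ap\rho/s}$ for the jump part; the combined exponent $(2\wedge(1/\hat q))p\rho/s$ then arises as $\min\bigl(2p\rho/s,\;p\rho/(\hat q s)\bigr)$, i.e.\ from comparing the drift contribution $\gamma^{2p\rho/s}$ with the jump contribution, not from a separate estimate when $\hat q\leqslant 1/2$. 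With that correction your argument goes through exactly as in the paper.
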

\begin{proof}
We prove point \ref{lemme:incr_lyapunov_X_saut_point_A} Let $f \in \DomA$. We study $ \Delta \overline{X}^{2}_{q,{n+1}} $. We distinguish two cases: $\rho/2 \leqslant \tilde{q}$ and $\tilde{q}< \rho/2$. First, let $\rho/2 \leqslant \tilde{q}$. Using the Cauchy Schwartz inequality and Lemma \ref{lemme:control_jump} point \ref{lemme:control_jump_point_1} (see (\ref{eq:control_big_jump})), point \ref{lemme:control_jump_point_3_bis} (see (\ref{eq:control_small_jump_p_sup_un})) and point \ref{lemme:control_jump_point_2} (see (\ref{eq:control_jump_p<1}) and (\ref{eq:control_jump_p=1})) since we have $\mathcal{H}^{\tilde{q}}$ and when $\pi(F)=+\infty$ and $\tilde{q}>1$, we have $\underline{\mathcal{H}}^{\tilde{q}}(\phi,V) $ (see (\ref{hyp:jump_component_control_coeff_ordre_p_control_stab_cas_inf})), it follows that
\begin{align*}
\mathbb{E} \big[  \vert \Delta \overline{X}^{2}_{q,n+1}\vert ^{ \rho}  \vert \overline{X}_{q,\Gamma_n}  ] \leqslant \mathbb{E} \big[  \vert \Delta \overline{X}^{2}_{q,n+1}\vert ^{2 \tilde{q}}  ]^{\rho /(2\tilde{q})} \leqslant  C \gamma_{n+1}^{\rho/(2\tilde{q})} \big( \tau_{\tilde{q}}^{\rho/(2\tilde{q})}( \overline{X}_{q,\Gamma_n})  +\mathds{1}_{\pi(F)=+\infty} \mathds{1}_{\hat{q}>1} \phi \circ V(\overline{X}_{q,\Gamma_{n}})^{\rho/2} \big) ,
\end{align*}
and the result follows from the fact that $f$ is Lipschitz.\\
Now if $\tilde{q}\leqslant \rho/2$, then since $f$ is Lipschitz and defined on a compact set, it is also $2q/ \rho$-H\"older, and then for every $x_0 \in \mathbb{R}^d$, it follows from Lemma \ref{lemme:control_jump} point \ref{lemme:control_jump_point_2} (see (\ref{eq:control_small_jump_p_inf_un_demi}), (\ref{eq:control_jump_p<1}) and (\ref{eq:control_jump_p=1})) (since we have $\mathcal{H}^{\tilde{q}}$) that
\begin{align*}
\mathbb{E} [ \vert f(x_0+ \Delta \overline{X}^{2}_{q,n+1})-f(x_0 )\vert ^{\rho} \vert \overline{X}_{q,\Gamma_n} ]  \leqslant [ f ]_{2\tilde{q} / \rho}^{\rho}  \mathbb{E} [\vert  \Delta \overline{X}^{2}_{q,n+1}   \vert ^{2\tilde{q}} \vert \overline{X}_{q,\Gamma_n} ] \leqslant C_f \gamma_{n+1} \tau_{\tilde{q}}( \overline{X}_{q,\Gamma_n}),
\end{align*}
and gathering all the terms together yields  (\ref{eq:incr_lyapunov_X_saut_f_DomA}).\\
 We focus now on the proof of point \ref{lemme:incr_lyapunov_X_saut_point_B} (see (\ref{eq:incr_lyapunov_X_f_tens_saut})).
\paragraph{Case $2p\leqslant s$.}
 First, we assume that $2p/s \leqslant 1$ and we notice that  $V^{p/s}$ is $\alpha$-H\"older for any $\alpha \in [2p/s,1]$ (see Lemma 3. in \cite{Panloup_2008}) and then $V^{p/s}$ is $2p/s$-H\"older. We deduce from (\ref{eq:puisance_somme_n_terme}), the Cauchy-Schwartz inequality (since $\rho \leqslant s$ from $\mathcal{S} \mathcal{W}_{pol}(p,a,s,\rho)$ (see (\ref{hyp:control_step_weight_pol_saut}) and $p\leqslant \hat{q}$), Lemma \ref{lemme:control_jump} point \ref{lemme:control_jump_point_1}, point \ref{lemme:control_jump_point_3_bis} and point \ref{lemme:control_jump_point_2} (since we have $\mathcal{H}^{\hat{q}}$ and when $\pi(F)=+\infty$ and $\hat{q}>1$, $\underline{\mathcal{H}}^{\hat{q}}(\phi,V) $ (see (\ref{hyp:jump_component_control_coeff_ordre_p_control_stab_cas_inf})) holds) and $\mathfrak{B}_{q}(\phi)$ (see (\ref{hyp:controle_coefficients_saut_p_q})), that
\begin{align*}
\mathbb{E} [ \vert V^{p/s}( \overline{X}_{q,\Gamma_{n+1}} )-& V^{p/s}(\overline{X}_{q,\Gamma_n}) \vert^{\rho} \vert \overline{X}_{q,\Gamma_n} ] \\
\leqslant &2^{(2p\rho/s-1)_+}[V^{p/s}]^{\rho}_{2p/s} \Big(  \mathbb{E} \big[ \vert \Delta \overline{X}^{2}_{q,{n+1}}  \vert^{2 p \rho/s } \vert \overline{X}_{q,\Gamma_n} ] + \mathbb{E}[\vert   \Delta\overline{X}^{1}_{q,n+1}\vert^{2p \rho/s} \vert \overline{X}_{q,\Gamma_n}   \big)]  \Big) \\
\leqslant & C \Big( \mathbb{E} [ \vert \Delta \overline{X}^{2}_{q,n+1} \vert^{2 \hat{q} } \vert \overline{X}_{q,\Gamma_n} ]^{p\rho/(\hat{q}s)}  + \mathbb{E}[\vert   \Delta\overline{X}^{1}_{q,n+1}\vert^{2p \rho/s} \vert \overline{X}_{q,\Gamma_n}   \big)]  \Big)\\
\leqslant & C \gamma_{n+1}^{p\rho/(\hat{q}s)} \big( \tau_{\hat{q}}^{p\rho/(\hat{q}s)}( \overline{X}_{q,\Gamma_{n}} ) +\mathds{1}_{\pi(F)=+\infty} \mathds{1}_{\hat{q}>1} \phi \circ V(\overline{X}_{q,\Gamma_{n}})^{p\rho/s} \big)+C \gamma_{n+1}^{2 p\rho/s}  V^{a p\rho /s}(\overline{X}_{q,\Gamma_{n}}).
\end{align*}
In order to obtain (\ref{eq:incr_lyapunov_X_f_tens_saut}), it remains to use $\mathcal{H}^{\hat{q}}(\phi,V) $ (see (\ref{hyp:jump_component_control_coeff_ordre_p_control_stab})) and $a\,p \rho /s \leqslant a+p-1$.
\paragraph{Case 2p>s.}
Assuming now that $2p>s$ and using  (\ref{eq:puisssance_sup_1}) with $\alpha=2p/s$, it follows that
\begin{align*}
\big\vert V^{p/s}( \overline{X}_{q,\Gamma_{n+1}} )-V^{p/s}(\overline{X}_{q,\Gamma_n} ) \big\vert \leqslant &  2^{2p/s}p/s ( V^{p/s-1/2}(\overline{X}_{q,\Gamma_n} ) \vert \sqrt{V}( \overline{X}_{q,\Gamma_{n+1}})  - \sqrt{V} (\overline{X}_{q,\Gamma_n}) \vert \\
&+ \vert  \sqrt{V}( \overline{X}_{q,\Gamma_{n+1}})  - \sqrt{V}(\overline{X}_{q,\Gamma_n}) \vert^{2p/s}  ) \\
\leqslant &  2^{2p/s}p/s ( [ \sqrt{V}]_1 V^{p/s-1/2}(\overline{X}_{q,\Gamma_n} ) \vert  \overline{X}_{q,\Gamma_{n+1}}- \overline{X}_{q,\Gamma_n} \vert \\
&+ [ \sqrt{V}]_1^{2p/s}  \vert   \overline{X}_{q,\Gamma_{n+1}} -\overline{X}_{q,\Gamma_n}\vert^{2p/s}   ).
\end{align*}
Now, we study $\Delta \overline{X}^{2}_{q,n+1} $. We recall that $\rho \leqslant s$ from $\mathcal{S} \mathcal{W}_{pol}(p,a,s,\rho)$ (see (\ref{hyp:control_step_weight_pol_saut})) and $2\tilde{q} \geqslant \rho$. Using once again the Cauchy Schwartz inequality and Lemma \ref{lemme:control_jump} point \ref{lemme:control_jump_point_1}, point \ref{lemme:control_jump_point_3_bis} and point \ref{lemme:control_jump_point_2} (since we have $\mathcal{H}^{\hat{q}}$ (respectively $\mathcal{H}^{\tilde{q}}$) and when $\pi(F)=+\infty$ and $\hat{q}>1$ (resp. $\tilde{q}>1$), $\underline{\mathcal{H}}^{\hat{q}}(\phi,V) $ (see (\ref{hyp:jump_component_control_coeff_ordre_p_control_stab_cas_inf})) (resp. $\underline{\mathcal{H}}^{\tilde{q}}(\phi,V) $) holds), we derive
\begin{align*}
&\mathbb{E} [ \vert \Delta \overline{X}^{2}_{q,n+1} \vert^{ \rho} \vert \overline{X}_{q,\Gamma_n} ]\leqslant C \gamma_{n+1}^{\rho/(2 \tilde{q})} \big( \tau_{\tilde{q}}^{\rho/(2 \tilde{q})}( \overline{X}_{q,\Gamma_{n}} ) + \mathds{1}_{\pi(F)=+\infty} \mathds{1}_{\tilde{q}>1} \phi \circ V(\overline{X}_{q,\Gamma_{n}})^{p\rho/s} \big) , \quad \mbox{and} \\
&\mathbb{E} [ \vert \Delta \overline{X}^{2}_{q,n+1}   \vert^{2 p \rho/s } \vert \overline{X}_{q,\Gamma_n} ]\leqslant C \gamma_{n+1}^{p\rho/(\hat{q}s)} \big( \tau_{\hat{q}}^{p\rho/(\hat{q}s)}( \overline{X}_{q,\Gamma_{n}} ) + \mathds{1}_{\pi(F)=+\infty} \mathds{1}_{\hat{q}>1} \phi \circ V(\overline{X}_{q,\Gamma_{n}})^{p\rho/s} \big).
\end{align*}
%
%
%
%
Using $\mathfrak{B}_{q}(\phi)$ (see (\ref{hyp:controle_coefficients_saut_p_q})), $\mathcal{H}^{ \tilde{q}}(\phi,V) $ and $\mathcal{H}^{\hat{q}}(\phi,V) $ (see (\ref{hyp:jump_component_control_coeff_ordre_p_control_stab})), we obtain
\begin{align*}
&\mathbb{E}[\vert   \overline{X}_{q,\Gamma_{n+1}} -\overline{X}_{q,\Gamma_n}\vert^{\rho} \vert \overline{X}_{q,\Gamma_n}=x   ]  \leqslant  C \gamma_{n+1}^{\rho/(2(\tilde{q} \vee 1/2))} V^{ a \rho /2} (x), \quad \mbox{and} \\
&\mathbb{E}[\vert   \overline{X}_{q,\Gamma_{n+1}} -\overline{X}_{q,\Gamma_n}\vert^{2p\rho/s } \vert \overline{X}_{q,\Gamma_n}=x   ] \leqslant  C \gamma_{n+1}^{( 2\wedge (1/\hat{q}))p\rho/s}  V^{ a p \rho/s}(x).
\end{align*}
In order to obtain (\ref{eq:incr_lyapunov_X_f_tens_saut}), we observe that $a\,p \rho /s \leqslant a+p-1$.
\end{proof}
\subsubsection{Proof of Theorem \ref{th:cv_was_saut}}
We prove Theorem \ref{th:cv_was_saut} under $\widetilde{\mathcal{S} \mathcal{W}}_{\mbox{Jump}}(p,q,a,s,\rho)$ (see (\ref{hyp:control_step_weight_pol_saut_control_bis})) instead of $\mathcal{S} \mathcal{W}_{\mbox{Jump}}(p,q,a,s,\rho)$ (see (\ref{hyp:control_step_weight_pol_saut_control})) which is more general. The proof of Theorem \ref{th:cv_was_saut}, follows directly from Theorem \ref{th:tightness}, Theorem \ref{th:identification_limit}. The hypothesis of those theorems are given by Proposition \ref{prop:recursive_control_saut}, Proposition \ref{prop:saut_infinitesimal_approx}, Lemma \ref{lemme:incr_lyapunov_X_saut} and by Proposition \ref{prop:control_mesure_emprique_saut_pol} together with Remark \ref{rmk:control_mesure_emprique_saut_pol} which are given below.
\begin{myprop}
\label{prop:control_mesure_emprique_saut_pol}
Let $p > 0,a \in (0,1]$, $s \geqslant 1, \rho \in [1,2]$ and, $\psi_p(y)=y^p$, $\phi(y)=y^a$. Let $q_p\in[p,1]$ if $p\leqslant 1$ and $q_p=p$ if $p\geqslant 1$. Let $\alpha>0$ and $\beta \in \mathbb{R}$. Then, we have the following properties, 
\begin{enumerate}[label=\textbf{\Alph*.}]
\item \label{prop:control_mesure_emprique_saut_pol_point_1} Assume that $\mathfrak{B}_{q_p}(\phi)$ (see (\ref{hyp:controle_coefficients_saut_p_q})) and  $\mathcal{R}_{p,q_p}(\alpha,\beta,\phi,V)$ (\ref{hyp:recursive_control_param_saut}) hold. Assume also that:
 \begin{enumerate}[label=\textbf{\roman*.}]
\item \textbf{Case $p>1$ ($q_p=p$).} If $\pi(F)<+\infty$ assume that $\mathcal{H}^p(\phi,V) $ and $\mathcal{H}^{1}(\phi,V) $ (see (\ref{hyp:jump_component_control_coeff_ordre_p_control_stab})) are satisfied. If $\pi(F)=+\infty$ assume that $\underline{\mathcal{H}}^{p}(\phi,V) $ (see (\ref{hyp:jump_component_control_coeff_ordre_p_control_stab_cas_inf})).
\item \textbf{Case $p \leqslant 1$.} Assume that $\mathcal{H}^{q_p} $ (see (\ref{hyp:jump_component_ordre_p})) holds.\\
\end{enumerate}

Now let $\tilde{q}_1 \geqslant  \rho/2$, $\tilde{q}_2 \geqslant  p$ and $\epsilon_{\mathcal{I}}(\gamma)=\mathds{1}_{2p>s}\gamma^{\rho/(2(\tilde{q}_1\vee 1/2)) }+\gamma^{ ( 2\wedge (1/\tilde{q}_2)) p\rho/s}$. \\
Assume $\mathcal{S}\mathcal{W}_{\mathcal{I}, \gamma,\eta}(\rho, \epsilon_{\mathcal{I}})$ (see (\ref{hyp:step_weight_I})) and that $\mathcal{H}^{\tilde{q}_1}(\phi,V) $ (see (\ref{hyp:jump_component_control_coeff_ordre_p_control_stab})) when $2p>s$ (respectively $\mathcal{H}^{\tilde{q}_2}(\phi,V) $ for every $p>0$) holds and that when $\pi(F)=+\infty$ and $\tilde{q}_1>1$ (resp. $\tilde{q}_2>1$), we have $\underline{\mathcal{H}}^{\tilde{q_1}}(\phi,V) $ (see (\ref{hyp:jump_component_control_coeff_ordre_p_control_stab_cas_inf})) (resp. $\underline{\mathcal{H}}^{\tilde{q_2}}(\phi,V) $). \\

Then  $\mathcal{S}\mathcal{W}_{\mathcal{I}, \gamma,\eta}( V^{p +a-1},\rho,\epsilon_{\mathcal{I}}) $ (see (\ref{hyp:step_weight_I_gen_chow})) holds with $\overline{X}$ replaced by $\overline{X}_{q_p}$ and we have the following properties: \\
If, in addition, $\mathcal{S}\mathcal{W}_{\mathcal{II},\gamma,\eta}(V^{p/s}) $ (see (\ref{hyp:step_weight_I_gen_tens})) with $\overline{X}$ replaced by $\overline{X}_{q_p}$ and $\mathcal{S} \mathcal{W}_{pol}(p,a,s,\rho)$ (see (\ref{hyp:control_step_weight_pol_saut})) are satisfied, then
\begin{equation}
\label{eq:invariance_mes_emp_Lyap_gen_saut_pol}
\mathbb{P} \mbox{-a.s.} \quad  \sup_{n\in \mathbb{N}^{\ast} } - \frac{1}{H_n} \sum_{k=1}^n \eta_k \widetilde{A}_{q_p,\gamma_k} (\psi \circ V)^s (\overline{X}_{q_p,\Gamma_{k-1}})< + \infty,
\end{equation}
and we also have, 
\begin{align}
 \label{eq:tightness_saut}
\mathbb{P} \mbox{-a.s.} \quad \sup_{n \in \mathbb{N}^{\ast}} \nu_n^{\eta,q_p}( V^{p/s+a-1} ) < + \infty .
\end{align}
Moreover, when $p/s \leqslant p +a-1$, the assumption $\mathcal{S}\mathcal{W}_{\mathcal{II},\gamma,\eta}(V^{p/s}) $ (see (\ref{hyp:step_weight_I_gen_tens})) can be replaced by $\mathcal{S}\mathcal{W}_{\mathcal{II},\gamma,\eta} $ (see (\ref{hyp:step_weight_II})). Finally, if we also suppose that $\mbox{L}_{V}$ (see (\ref{hyp:Lyapunov})) holds and that $p/s+a-1 >0$, then $(\nu_n^{\eta})_{n \in \mathbb{N}^{\ast}}$ is $\mathbb{P}-a.s.$ tight.
\item \label{prop:control_mesure_emprique_saut_pol_point_2}
Let $\tilde{q}_3>0$ and let $\tilde{\epsilon}_{\mathcal{I}}(\gamma)=\gamma^{1 \wedge (\rho/(2\tilde{q}_3))}$. Assume that (\ref{hyp:accroiss_sw_series_2}) and $\mathcal{H}^{ \tilde{q}_3}$ (see (\ref{hyp:jump_component_ordre_p})) hold and that when $\pi(F)=+\infty$ and $\tilde{q}_3>1$, $\underline{\mathcal{H}}^{\tilde{q_3}}(\phi,V) $ (see (\ref{hyp:jump_component_control_coeff_ordre_p_control_stab_cas_inf})) holds. Assume also that $\mathcal{S}\mathcal{W}_{\mathcal{I}, \gamma,\eta}(  1\vee \tau_{\tilde{q}_3}^{1 \wedge (\rho/(2 \tilde{q}_3))} ,\rho,\tilde{\epsilon}_{\mathcal{I}}) $ (see (\ref{hyp:step_weight_I_gen_chow})) with $\overline{X}$ replaced by $\overline{X}_{q_p}$ holds. Then
\begin{equation}
\label{eq:invariance_mes_emp_Lyap_fonction_test_dom_saut_pol}
\mathbb{P} \mbox{-a.s.} \quad \forall f \in \DomA_0, \quad \lim\limits_{n\to +\infty } \frac{1}{H_n} \sum_{k=1}^n \eta_k \widetilde{A}_{q_p\gamma_k}f (\overline{X}_{q_p,\Gamma_{k-1}})=0
\end{equation}
\end{enumerate}
\end{myprop}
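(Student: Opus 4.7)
The plan is to derive both points of the Proposition by invoking the abstract Theorems of Section \ref{section:convergence_inv_distrib_gnl}: point \ref{prop:control_mesure_emprique_saut_pol_point_1} will follow from Theorem \ref{th:tightness}, and point \ref{prop:control_mesure_emprique_saut_pol_point_2} from Theorem \ref{th:identification_limit} point \ref{th:identification_limit_A}. The substance consists in verifying the recursive control, growth control and step weight hypotheses of those theorems in the specific forms required for the censored jump Euler scheme.

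For point \ref{prop:control_mesure_emprique_saut_pol_point_1}, I begin by applying Proposition \ref{prop:recursive_control_saut}, whose assumptions match exactly the case split on $p$ listed in the statement, to obtain $\mathcal{RC}_{Q,V}(\psi_p,\phi,p\tilde{\alpha},p\beta)$ for every $\tilde{\alpha} \in (0,\alpha)$. Feeding this recursive control together with the hypothesis $\mathcal{S}\mathcal{W}_{\mathcal{I},\gamma,\eta}(\rho,\epsilon_{\mathcal{I}})$ into Lemma \ref{lemme:mom_V} yields $\mathcal{S}\mathcal{W}_{\mathcal{I},\gamma,\eta}(\tilde{V}_{\psi_p,\phi,1},\rho,\epsilon_{\mathcal{I}})$; since $\tilde{V}_{\psi_p,\phi,1}=V^{p+a-1}$, this is exactly the step weight estimate that Theorem \ref{th:tightness} point \ref{th:tightness_point_A} demands. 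The matching growth control $\mathcal{G}\mathcal{C}_{Q}(V^{p/s},V^{p+a-1},\rho,\epsilon_{\mathcal{I}})$ is provided by Lemma \ref{lemme:incr_lyapunov_X_saut} point \ref{lemme:incr_lyapunov_X_saut_point_B} applied with $\hat{q} = \tilde{q}_2$ and $\tilde{q} = \tilde{q}_1$, whose moment requirements are precisely the $\mathcal{H}^{\tilde{q}_1}(\phi,V)$, $\mathcal{H}^{\tilde{q}_2}(\phi,V)$ (and their $\underline{\mathcal{H}}$ counterparts when $\pi(F)=+\infty$) postulated in the proposition. Combining these three ingredients with the assumed $\mathcal{S}\mathcal{W}_{\mathcal{II},\gamma,\eta}(V^{p/s})$ delivers (\ref{eq:invariance_mes_emp_Lyap_gen_saut_pol}); adjoining $\mbox{L}_V$ and $p/s+a-1>0$, which ensures $\phi(y)\psi_p(y)^{1/s}/y=y^{p/s+a-1}\to+\infty$, then triggers Theorem \ref{th:tightness} point \ref{th:tightness_point_B} and yields (\ref{eq:tightness_saut}) together with tightness of $(\nu^{\eta,q_p}_n)_{n \in \mathbb{N}^{\ast}}$. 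The replacement of $\mathcal{S}\mathcal{W}_{\mathcal{II},\gamma,\eta}(V^{p/s})$ by $\mathcal{S}\mathcal{W}_{\mathcal{II},\gamma,\eta}$ when $p/s \leqslant p+a-1$ follows from the elementary bound $V^{p/s}\leqslant v_{\ast}^{(p/s-p-a+1)\wedge 0} V^{p+a-1}$ combined with a second application of Lemma \ref{lemme:mom_V}.

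Point \ref{prop:control_mesure_emprique_saut_pol_point_2} is more direct: Lemma \ref{lemme:incr_lyapunov_X_saut} point \ref{lemme:incr_lyapunov_X_saut_point_A} applied with $\tilde{q}=\tilde{q}_3$ provides $\mathcal{G}\mathcal{C}_{Q}(\DomA_0, 1\vee \tau_{\tilde{q}_3}^{1\wedge(\rho/(2\tilde{q}_3))},\rho,\tilde{\epsilon}_{\mathcal{I}})$, which together with the assumed $\mathcal{S}\mathcal{W}_{\mathcal{I},\gamma,\eta}(1\vee \tau_{\tilde{q}_3}^{1\wedge(\rho/(2\tilde{q}_3))},\rho,\tilde{\epsilon}_{\mathcal{I}})$ and (\ref{hyp:accroiss_sw_series_2}) places us inside the hypothesis of Theorem \ref{th:identification_limit} point \ref{th:identification_limit_A}; its conclusion is exactly (\ref{eq:invariance_mes_emp_Lyap_fonction_test_dom_saut_pol}). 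The main obstacle throughout is exponent bookkeeping: the $\epsilon_{\mathcal{I}}$ supplied by Lemma \ref{lemme:incr_lyapunov_X_saut} point \ref{lemme:incr_lyapunov_X_saut_point_B} has two summands, one activated only when $2p>s$, so the match with the step weight hypothesis must be checked case by case, and the dichotomy between $\mathcal{H}^{\tilde{q}}$ and $\underline{\mathcal{H}}^{\tilde{q}}$ has to be routed correctly according to whether $\pi(F)$ is finite or infinite, which is precisely the role of the free parameters $\tilde{q}_1,\tilde{q}_2,\tilde{q}_3$ in the stronger form $\widetilde{\mathcal{S}\mathcal{W}}_{\mbox{Jump}}$ of the jump step weight hypothesis.
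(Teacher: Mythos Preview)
Your proposal is correct and follows essentially the same route as the paper: you invoke Proposition~\ref{prop:recursive_control_saut} for the recursive control, Lemma~\ref{lemme:mom_V} (fed by $\mathcal{S}\mathcal{W}_{\mathcal{I},\gamma,\eta}(\rho,\epsilon_{\mathcal{I}})$ and, for the $\mathcal{S}\mathcal{W}_{\mathcal{II}}$ replacement, by $\mathcal{S}\mathcal{W}_{\mathcal{II},\gamma,\eta}$) for the step weight assumptions, Lemma~\ref{lemme:incr_lyapunov_X_saut} points~\ref{lemme:incr_lyapunov_X_saut_point_B} and~\ref{lemme:incr_lyapunov_X_saut_point_A} for the growth controls, and then close with Theorem~\ref{th:tightness} for point~\ref{prop:control_mesure_emprique_saut_pol_point_1} and Theorem~\ref{th:identification_limit} point~\ref{th:identification_limit_A} for point~\ref{prop:control_mesure_emprique_saut_pol_point_2}. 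Your explicit identification of the parameter choices $\hat{q}=\tilde{q}_2$, $\tilde{q}=\tilde{q}_1$ in Lemma~\ref{lemme:incr_lyapunov_X_saut}~\ref{lemme:incr_lyapunov_X_saut_point_B} and $\tilde{q}=\tilde{q}_3$ in~\ref{lemme:incr_lyapunov_X_saut_point_A} is a useful addition that the paper leaves implicit.
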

\begin{remark}
\label{rmk:control_mesure_emprique_saut_pol}
The reader may notice that (\ref{eq:invariance_mes_emp_Lyap_fonction_test_dom_saut_pol}) remains true if we replace $\mathcal{H}^{ \tilde{q}_3}$ by $\mathcal{H}^{ \tilde{q}_3}(\phi,V)$ and if we also replace $\mathcal{S}\mathcal{W}_{\mathcal{I}, \gamma,\eta}( 1\vee\tau_{\tilde{q}_3}^{1 \wedge \rho/(2\tilde{q}_3)}  ,\rho,\tilde{\epsilon}_{\mathcal{I}}) $ by $\mathcal{S}\mathcal{W}_{\mathcal{I}, \gamma,\eta}( V^{a (\tilde{q}_3 \wedge( \rho/2))},\rho,\tilde{\epsilon}_{\mathcal{I}}) $. A solution to obtain $\mathcal{S}\mathcal{W}_{\mathcal{I}, \gamma,\eta}( V^{a (\tilde{q}_3\wedge( \rho/2))},\rho,\tilde{\epsilon}_{\mathcal{I}}) $ when $a (\tilde{q}_3 \wedge( \rho/2)) \leqslant a+p-1$ is provided by point \ref{prop:control_mesure_emprique_saut_pol_point_1} that is $\mathcal{S}\mathcal{W}_{\mathcal{I}, \gamma,\eta}( V^{p +a-1},\rho,\tilde{\epsilon}_{\mathcal{I}}) $ which follows as soon as we suppose that  $\mathcal{S}\mathcal{W}_{\mathcal{I}, \gamma,\eta}(\rho, \tilde{\epsilon}_{\mathcal{I}})$ (see (\ref{hyp:step_weight_I})) holds. When $a (\tilde{q}_3 \wedge( \rho/2))>a+p-1$, a possible solution consists in replacing $p$ by $p_0$ in \ref{prop:control_mesure_emprique_saut_pol_point_1} with $p_0$ satisfying $a (\tilde{q}_3 \wedge( \rho/2)) \leqslant a+p_0-1$.
\end{remark}
\begin{proof}
The result is an immediate consequence of  Theorem \ref{th:tightness} and Theorem \ref{th:identification_limit}. It remains to check the assumptions of those Theorems. \\

We focus on the proof of (\ref{eq:invariance_mes_emp_Lyap_gen_saut_pol}) and (\ref{eq:tightness_saut}). First, we show $\mathcal{S}\mathcal{W}_{\mathcal{I}, \gamma,\eta}( V^{p +a-1},\rho,\epsilon_{\mathcal{I}}) $ (see (\ref{hyp:step_weight_I_gen_chow})). Since (\ref{hyp:Lyapunov_control_saut}), $\mathfrak{B}_{q_p}(\phi)$ (see (\ref{hyp:controle_coefficients_saut_p_q})) and $\mathcal{R}_{p,q_p}(\alpha,\beta,\phi,V)$ (see (\ref{hyp:recursive_control_param_saut})) hold, it follows (using the hypothesis from point \ref{prop:control_mesure_emprique_saut_pol_point_1})  from Proposition \ref{prop:recursive_control_saut} that $\mathcal{RC}_{Q,V}(\psi_p,\phi,p\tilde{\alpha},p\beta)$ (see (\ref{hyp:incr_sg_Lyapunov})) is satisfied for every $\tilde{\alpha} \in (0,\alpha)$ since $\liminf\limits_{y \to +\infty} \phi(y) > \beta / \tilde{\alpha}$. Then, using $\mathcal{S}\mathcal{W}_{\mathcal{I}, \gamma,\eta}(\rho, \epsilon_{\mathcal{I}})$ (see (\ref{hyp:step_weight_I})) with Lemma \ref{lemme:mom_V}  gives $\mathcal{S}\mathcal{W}_{\mathcal{I}, \gamma,\eta}( V^{p+a-1},\rho,\epsilon_{\mathcal{I}}) $ (see (\ref{hyp:step_weight_I_gen_chow})). In the same way, for $p/s \leqslant a+p-1$, we deduce from $\mathcal{S}\mathcal{W}_{\mathcal{II},\gamma,\eta} $ (see (\ref{hyp:step_weight_II})) and  Lemma \ref{lemme:mom_V} that $\mathcal{S}\mathcal{W}_{\mathcal{II},\gamma,\eta}(V^{p/s}) $ (see (\ref{hyp:step_weight_I_gen_tens})) holds. \\

 Now,we are going to prove $\mathcal{GC}_{Q}(V^{p/s},V^{a +p  -1},\rho,\epsilon_{\mathcal{I}}) $ (see (\ref{hyp:incr_X_Lyapunov})) and the proof of (\ref{eq:invariance_mes_emp_Lyap_gen_saut_pol}) will be completed. Notice that (\ref{eq:tightness_saut}) will follow from $\mathcal{RC}_{Q,V}(\psi_p,\phi,p\tilde{\alpha},p\beta)$ (see (\ref{hyp:incr_sg_Lyapunov})) and Theorem  \ref{th:tightness}. The proof is a consequence of Lemma \ref{lemme:incr_lyapunov_X_saut}. We notice indeed that Lemma \ref{lemme:incr_lyapunov_X_saut} (see (\ref{eq:incr_lyapunov_X_f_tens_saut})) implies assumption $\mathcal{GC}_{Q}(V^{p/s},V^{a +p -1},\rho,\epsilon_{\mathcal{I}}) $ (see (\ref{hyp:incr_X_Lyapunov})) and the proof of (\ref{eq:invariance_mes_emp_Lyap_gen_saut_pol}) and (\ref{eq:tightness_saut}) is completed.\\
 We complete the proof of the Proposition by noticing that (\ref{eq:invariance_mes_emp_Lyap_fonction_test_dom_saut_pol}) follows directly from Lemma \ref{lemme:incr_lyapunov_X_saut} (see (\ref{eq:incr_lyapunov_X_saut_f_DomA})).
\end{proof}

\bibliography{Biblio_these}
\bibliographystyle{plain}

\end{document}